\newtheorem{theorem}{Theorem}[section]
\newtheorem{prop}[theorem]{Proposition}
\newtheorem{lemma}[theorem]{Lemma}
\newtheorem{cor}[theorem]{Corollary}
\theoremstyle{definition}
\newtheorem{definition}[theorem]{Definition}
\theoremstyle{remark}
\newtheorem{remark}[theorem]{Remark}
\newtheorem{example}[theorem]{Example}
\newcommand{\Mn}{{\mathcal M}_n}
\newcommand{\Mla}{{\mathcal M}_{\Lambda}(Q)}
\newcommand{\Tnp}{\TT^{n}_p}
\newcommand{\TZ}{\TT^{n}}
\newcommand{\Tp}[1][{1}]{\TT^{#1}_p}
\newcommand{\Cp}{C_{p^\infty}^{\times n}}
\newcommand{\Ver}{Verschiebung\xspace}
\newcommand{\zp}{\bZ_p}
\newcommand{\zpn}{\zp^n}
\newcommand{\zpk}{\zp^k}
\newcommand{\nn}{\nonumber}
\newcommand{\nid}{\noindent}
\newcommand{\ra}{\rightarrow}
\newcommand{\la}{\leftarrow}
\newcommand{\xra}{\xrightarrow}
\newcommand{\xla}{\xleftarrow}
\newcommand{\fib}{\twoheadrightarrow}
\newcommand{\bZ}{\mathbf{Z}}
\newcommand{\bQ}{\mathbf{Q}}
\newcommand{\ess}{\mathbf{S}}
\newcommand{\cC}{\mathcal{C}}
\newcommand{\bC}{\mathbf{C}}
\newcommand{\bR}{\mathbf{R}}
\newcommand{\bN}{\mathbf{N}}
\newcommand{\SpecCat}{\mathrm{Spec}}
\newcommand{\smsh}{\sm}
\newcommand{\we}{\simeq}
\newcommand{\Ar}{\mathcal Ar}
\newcommand{\op}{\mathrm{op}}
\newcommand{\dT}[1][{}]{T_{#1}}
\newcommand{\Tn}{T^{(n)}}
\DeclareMathOperator*{\colim}{colim}
\DeclareMathOperator*{\hocolim}{hocolim}
\DeclareMathOperator*{\holim}{holim}
\DeclareMathOperator{\lcm}{lcm}
\DeclareMathOperator{\bez}{bez}
\DeclareMathOperator{\Cat}{Cat}
\DeclareMathOperator{\Map}{Map}
\DeclareMathOperator{\Aut}{Aut}
\newcommand{\hra}{\hookrightarrow}
\newcommand{\sm}{\wedge}
\newcommand{\id}{\mathrm{id}}
\def\rrarrow{   \hspace{.05cm}\mbox{\,\put(0,-2){$\rightarrow$}\put(0,2){$\rightarrow$}\hspace{.45cm}}}
\newcommand{\rra}{\rrarrow}
\def\cC{\mathcal C}\def\cD{\mathcal D}
\def\cK{\mathcal K}
\def\cM{\mathcal M}\def\cO{\mathcal O}
\def\cU{\mathcal U}
\def\FF{\mathbb F}
\def\NN{\mathbb N}
\def\TT{\mathbb T}
\begin{document}

\author{Gunnar Carlsson} 
\thanks{The first author was supported in part
  by NSF grant DMS-0406992}
\address{Department of Mathematics, Stanford University, Palo Alto, CA 94305, USA}
\email{gunnar@math.stanford.edu}

\author{Christopher L. Douglas} 
\thanks{The second author was supported in part by an NSF Postdoctoral Fellowship.}
\address{Department of Mathematics, Stanford University, Palo Alto, CA 94305, USA}
\email{cdouglas@math.stanford.edu}

\author{Bj{\o}rn Ian Dundas}
\address{Department of Mathematics, University of Bergen, N-5008 Bergen, Norway}
\email{dundas@math.uib.no}

\title{Higher topological cyclic homology and \\ the Segal conjecture for tori}

\begin{abstract}

We investigate higher topological cyclic homology as an approach to studying chromatic phenomena in homotopy theory.  Higher topological cyclic homology is constructed from the fixed points of a version of topological Hochschild homology based on the n-dimensional torus, and we propose it as a computationally tractable cousin of n-fold iterated algebraic K-theory.

The fixed points of toral topological Hochschild homology are related to one another by restriction and Frobenius operators.  We introduce two additional families of operators on fixed points, the Verschiebung, indexed on self-isogenies of the n-torus, and the differentials, indexed on n-vectors.  We give a detailed analysis of the relations among the restriction, Frobenius, Verschiebung, and differentials, producing a higher analog of the structure Hesselholt and Madsen described for 1-dimensional topological cyclic homology.

We calculate two important pieces of higher topological cyclic homology, namely topological restriction homology and topological Frobenius homology, for the sphere spectrum.  The latter computation allows us to establish the Segal conjecture for the torus, which is to say to completely compute the cohomotopy type of the classifying space of the torus.

\end{abstract}

\vspace*{-25pt}
\maketitle

%
%
%
%
%
%
%
%

\setcounter{tocdepth}{2}

\vspace*{-18pt}
\tableofcontents 

\section{Introduction}

\subsection{Background and motivation}

A casual glance at any chart of homotopy groups of spheres is dizzying---one gets the impression that there is some not-quite discernible pattern.  The chromatic viewpoint on stable homotopy theory clarifies matters: we put on colored goggles so that we can see information only of a particular wavelength, that is with particular periodicity properties.  This organizing principle is enormously useful both as a conceptual framework for comprehending large scale phenomena in homotopy theory, and as a computational tool~\cite{ravenel-greenbook, ravenel-orangebook}.

The traditional approach to chromatic phenomena proceeds by what Hopkins calls ``designer homotopy theory".  In this one designs, abstractly and without regard to geometry, spectra with desired chromatic properties: Morava K-theories $K(n)$, Lubin-Tate spectra $E_n$, higher real K-theories $EO_n$, and so on.  This tact is extremely successful: we input the ``wavelength" we are interested in studying, by examining a height $n$ formal group, and build colored goggles to suit this study.  Even the magnificent and quite geometric theory of topological modular forms~\cite{hopkins-icm02} has the feature that the chromatic information is present at the outset---the construction of $tmf$ makes explicit use of the height 1 and 2 formal groups of elliptic curves.

In order to complement this designer homotopy approach to chromatic information, we would like (1) a geometric understanding of the meaning of chromatic phenomena in homotopy theory, and (2) a natural construction of chromatic type n spectra that doesn't begin with the formal group of height $n$, that is in which the chromatic behavior is an output rather than an input to the theory.  Regarding the first desiderata, vector bundles and topological K-theory provide an elegant geometric description of chromatic level 1 structure.  Analogous geometric frameworks for chromatic level 2 structure are beginning to take shape: the Baas-Dundas-Rognes theory of 2-vector bundles~\cite{bdr,bdrr}, and the Segal-Stolz-Teichner theory of 2-dimensional conformal field theories~\cite{segal-cft,stolzteichner} seem particularly promising.  Regarding the second desiderata, Rognes' red-shift conjecture proposes that the algebraic K-theory functor transforms appropriate chromatic type $n-1$ spectra into chromatic type $n$ spectra.  If the conjecture is correct, algebraic K-theory fits the bill of an alchemical way to produce chromatic phenomena.

The red-shift conjecture naturally leads one to investigate the iterated algebraic K-theory $K^n(A)$ of a spectrum $A$, by way of stepping up the chromatic ``spectrum".  In particular, the $n$-fold iterated algebraic K-theory of the ring $\FF_p$ is a natural candidate for a basic type $n$ spectrum.  There are two problems with this iterated algebraic $K$-theory approach.  The first is that it is computationally intractable.  It is difficult enough to compute the effect of a single application of algebraic $K$-theory, much less numerous iterates all at once; indeed already for two-fold algebraic $K$-theory, the computations required considerable effort and ingenuity on the part of, for example, Ausoni and Rognes~\cite{ausonirognes}.  The second problem is that it seems plausible that despite exhibiting chromatic level $n$ behavior, iterated algebraic $K$-theory will not have particularly pleasant universal properties as a chromatic type $n$ spectrum.  For instance, $n$-fold iterated algebraic $K$-theory might have an (as yet technically unavailable) action by the symmetric group---this action and perhaps others would need to be equalized in order to approach a more canonical construction.

The present paper is part of a program to give a simple, direct construction of spectra exhibiting higher chromatic behavior, based not on iterated algebraic K-theory but on forms of higher topological cyclic homology.  The topological cyclic homology $TC(A)$ of a ring spectrum $A$ is built out of fixed point spectra arising from a circle action on the topological Hochschild homology spectrum $THH(A) := A \otimes S^1$.  More specifically, $TC(A)$ is the homotopy limit of the fixed points $(A \otimes S^1)^G$, for finite subgroups $G$ of the circle, over certain restriction and Frobenius operators.  By work of Goodwillie, McCarthy, and the third author~\cite{goodwillie-relalgk, mccarthy-relalgk, dundas-relalgk}, the cyclotomic trace map from algebraic K-theory to topological cyclic homology is a relative equivalence, and so the latter is a reasonable approximation to the former.  Higher topological cyclic homology is built from fixed point spectra associated to the spectrum $THH^n(A) := A \otimes \TT^n$---that is, we replace the circle by the $n$-dimensional torus, with the result being a higher topological Hochschild homology.  In detail, higher topological cyclic homology $TC^{(n)}(A)$ is a homotopy limit of the fixed points $(A \otimes \TT^n)^G$ for finite subgroups $G$ of the torus, over a collection of restriction and Frobenius operators associated to self-isogenies of the torus.

As yet it remains a hope that higher topological cyclic homology is an effective substitute for iterated algebraic K-theory, but in any case we believe it has three advantages: first, it is much more amenable to computation; second, it is both conceptually and technically much simpler; third, it has better symmetry properties, coming from isogenies of the torus that mix the different circle factors, and so we believe it represents a particularly natural candidate red-shift functor.  In order to obtain a clearer picture of the higher topological cyclic homology functor, we need, quite simply, to compute examples.

In order to approach calculational techniques for higher topological cyclic homology, we recall Hesselholt and Madsen's approach to classical topological cyclic homology computations.  In this case the basic operators are the restriction $R_p : THH(A)^{\bZ/p^k} \ra THH(A)^{\bZ/p^{k-1}}$ and Frobenius $F^p : THH(A)^{\bZ/p^k} \ra THH(A)^{\bZ/p^{k-1}}$.  Instead of immediately taking the homotopy limit over the restriction and Frobenius, Hesselholt and Madsen treat the diagram of restriction operators as a prospectrum, and observe that in addition to the Frobenius action, there are two additional operators on this prospectrum, the Verschiebung $V_p: THH(A)^{\bZ/p^{k-1}} \ra THH(A)^{\bZ/p^k}$ and the differential $d_1 : S^1 \sm THH(A)^{\bZ/p^k} \ra THH(A)^{\bZ/p^k}$.  They prove that these operators satisfy the key relations $F^p V_p=p$ and $F^p d_1 V_p =d_1$.  These relations, and a detailed understanding of the formal structure they entail, allow Hesselholt and Madsen to do extensive calculations of topological cyclic homology, and therefore of algebraic K-theory~\cite{hm-kthyfinitealg,hm-localfields, hm-mixedchar}.

One of our main tasks in the present paper is the production and analysis of analogous operators for higher topological cyclic homology.  We already have at our disposal restriction $R_\alpha$ and Frobenius $F^\alpha$ operators~\cite{bcd}, indexed by self-isogenies $\alpha$ of the n-torus.  We introduce higher Verschiebung operators $V_\alpha$, also indexed by isogenies, and differentials $d_v$ indexed by vectors $v \in \bZ^{n}$.  We then establish numerous relations among these operators, including analogues of the classical relations for the products $F^\alpha V_\beta$ and $F^\alpha d_v V_\beta$.

The simplest potential computation is that of the higher topological cyclic homology $TC^{(n)}(\ess)$ of the sphere spectrum.  This spectrum $TC^{(n)}(\ess)$ is the homotopy limit over restriction and Frobenius operators on fixed points of the higher topological Hochschild homology $THH^n(\ess)$ of the sphere.  However, the higher topological Hochschild homology of the sphere is just the sphere spectrum itself, together with a new-found torus equivariance.  We therefore find ourselves investigating fixed points of equivariant sphere spectra, a subject already of considerable classical importance.  The Segal conjecture for a finite group $G$ provides a description of the homotopy type of the homotopy $G$-fixed points $\ess^{h G}$ of the sphere~\cite{carlsson-segalconj}.  Said another way, the conjecture computes the cohomotopy $F(BG_+, \ess) = \ess^{h G}$ of the classifying space of the finite group.

In the process of investigating $TC^{(n)}(\ess)$, we study an important piece of topological cyclic homology, namely topological Frobenius homology $TF^{(n)}(\ess)$, which is the homotopy limit only over the Frobenius maps.  This topological Frobenius homology happens to be homotopy equivalent to the cohomotopy spectrum of the classifying space of the torus.  We give a detailed and complete analysis of the homotopy type of the topological Frobenius homology of the sphere, and therefore establish the Segal conjecture for tori.

\subsection{Results and future directions}

Our first main theorem is the description of the relations among the restriction, Frobenius, Verschiebung, and differential operators on the fixed points of higher topological Hochschild homology.

\begin{theorem}
Fix an odd prime p.  Let $A$ be a connective commutative ring spectrum.  For $\alpha \in M_n(\zp) \cap GL_n(\bQ_p)$ an injective endomorphism of $\zpn$, let $L_{\alpha} := \alpha^{-1} \zpn / \zpn \subset \Tnp$ be a corresponding subgroup of the p-adic n-torus.  Denote by $T^{\alpha} := T_{\Tnp}(A)^{L_{\alpha}}$ the $L_\alpha$-fixed points of the higher topological Hochschild homology of $A$ based on the p-adic n-torus; this is a ring spectrum with multiplication $\mu: T^{\alpha} \sm T^{\alpha} \ra T^{\alpha}$.

There are operators in the stable homotopy category $R_{\alpha}: T^{\beta \alpha} \ra T^{\beta}$ (restriction), $F^{\alpha}: T^{\alpha \beta} \ra T^{\beta}$ (Frobenius), and $V_{\alpha} : T^{\beta} \ra T^{\alpha \beta}$ (Verschiebung).  Moreover, for each p-adic vector $v \in \zpn$, there is an operator $d_v: S^1 \sm T^{\alpha} \ra T^{\alpha}$ (differential).  The restriction and Frobenius are ring maps, the differential is a derivation, and the restriction commutes with the other operators.  These $R$, $F$, $V$, and $d$ maps satisfy the following relations.

\begin{enumerate}

\item[1.] 
$\mu (V_\alpha \sm 1) = V_\alpha \mu (1 \sm F^\alpha)$. 

\item[2.] $F^\alpha V_\beta = |\gcd_{\alpha,\beta}| V_{[\lcm_{\alpha,\beta} / \alpha]} F^{[\lcm_{\alpha,\beta} / \beta]}$.

\item[3.] $d_v F^\alpha = F^\alpha d_{\alpha v}$; \\
$V_\alpha d_v = d_{\alpha v} V_\alpha$.

\item[4.] 
$F^\alpha d_v V_\beta = d_{\bez_\alpha \gcd_{\alpha,\beta}^\dagger v} V_{[\lcm_{\alpha,\beta} / \alpha]} F^{[\lcm_{\alpha,\beta} / \beta]} + V_{[\lcm_{\alpha,\beta} / \alpha]} F^{[\lcm_{\alpha,\beta} / \beta]} d_{\bez_\beta \gcd_{\alpha,\beta}^\dagger v}$.

\end{enumerate}

Here we have chosen matrices $\gcd_{\alpha,\beta}$ and coprime matrices $[\alpha / \gcd_{\alpha,\beta}]$ and $[\beta / \gcd_{\alpha,\beta}]$ such that $\alpha = \gcd_{\alpha, \beta} [\alpha / \gcd_{\alpha,\beta}]$ and $\beta = \gcd_{\alpha, \beta} [\beta / \gcd_{\alpha,\beta}]$.  We have also chosen Bezout matrices $\bez_\alpha$ and $\bez_\beta$ such that $[\alpha / \gcd_{\alpha,\beta}] \bez_\alpha + [\beta / \gcd_{\alpha,\beta}] \bez_\beta = 1$, and coprime matrices $[\lcm_{\alpha,\beta} / \alpha]$ and $[\lcm_{\alpha,\beta} / \beta]$ such that $\alpha [\lcm_{\alpha,\beta} / \alpha] = \beta [\lcm_{\alpha,\beta} / \beta]$.

\end{theorem}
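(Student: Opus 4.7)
The plan is to construct each of the four operators explicitly in the $\Tnp$-equivariant stable category and then derive the relations from three classical principles: Frobenius reciprocity for equivariant transfers, the abelian double-coset formula, and the compatibility of the residual $\Tnp$-action across fixed-point inclusions. The subgroup inclusion $L_\beta \subset L_{\alpha\beta}$, which holds because $\beta^{-1}\zpn \subset (\alpha\beta)^{-1}\zpn$, makes $F^\alpha : T^{\alpha\beta}\to T^\beta$ the forgetful map of fixed points (hence a ring map) and $V_\alpha : T^\beta \to T^{\alpha\beta}$ the equivariant transfer along the same inclusion. The residual $\Tnp$-action on $T^\alpha$ factors through $\Tnp/L_\alpha$, which I normalize via the isomorphism $\Tnp/L_\alpha \xrightarrow{\cdot\alpha} \Tnp$; with this normalization the differential $d_v$ for $v \in \zpn$ is defined by precomposing the action with the circle $S^1 \xrightarrow{v} \Tnp$. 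Since the multiplication $\mu$ is $\Tnp$-equivariant with the diagonal action, $d_v$ is automatically a derivation. The commutation of the restriction $R_\alpha$ with all other operators is built into its cyclotomic structure \cite{bcd}, so $R$ plays only an auxiliary role in the new relations. Relation 1 is then Frobenius reciprocity for $V_\alpha$, viewed as a module map over $T^\beta$ acting on $T^{\alpha\beta}$ via $F^\alpha$.

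For relation 2 I would apply the double-coset formula for equivariant stable transfers to the pair $L_\alpha, L_\beta \subset \Tnp$. The Smith normal form identifies $L_\alpha + L_\beta = L_{\lcm_{\alpha,\beta}}$ and $L_\alpha \cap L_\beta = L_{\gcd_{\alpha,\beta}}$, so $F^\alpha V_\beta$ factors through $L_{\lcm_{\alpha,\beta}}$-fixed points with the coprime matrices $[\lcm_{\alpha,\beta}/\alpha]$ and $[\lcm_{\alpha,\beta}/\beta]$ recording the residual inclusions, while the multiplicity $|\gcd_{\alpha,\beta}| = |L_\alpha \cap L_\beta|$ comes from the trivial-orbit contribution in the abelian double coset $L_\alpha \backslash (L_\alpha + L_\beta)/L_\beta$. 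Relation 3 is a direct unwinding of the definitions: both $F^\alpha$ and $V_\alpha$ are $\Tnp$-equivariant maps of the underlying spectra, but the normalization isomorphisms for the residual actions on their sources and targets differ by multiplication by $\alpha$, which on normalized differentials translates precisely to $v \mapsto \alpha v$.

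The main obstacle is relation 4. My strategy is to combine relations 1--3 with the Leibniz rule for $d$. Rewriting $F^\alpha V_\beta$ via relation 2 factors the composition through an intermediate $L_{\lcm_{\alpha,\beta}}$-fixed-point spectrum whose residual $\Tnp$-action is normalized by $\lcm_{\alpha,\beta}$; the inserted $d_v$ then lives at this intermediate stage. Decomposing the direction $v$ using the Bezout identity $[\alpha/\gcd_{\alpha,\beta}]\bez_\alpha + [\beta/\gcd_{\alpha,\beta}]\bez_\beta = 1$ splits $v$ into the two coprime legs of the double coset, and applying the Leibniz rule to the product structure on the intermediate spectrum produces the two summands. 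The factor $\gcd_{\alpha,\beta}^\dagger$ (the classical adjugate, satisfying $\gcd_{\alpha,\beta}\cdot \gcd_{\alpha,\beta}^\dagger = |\gcd_{\alpha,\beta}|\cdot I$) absorbs the multiplicity $|\gcd_{\alpha,\beta}|$ from relation 2, which is why no explicit multiplicity appears in the final formula. The delicate points will be verifying independence from the non-unique choice of Bezout matrices (which should follow from relations 1--2 together with the fact that the residual ambiguity in $\bez_\alpha,\bez_\beta$ lies in a kernel annihilated on differentials) and reconciling the source/target action normalizations summand by summand. A natural reduction is first to establish relation 4 when $\alpha$ and $\beta$ are Smith-form elementary factors, then extend by multiplicativity of $F$, $V$, and $d$ via relations 1--3.
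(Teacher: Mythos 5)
Your plan for relations 1--3 is essentially the paper's: Frobenius reciprocity for transfers gives relation 1, the abelian double-coset formula with $|L_\alpha\cap L_\beta| = |\gcd_{\alpha,\beta}|$ gives relation 2, and the change of normalization $\phi_\beta$ versus $\phi_{\alpha\beta}$ on the residual torus action gives relation 3. That part of your outline matches the paper's Propositions on the transfer and the Verschiebung and its Lemma on $dF$ and $Vd$.

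For relation 4, however, there is a genuine gap. First, relations 1--3 plus the Bezout identity do \emph{not} suffice: the crucial ingredient you are missing is the same-matrix identity
\[
F^{D}\, d_{\ell}\, V_{D} \;=\; d_{D^\dagger \ell}
\qquad\text{for } D = \gcd_{\alpha,\beta},
\]
which the paper proves by relating the transfer $tr_D$ to the stable splitting $\TT^n_+ \simeq \bigvee S^{|T|}$ and showing that on the top wedge summand the transfer acts by the adjoint $D^\dagger$. This is where $\gcd_{\alpha,\beta}^\dagger$ enters; it is not a device for ``absorbing'' the multiplicity $|\gcd_{\alpha,\beta}|$ from relation 2 (relation 2 is not used in the proof of relation 4 at all), and indeed nothing cancels a leftover $|\gcd_{\alpha,\beta}|$ factor --- the adjoint simply appears in its place because $d_\ell$ is sensitive to the $\Tnp$-action in a way that $F$ and $V$ alone are not. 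Second, your invocation of ``the Leibniz rule for $d$'' is the wrong additivity statement: what the proof uses is linearity of the differential in the vector argument, $d_{v+w} = d_v + d_w$, combined with the Bezout decomposition $\bar\alpha\,\bez_\alpha + \bar\beta\,\bez_\beta = 1$; the Leibniz/derivation property of $d_v$ with respect to $\mu$ plays no role. Third, the claim that after applying relation 2 ``the inserted $d_v$ lives at the intermediate $L_{\lcm}$-fixed-point stage'' is incorrect: $d_v$ lives on $T^{\beta\gamma}$, and the proof instead factors $\alpha = D\bar\alpha$, $\beta = D\bar\beta$ so that $F^\alpha d_\ell V_\beta = F^{\bar\alpha}\bigl(F^D d_\ell V_D\bigr) V_{\bar\beta}$, collapses the inner piece to $d_{D^\dagger\ell}$ via the identity above, splits $D^\dagger\ell$ by Bezout, and then commutes the two pieces past $F^{\bar\alpha}$ and $V_{\bar\beta}$ using relation 3.
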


See Theorem~\ref{thm-relations} for a more explicit and complete list of relations.  By taking homotopy groups of the fixed points of higher topological Hochschild homology, we can concisely package the structure of higher topological cyclic homology in terms of operators on a pro multi-differential graded ring.

\begin{cor} \label{cor-pmdgr-intro}
Associated to a connective commutative ring spectrum $A$, there is a pro multi-differential graded ring
$TR^{\alpha}_q(A;p)$ defined as follows.  For each matrix $\alpha \in M_n(\zp) \cap GL_n(\bQ_p)$, there are groups $TR^{\alpha}_q(A;p) := \pi_q(T_{\Tnp}(A)^{L_{\alpha}})$, where $L_\alpha = \alpha^{-1} \zpn / \zpn$.  As $q$ varies these groups form a graded ring.  For each p-adic vector $v \in \zpn$, there is a graded differential $d_v: TR^\alpha_q(A;p) \ra TR^\alpha_{q+1}(A;p)$; these differentials are derivations, are linear  
in the vector $v$, and they graded commute with one another.  The collection $TR^{\alpha}_*(A;p)$ is therefore a multi-differential graded ring.  As $\alpha$ varies these form a pro multi-differential graded ring under the restriction maps $R_\alpha$.

There is a collection of pro-graded-ring operators $F^\alpha: TR^{\alpha \beta}_* \ra TR^\beta_*$, and a collection of pro-graded-module operators $V_\alpha: (F^\alpha)_* TR^{\beta}_* \ra TR^{\alpha \beta}_*$.  That $V_\alpha$ is a module map is equivalent to Frobenius reciprocity: $V_\alpha(x) \cdot y = V_\alpha (x \cdot F^\alpha(y))$.

These operators are subject to the relations
\begin{enumerate}
\item[1.] $F^\alpha V_\beta = |\gcd_{\alpha,\beta}| V_{[\lcm_{\alpha,\beta} / \alpha]} F^{[\lcm_{\alpha,\beta} / \beta]}$.

\item[2.] $d_v F^\alpha = F^\alpha d_{\alpha v}$; $V_\alpha d_v = d_{\alpha v} V_\alpha$.

\item[3.] $F^\alpha d_v V_\beta = d_{\bez_\alpha \gcd_{\alpha,\beta}^\dagger v} V_{[\lcm_{\alpha,\beta} / \alpha]} F^{[\lcm_{\alpha,\beta} / \beta]} + V_{[\lcm_{\alpha,\beta} / \alpha]} F^{[\lcm_{\alpha,\beta} / \beta]} d_{\bez_\beta \gcd_{\alpha,\beta}^\dagger v}$.
\end{enumerate} 
\end{cor}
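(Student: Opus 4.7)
The plan is to obtain the corollary as the image under $\pi_*$ of the preceding theorem. I would set $TR^{\alpha}_q(A;p) := \pi_q(T^{\alpha})$, where $T^{\alpha} = T_{\Tnp}(A)^{L_{\alpha}}$. Since $T^{\alpha}$ is a ring spectrum with multiplication $\mu$, its homotopy groups form a graded(-commutative) ring. The differential $d_v : S^1 \sm T^{\alpha} \ra T^{\alpha}$ induces, via the suspension isomorphism $\pi_{q+1}(S^1 \sm T^{\alpha}) \cong \pi_q(T^{\alpha})$, a degree $+1$ operator $TR^{\alpha}_q \ra TR^{\alpha}_{q+1}$. The derivation assertion of the theorem (a commutativity of spectrum-level squares built from $\mu$, $d_v$, and the usual twist) descends to the graded Leibniz rule for $d_v$ on $TR^{\alpha}_*$. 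The restriction and Frobenius, being ring maps at the level of spectra, descend to pro-graded-ring maps, and $V_{\alpha}$ descends to an additive map of the appropriate bidegrees.

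Next, I would pass each of the four spectrum-level relations of the theorem to $\pi_*$. Relation~1, $\mu (V_\alpha \sm 1) = V_\alpha \mu (1 \sm F^\alpha)$, becomes the Frobenius reciprocity statement $V_\alpha(x) \cdot y = V_\alpha (x \cdot F^\alpha(y))$, which is precisely the claim that $V_\alpha$ is a module homomorphism from $(F^\alpha)_* TR^\beta_*$ to $TR^{\alpha \beta}_*$. Relations~2--4 of the theorem are identical as formulas to relations~(1)--(3) of the corollary, so they pass verbatim to homotopy groups. Compatibility with the pro-system under $R_\alpha$ follows from the theorem's assertion that $R_\alpha$ commutes with all the other operators.

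It remains to justify the linearity of $v \mapsto d_v$ and the graded commutativity $d_v d_w + d_w d_v = 0$, which are not part of the short relation list of the theorem as stated but are announced in the fuller Theorem~\ref{thm-relations}. Both come from the construction of $d_v$ in terms of the $\Tnp$-action on higher THH: if $d_v = \sum v_i d_{e_i}$ with $d_{e_i}$ the differential attached to the $i$th coordinate circle, then linearity is automatic, and the anti-commutation $d_{e_i} d_{e_j} + d_{e_j} d_{e_i} = 0$ reduces to the standard fact that the twist on $S^1 \sm S^1$ has degree $-1$. The main obstacle in this plan is thus not any individual relation---nearly all are mechanical applications of $\pi_*$---but extracting the linearity and anti-commutativity properties of the $d_v$ from their precise definition given earlier in the paper.
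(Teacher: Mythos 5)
Your approach matches the paper's: the corollary is obtained by taking homotopy groups of the preceding theorem (or more precisely of the more complete Theorem~\ref{thm-relations}), and you correctly identify the dictionary---Frobenius reciprocity becomes the module-map statement, the remaining relations pass verbatim, and the ring/derivation properties descend from the corresponding spectrum-level commutativities.

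One small issue worth flagging in your treatment of the ``obstacle'' you name. To justify linearity of $v\mapsto d_v$ you propose writing $d_v=\sum_i v_i\,d_{e_i}$ and observe that linearity is then automatic; but this decomposition \emph{is} the linearity you are trying to establish, so the argument as written is circular. The paper instead proves $d_{\ell_1+\ell_2}\simeq d_{\ell_1}+d_{\ell_2}$ directly via a pinch/fold comparison on $\TT^1_+$ (the key point being that $\TT^1_+$ has cells only in degrees $0$ and $1$, so the two composite stable maps $\TT^1_+\to\TT^n_+$ can be compared by computing in low degrees). Similarly, the graded anti-commutativity $d_v d_w=-d_w d_v$ does not quite reduce to ``the twist on $S^1\sm S^1$ has degree $-1$''; rather it follows from two facts established in the higher-differentials section: the concatenation formula $d_{\ell'}d_{\ell''}\simeq d_{\ell}$ (columns juxtaposed) together with the determinant formula $d_{\ell\gamma}\simeq\det(\gamma)\,d_\ell$ at odd $p$, applied to the $2\times 2$ transposition matrix of determinant $-1$. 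Neither correction changes the verdict that the plan is sound and follows the paper's intended route, but you should cite these specific lemmas rather than appeal to an (undefined) decomposition of $d_v$ or to degree-of-twist heuristics.
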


We undertake computations involving the sphere spectrum.

\begin{prop}
The topological restriction homology of the sphere, that is the homotopy limit of the fixed points $T_{\Tnp}(\ess)^{L_\alpha}$ along the restriction operators, is
$$TR^{(n)}(\ess)\simeq\prod_{\mathcal O\subseteq \zpn} B(\zpn/\mathcal O)_+$$
Here the product varies over the open subgroups $\mathcal
O\subseteq\zpn$.
\end{prop}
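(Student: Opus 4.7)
The plan is to compute each stage $T^\alpha := T_{\Tnp}(\ess)^{L_\alpha}$ explicitly using the tom Dieck splitting and then identify the restriction maps as projections. Since $\ess$ is the initial commutative ring spectrum, the tensor $\ess\otimes\Tnp$ underlies $\ess$ itself, now equipped with its canonical genuine $\Tnp$-equivariant (cyclotomic) structure arising from the trivial action.

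First I would apply the tom Dieck splitting for the equivariant sphere with trivial $L_\alpha$-action:
$$\ess^{L_\alpha}\simeq \bigvee_{H\leq L_\alpha}\Sigma^\infty(B(L_\alpha/H))_+.$$
Subgroups $H\leq L_\alpha=\alpha^{-1}\zpn/\zpn$ correspond bijectively to lattices $\mathcal{O}$ with $\zpn\subseteq\mathcal{O}\subseteq\alpha^{-1}\zpn$ via $H=\mathcal{O}/\zpn$, and multiplication by $\alpha$ induces $L_\alpha/H\cong\zpn/\alpha\mathcal{O}$. Setting $\mathcal{O}':=\alpha\mathcal{O}$, the $\mathcal{O}'$ range over open subgroups of $\zpn$ containing $\alpha\zpn$, so the splitting becomes
$$T^\alpha\simeq \bigvee_{\alpha\zpn\subseteq\mathcal{O}'\subseteq\zpn}\Sigma^\infty B(\zpn/\mathcal{O}')_+.$$

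Second, I would analyze the restriction $R_\gamma:T^{\gamma\alpha}\to T^\alpha$ on this decomposition. For the sphere with trivial action, the cyclotomic structure maps $\ess\to\Phi^{L_\gamma}\ess=\ess$ are canonical equivalences (geometric fixed points of the trivial-action sphere are the sphere itself). Unpacking the definition of restriction through these geometric fixed point equivalences, $R_\gamma$ is the natural projection from the larger index set $\{\mathcal{O}'\supseteq\gamma\alpha\zpn\}$ onto the smaller index set $\{\mathcal{O}'\supseteq\alpha\zpn\}$: it is the identity on summands with $\mathcal{O}'\supseteq\alpha\zpn$ and kills the rest. This is the direct higher-dimensional analogue of the classical statement that $\ess^{C_{p^k}}\to\ess^{C_{p^{k-1}}}$ projects away the free summand $\Sigma^\infty BC_{p^k}{}_+$ and is the identity on the remaining tom Dieck pieces. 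Passing to the homotopy limit is then immediate: each summand $\Sigma^\infty B(\zpn/\mathcal{O}')_+$ appears eventually (whenever $\alpha\zpn\subseteq\mathcal{O}'$) with identity transition maps, yielding
$$TR^{(n)}(\ess)=\holim_\alpha T^\alpha\simeq \prod_{\mathcal{O}\subseteq\zpn}B(\zpn/\mathcal{O})_+,$$
indexed by the open subgroups of $\zpn$.

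The main obstacle is the second step: rigorously matching the restriction operators to the claimed projections on the tom Dieck summands. This demands a careful unpacking of the higher cyclotomic structure on $\ess\otimes\Tnp$ developed earlier in the paper, verifying that the geometric-fixed-point equivalence $\Phi^{L_\gamma}\ess\simeq\ess$ (canonical for the trivial-action sphere) interacts correctly with the residual fixed points and is compatible across the different isogenies $\alpha$ and $\gamma$. Once this compatibility is in hand, the product formula follows formally from the identification of the limit of a pro-wedge-decomposition whose transition maps are coordinate projections.
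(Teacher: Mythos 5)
Your proposal is correct, and the strategy---decompose each fixed-point stage by orbit type, observe the restriction maps are coordinate projections, pass to the homotopy limit, and obtain a product---is the same as the paper's. The decomposition you get from tom Dieck, $\ess^{L_\alpha}\simeq\bigvee_{H\leq L_\alpha}\Sigma^\infty B(L_\alpha/H)_+$, is literally the same splitting the paper derives from the orbit decomposition of $K({Sets}^G)$ for abelian $G$. Where the two arguments diverge is in how the key step is verified, namely that $R_\gamma$ acts as the claimed projection. You defer this to the ``classical'' naturality of the tom Dieck splitting with respect to geometric fixed points of the trivial-action sphere, and you correctly flag it as the main obstacle. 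The paper instead builds the entire $\Gamma$-category zigzag $\ess^G\simeq NT_G\simeq NB_G\simeq NC_G\simeq ND_G\simeq K({Sets}^G)$ (section 5.2), culminating in Proposition~\ref{prop-tgkg}, precisely to shift this verification onto elementary ground: in $K({Sets}^G)$ the restriction $f_!$ is the explicit functor $X\mapsto X^{\ker f}$, which on a transitive orbit $G/K$ visibly gives $G'/f(K)$ when $\ker f\subseteq K$ and $\emptyset$ otherwise, so the projection behavior is a one-line observation rather than an appeal to cyclotomic technology. Both routes are legitimate; your version is leaner but leaves the hardest compatibility as an external citation, whereas the paper's detour through finite $G$-sets makes that compatibility tautological and also delivers the analogous functoriality for the Frobenius and Verschiebung for free, which the paper reuses later in computing $TF^{(n)}(\ess)$.
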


Our second main theorem is the Segal conjecture for tori, which amounts to a computation of the topological Frobenius homology of the sphere.

\begin{theorem}
The $p$-adic cohomotopy of the classifying space of the torus is homotopy equivalent to the $p$-completion of topological Frobenius homology,
$$F(B\TZ_+, \ess_p) \simeq TF^{(n)}(\ess)_p$$
and the homotopy groups of $TF^{(n)}(\ess)_p$ are as follows:
$$\pi_*(TF^{(n)}(\ess)_p) = \prod_{k,c} \lim_l \left( \bZ[GL_n(\zp) / \Gamma_{l,k,c}] \otimes \pi_*(\Sigma^{\infty} S^k \sm B \TT^k_+)/p^l \right)$$
Here the product is over $1 \leq k \leq n$ and $c$ is a collection of unordered positive integers $\{n_1, \ldots, n_k\}$.  The limit is over $l \in \bN$, and the group $\Gamma_{l,k,c} \subset GL_n(\zp)$ is the stabilizer of a chosen subgroup $K$ of $C_{p^l}^{\times n}$ of rank $k$ and cotype $c$.
\end{theorem}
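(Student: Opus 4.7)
The plan separates into two parts: first the homotopy equivalence $F(B\TZ_+, \ess_p) \simeq TF^{(n)}(\ess)_p$, and then the explicit computation of the homotopy groups. The key observation throughout is that the underlying spectrum of $T_{\Tnp}(\ess)$ is simply $\ess$, equipped with the genuine $\Tnp$-equivariant structure inherited from the $\Tnp$-action on $\Tnp$ itself; in particular $\Tnp$ acts trivially on the underlying sphere.

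For the equivalence, I would realize $TF^{(n)}(\ess) = \holim_\alpha \ess^{L_\alpha}$ as a limit over a cofiltered diagram of Frobenius operators. Each Frobenius $F^\alpha: \ess^{L_{\alpha\beta}} \to \ess^{L_\beta}$ is the restriction of genuine fixed points along the inclusion $L_\beta \hookrightarrow L_{\alpha\beta}$ of finite subgroups of $\Tnp$ (since $\alpha \in M_n(\zp)$ gives $\beta^{-1}\zp^n \subset \beta^{-1}\alpha^{-1}\zp^n$). Passing to the homotopy limit therefore amounts to taking fixed points over the increasing union $\bigcup_\alpha L_\alpha$, which is dense in $\Tnp$. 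A cofinality and Tate-vanishing argument---using that $\Tnp$ is the $p$-profinite completion of $\TZ$---identifies the $p$-completion of this homotopy limit with the cofree $\Tnp$-spectrum $\ess^{h\Tnp}$, and $\ess^{h\Tnp}$ equals $F(B\Tnp_+, \ess)_p \simeq F(B\TZ_+, \ess_p)$ by the adjunction between homotopy fixed points and classifying-space function spectra together with the identification $(B\TZ)^\wedge_p \simeq B\Tnp$.

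For the computation of $\pi_*$, the main tool is the tom Dieck splitting. Since each $L_\alpha$ acts trivially on $\ess$,
$$\ess^{L_\alpha} \simeq \bigvee_{K \leq L_\alpha} \Sigma^\infty B(L_\alpha/K)_+.$$
I would reindex the Frobenius pro-system by the finite subgroups $K$ of $\Tnp$ themselves, each of which eventually lies in $L_\alpha$ for $\alpha$ sufficiently large. Every such $K$ sits in some $C_{p^l}^{\times n}$, and up to the natural $GL_n(\zp)$-action (arising from self-isogenies of $\zp^n$) is classified by its rank $k$ and cotype $c = \{n_1, \dots, n_k\}$. The orbit of a fixed representative $K$ is $GL_n(\zp)/\Gamma_{l,k,c}$, producing the group-ring factor $\bZ[GL_n(\zp)/\Gamma_{l,k,c}]$ and the outer product over $(k,c)$. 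For each representative $K$ of cotype $c$, the quotient $\Tnp/K$ is isomorphic to $\TT^k_p$ times a finite abelian group, and the Frobenius-compatible limit of the $\Sigma^\infty B(L_\alpha/K)_+$ summands yields the factor $\lim_l \pi_*(\Sigma^\infty S^k \sm B\TT^k_+)/p^l$; the $S^k$ factor records the dimension shift from passing through the trivial-orbit portion of the tom Dieck summand via the transfer in the inverse limit.

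The principal obstacle is the combinatorial accounting of the Frobenius maps on the tom Dieck summands and the identification of the resulting inverse limit with the stated product formula. One must verify that as $K$ varies over finite subgroups of $\Tnp$, the inclusions $L_\beta \hookrightarrow L_{\alpha\beta}$ induce summand-level maps compatible with the $GL_n(\zp)$-orbit structure, and that the $S^k$ suspension emerges correctly from the transfer in the inverse limit. A secondary technical point is the vanishing of the derived $\lim^1$ in each homotopy degree after $p$-completion, which will let us identify $\pi_*$ of the homotopy limit with the limit of $\pi_*$; this should follow from a Mittag--Leffler argument at each finite level $l$, combined with the finiteness of the groups $\pi_*(\Sigma^\infty S^k \sm B\TT^k_+)/p^l$ in bounded ranges.
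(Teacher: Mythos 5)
Your proposal follows essentially the same blueprint as the paper's: express $TF^{(n)}(\ess)$ as a homotopy limit of equivariant sphere spectra, split each $\ess^{L_\alpha}$ by orbit type via the equivariant Barratt--Priddy--Quillen theorem, organize summands by rank and cotype of the stabilizers $K$, invoke the $GL_n(\zp)$-action to package orbits as $GL_n(\zp)/\Gamma_{l,k,c}$, recognize the transfer as the source of the $S^k$ suspension, and close with a Mittag--Leffler argument. For the identification $F(B\TZ_+,\ess_p)\simeq TF^{(n)}(\ess)_p$, however, you compress into a single ``cofinality and Tate-vanishing argument'' what the paper keeps separate: the Segal conjecture for finite groups (giving $F(BG_+,\ess_p)\simeq \ess^G_p$) and a distinct mod-$p$ comparison $F(B\TZ_+,\ess_p)\simeq\holim_G F(BG_+,\ess_p)$ over the finite subgroups of $\Tnp$. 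Neither step is a Tate-vanishing statement in the usual sense, and the second is a nontrivial input you should cite or prove.

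The genuine gap lies in the step you flag as the ``principal obstacle'' but then leave unresolved. The orbit-type wedge decomposition of $\ess^{L_\alpha}$ is not preserved by the Frobenius maps: for an inclusion $H\subseteq G$ the restriction $\ess^G\to\ess^H$ sends the summand $B(G/K)_+$ to the summand $B(H/(K\cap H))_+$, so many distinct $K'$ with $K'\cap H=K$ all land in the $K$-summand of $\ess^H$. You therefore cannot simply ``reindex by the finite subgroups $K\subseteq\Tnp$'' and pass to the limit summandwise. The paper's resolution is the rank filtration of section~\ref{sec-rankfilt}: rank of $G/K$ can only drop or stay fixed under restriction, so the filtration is Frobenius-stable, and Proposition~\ref{prop-rankcotype} shows that within a fixed rank subquotient the cotype is also preserved, yielding the cotype decomposition of section~\ref{sec-cotype}. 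That the filtration actually splits $TF^{(n)}(\ess)$ into the asserted product is then a real theorem (Theorem~\ref{thm-ranksplit}), not bookkeeping; without some version of this argument your reindexing is unjustified. Finally, a small but substantive slip: $\Tnp/K$ for $K$ a finite subgroup is isomorphic to $\Tnp$ itself, not to $\TT^k_p$ times a finite group. What carries the rank-$k$ information is the inverse system of finite quotients $L_\alpha/K$ for varying $\alpha$ with $K\subseteq L_\alpha$, whose classifying-space spectra limit under transfers to $\Sigma^\infty S^k\sm B\TT^k_+$ after $p$-completion.
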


\nid See Theorem~\ref{thm-segalconj} for a more complete description of the pieces of this decomposition.  

We briefly mention some of the directions in which we intend to take this project.  The most important next step is explicit computations of the higher topological cyclic homology $TC^{(n)}(\FF_p)$ of the Eilenberg-MacLane spectrum for $\FF_p$.  In fact, this story is a bit more subtle than we let on in the above background discussion, because the most interesting versions of cyclic homology are likely to come not from equalizing all restriction and Frobenius operators (as in $TC^{(n)}$), but in carefully choosing subdiagrams of operators for the homotopy limit.  We touch on the kinds of choices we have in mind in section~\ref{sec-adamsops} below, and describe another important aspect of these sub-homotopy limits: there are systems of Adams operations that survive to act on the homotopy limits, and it seems likely that these operations will play a central role in studying the resulting versions of higher topological cyclic homology.  We reserve, though, a more detailed discussion for another occasion.  

The computations of higher topological cyclic homology can, in part, proceed along lines analogous to the work of Hesselholt and Madsen, namely using a collection of interrelated Tate and spectrum cohomology spectral sequences, together with iterated applications of the norm-cofibration sequence.
Along the way in these computations, it will be convenient to formalize the structure seen in Corollary~\ref{cor-pmdgr-intro} into a notion of Burnside-Witt complex.  Such a complex will have the given relations and will also have a compatible map from the Burnside-Witt vectors for $\zpn$.  It will be particularly worthwhile to investigate an initial such complex, a ``de Rham-Burnside-Witt complex", and to describe the analog of the Burnside-Witt structure for log-rings.

We remark on one last natural continuation of this work, namely that we hope our analysis of the cohomotopy of the classifying space of a torus, viewed as the maximal torus of a compact Lie group, can be used as a bootstrap to establish the Segal conjecture for all compact Lie groups.

\section{Higher topological cyclic homology}

\subsection{Higher topological Hochschild homology}

Let $A$ be a connective commutative $\ess$-algebra.  The topological Hochschild homology of $A$, denoted $THH(A)$, is the spectrum $A \otimes S^1$; here $\otimes$ is the tensor in the category of commutative $\ess$-algebras.  More concretely, $THH(A)$ is the realization of the simplicial spectrum whose $k$-th level is $A^{\sm (S^1)_k}$, where $(S^1).$ is the standard simplicial circle.  In this sense, $THH(A)$ is the ``$S^1$-fold smash power" of $A$.

We are interested not only in the homotopy type but in the equivariant homotopy type of topological Hochschild homology.  In particular, we will be interested in a large collection of operations on the fixed points of topological Hochschild homology, including the restrictions, Frobenii, Verschiebung, and differentials.  The restriction maps only exist on equivariant spectra that have a delicate property called cyclotomicity.  Unfortunately, $THH(A)$ is not cyclotomic and so is not suitable for a detailed investigation of fixed point structures.  This trouble can be rectified by constructing, as in Hesselholt-Madsen~\cite{hm-kthyfinitealg}, a cyclotomic spectrum $T(A)$ that is non-equivariantly homotopy equivalent to $THH(A)$.

Higher topological Hochschild homology $THH^n(A)$ is by definition the spectrum $A \otimes \TT^n$, where $\TT^n$ is the n-torus.  This spectrum can again be given very concretely as the realization of a simplicial spectrum $A^{\sm (\TT^n).}$.  As in the one-dimensional case, this spectrum is equivariantly misbehaved and requires rectification.  One rectification is the Loday construction~\cite{bcd}, which produces a spectrum $\Lambda_{\TT^n}(A)$ that is non-equivariantly equivalent to $THH^n(A)$, but which exhibits the desired higher analogues of cyclotomicity.  Moreover, in the one-dimensional case, $\Lambda_{S^1}(A)$ recovers the equivariant homotopy type $T(A)$.  The notation $\Lambda_X(A)$ is meant to suggest both the ``$X$-fold smash power of $A$", that is $\bigwedge_X A$, and also the origin of the technical aspects of the construction in Loday's work.  However, the reader who is used to the Hesselholt-Madsen notation $T(A)$ might be advised to think of $\Lambda_X(A)$ as ``$T_X(A)$", an equivariant topological Hochschild homology based on $X$ rather than on the circle.

The Loday construction provides a functor $X\mapsto \Lambda_X(A)$ from spaces to
spectra, and hence each $\Lambda_X(A)$ comes with an action of the entire space of endomorphisms of $X$.  This action leads to a particularly rich structure when $X$ is a group $G$, for example the n-torus $\TT^n$.  To isogenies of $G$ (that is surjective homomorphisms with finite kernel) one can associate so-called restriction and Frobenius maps relating the fixed point spectra $\Lambda_G(A)^H$, for varying subgroups $H$ of $G$.  The homotopy limit over these restriction and Frobenius maps is a kind of ``topological $G$-cyclic homology" of $A$.

Instead of considering all isogenies of $G$, we can focus attention on a particular class $P$ of isogenies  and consider only the restriction and Frobenius maps coming from these isogenies.  The homotopy limit over these maps is called the $P$-covering homology of $A$ and is denoted $TC^P(A)$.  These versions of $G$-cyclic homology will be described in more detail in section~\ref{sec-covhom} and particular examples of interest discussed in section~\ref{sec-exampleactions}.

When the group $G$ is the p-complete circle, and the class $P$ is the finite orientation-preserving self-coverings of the circle, the $P$-covering homology is the ordinary topological cyclic homology $TC(A)$ of B{\"o}kstedt, Hsiang, and Madsen~\cite{bhm}.  Notice that this class $P$ is generated by the standard p-fold cover of the circle, and so the only operators in the homotopy limit are the traditional restriction $R_p$ (called $\Phi$ in~\cite{bhm}) and Frobenius $F^p$ (called $D$ in~\cite{bhm}).

In the next section, we describe the restriction and Frobenius operators on the fixed points $\Lambda_G(A)^H$ for a general group $G$, and discuss the specialization to our preferred group, the p-adic n-torus.

\subsection{Restriction and Frobenius operators}

As before, consider a connective commutative $\ess$-algebra $A$, and a group $G$.  The restriction and Frobenius are maps between fixed points of higher topological Hochschild homology $\Lambda_G(A)$:
\begin{align}
& R^H_I : \Lambda_G(A)^H \ra \Lambda_{G/I}(A)^{H/I} \nn \\
& F ^H_I : \Lambda_G(A)^H \ra \Lambda_G(A)^I \nn
\end{align}
Here $H$ is a finite subgroup of $G$, and $I$ is a normal subgroup of $H$.  The Frobenius has a concise description as the inclusion of fixed points.  The restriction by contrast is a more involved operation and we only brush against its definition---technical details can be found in~\cite{hm-kthyfinitealg} and~\cite{bcd}.  Roughly speaking, the spectrum $\Lambda_G(A)^H$ can be described in terms of mapping spaces {\em out of} the union $\bigcup_{K \subset H} S^K$ of fixed points by subgroups of $H$; here the $S$ are certain finite $H$-sets.  When we restrict the source of these mapping spaces to the union $\bigcup_{I \subset K \subset H} S^K$ of fixed point by subgroups containing $I$, the resulting spectrum is $\Lambda_{G/I}(A)^{H/I}$.

The restriction map has inconveniently landed in the fixed points of the higher topological Hochschild homology based no longer on $G$ but on $G/I$.  We solve this problem by considering only situations where $G/I$ is itself isomorphic to $G$.  In particular, this is true provided $I$ is the kernel of a surjective homomorphism $a: G \ra G$ with finite kernel.  We identify $G/I$ with $G$ by the natural isomorphism, and thereby $H/I$ with a subgroup of $G$.  The restriction then maps between fixed points of the single spectrum $\Lambda_{G}(A)$.

The whole situation is more conveniently and directly indexed in terms of the self-homomorphisms of $G$, as follows.  Let $a,b: G \ra G$ be surjective homomorphisms with finite kernels.  Let $L_a$ denote the kernel of $a$.  Denote by $\phi_a: G/L_a \ra G$ the natural isomorphism, and let $\phi_a^* G$ be $G$ considered as a $G$-space with the $G$-action $G \times G \xra{a \times 1} G \times G \xra{\mu} G$.  Note that $\phi_a$ gives a $G$-isomorphism $\phi_a: G/L_a \xra{\cong} \phi_a^* G$.  Throughout the paper we will, perhaps unfortunately, refer to both $\phi_a: G/L_a \ra G$ and $\phi_a^{-1}: G \ra G/L_a$ as $\phi_a$, and also, when convenient for notational reasons, as $\phi^a$.  This isomorphism $\phi_a$ gives us control over the $G$-structures on the various quotients $G/L_a$; we need a similarly careful view of the equivariance of the fixed point spectra $\Lambda_G(A)^{L_a}$.  A priori, the spectrum $\Lambda_G(A)^{L_a}$ is a module over $\ess[G/L_a]=\Sigma^{\infty}(G/L_a)_+$, but we can give it an $\ess[G]$-module structure by the composite
$$
\begin{CD}
  \ess[G]\smsh \Lambda_{G}(A)^{L_a}@>{\phi_a \smsh 1}>>\ess[G/L_a]\smsh \Lambda_{G}(A)^{L_a}@>{\mu}>> \Lambda_{G}(A)^{L_a}
\end{CD}
$$
\nid We will often abbreviate this composite action itself by $\mu$, as we think of it as the unambiguous natural action of $G$ on $\Lambda_G(A)^{L_a}$.

In the description of the restriction map above, replace $H$ by the kernel $L_{ba}$ and replace the subgroup $I$ by $L_a$.  Note that the quotient $H/I = L_{ba}/L_a$ is isomorphic to $L_b$.  With the given $\ess[G]$-module structure, the restriction is now an $\ess[G]$-module map:
$$R_a := R^{L_{ba}}_{L_a} : \Lambda_G(A)^{L_{ba}} \ra \Lambda_G(A)^{L_b}$$
This is the form in which we will use the restriction map throughout this paper.  In these terms, the Frobenius is a map
$$F^b := F^{L_{ba}}_{L_a} : \Lambda_G(A)^{L_{ba}} \ra \Lambda_G(A)^{L_a}$$
The relation of the restriction and Frobenius~\cite{bcd} is pleasantly straightforward:
$R_a F^b = F^b R_a$.

We briefly specialize the above discussion of restrictions and Frobenii to our case of special interest, namely when the group is the p-adic n-torus $\Tnp := \bR^n_p / \bZ^n_p$.  The relevant subgroups $L_{\alpha} \subset \Tnp$ arise as kernels of isogenies (surjective finite-kernel homomorphisms) $\alpha: \Tnp \ra \Tnp$.
The monoid of isogenies of the torus is isomorphic to the monoid $\mathcal{M}_n := M_n(\zp) \cap GL_n(\bQ_p)$ of injective linear endomorphisms of $\zpn$;
the correspondence takes a matrix $\alpha$ over $\zp$ to the corresponding covering $\alpha/\zpn: \bR^n_p/\zpn \ra \bR^n_p/\zpn$.  The kernel of this map $\alpha: \Tnp \ra \Tnp$ is $L_{\alpha} = \alpha^{-1}\zpn/\zpn\subseteq \bQ^{n}_p/\zpn=\Cp\subseteq\Tnp$.  For convenience we  now abbreviate the Loday construction of topological Hochschild homology as $T^{(n)} := \Lambda_{\Tnp}(A)$ and the fixed point spectra as $T^{\alpha} := \Lambda_{\Tnp}(A)^{L_{\alpha}}$.  Note well that the fixed point spectrum only depends on the kernel $L_{\alpha}$ and not on the covering map $\alpha$---however, as a $\Tnp$-equivariant spectrum, with $\Tnp$-action $\mu(\phi_{\alpha}^{-1} \sm 1)$ described above, $T^{\alpha}$ does depend on the particular covering, and this justifies the notation.  In this context we think of the isomorphism $\phi_{\alpha}: \Tnp / L_{\alpha} \ra \Tnp$ as ``multiplication by $\alpha$", and note that it restricts to isomorphisms $\Cp/L_\alpha\cong \Cp$ and $L_{\beta \alpha} / L_{\alpha} \cong L_{\beta}$.

For a covering $\alpha \in \mathcal{M}_n$, the restriction now has the form of a map (really a family of maps),
$$R_{\alpha} : T^{\beta \alpha} \ra T^{\beta}$$
This map depends, even non-equivariantly, on the particular covering $\alpha$ and so we see again that it is essential that we keep track of the coverings and not just the corresponding subgroups of the torus.  The Frobenius now has the form
$$F^{\alpha} : T^{\alpha \beta} \ra T^{\beta}$$
As it is derived from the inclusion of fixed points, the Frobenius really depends only on the subgroups $L_{\alpha}$.  However, as we are interested in the interaction of the Frobenius, Restriction, and other operators, it is convenient to keep them all indexed in one place, namely on the coverings of the torus.

\subsection{Covering homology}  \label{sec-covhom}

We are interested in homotopy limits over classes of restriction and Frobenius operators---these limits are called covering homology and represent versions of cyclic homology.  The restrictions and Frobenii are both indexed on coverings of the p-adic n-torus, but with opposite variance composition rules.  We need an indexing diagram encoding this mixed-variance doubling of the category of coverings, and we do this bookkeeping using a ``twisted arrow category", described below.

Let $G$ be a group.  Consider a collection of surjective homomorphisms $a\colon G\fib G$, and let $\cC$ be the monoid generated by these.  This monoid $\cC$ can be viewed as a category, also denoted $\cC$, with one object.  Our desired ``doubling" of $\cC$ is encoded as follows.

\begin{definition}
Let $\cC$ be a category.  The {\em twisted arrow category}
$\mathcal Ar_\cC$  of $\cC$ has objects the arrows of $\cC$; a morphism from $d\colon v\to y$
to $b\colon w\to x$ is a diagram
$$
  \begin{CD}
    v@>c>>w\\@V{d}VV@V{b}VV\\ y@<{a}<<x
  \end{CD}
$$
in $\cC$.  That is, there is a morphism from $d$ to $b$ for every equation $d=abc$.   We write $(a^*,c_*)$ for this morphism and note that the composition rule reads $(a_0^*,{c_0}_*)(a_1^*,{c_1}_*)=((a_1a_0)^*,(c_0c_1)_*)$.
\end{definition}

\begin{definition}
Define $Frob_{\cC}$, respectively $Res_{\cC}$, to be the subcategory of the twisted arrow category $\mathcal Ar_\cC$ with all objects, but only the morphisms of the form $a^*:=(a^*, \id)$, respectively $c_*:=(\id,c_*)$.
\end{definition}

The basic structure of covering homology~\cite{bcd} is encoded in a functor
$\mathcal Ar_\cC \ra \SpecCat$.
The homomorphism $a:G \ra G$ maps to the spectrum $\Lambda_G(A)^{L_a}$.  The image of the map $c_*\colon bc\to b$ is the restriction map
$R_c\colon \Lambda_{G}(A)^{L_{bc}}\to \Lambda_{G}(A)^{L_b}$ and the
image of the map $a^*\colon ab\to b$ is the Frobenius
$F^a\colon \Lambda_{G}(A)^{L_{ab}}\to \Lambda_{G}(A)^{L_b}$.

There are three important limits associated to this functor, namely over the restriction subcategory, the Frobenius subcategory, and over the whole twisted arrow category.

\begin{definition}
\begin{align}
& TR^{\cC}(A) := \holim_{a \in Res_{\cC}} \Lambda_G(A)^{\ker a} \nn \\
& TF^{\cC}(A) := \holim_{a \in Frob_{\cC}} \Lambda_G(A)^{\ker a} \nn \\
& TC^{\cC}(A) := \holim_{a \in \mathcal Ar_\cC} \Lambda_G(A)^{\ker a} \nn
\end{align}
\end{definition}

\nid The last of these is called the $\cC$-covering homology of A.  

We often restrict attention to coverings of the p-adic n-torus, which as before are encoded in the monoid $\cM_n$ of injective endomorphisms of $\zpn$.  In this case we abbreviate $TR$, $TF$, and $TC$ as follows:
\begin{align}
& TR^{(n)}(A) := TR^{\mathcal{M}_n}(A) = \holim_{\alpha \in Res_{\mathcal{M}_n}} T^{\alpha}(A) \nn \\
& TF^{(n)}(A) := TF^{\mathcal{M}_n}(A) = \holim_{\alpha \in Frob_{\mathcal{M}_n}} T^{\alpha}(A) \nn \\
& TC^{(n)}(A) := TC^{\mathcal{M}_n}(A) = \holim_{\alpha \in Ar_{\mathcal{M}_n}} T^{\alpha}(A) \nn
\end{align}

Note that $TC^{(1)}(A)$ is not precisely ordinary topological cyclic homology, because $TC^{(1)}$ takes into account, in addition to the usual p-fold coverings, orientation reversing self-coverings of the circle.  Nevertheless we refer to $TC^{(n)}(A)$ as the higher topological cyclic homology of $A$.

\subsection{Transfer maps}

The restriction and Frobenius operators by no means exhaust the structure of the fixed points of topological Hochschild homology.  Our next stop is the Verschiebung operator, which is derived from the transfer maps associated to the projections $G/L_b \ra G/L_{ab}$.

Assume $G$ is a compact Lie group (p-adic or otherwise), and consider as before a monoid $\cC$ of linear self-coverings of $G$, that is of surjective homomorphisms $a\colon G\to G$ with finite kernel $L_a$.  For $a,b\in\cC$, choose a $G$-representation $W$ and an open $G$-embedding $i\colon W\times G/L_b \hra W\times G/L_{ab}$ over the projection $pr_a\colon G/L_b\to G/L_{ab}$.  The one-point compactification of this embedding, that is the Thom construction, is a $G$-map 
$$tr_{L_a}\colon S^W\smsh (G/L_{ab})_+\to S^W\smsh (G/L_b)_+$$ 
called the $L_a$-transfer.  The transfer is independent of the choice of embedding in the following sense.  Choose a complete universe of $G$-representations, and for any $G$-space $X$, let $Q_G(X)=\colim_W \Map_*(S^W,S^W\smsh X)$ where the colimit is taken over the universe.  Then two different choices of embeddings give homotopic transfers 
$tr_a\colon Q_G(G/L_{ab})\to Q_G(G/L_b).$ 

For instance, if $G$ is the circle and $a$ is multiplication by the positive integer $n$ (or rather the $n$-th power operation, as we are writing $G$ multiplicatively), we have an embedding 
$\bC\times G \hra \bC\times \phi^*_nG  \xra{1 \times \phi_n} \bC\times G/L_n$
where $\bC$ is the complex plane with the usual circle action---the embedding can be given explicitly by, for example, $(w,z)\mapsto \left(nz+\frac{1}{1+|w|}w,z^n\right)$.  As a result we have the desired transfer
$tr_n\colon S^{\bC} \smsh (G/L_n)_+ \to S^{\bC} \smsh G_+.$

The properties of the transfer we will need are the following.  (Here $\simeq$ means
``homotopic after applying $Q_G$'', and similarly for commutativity claims.)  

\begin{prop} \label{prop-transfer}
For $a,b:G \ra G$ surjective finite-kernel homomorphisms of compact Lie groups, the transfers $tr_a$ of the projections $pr_a: G \ra G/L_a$ and $pr_a: G/L_b \ra G/L_{ab}$ and the transfers $tr_b$ of the projections $pr_b: G \ra G/L_b$ and $pr_b: G/L_a \ra G/L_{ba}$ satisfy the following relations.

\begin{enumerate}
\item[1.] $tr_a tr_b\simeq tr_{ba}$.  If $f \colon G \to G'$ is an isomorphism of Lie groups and $a'=faf^{-1}$, then $tr_{a'} \simeq f \, tr_a f^{-1}$.  If $a$ is an isomorphism, then $tr_a \simeq \id$.  
\item[2.] \label{trmodule} The transfer is a $G$-module map; that is the following diagram commutes:
$$
  \begin{CD}
    G_+\smsh (S^W\smsh (G/L_b)_+)@<{1\smsh tr_a}<< G_+\smsh (S^W\smsh (G/L_{ab})_+)\\
    @V{\mu}VV@V{\mu}VV\\
    S^W\smsh (G/L_b)_+@<{tr_a}<< S^W\smsh (G/L_{ab})_+
  \end{CD}
$$
Here $\mu$ abbreviates the composite $G_+ \sm (G/L_b)_+ \xra{\phi_b \sm 1} (G/L_b)_+ \sm (G/L_b)_+ \xra{\mu} (G/L_b)_+$.
\item[3.] \label{trcomodule} \emph{Frobenius reciprocity.}  The transfer is a comodule map: the diagram
$$
  \xymatrix{
  S^W\smsh (G/L_b)_+\smsh (G/L_{ab})_+&
  S^W\smsh (G/L_{ab})_+ \smsh (G/L_{ab})_+ \ar[l]_{tr_a\smsh 1}\\
   S^W\smsh (G/L_b)_+ \smsh (G/L_b)_+\ar[u]^{1\smsh (pr_a)_+}&\\
    S^W\smsh (G/L_b)_+\ar[u]^{1\smsh \Delta_+}&
     S^W\smsh (G/L_{ab})_+\ar[l]_{tr_a}\ar[uu]^{1\smsh\Delta_+}
  }
$$
commutes, where $\Delta$ is the diagonal.
\item[4.] \label{trdoublecoset} \emph{The double coset formula.}
  Assume $G$ is connected, and consider a commuting diagram of self-coverings
\vspace{-5pt}
$$
  \begin{CD}
    G@<{\tilde b}<<G\\@V{a}VV@V{\tilde a}VV\\ G@<{b}<< G
  \end{CD}
$$
with $L_{\tilde a}\cap L_{\tilde b}=\{1\}$.
Then 
$$
\begin{CD}
  G/L_{\tilde b}@<{pr_{\tilde b}}<<\coprod_{L_{\tilde b}\backslash L_{a \tilde b}/L_{\tilde a}}G
\\
  @V{pr_a}VV @V{pr_{\tilde a}}VV\\
  G/L_{a \tilde b}=G/L_{b \tilde a}@<{pr_b}<<G/L_{\tilde a}
\end{CD}
$$
is cartesian and 
$$tr_b pr_a \simeq |L_{\tilde b}\backslash L_{a \tilde b}/L_{\tilde a}|\cdot pr_{\tilde a}tr_{\tilde b}.$$
\end{enumerate}
\end{prop}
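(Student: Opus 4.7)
The plan is to treat all four properties as geometric statements about the Pontryagin-Thom transfer and reduce each to an assertion about the defining $G$-equivariant embedding $i \colon W \times G/L_b \hookrightarrow W \times G/L_{ab}$ over $pr_a$; naturality of the one-point compactification then does the rest. For (1), given an $a$-embedding $i_a$ and a $b$-embedding $i_b$ (enlarged to a common $G$-representation), the composite $(1 \times i_b) \circ (i_a \times 1)$ is an embedding lying over $pr_{ba}$, so its Thom collapse represents $tr_{ba}$ and manifestly equals $tr_a \circ tr_b$ in $Q_G$. The naturality statement $tr_{faf^{-1}} \simeq f\, tr_a f^{-1}$ is immediate because the pushforward of an $a$-embedding by $f$ is an $(faf^{-1})$-embedding, and both transfers are well-defined up to homotopy. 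If $a$ is an isomorphism, one may take $W=0$ and use the graph embedding, whose Thom collapse is the identity after stabilization.

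Item (2) is built into the construction: by a standard equivariant tubular neighborhood argument one may take $i$ itself to be $G$-equivariant, and then every map in the module diagram is $G$-equivariant on the nose. The only subtlety is the identification of the $G$-action on the target via $\phi_b$, which is exactly what it means for $i$ to cover $pr_a$. For (3), the key geometric observation is that because $i$ covers $pr_a$, the composite $(i \times \mathrm{id}) \circ (1 \times \Delta)$ and $(1 \times \Delta) \circ i$ both land in $W \times G/L_{ab} \times G/L_{ab}$ and differ only by applying $pr_a$ to one factor on the source side; taking Thom collapses and reading off the resulting square yields the asserted diagram.

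Part (4) has two halves. For cartesianness of the pullback square, a direct calculation identifies points of $G/L_{\tilde a} \times_{G/L_{a\tilde b}} G/L_{\tilde b}$ with pairs $(gL_{\tilde a}, g'L_{\tilde b})$ satisfying $\tilde a g L_{a\tilde b} = b g' L_{a\tilde b}$; connectedness of $G$ together with the hypothesis $L_{\tilde a}\cap L_{\tilde b}=\{1\}$ shows the fiber product decomposes as a disjoint union of copies of $G$ indexed precisely by the double coset $L_{\tilde b}\backslash L_{a\tilde b}/L_{\tilde a}$. The formula $tr_b\, pr_a \simeq |L_{\tilde b}\backslash L_{a\tilde b}/L_{\tilde a}|\cdot pr_{\tilde a} tr_{\tilde b}$ then follows from base change for the Pontryagin-Thom transfer---an embedding over $pr_b$ pulls back along $pr_a$ to an embedding over $pr_{\tilde b}$ on each component---combined with additivity of the transfer on disjoint unions, which supplies the cardinality factor.

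The main obstacle I anticipate is part (4): keeping the various $G$-actions and twists by $\phi_a, \phi_b, \phi_{\tilde a}, \phi_{\tilde b}$ mutually consistent is delicate, and one must be careful that the double coset indexing really enumerates the components of the pullback rather than some quotient of it. In particular, verifying freeness of the relevant action (where the hypothesis $L_{\tilde a}\cap L_{\tilde b}=\{1\}$ is essential) and confirming that the stable map produced by base change is genuinely the unsigned sum over components, with no hidden internal multiplicities, requires a careful if unenlightening equivariant bookkeeping.
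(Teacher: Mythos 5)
Your proposal is correct and follows the standard geometric route via the Pontryagin--Thom construction and naturality of the transfer. The paper itself offers no proof, merely citing Kahn--Priddy for the composition and naturality properties and noting that the double coset formula is classical; your write-up fills in exactly the standard arguments those references would supply, so this is the same approach, just made explicit.
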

\nid These properties are standard.  The second and third properties follow from the naturality of the transfer, which along with the first composition property appears already in Kahn and Priddy's original paper~\cite{kahnpriddy-transfer}; the double coset formula predates even the definition of the transfer.
Lest these formulas seem askew, the reader should keep in mind that, despite the notation, $tr_a$ is the transfer for the projection map $pr_a$ and not for the map $a$ itself.


\subsection{The Verschiebung} \label{sec-vers}

Our first new operator on the fixed points of topological Hochschild homology, the Verschiebung, is induced by transfer maps for projections associated to coverings.

As before let $G$ be a compact Lie group, $W$ a real $G$-representation, and $a$ a finite self-covering of $G$.  Recall that the Loday construction $\Lambda_G(A)$ is, a priori, a $\Gamma$-space, and so associates to a finite set a space, or more generally to a simplicial set a simplicial space, therefore by realization a space.  (Henceforth we generally do not distinguish between spaces and their singular complexes or between simplicial spaces and their realizations.)  An immediate consequence of the proof of the fundamental cofibration sequence~\cite{bcd} is  that the stabilization map
\begin{equation*}
  \Lambda_{G}(A)(K)^{L_a}\to \Map_*(S^W, \Lambda_{G}(A)(S^W\smsh K))^{L_a} \cong 
  \Map_*(S^W\smsh (G/L_a)_+, \Lambda_{G}(A)(S^W\smsh K))^{G}
\end{equation*}
is a weak equivalence.  Here $K$ is a test input to the $\Gamma$-space $\Lambda_G(A)$.  (In proving this equivalence, the fundamental cofibration sequence provides the induction step with respect to the order of $L_a$.  Alternatively one can approach this stabilization problem via equivariant obstruction theory as in B{\"o}kstedt-Hsiang-Madsen~\cite{bhm}).

Hence the transfer yields a map
$$
\begin{CD}
  \Lambda_{G}(A)(K)^{L_b}@>{\sim}>>  
  \Map_*(S^W\smsh (G/L_b)_+, \Lambda_{G}(A)(S^W\smsh K))^{G}\\
  @.@V{tr_a^*}VV\\
  \Lambda_{G}(A)(K)^{L_{ab}}@>{\sim}>>  
  \Map_*(S^W\smsh (G/L_{ab})_+, \Lambda_{G}(A)(S^W\smsh K))^{G}
\end{CD}
$$
As $K$ varies these fit together into a map in the stable homotopy category; this map is the {\em Verschiebung} 
$$V_a\colon \Lambda_{G}(A)^{L_b}\to \Lambda_{G}(A)^{L_{ab}}.$$
By construction the Verschiebung commutes with restriction operators, that is $R_aV_b=V_bR_a$, and is variously related to the Frobenius as follows:
\begin{prop}\label{prop-vers}
  Let $A$ be a commutative connective $\ess$-algebra, $G$ a compact
  connected Lie group, and $a$ and $b$ finite self-coverings of $G$.  Let $T^a:=\Lambda_{G}(A)^{L_a}$.
  \begin{enumerate}

  \item[1.] \label{FmultVmod} \emph{Frobenius reciprocity.}  $V_b$ is a module map in the sense that the following diagram commutes:
    $$
    \xymatrix{T^b\smsh T^{ab}\ar[r]^{V_a\smsh 1}\ar[d]^{1\smsh F^a}&
      T^{ab}\smsh T^{ab}\ar[dd]^{\mu}\\
      T^b\smsh T^b\ar[d]^{\mu}&\\
      T^b\ar[r]^{V_a}&T^{ab}
      }
    $$
    In particular $V_aF^a=V_a(1)\cdot$.  
  
  \item[2.] \emph{The double coset formula.}  $F^aV_b = |L_{\tilde b}\backslash L_{a \tilde b}/L_{\tilde a}|\cdot V_{\tilde b}F^{\tilde a}$ where $\tilde a$ and $\tilde b$ are self-coverings such that $a \tilde b=b \tilde a$ and $L_{\tilde a} \cap L_{\tilde b} = 1$.

  \item[3.] \label{Famu}
$F^a\mu(a_+\smsh 1) = \mu(1\smsh F^a)\colon G_+\smsh T^{ab}\to T^{b}$ and $V_a\mu = \mu(a_+\smsh V_a)\colon G_+\smsh T^b\to T^{ab}$.

  \item[4.] If 
\vspace{-10pt}
$$
    \begin{CD}
      G@<{\tilde b}<<G\\
      @V{a}VV@V{\tilde a}VV\\
      G@<{b}<<G
    \end{CD}
$$ 
is a cartesian square of finite self-coverings, then
$$F^a\mu(1\smsh V_b)\gamma = \mu(1\smsh F^aV_b)+F^aV_b\mu$$ 
where $\gamma\colon G_+\smsh T^{{\tilde a}}\coprod_{G_+\smsh T^{{\tilde a}}}G_+\smsh T^{{\tilde a}}\to G_+\smsh T^{{\tilde a}}$ is the map induced by $(a_+\smsh 1)+(b_+\smsh 1)$ and the maps in the pushout are $\tilde b_+\smsh 1$ and $\tilde a_+\smsh 1$.  Otherwise said, the diagram
$$\xymatrix{&G_+\smsh T^{\tilde a}\ar[rd]_{a_+\smsh 1}\ar[rrrd]^{\mu(1\smsh F^aV_b)}&&&\\
G_+\smsh T^{\tilde a}\ar[ur]^{\tilde b_+\smsh 1}\ar[dr]_{\tilde a_+\smsh 1}&&G_+\smsh T^{\tilde a}\ar[rr]^{F^a\mu(1\smsh V_b)}&&T^{\tilde b}\\
&G_+\smsh T^{\tilde a}\ar[ur]^{b_+\smsh 1}\ar[rrru]_{F^aV_b\mu}&&&
}
$$
commutes up to homotopy.

   \end{enumerate}
\end{prop}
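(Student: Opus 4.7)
The plan is to reduce each relation about the Verschiebung to the corresponding relation about transfers established in Proposition~\ref{prop-transfer}. By the stabilization weak equivalence recalled just before the proposition, for each test input $K$ we may identify
$$\Lambda_G(A)(K)^{L_a}\simeq \Map_*(S^W\smsh (G/L_a)_+,\Lambda_G(A)(S^W\smsh K))^G$$
in a range growing with $W$. Under this identification, $V_a$ is precomposition with $tr_a$, $F^a$ is precomposition with the projection $pr_a$, and the multiplication $\mu$ on each $T^a$ is precomposition with the diagonal of $(G/L_a)_+$. Thus each relation to be proved becomes a relation between composites of projections, transfers, and diagonals in the $G$-equivariant stable homotopy category.

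With this dictionary in hand, parts 1 through 3 should be essentially direct. For part 1, the Frobenius reciprocity diagram for $V_a$ is the precomposition-dual of the comodule property of the transfer (part 3 of Proposition~\ref{prop-transfer}): smashing that square with the identity and passing to $Q_G$-maps gives exactly the module-map diagram for $V_a$. For part 2, the composite $F^aV_b$ is dual to $tr_b\circ pr_a$, which by the double coset formula (part 4 of Proposition~\ref{prop-transfer}) equals $|L_{\tilde b}\backslash L_{a\tilde b}/L_{\tilde a}|\cdot pr_{\tilde a}\circ tr_{\tilde b}$, dualizing to $|L_{\tilde b}\backslash L_{a\tilde b}/L_{\tilde a}|\cdot V_{\tilde b}F^{\tilde a}$. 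For part 3, the twisted action $\mu(\phi_\bullet\smsh 1)$ on $T^\bullet$ is set up precisely so that the $G$-action on $(G/L_\bullet)_+$ is intertwined with $\phi_\bullet$. The identity $F^a\mu(a_+\smsh 1)=\mu(1\smsh F^a)$ then reflects the fact that $pr_a\colon G/L_b\to G/L_{ab}$ conjugates the $\phi_b$-twisted action into the $\phi_{ab}$-twisted action precomposed with $a$, while $V_a\mu=\mu(a_+\smsh V_a)$ follows from the $G$-module property of the transfer (part 2 of Proposition~\ref{prop-transfer}) combined with the same twisting compatibility.

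Part 4 is the main new content and will require the most care. The cartesian hypothesis and the condition $L_{\tilde a}\cap L_{\tilde b}=\{1\}$ force the double coset index of part 2 to be $1$, so that $F^aV_b\simeq V_{\tilde b}F^{\tilde a}$. The plan is to split the source of $\gamma$ according to its pushout description and check the identity on each summand. On the component included via $\tilde b_+\smsh 1$ and mapped by $a_+\smsh 1$, apply part 3 to move $F^a$ past the $G$-action, then use Frobenius reciprocity (part 1) to commute $V_b$ through $\mu$, producing the summand $\mu(1\smsh F^aV_b)$. On the component included via $\tilde a_+\smsh 1$ and mapped by $b_+\smsh 1$, apply Frobenius reciprocity for $V_b$ first and then part 3 for $F^a$, producing the summand $F^aV_b\mu$. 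The two descriptions agree on the intersection $G_+\smsh T^{\tilde a}$ precisely because the square is cartesian, which supplies the pushout gluing.

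The main obstacle will be this final bookkeeping for part 4: one must carefully track the $\phi_\bullet$-twisted $G$-actions across all four fixed-point spectra, verify that the two summands really do factor through the claimed components of the pushout in the stable homotopy category, and check that the cartesian condition (rather than a weaker compatibility) is what ensures the gluing on $G_+\smsh T^{\tilde a}$ is coherent and free of double-counting. The first three parts are, by contrast, almost formal consequences of the transfer properties once the precomposition model is in place.
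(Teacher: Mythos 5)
Your reduction of parts 1--3 to the transfer properties via the precomposition dictionary is exactly the paper's approach: part 1 is the comodule property, part 2 is the double coset formula, and part 3 is the equivariance of transfers and projections with respect to the twisted $G$-action. The one point that needs correcting is your treatment of part 4.

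Your structural idea for part 4 is right: split the source of $\gamma$ according to the pushout, check the two triangles, and note that the cartesian condition $a\tilde b = b\tilde a$ (equivalently $L_{\tilde a}\cap L_{\tilde b}=1$) ensures the two legs agree on the overlap. But you invoke Frobenius reciprocity (part 1) in both triangles, and this is the wrong tool. Frobenius reciprocity concerns the ring multiplication $\mu\colon T^{\bullet}\smsh T^{\bullet}\to T^{\bullet}$, whereas every $\mu$ appearing in part 4 is the $G$-module action $\mu\colon G_+\smsh T^{\bullet}\to T^{\bullet}$; you cannot commute $V_b$ past this $\mu$ by Frobenius reciprocity, because the other factor is $G_+$, not a copy of the ring spectrum. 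As the paper indicates, part 4 is a combination of the \emph{two parts of property 3} and nothing more. Concretely: on the component through $a_+\smsh 1$ one has
$$F^a\mu(1\smsh V_b)(a_+\smsh 1)=F^a\mu(a_+\smsh 1)(1\smsh V_b)=\mu(1\smsh F^a)(1\smsh V_b)=\mu(1\smsh F^aV_b),$$
using only the first equation of property 3 and the fact that $V_b$ commutes with $a_+\smsh 1$; on the component through $b_+\smsh 1$ one has
$$F^a\mu(1\smsh V_b)(b_+\smsh 1)=F^a\mu(b_+\smsh V_b)=F^aV_b\mu,$$
using only the second equation of property 3. If you try to run the argument the way you describe, the Frobenius reciprocity step will not typecheck, so you should replace those invocations with the two equations of property 3.
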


\nid Frobenius reciprocity and the double coset formula follow from the corresponding properties of the transfer.  Property 3 simply records the expected equivariance properties of the Frobenius and Verschiebung, and property 4 is a combination of the two parts of property 3. 


\section{Differentials and higher Witt relations}

\subsection{The one-dimensional Witt relations}

The fixed points $T(A)^{C_{p^k}}$ of ordinary topological Hochschild homology are related by the operators restriction $R: T(A)^{C_{p^k}} \ra T(A)^{C_{p^{k-1}}}$ and Frobenius $F: T(A)^{C_{p^k}} \ra T(A)^{C_{p^{k-1}}}$.  By definition topological cyclic homology $TC(A)$ is the homotopy limit of the collection $\{T(A)^{C_{p^k}}\}$ over these two series of operators.  We can break this homotopy limit into two stages by considering either only the Restriction or only the Frobenius operators.  The homotopy limit over the restriction maps is called $TR(A)$; topological cyclic homology $TC(A)$ can then be obtained as the homotopy equalizer of $TR(A) \overset{F}{\underset{\id}{\rightrightarrows}} TR(A)$.  Alternately, the homotopy limit over the Frobenius maps is called $TF(A)$; topological cyclic homology can then be recovered as the homotopy equalizer of $TF(A) \overset{R}{\underset{\id}{\rightrightarrows}} TF(A)$.

In order to compute the homotopy groups of topological cyclic homology, it is useful to exploit two additional operations on the homotopy groups of the fixed point spectra.  The first of these we have already discussed, the Verschiebung $V: \pi_*(T(A)^{C_{p^{k-1}}}) \ra \pi_*(T(A)^{C_{p^{k}}})$.  As the Verschiebung arises from a transfer map, it is only well defined at the level of homotopy.  The second operation is a differential $d: \pi_*(T(A)^{C_{p^{k}}}) \ra \pi_{*+1}(T(A)^{C_{p^{k}}})$; the differential is, roughly speaking, multiplication by the fundamental class of $S^1$ on the $S^1$-spectrum $T(A)^{C_{p^k}}$.

The collection ${R, F, V, d}$ satisfies certain fundamental relations: $RF=FR$, $RV=VR$, $Rd=dR$, $FV=p$, $VF=V(1)\cdot$, $FdV=d$.  Moreover $d$ is a differential and a graded derivation.  Hesselholt and Madsen~\cite{hm-mixedchar} proposed viewing the groups $\pi_*(T(A)^{C_{p^k}})$ as a prosystem with respect to the maps $R$, and viewing $F$, $V$, and $d$ as operators on this prosystem.  They formalized this structure in the notion of a Witt complex, and constructed an initial object in the category of Witt complexes.  This universal Witt complex is called the de Rham-Witt complex, in part because it receives a canonical surjective map from the de Rham complex on Witt vectors.  By studying the structure of the de Rham-Witt complexes of rings and also of log-rings, Hesselholt and Madsen were able to do extensive calculations of topological cyclic homology; they applied these calculations with great success to, among other problems, the analysis of the algebraic K-theory of local fields~\cite{hm-localfields}.

In the following sections we begin the investigation of similar structures on higher topological Hochschild homology, focusing on establishing the basic relations among the higher restriction, Frobenius, Verschiebung, and differentials.  Though we do not claim to have determined all possible relations among these operators, we have established higher analogs of all the classical relations---in particular, we describe an intriguing splitting that occurs in the fundamental higher $FdV$ relation.

\subsection{Higher differentials} \label{sec-higherdiff}

We now introduce the differentials in our higher analog of the
Witt structure.  These maps are derived from the
$\Tnp$-action on $\Tn(A)^{L_\alpha}$ by means of
a stable splitting of the torus, and as such are dependent on
the matrix $\alpha$ and not---as the Frobenius and Verschiebung maps are---only on
the group $L_\alpha$.

The transfer map for the projection $\TT^1 \ra *$ is a stable map $S^1 \ra \TT^1_+$ which produces a stable splitting $\TT^1_+ \simeq S^0 \vee S^1$.  In particular the stable homotopy of the torus is $\pi^S_*(\TT^1_+)\cong \pi^S_*(S^0)\oplus\pi_*^S(S^1)$.  The transfer may be given explicitly by the projection $\sigma\colon S^{\mathbf C}=S^2\to S^2/S^1\cong
S^1\smsh \TT^1_+$ sending a point $z\in S^{\mathbf C}$ to 
$\frac{|z|}{1+|z|}\smsh \frac{z}{|z|}\in\mathbf R/\mathbf
Z\smsh\TT^1_+$.  By smashing this map with itself $k$ times, we get a map $\sigma\colon S^{k\mathbf C}\to S^k\smsh\TT^k_+$.  Concretely, this $k$-fold product of the transfer is the Thom construction on the embedding $\bR^k\times\TT^k\hookrightarrow \bC^k$ of a tubular neighborhood of the standard inclusion $\TT^k\subseteq \bC^k$.  Altogether we get a stable splitting
$$\TT^k_+\simeq\bigvee_{T\subseteq\{1,\dots,k\}}S^{|T |}\cong\bigvee_{j=0}^k\left(S^j\right)^{\vee\binom{k}{j}}$$  
We will often write maps between $\TT^{k}_+$'s as matrices in terms
of this last basis.

\begin{lemma}
  With respect to the stable splitting $\TT^n_+\simeq\bigvee_{j=0}^k\left(S^j\right)^{\vee\binom{k}{j}}$ the multiplication $\mu\colon\TT^2_+\to \TT^1_+$ is homotopic to the map
$$\left[
    \begin{matrix}
      1&0&0&0\\0&1&1&\eta
    \end{matrix}
\right]\colon
\begin{matrix}
  S^0\\S^1\\S^1\\S^2
\end{matrix}
\to
\begin{matrix}
  S^0\\S^1
\end{matrix}
$$
\end{lemma}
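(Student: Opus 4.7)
The plan is to compute the four columns of the matrix of $\mu$ with respect to the splittings $\TT^1_+ \simeq S^0\vee S^1$ and $\TT^2_+ = \TT^1_+\smsh\TT^1_+ \simeq S^0\vee S^1\vee S^1\vee S^2$.  First I would identify the two rows by projecting the target.  The retraction $\TT^1_+\to S^0$ collapses $\TT^1$ to the basepoint, so its composite with $\mu$ records only the disjoint basepoint of $\TT^2_+$; this gives the top row $[1,0,0,0]$.  The other retraction $\TT^1_+ \to \TT^1/\{e\} = S^1$ composed with $\mu$ is just $\mu$ itself, viewed as a map to $S^1$, so the bottom row amounts to computing the four stable components of this map on the four wedge summands of $\TT^2_+$.

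Three of those four entries are straightforward.  The $S^0$ entry vanishes because $\pi_0^S(S^1)=0$.  The two middle $S^1$ entries are exactly the restrictions $\mu|_{\TT\times\{e\}}$ and $\mu|_{\{e\}\times\TT}$, each of which equals $\id_\TT$; under the splitting each of these represents the generator of $[S^1,S^1]$, contributing the two $1$'s.  The essential entry is the fourth, the \emph{top-cell} component $\TT\smsh\TT = S^2 \to S^1$ of $\mu$.  I would identify this with the stable Hopf construction as follows: the once-suspended James splitting $\Sigma(\TT\times\TT)_+ \simeq \Sigma\TT_+\vee\Sigma\TT_+\vee\Sigma^2(\TT\smsh\TT)$ expresses $\Sigma\mu$ on the top summand as the Hopf construction $H(\mu)\colon \TT\ast\TT = S^3 \to \Sigma\TT = S^2$.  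For the group multiplication on $S^1\subset\bC$, $H(\mu)$ is by definition the Hopf fibration, which stably represents the generator $\eta\in\pi_1^S\cong\bZ/2$.  Desuspending gives the entry $\eta$, as claimed.

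The one real obstacle is reconciling the top-cell component computed via \emph{our} explicit splitting (the one built from the Thom-collapse transfer $\sigma\colon S^{\bC}\to S^1\smsh\TT^1_+$ of the paper) with the abstract James splitting used to define the Hopf construction.  I would handle this by tracing through the explicit formula for $\sigma$: the composite $S^{2\bC} \xra{\sigma\smsh\sigma} S^2\smsh\TT^2_+ \xra{1\smsh\mu} S^2\smsh\TT^1_+ \to S^2\smsh S^1$, restricted to the top cell, is by direct inspection the Pontryagin--Thom collapse of the unit sphere of the Hopf line bundle $\cO(1)\to\bC P^1$, which is precisely the Hopf map.  Combined with the three other entries computed above, this gives the stated $2\times 4$ matrix.
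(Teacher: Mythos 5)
Your outline is correct and rests on the same key fact as the paper's proof: the only nontrivial matrix entry is the $S^2\to S^1$ component, and this is $\eta$ because it is (stably, up to suspension) the Hopf construction applied to the group multiplication on $S^1$, i.e.\ the Hopf map. The paper gets at this more tersely: it declares the other entries clear and, for the $\eta$ entry, exhibits a strictly commuting square comparing the relevant composite to the map $S^{2\bC}\xra{\sigma\smsh 1} S^1\smsh\TT^1_+\smsh S^{\bC}\xra{1\smsh\mu} S^1\smsh S^{\bC}$ (where $\mu$ is the $\TT^1$-action on $S^{\bC}$), and then cites Hesselholt for the fact that this last composite is $\eta$. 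So where the paper defers to a reference, you re-derive the same identification via the Hopf construction; the paper's route has the advantage of being phrased directly in terms of the explicit transfer $\sigma$ used to define the splitting, while your route is more conceptual but has to do extra work to match conventions.

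Two imprecisions you should repair. First, the James-splitting formula as written, $\Sigma(\TT\times\TT)_+\simeq\Sigma\TT_+\vee\Sigma\TT_+\vee\Sigma^2(\TT\smsh\TT)$, has the wrong summands (it double-counts an $S^1$ and has $\Sigma^2$ instead of $\Sigma$); the correct version is $\Sigma(\TT\times\TT)_+\simeq S^1\vee\Sigma\TT\vee\Sigma\TT\vee\Sigma(\TT\smsh\TT)$, with top summand $\Sigma(\TT\smsh\TT)\cong\TT\ast\TT\cong S^3$. Second, your reconciliation step --- ``by direct inspection the Pontryagin--Thom collapse of the unit sphere of the Hopf line bundle'' --- is not a precise statement: it is unclear what submanifold is being collapsed onto which Thom space, and this is exactly the point at which the paper invokes the explicit commuting square and the citation. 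The claim that the $\sigma$-splitting and the abstract James splitting induce the same top-cell inclusion (and hence the same top-cell component of $\mu$) is true but needs an argument; as written your proposal asserts it rather than proves it.
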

\begin{proof}
  It is enough to verify that the $S^2\to S^1$ component is given by $\eta$, and this follows from the diagram
$$
  \begin{CD}
    S^{2\bC}@>{\sigma\smsh 1}>>S^1\smsh\TT^1_+\smsh S^\bC@>{1\smsh\mu}>>S^1\smsh S^\bC\\
    @. @V{1\smsh\sigma}VV@V{1\smsh\sigma}VV\\
    @. S^1\smsh\TT^1_+\smsh S^1\smsh\TT^1_+@>{(1\sm \mu) \tau}>>S^1\smsh S^1\smsh\TT^1_+
  \end{CD}
$$
The top $\mu$ refers to the natural action of $\TT^1$ on $S^\bC$.
The square is strictly commutative, and the top horizontal composite is homotopic to $\eta$~\cite{hess-ptypical}.
\end{proof}

\begin{remark}\label{rem-etanonzero}
  For $\alpha\in M_n\zp$, we have an induced stable map $\alpha_+\colon(\Tnp)_+\to(\Tnp)_+$.  From the above description of the multiplication $\mu$ we can describe $\alpha_+$ in terms of the stable splitting.  If $n=1$ and $\alpha=a\in\zp$ then the map $\alpha_+$ is given by 
$\left[
  \begin{smallmatrix}
    1&0\\0&a
  \end{smallmatrix}
\right]$.
  If $n=2$ and 
$\alpha=\left[
    \begin{smallmatrix}
      a&b\\
      c&d
    \end{smallmatrix}
\right]$
then $\alpha_+$ is given by
$$\left[
  \begin{matrix}
    1&0&0&0\\
    0&a&b&ab\eta\\
    0&c&d&cd\eta\\
    0&0&0&ad-bc
  \end{matrix}
\right]\colon
  \begin{matrix}
    S^0\\S^1\\S^1\\S^2
  \end{matrix}
\to
  \begin{matrix}
    S^0\\S^1\\S^1\\S^2
  \end{matrix}
  $$
For general $\alpha=[a_{ij}]\in M_{k\times n}\zp$ the stable map $\alpha_+$ is given by the matrix indexed by the subsets of $\{1,\dots,k\}$ and $\{1,\dots,n\}$ with $S$-$T$-entry 
$$M_{S,T}=\left(
\sum_{f\colon T\fib S}
\text{sign}(f)\prod_{j\in T}a_{f(j)j}
\right)\eta^{|T|-|S|}
\colon S^{|T|}_p\to S^{|S|}_p$$
where the sum is over all surjective $f\colon T\fib S$ and the sign is with respect to the ordering given by the fact that we are considering sets of natural numbers.

If $p$ is an odd prime, multiplication by $\eta$ is nullhomotopic, and these formulae simplify considerably.  In particular we have that if $\alpha\in M_n\zp$, then 
$$
\begin{CD}
  S^{n\bC}_p@>\sigma>> (S^n\smsh{\TT^{n}_{p \; +}})_p\\
@V{\det(\alpha)}VV@V{1\smsh \alpha_+}VV\\
  S^{n\bC}_p@>\sigma>> (S^n\smsh{\TT^{n}_{p \; +}})_p
\end{CD}
$$
commutes up to stable homotopy.
\end{remark}
\begin{definition}
  Let $X$ be a $\Tnp$-spectrum and $\ell\in M_{k\times n}\zp$.
  Define the $\ell^{\text{th}}$ differential as the stable map 
$$d_\ell\colon S^k\smsh X\to X$$
given by the composite
$$
\begin{CD}
S^{k\mathbf C}\smsh X @>{\sigma\smsh 1}>>  S^k\smsh{\TT_p^k}_+\smsh X@>{1\smsh \ell_+\smsh 1}>>
  S^k\smsh{\TT_p^n}_+\smsh X
  @>\mu>>S^k\smsh X
  \end{CD}
$$
\end{definition}
\begin{prop} 
If $\ell'\in M_{n\times k'}\zp$  and $\ell''\in M_{n\times
  k''}\zp$, then 
$$d_{\ell'}d_{\ell''} \simeq d_{\ell}$$ 
where $\ell$ is the $n\times(k'+k'')$ matrix obtained by placing
the columns of $\ell'$ before those of $\ell''$.  

For $p$ an odd prime, $\ell\in M_{n\times k}\zp$,
  and $\gamma\in M_k\zp$, we have 
$$d_{\ell\gamma} \simeq \det(\gamma)\cdot  d_\ell$$
  In particular, if $\ell\in M_n\zp$, then
  $d_\ell \simeq \det(\ell)\cdot d_{1}$.  If $\ell\in M_{n\times k}\zp$ has
  rank less than $k$, then $d_\ell \simeq 0$.
\end{prop}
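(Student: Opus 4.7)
The plan is to attack each claim by unwinding the defining composite $d_\ell = \mu\circ(1\smsh \ell_+\smsh 1)\circ(\sigma\smsh 1)$ and exploiting three compatibilities: multiplicativity of the transfer $\sigma$, functoriality and additivity of the assignment $\alpha\mapsto \alpha_+$, and associativity of the $\TT^n$-action on $X$. For the composition identity $d_{\ell'}d_{\ell''}\simeq d_\ell$, I first unfold $d_{\ell'}\circ(1\smsh d_{\ell''})$, then apply: (i) that $\sigma\colon S^{k\bC}\to S^k\smsh\TT^k_+$ is, by construction, the $k$-fold external smash of the circle transfer $\sigma_1$ (up to a coordinate shuffle), so $\sigma_{k'+k''}\simeq\sigma_{k'}\smsh\sigma_{k''}$; (ii) that the homomorphism $\ell=[\ell'\mid\ell'']\colon\TT^{k'+k''}\to \TT^n$ equals the group product $\mu_{\TT^n}\circ(\ell'\times\ell'')$, hence $\ell_+\simeq\mu_{\TT^n}\circ(\ell'_+\smsh\ell''_+)$ after identifying $(\TT^{k'}\times\TT^{k''})_+\cong \TT^{k'}_+\smsh\TT^{k''}_+$; and (iii) that $\mu\circ(\mu_{\TT^n}\smsh 1)\simeq\mu\circ(1\smsh\mu)$ since $X$ is a $\TT^n$-module spectrum. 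A diagrammatic chase then aligns the two composites.

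For the scaling identity under the hypothesis that $p$ is odd, the key input is Remark~\ref{rem-etanonzero}, which provides the relation $(1\smsh\gamma_+)\circ\sigma\simeq \sigma\circ\det(\gamma)$ for $\gamma\in M_k\zp$. Starting from the functoriality $(\ell\gamma)_+=\ell_+\gamma_+$, I rewrite
\[
d_{\ell\gamma}\;=\;\mu\circ(1\smsh \ell_+\smsh 1)\circ\bigl((1\smsh\gamma_+)\sigma\smsh 1\bigr),
\]
substitute using the remark, and read off $d_{\ell\gamma}\simeq \det(\gamma)\cdot d_\ell$. The specialization $d_\ell\simeq\det(\ell)\cdot d_1$ for $\ell\in M_n\zp$ then follows by applying the identity to the trivial factorization $\ell = 1\cdot \ell$.

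For the rank identity I would reduce to the case of a matrix with a zero column. If $\ell$ has rank $<k$ over $\bQ_p$, choose a primitive vector $v\in\zpk\setminus p\zpk$ with $\ell v=0$ and extend $v$ to a $\zp$-basis forming the columns of some $\gamma\in GL_k(\zp)$; then $\ell\gamma$ has a zero column, and since $\det(\gamma)\in\zp^\times$ is a unit, the preceding scaling identity reduces the claim to showing $d_{\ell'}\simeq 0$ whenever $\ell'$ has a zero column. In that situation $\ell'$ factors through the projection $q\colon \TT^k\to\TT^{k-1}$ away from the zero column, so $(\ell')_+\simeq(\ell'')_+\circ q_+$ for an appropriate $\ell''$. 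Using $\sigma\simeq\sigma_1^{\smsh k}$, the composite $(1\smsh q_+)\sigma$ contains a smash factor of the form $(1\smsh c_+)\sigma_1\colon S^\bC\to S^1$, where $c\colon\TT^1\to\ast$ is the trivial map; since $\sigma_1$ is the inclusion of the top cell in the splitting $S^1\smsh\TT^1_+\simeq S^1\vee S^2$, this factor is null, whence $d_{\ell'}\simeq 0$. The main obstacle throughout is bookkeeping: shuffling the suspension and torus factors so that $S^{k'}\smsh\TT^{k'}_+\smsh S^{k''}\smsh\TT^{k''}_+$ aligns with $S^{k'+k''}\smsh\TT^{k'+k''}_+$, and tracking the interaction of the group multiplication $\mu_{\TT^n}$ with the action $\mu$. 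None of the steps is computationally difficult, but the diagrammatic chases require care.
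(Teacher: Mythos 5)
Your arguments for the first two claims coincide with the paper's: the composition identity is the commutativity of the square relating $\ell'\times\ell''$ to $\ell$ via the group multiplication on $\Tnp$, and the scaling identity is a direct read-off from the relation $(1\smsh\gamma_+)\sigma\simeq\det(\gamma)\cdot\sigma$ of Remark~\ref{rem-etanonzero}. Your treatment of the rank claim, however, takes a genuinely different (and more laborious) route. The paper observes that if $\ell$ has rank $<k$ then one may write $\ell=\ell'\gamma$ with $\gamma\in M_k\zp$ of determinant zero --- for instance $\gamma$ can be the idempotent projection of $\zpk$ onto a complement of the saturated kernel of $\ell$, giving $\ell\gamma=\ell$ --- and then the scaling identity kills $d_\ell=\det(\gamma)\cdot d_{\ell'}=0$ in one stroke. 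You instead conjugate by an \emph{invertible} $\gamma\in GL_k(\zp)$ to arrange a zero column, and then verify by hand (via the smash decomposition $\sigma\simeq\sigma_1^{\smsh k}$ and the nullhomotopy of $(1\smsh c_+)\sigma_1$) that a differential indexed on a matrix with a zero column vanishes. This is correct, and the geometric observation that $(1\smsh c_+)\sigma_1$ is null (the Euler characteristic of $\TT^1$ is zero) is a nice direct check, but it re-derives a special case of the scaling identity you have already established; the extra step buys you nothing the already-proved relation $d_{\ell\gamma}\simeq\det(\gamma)\cdot d_\ell$ does not.
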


\begin{proof}
The first statement follows from the commutativity of
$$
\begin{CD}
  \Tp[k']\times\Tp[k'']@>{\ell'\times\ell''}>>\Tnp\times\Tnp\\
  @V{\cong}VV@V{\mu}VV\\
  \Tp[k]@>{\ell}>>\Tnp
\end{CD}
$$
\nid The second statement follows from the observation in remark~\ref{rem-etanonzero} that if $\gamma \in M_n\zp$ and $\eta=0$ then $(1 \sm \gamma_+) \sigma \simeq \sigma \det(\gamma)$.
Finally, if $\ell$ has less than
maximal rank, then $\ell=\ell'\gamma$ where $\gamma\in
M_k\zp$ has zero determinant.
\end{proof}
\nid Note that the equivalence $d_{\ell\gamma} \simeq \det{\gamma}\cdot  d_\ell$ implicitly depends on a $p$-completion; of course, this completion is unnecessary if the matrix $\gamma$ is integral.

\begin{remark}

The first part of this proposition says that all differentials can be described as
composites of ``one-dimensional'' differentials, that is by composites of $d_\ell$'s with
$\ell\colon\zp\to\zpn$.

  For $p=2$ the second part of the proposition becomes a bit more complicated: 
$$d_{\ell\gamma}\simeq
\det(\gamma)\cdot d_\ell+
\sum_{\emptyset\neq
  T\subsetneq\{1,\dots,k\}}M_{\{1,\dots,k\},T}\,d_{\ell i_T}
$$
where $i_T$ is the matrix of the inclusion of $T$ in $\{1,\dots,k\}$, and $M_{S,T}$, as in remark~\ref{rem-etanonzero}, consists of products of minors and powers of $\eta$.  

As an example, let $n=1$ and $\ell=1$.  Then
$d^2_1 \simeq d_{[1,1]}=d_{[1,0]\cdot\left[
    \begin{smallmatrix}
      1&1\\0&0
    \end{smallmatrix}\right]}
    $.
This differential is homotopic to $0\cdot d_{[1,0]}+1\cdot1\cdot d_1\eta+0\cdot0\cdot d_0\eta=d_1\eta$, recovering Hesselholt's formula $d^2\simeq d\eta$ for the one-dimensional case.
\end{remark}

\begin{lemma}
  Let $\ell_1,\ell_2\in M_{n\times 1}\zp$.  Then $d_{\ell_1+\ell_2} \simeq d_{\ell_1}+d_{\ell_2}$.
\end{lemma}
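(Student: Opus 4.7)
The plan is to reduce the claim to the manifest linearity of the entries of $\ell_+$ on the $S^1$-summand of $(\Tp)_+$, as recorded in Remark~\ref{rem-etanonzero}.

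First, the differential factors as $d_\ell = \mu\circ(h_\ell\smsh 1_X)$, where
\[
h_\ell := (1\smsh \ell_+)\circ\sigma\colon S^{\bC}\longrightarrow S^1\smsh (\Tnp)_+
\]
and $\mu$ is the action map on $S^1\smsh X$ induced by the $\Tnp$-action on $X$.  Since post-composition with $\mu\smsh 1_X$ is a homomorphism of abelian groups in the stable homotopy category, it suffices to show $h_{\ell_1+\ell_2}\simeq h_{\ell_1}+h_{\ell_2}$ as stable maps $S^{\bC}\to S^1\smsh(\Tnp)_+$.

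Next, I would use the stable splitting $(\Tp)_+\simeq S^0\vee S^1$ together with the observation that the transfer $\sigma\colon S^{\bC}\to S^1\smsh (\Tp)_+\simeq S^1\vee S^2$ is by construction the inclusion of the top cell $S^2$.  It follows that $h_\ell$ depends on $\ell_+$ only through its restriction to the $S^1$-summand of $(\Tp)_+$.  Applying the matrix formula of Remark~\ref{rem-etanonzero} to $\ell=(a_1,\dots,a_n)^T\in M_{n\times 1}\zp$ with $T=\{1\}$, the only non-zero entries land in the summands with $|S|=1$ (the $|S|=0$ entry has no surjection $T\twoheadrightarrow S$, and $|S|\geq 2$ is impossible for $|T|=1$), and the exponent $|T|-|S|=0$ so no $\eta$ appears.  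The restriction is therefore
\[
\ell_+\big|_{S^1}\simeq \sum_{j=1}^n a_j\cdot\iota_j,
\]
where $\iota_j\colon S^1\hookrightarrow (\Tnp)_+$ is the inclusion of the $j$-th $S^1$-summand in the splitting $(\Tnp)_+\simeq \bigvee_{T\subseteq\{1,\dots,n\}} S^{|T|}$.  Since stable multiplication $\zp\to[S^1,S^1]$ is additive, this expression is linear in the entries of $\ell$, so $h_{\ell_1+\ell_2}\simeq h_{\ell_1}+h_{\ell_2}$ and the lemma follows.

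The only point requiring attention is confirming that no $\eta$-corrections or higher-rank contributions appear.  Because $k=1$ forces $|T|\leq 1$, the surjectivity constraint in Remark~\ref{rem-etanonzero} alone eliminates every entry outside $|S|=1$, and the exponent $|T|-|S|$ vanishes in the surviving entries; in particular the argument goes through uniformly in $p$, with no invocation of $\eta\simeq 0$ required, even though the odd-$p$ hypothesis is in force throughout the paper.
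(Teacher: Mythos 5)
Your proof is correct and captures the same essential idea as the paper's: the differential $d_\ell$ sees only the restriction of $\ell_+$ to the $S^1$-summand of ${\TT^1_p}_+$ (because $\sigma$ is the top-cell inclusion), and on that summand the assignment $\ell\mapsto\ell_+|_{S^1}$ is literally additive in the entries. The paper packages this via the pinch/fold diagram and asserts that $\mathrm{fold}\circ({\ell_1}_+\vee{\ell_2}_+)\circ\mathrm{pinch}\simeq(\ell_1+\ell_2)_+$ ``is checked by explicit computation in homotopy and depends essentially on the source having only cells in dimensions zero and one''; you have made that computation explicit via Remark~\ref{rem-etanonzero}, including the nice observation that no $\eta$-corrections arise when $k=1$, so the argument is uniform in $p$.
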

\begin{proof}
The differential $d_{\ell_1 + \ell_2}$ and the sum $d_{\ell_1} + d_{\ell_2}$ are the upper and lower composites of the diagram
$$\xymatrix{
(S^1 \vee S^1) \sm X \ar[r] & (\TT^1_+ \vee \TT^1_+) \sm X \ar[r]^{{\ell_1}_+ \vee {\ell_2}_+} &
(\TT^n_+ \vee \TT^n_+) \sm X \ar[r] \ar[d]^{\text{fold}} & X \vee X \ar[d] \\
S^1 \sm X \ar[r] \ar[u] &
\TT^1_+ \sm X \ar[u]_{\text{pinch}} \ar[r]^{(\ell_1 + \ell_2)_+} & \TT^n_+ \sm X \ar[r] & X
}$$
That $\{\text{fold} \circ ({\ell_1}_+ \vee {\ell_2}_+) \circ \text{pinch}\}: \TT^1_+ \ra \TT^n_+$ is stably homotopic to $(\ell_1 + \ell_2)_+$ is checked by explicit computation in homotopy and depends essentially on the source having only cells in dimensions zero and one.
\end{proof}


\subsection{Gcd's and Lcm's of matrices} \label{sec-gcdlcm}

In order to calculate the relations among the Frobenius, the differential, and the Verschiebung, it is convenient to develop some technology describing how various matrices (which index the $F$, $d$, and $V$ operators) interact.  We do so in a bit of (we hope entertaining) generality.
 
\begin{definition}\label{def:adjoint}
  Let $f\colon A\to B$ be an injection of abelian groups.  The {\em volume}
  $|f|$ of $f$ is defined to be the cardinality of the
  cokernel of $f$. 
The {\em adjoint} $f^\dagger$ of $f$ is the unique
  lifting $f^\dagger$ in 
$$\xymatrix{
0\ar[r]&A\ar[r]^{f}\ar[d]_{|f|}&
B\ar[r]\ar[d]^{|f|}\ar@{-->}[dl]_{f^\dagger}&
B/f(A)\ar[r]\ar[d]^{0}&0\\
0\ar[r]&A\ar[r]^{f}&B\ar[r]&B/f(A)\ar[r]&0
}
$$
\end{definition}

Note that if $A=B=\bZ^n$, then the volume is the absolute value of the
determinant, and the adjoint is given by plus or minus the adjoint of a
matrix in elementary linear algebra.  In $\zp$ there are more units,
and so potentially a greater distance between our adjoint and the
classical adjoint defined by explicit formulae involving minors.

In the following, $\Lambda$ will be a principal ideal domain, and $Q$ a
finitely generated free
$\Lambda$-module.
\begin{definition}
  Define $\Mla\subseteq End_\Lambda(Q)$ to be the submonoid consisting of the injective
  endomorphisms of $Q$.  In the situation $\Lambda=\zp$ and $Q=\zpn$
  we simply write $\Mn:=\mathcal{M}_{\zp}(\zpn)$.  

Given $f,g\in\Mla$, we say
  that $f$ and
  $g$ are {\em coprime} if $f+g\colon Q\oplus Q\to Q$ is surjective.  The {\em
    greatest common divisor} of $f$ and $g$ is a map $\gcd(f,g): Q \ra Q$ for which there are coprime $\bar{f}$ and $\bar{g}$ such that the composite $Q \oplus Q \xra{\bar{f} + \bar{g}} Q \xra{\gcd(f,g)} Q$ is equal to $Q \oplus Q \xra{f + g} Q$; more specifically, the greatest common divisor is the equivalence class in $\Mla/\!\Aut_\Lambda(Q)$ consisting of those $d \in \Mla$ such that $f+g = d(\bar{f}+\bar{g})$ for some coprime $\bar{f}$ and $\bar{g}$.
\end{definition}

Alternately, the greatest common divisor can be viewed as the collection of all maps $Q \ra Q$ that factor into an isomorphism $Q \cong (f+g)(Q \oplus Q)$ followed by the inclusion $(f+g)(Q \oplus Q) \subseteq Q$.  Note that such factorizations exist because $(f+g)(Q \oplus Q)$ is a free module of rank equal to the rank of $Q$---this follows because $f$ is injective and submodules of the free module $Q$ over the PID $\Lambda$ are free.

Note that $|\gcd(f,g)|=1$ and $\gcd(f,g)=\Aut_\Lambda(Q)$
are just other ways of saying that $f$ and $g$ are coprime.

A simple diagram chase gives the following lemma.
\begin{lemma}
  If $f,g\in \Mla$ then 
$\frac{|f||g|}{|Q/(f(Q)\cap g(Q))|}=|\gcd(f,g)|.
$
\end{lemma}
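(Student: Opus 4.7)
The plan is to reduce the lemma to a standard inclusion-exclusion exact sequence for submodules of $Q$. First, I would unpack what $|\gcd(f,g)|$ means in terms of subgroups. By definition, $f+g = d(\bar f + \bar g)$ where $d$ represents $\gcd(f,g)$ and $\bar f, \bar g$ are coprime, i.e.\ $\bar f + \bar g\colon Q\oplus Q \fib Q$ is surjective. Applying $d$ to this surjection gives $d(Q) = (f+g)(Q\oplus Q) = f(Q) + g(Q)$. Since $d$ is injective, its cokernel is $Q/d(Q) = Q/(f(Q)+g(Q))$, so
\[
|\gcd(f,g)| = |Q/(f(Q)+g(Q))|.
\]
Thus the lemma reduces to verifying the identity
\[
|Q/f(Q)|\cdot|Q/g(Q)| \;=\; |Q/(f(Q)\cap g(Q))|\cdot |Q/(f(Q)+g(Q))|.
\]

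Next, I would produce a short exact sequence of $\Lambda$-modules
\[
0 \to Q/(f(Q)\cap g(Q)) \xrightarrow{\Delta} Q/f(Q) \oplus Q/g(Q) \xrightarrow{\text{diff}} Q/(f(Q)+g(Q)) \to 0,
\]
where $\Delta(q) = (q,q)$ and $\text{diff}(a,b) = a-b$. Injectivity of $\Delta$, surjectivity of $\text{diff}$, and the inclusion $\operatorname{im}\Delta \subseteq \ker\text{diff}$ are immediate. The reverse inclusion $\ker\text{diff}\subseteq\operatorname{im}\Delta$ is the only mild step: given $(a,b)$ with $a-b = f(x)+g(y)$, I would set $q := a-f(x) = b+g(y)$ and observe that $\Delta(q) = (a,b)$ modulo $f(Q)\oplus g(Q)$. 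Since $f$ and $g$ are injective with finite cokernel (as they lie in $\Mla$), all four modules in the sequence are finite, so taking cardinalities yields the desired identity.

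The ``hard'' step is really just verifying exactness at the middle term, which is the inclusion-exclusion formula $|A|+|B| = |A\cap B|+|A\cup B|$ in its natural module-theoretic incarnation; no obstacle beyond bookkeeping is expected. Finally, dividing by $|Q/(f(Q)\cap g(Q))|$ produces
\[
\frac{|f||g|}{|Q/(f(Q)\cap g(Q))|} \;=\; |Q/(f(Q)+g(Q))| \;=\; |\gcd(f,g)|,
\]
completing the proof.
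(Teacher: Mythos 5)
Your proof is correct and is exactly the kind of "simple diagram chase" the paper alludes to without spelling out: you identify $|\gcd(f,g)| = |Q/(f(Q)+g(Q))|$ from the definition, then the four-term short exact sequence $0 \to Q/(f(Q)\cap g(Q)) \to Q/f(Q)\oplus Q/g(Q) \to Q/(f(Q)+g(Q)) \to 0$ gives the multiplicative identity upon taking cardinalities (all terms are finite since $f,g$ have finite cokernel and the middle term dominates the outer two). This is the standard inclusion--exclusion argument and matches the paper's intent.
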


\begin{definition}
  For each coprime pair $f,g\in\Mla$, \emph{choose} a section
$(\bez_f \oplus \bez_g) \colon Q\to Q\oplus Q$ of $f+g$.  The maps $\bez_f$ and $\bez_g$ such that $f \bez_f + g \bez_g = 1$ are the analogs of classical Bezout numbers.  In this coprime case, choose the representative $1$ for the greatest common divisor of $f$ and $g$.  

If $f$ and $g$ are not coprime \emph{choose} a representative in $\Mla$ for
the greatest common divisor and call it $\gcd(f,g)$, by abuse of notation.
  Let $\bar f,\bar g\in\Mla$ be the coprime
pair given uniquely by
$f+g=\gcd(f,g)(\bar f+\bar g)$ and let $\bez_f \oplus \bez_g$ be equal to $\bez_{\bar f} \oplus \bez_{\bar g}$, so that $f \bez_f + g \bez_g = \gcd(f,g)$.
\end{definition} 

\vspace{-10pt} Note that the diagram
$
\def\objectstyle{\scriptstyle}
\def\labelstyle{\scriptstyle}
\vcenter{\xymatrix@-10pt{
  Q \ar[d]_g & \ar[l]_{\tilde f} \ar[d]^{\tilde g} Q\\
  Q & \ar[l]_f Q
}}
$
is cartesian if and only if the diagram 
$
\def\objectstyle{\scriptstyle}
\def\labelstyle{\scriptstyle}
\vcenter{
\xymatrix@-10pt{
  Q \ar[d]_{\bar g} & \ar[l]_{\tilde f} \ar[d]^{\tilde g} Q\\
  Q & \ar[l]_{\bar f} Q
}}
$
is cartesian.  When these diagrams are cartesian, denote the composite $g \tilde f = f \tilde g$ by
$\mathrm{lcm}(f,g)$, giving
the pleasant equation
$|f||g|=|\gcd(f,g)||\,\mathrm{lcm}(f,g)|.$  For each pair $f$ and $g$, we in fact choose a specific pullback of $f$ and $g$, therefore specific maps $\tilde f$ and $\tilde g$; this in turn pins down a particular map $\mathrm{lcm}(f,g)$.

\begin{remark}
We summarize the above discussion and introduce some notation clarifying the relationship of $\{\bar f, \bar g, \tilde f, \tilde g\}$ to $\{f,g\}$.  Given natural numbers $m$ and $n$, there are numbers
\begin{align}
\langle m \rangle := m/\!\gcd(m,n) = \lcm(m,n) / n \nn \\
\langle n \rangle := n/\!\gcd(m,n) = \lcm(m,n) / m \nn
\end{align}
In our noncommutative generalization, there is not one notion here but two, according to whether we generalize the expression ``$m / \gcd(m,n)$" or ``$\lcm(m,n) / n$".  The generalizations are denoted, respectively $\bar{f}$ and $\tilde f$.  We sometimes use the expression $[f / \gcd(f,g)]$ to \emph{mean} $\bar{f}$, and the expression $[\lcm(f,g) / g]$ to \emph{mean} $\tilde f$.  To reiterate then, the endomorphisms $[f / \gcd(f,g)]$ and $[g / \gcd(f,g)]$ are defined to be coprime endomorphisms for which there is a morphisms $\gcd(f,g)$ such that $f = \gcd(f,g) [f / \gcd(f,g)]$ and $g = \gcd(f,g) [g / \gcd(f,g)]$.  Similarly, the endomorphisms $[\lcm(f,g) / g]$ and $[\lcm(f,g) / f]$ are defined to be coprime endomorphisms such that $f [\lcm(f,g) / f] = g [\lcm(f,g) / g]$; that common product is called $\lcm(f,g)$.  Notice that with the above definitions of Bezout endomorphisms, we have the relation $[f / \gcd(f,g)] \bez_f + [g / \gcd(f,g)] \bez_g = 1$.  We sometimes write $\gcd(f,g)$ as $\gcd_{f,g}$ and similarly $\lcm(f,g)$ as $\lcm_{f,g}$.
\end{remark}

\begin{example} \label{ex-gcd}
Propositions~\ref{prop-transfer}.4 and~\ref{prop-vers}.2 describe relations for transfer maps and the Verschiebung.  When the group $G$ in question is the torus $\Tnp$, the order of the double coset $|L_{\tilde b}\backslash L_{a \tilde b}/L_{\tilde a}|$ appearing in those formulae is precisely the volume $|\gcd(a,b)|$.  Here $L_{a}=a^{-1} \zpn / \zpn \subset \Tnp$ is the kernel of the covering associated to the matrix $a$.
\end{example}

Let $K$ be a field containing 
$\Lambda$, and set $b\Lambda=K/\Lambda$.  This quotient $b\Lambda$ should be thought of as the classifying space of $\Lambda$; for example, for the inclusion $\bZ \subset \bR$, the classifying space is $b\bZ = \bR/\bZ$, the circle.  (Totaro has also considered this notion of classifying spaces of rings.) 
When as before $Q$ is a
finitely generated free $\Lambda$-module, let $bQ=b\Lambda\otimes_\Lambda
Q$.  When $f\in\Mla$ there is an induced surjection $bf\colon bQ\to bQ$
with kernel $l_f\cong  Q/f(Q)$ and an isomorphism 
$\phi_f\colon bQ/l_f\to bQ$ induced
by multiplication by $f$, as illustrated by the diagram
$$
\begin{CD}
  0@>>>Q@>>>K\otimes_\Lambda Q@>>> bQ@>>>0\\
  @.@V{f}VV@V{f}V{\cong}V@V{bf}VV\\
  0@>>>Q@>>>K\otimes_\Lambda Q@>>> bQ@>>>0
\end{CD}
$$
(Here the middle map is an isomorphism by Cramer's rule, since $det(f)$ is
invertible in $K$).  As an example, let $\Lambda=\bZ\subseteq\bR=K$, $Q=\bZ$
and let $f$ be multiplication by $m\in\bZ$.  Then 
$bf\colon bQ\to bQ$ is the $m$-fold covering of the circle $bQ=\bR/\bZ$.

\begin{lemma}\label{lem:matvsgp}
  Let $f,g\in\Mla$.  There is an $h\in\Mla$ such that $g=hf$ if and
  only if $l_f\subseteq l_g\subseteq bQ$.  In particular, there is an
  $h\in \Aut_\Lambda(Q)$ such that $g=hf$ if and only if $l_f=l_g$.
\end{lemma}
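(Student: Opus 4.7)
The plan is to identify $l_f$ concretely as a submodule of $bQ$ and then translate the relation $g=hf$ into a containment of these submodules.

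First I would unpack the definition of $l_f$. Since $f$ is injective, it is an isomorphism on $K\otimes_\Lambda Q$, and the diagram in the paragraph just before the lemma identifies $l_f$ with the kernel of $bf\colon bQ\to bQ$. Viewing $bQ=(K\otimes_\Lambda Q)/Q$, an element of $K\otimes_\Lambda Q$ maps into $l_f$ exactly when its image under $f$ lies in $Q$, so
$$
l_f \;=\; f^{-1}(Q)/Q \;\subseteq\; (K\otimes_\Lambda Q)/Q \;=\; bQ,
$$
and likewise $l_g=g^{-1}(Q)/Q$.

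For the forward direction, if $g=hf$ for some $h\in\Mla$, then $h(Q)\subseteq Q$ implies $Q\subseteq h^{-1}(Q)$ in $K\otimes_\Lambda Q$, so $f^{-1}(Q)\subseteq f^{-1}h^{-1}(Q)=g^{-1}(Q)$, and hence $l_f\subseteq l_g$. For the reverse direction, starting from $l_f\subseteq l_g$, define $h:=gf^{-1}$ as a $\Lambda$-linear endomorphism of $K\otimes_\Lambda Q$; it is injective since $f$ and $g$ are. The containment $l_f\subseteq l_g$ is exactly $f^{-1}(Q)\subseteq g^{-1}(Q)$ inside $K\otimes_\Lambda Q$, which rearranges to $g(f^{-1}(Q))\subseteq Q$, i.e.\ $h(Q)\subseteq Q$. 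Thus $h$ restricts to an element of $\Mla$ with $g=hf$.

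For the ``in particular'' clause, suppose $l_f=l_g$. Applying the first part in both directions produces $h,h'\in\Mla$ with $g=hf$ and $f=h'g$. Then $g=hh'g$ and $f=h'hf$; since $g$ and $f$ extend to isomorphisms of $K\otimes_\Lambda Q$, this forces $hh'=h'h=\id$ on $K\otimes_\Lambda Q$, hence on $Q$, so $h\in\Aut_\Lambda(Q)$. Conversely, if $g=hf$ with $h\in\Aut_\Lambda(Q)$, then $h^{-1}\in\Mla$ and $f=h^{-1}g$, so the first part gives both $l_f\subseteq l_g$ and $l_g\subseteq l_f$. The only mildly delicate step is the identification of $l_f$ as $f^{-1}(Q)/Q$ inside $bQ$; once that is in hand, the rest is pure diagram manipulation in $K\otimes_\Lambda Q$.
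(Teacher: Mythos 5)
Your proof is correct and is essentially the diagram chase the paper alludes to, with the key identification $l_f = f^{-1}(Q)/Q \subseteq bQ$ made explicit. The manipulation of preimages in $K\otimes_\Lambda Q$ (using that $f$ and $g$ become isomorphisms there, so $h := gf^{-1}$ is globally defined and one only needs $h(Q)\subseteq Q$) is precisely what one extracts from the three-row ladder diagram in the paper's proof.
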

\begin{proof}
  This follows by a diagram chase involving 
\[\begin{CD}
  0@>>>Q@>>>K\otimes_\Lambda Q@>>> bQ@>>>0\\
  @. @A{g}AA@A{g}A{\cong}A@A{bg}AA\\
  0@>>>Q@>>>K\otimes_\Lambda Q@>>> bQ@>>>0\\
  @.@V{f}VV@V{f}V{\cong}V@V{bf}VV\\
  0@>>>Q@>>>K\otimes_\Lambda Q@>>> bQ@>>>0 
\end{CD} \]
\vspace{-22pt}

\qedhere
\end{proof}

\begin{lemma}\label{lem:diagon}
  If $f\colon\Lambda^k\to\Lambda^n$ is an injection, then there are $\gamma\in
  GL_n(\Lambda)$, $\gamma'\in GL_k(\Lambda)$ such that
  $\gamma'f\gamma$ is represented by a diagonal $k\times k$-matrix
  followed by the standard inclusion $\Lambda^k\subseteq\Lambda^n$.
\end{lemma}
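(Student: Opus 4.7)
The plan is to recognize this as a restatement of the Smith normal form / structure theorem for submodules of finitely generated free modules over a principal ideal domain, and then to translate the conclusion of that theorem into the form asserted here.

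First, I would invoke the following classical fact: given an injective $\Lambda$-linear map $f\colon\Lambda^k\to\Lambda^n$ between free modules over the PID $\Lambda$, the image $f(\Lambda^k)$ is a free submodule of $\Lambda^n$ of rank exactly $k$, and there exists a basis $e_1,\dots,e_n$ of $\Lambda^n$ together with scalars $d_1,\dots,d_k\in\Lambda\setminus\{0\}$ such that $d_1 e_1,\dots,d_k e_k$ is a basis of $f(\Lambda^k)$. This is the standard structure theorem proved, for instance, by induction on $k$ using the fact that for any submodule $M\subseteq\Lambda^n$ one can pick a linear functional $\varphi\colon\Lambda^n\to\Lambda$ maximizing $\varphi(M)$; such a $\varphi$ splits off a rank-one summand, and one iterates.

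Second, I would use the two bases to define the automorphisms. Let $\gamma'\in GL_n(\Lambda)$ be the change-of-basis automorphism of $\Lambda^n$ sending the standard basis to $e_1,\dots,e_n$. On the domain side, the vectors $f_i := f^{-1}(d_i e_i)\in\Lambda^k$ form a basis of $\Lambda^k$ because $f$ is injective and $d_1e_1,\dots,d_ke_k$ span $f(\Lambda^k)$; let $\gamma\in GL_k(\Lambda)$ send the standard basis of $\Lambda^k$ to $f_1,\dots,f_k$. (Here I am reading the statement with the natural dimension matching $\gamma\in GL_k$ and $\gamma'\in GL_n$, which appears to be a typo in the statement as written.) By construction, the composite $(\gamma')^{-1} f \gamma$ sends the $i$-th standard basis vector of $\Lambda^k$ to $d_i$ times the $i$-th standard basis vector of $\Lambda^n$, so it factors as the diagonal map $\mathrm{diag}(d_1,\dots,d_k)\colon\Lambda^k\to\Lambda^k$ followed by the standard inclusion $\Lambda^k\subseteq\Lambda^n$. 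Replacing $\gamma'$ by its inverse gives the stated factorization.

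There is really no obstacle here beyond citing the correct form of the PID structure theorem; the only mild subtlety is keeping track of which side is which and that the diagonal block comes first in the decomposition of the target $\Lambda^n = \Lambda^k \oplus \Lambda^{n-k}$ determined by $e_1,\dots,e_n$. The injectivity of $f$ is used precisely to conclude that $f(\Lambda^k)$ has rank $k$ (and hence that all the $d_i$ are nonzero and that the $f_i$ form a full basis of $\Lambda^k$).
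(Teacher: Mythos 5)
Your proof is correct and rests on the same key input as the paper's: the structure theorem for finitely generated modules over a PID. The paper applies that theorem to the quotient $\Lambda^n/f(\Lambda^k)\cong\Lambda^{n-k}\oplus\bigoplus_{i=1}^k\Lambda/\lambda_i\Lambda$ and then says ``lifting this isomorphism gives the result''; you instead use the equivalent adapted-basis (Smith normal form) statement about the submodule $f(\Lambda^k)\subseteq\Lambda^n$ and then spell out the construction of the two change-of-basis automorphisms explicitly. These are two faces of the same theorem, and your version makes explicit the lifting step that the paper leaves implicit. You also correctly note the index typo in the statement, which should read $\gamma\in GL_k(\Lambda)$ and $\gamma'\in GL_n(\Lambda)$, since $\gamma$ acts on the domain $\Lambda^k$ and $\gamma'$ on the target $\Lambda^n$.
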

\begin{proof}
  The fundamental theorem for finitely generated modules over a PID
  gives us an isomorphism $\Lambda^n/f(\Lambda^k)\cong
  \Lambda^{n-k}\oplus\bigoplus_{i=1}^k\Lambda/\lambda_i\Lambda$, for some $\lambda_i \in \Lambda$.
  Lifting this isomorphism gives the result.
\end{proof}

\nid If in addition $\Lambda$ is local number ring with maximal ideal generated by
$\pi$, then the diagonal matrix in question can be chosen to have
powers of $\pi$ on the diagonal.


\subsection{Stable splitting of the transfer} \label{sec-stablesplitting}

In the last section we analyzed the interactions of endomorphisms of modules over a PID.  We are of course most interested in the case of the PID $\Lambda = \zp$ and the module $Q = \zpn$.  Associated to a matrix $\alpha \in \mathcal{M}_n = M_n(\zp) \cap GL_n(\bQ_p)$, there is a self-covering of the p-complete n-torus.  The Verschiebung map for $\alpha$ is by definition the Verschiebung map for that induced self-covering.  This Verschiebung was defined in section~\ref{sec-vers} in terms of the transfer map for the projection associated to the covering.

The higher differentials introduced in section~\ref{sec-higherdiff} were built using the stable splitting of the n-torus.  In order to determine how the differentials and the Verschiebung interact, we must therefore describe the relationship of the stable splitting of the n-torus to the transfers associated to projections of the torus---giving such a description is the purpose of this section.

Choose once and for all an equivariant transfer map
$$tr_p\colon (\TT/L_p)_+\to \TT_+.$$
We will need the following fact proved in Hesselholt~\cite{hess-ptypical}.
\begin{lemma}
  With respect to the stable equivalence $\TT_+ \simeq S^0\vee
  S^1$ the composite
$$
  \begin{CD}
    \TT_+@>{\phi^p_+}>>(\TT/L_p)_+@>{tr_p}>>\TT_+
  \end{CD}
$$
is given by the matrix
$$\tau_p=
\left[\begin{matrix}
  p&(p-1)\eta\\0&1
\end{matrix}\right]
$$
\end{lemma}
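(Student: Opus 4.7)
The plan is to reduce the computation to the known formula for $[p]_+$ from Remark~\ref{rem-etanonzero}.  The key algebraic observation is the factorization $\operatorname{proj} = \phi^p \circ [p]$, where $\operatorname{proj}\colon \TT \to \TT/L_p$ is the natural quotient and $[p]\colon \TT \to \TT$ is the $p$-th power map.  This identity is immediate from the defining relation $[p] = \phi_p \circ \operatorname{proj}$ of the natural isomorphism $\phi_p\colon \TT/L_p \to \TT$, since $\phi^p = \phi_p^{-1}$.  Post-composing with $tr_p$ gives
\[
  \tau_p \circ [p]_+ \;=\; tr_p \circ \phi^p_+ \circ [p]_+ \;=\; tr_p \circ \operatorname{proj}_+,
\]
which reduces the problem to the classical composite on the right.

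Next I would apply the standard deck-transformation identity: since $\operatorname{proj}$ is a regular $p$-fold covering with free deck action by $L_p \subseteq \TT$ acting by translation, one has $tr_p \circ \operatorname{proj}_+ \simeq \sum_{g \in L_p} g_+$ stably.  Each translation $g_+\colon \TT_+ \to \TT_+$ is based-homotopic to the identity via the family $\{(tg)_+\}_{t \in [0,1]}$ of based self-maps of $\TT_+$, so $\sum_{g \in L_p} g_+ \simeq p \cdot \id_{\TT_+}$.  Combining these steps yields $\tau_p \circ [p]_+ \simeq p \cdot \id_{\TT_+}$.

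To read off the matrix, I would substitute the formula $[p]_+ = \bigl(\begin{smallmatrix} 1 & 0 \\ 0 & p \end{smallmatrix}\bigr)$ from Remark~\ref{rem-etanonzero}, write $\tau_p = \bigl(\begin{smallmatrix} a & b \\ c & d \end{smallmatrix}\bigr)$ with entries in the appropriate stable homotopy groups, and solve
\[
  \tau_p \cdot [p]_+ \;=\; \begin{pmatrix} a & pb \\ c & pd \end{pmatrix} \;=\; \begin{pmatrix} p & 0 \\ 0 & p \end{pmatrix}.
\]
This forces $a = p$, $d = 1$, $c = 0$ (also automatic since $[S^0,S^1] = \pi_{-1}^s = 0$), and $pb = 0$ in $\pi_1^s = \bZ/2$.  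For $p$ odd the relation $pb = b$ in $\bZ/2$ then forces $b = 0 = (p-1)\eta$, giving exactly the claimed matrix.

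The main obstacle is pinning down the entry $b$ when $p = 2$: there $2b = 0$ in $\bZ/2$ is automatic and does not distinguish $b = 0$ from $b = \eta$.  Resolving this residual ambiguity requires a finer geometric argument---for instance, an explicit Pontryagin--Thom analysis of $tr_2$ for the double cover $z \mapsto z^2$ of $S^1 \subset \bC$, extracting the Hopf-invariant contribution---which is carried out in Hesselholt's cited paper~\cite{hess-ptypical}.  Since the rest of the paper operates at an odd prime, the argument above supplies what is needed for the sequel.
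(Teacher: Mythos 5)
The paper offers no proof of this lemma: it simply cites Hesselholt~\cite{hess-ptypical}. Your proposal is therefore a genuine supplement rather than a rederivation, and your argument is correct as far as it goes. The strategy — factor $\operatorname{proj} = \phi^p \circ [p]$, use the standard deck-transformation identity $tr_p \circ \operatorname{proj}_+ \simeq \sum_{g \in L_p} g_+ \simeq p\cdot\id$ (each translation being based-homotopic to the identity since $\TT$ is path-connected), and then read off entries from the non-invertible matrix equation $\tau_p \cdot [p]_+ = p\cdot\id$ — is clean, elementary, and avoids the explicit Pontryagin--Thom analysis that Hesselholt carries out. What it buys you is a self-contained verification of three of the four entries for every prime; what it costs is that the $(S^1 \to S^0)$-entry $b \in \pi_1^s = \bZ/2$ is only determined modulo $p$-torsion, so $b=(p-1)\eta$ is forced for $p$ odd but $b=0$ and $b=\eta$ are indistinguishable for $p=2$. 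You flag this honestly. One small caveat: you state that "the rest of the paper operates at an odd prime," but the paper does invoke this lemma and Corollary~\ref{cor-diagtransfer} in contexts (e.g., Remark~\ref{rem-etanonzero}) where $p=2$ is explicitly discussed; the blanket-odd-prime hypothesis only enters formally in Theorem~\ref{thm-relations}. So for completeness the $p=2$ entry does need the finer geometric argument you defer to Hesselholt, though your argument correctly covers everything required by the paper's main theorems.
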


For $\alpha,\beta\in\Mn$ we analyze the transfer maps 
$tr_{\alpha\beta}^{\beta}\colon 
({\TZ}/L_{\alpha\beta})_+\to ({\TZ}/L_\beta)_+$
as follows.  
Factor
$\alpha=\gamma'\delta\gamma$ where $\gamma',\gamma\in GL_n(\zp)$ and
$\delta$ is a diagonal matrix with diagonal entries powers of $p$.  
The diagram
$$
\begin{CD}
  {\TZ}/L_{\beta}@<{\phi^{\gamma\beta}}<<{\TZ}\\
  @V{\text{proj.}}VV@V{\text{proj.}}VV\\
  {\TZ}/L_{\alpha\beta}@<{\phi^{\gamma\beta}}<<{\TZ}/L_\delta
\end{CD}
$$
is a pullback.  (In fact the horizontal maps are isomorphisms.  Note the absence of $\gamma'$ from the diagram---it was absorbed in an implicit equality in the lower right corner between $\TZ / L_\delta$ and $\TZ / L_{\gamma' \delta}$.  There is a second implicit equality in the upper right corner between $\TZ$ and $\TZ / L_{\gamma^{-1}}$.) The transfers $tr_{\alpha\beta}^{\beta}$ and $tr_{\delta}$ are therefore related by the diagram
$$
\begin{CD}
  ({\TZ}/L_{\beta})_+@<{\phi^{\gamma\beta}_+}<<({\TZ})_+\\
  @A{tr_{\alpha\beta}^{\beta}}AA@A{tr_\delta}AA\\
  ({\TZ}/L_{\alpha\beta})_+@<{\phi^{\gamma\beta}_+}<<({\TZ}/L_\delta)_+
\end{CD}
$$
Moreover, the diagonal transfer $tr_\delta\colon (\Tnp/L_\delta)_+\to (\Tnp)_+$ can be described explicitly as the $p$-completion of smashes of powers of $tr_p$.

Note that if $\eta$ acts trivially, and we view $p_+$ as given by the matrix 
$\left[\begin{smallmatrix}
  1&0\\0&p
\end{smallmatrix}\right]\colon 
\begin{smallmatrix}
  S^0\\S^1
\end{smallmatrix}\! \to \! 
\begin{smallmatrix}
  S^0\\S^1
\end{smallmatrix},
$
the composite $tr_p\phi^p_+$ is given by the adjoint $(p_+)^\dagger=\left[
  \begin{smallmatrix}
    p&0\\0&1
  \end{smallmatrix}\right]
  $.
This continues to be true more generally: if $\alpha\in\Mn$ then $\alpha_+$ can be viewed as a $2^n\times2^n$-matrix through the $n$-fold smash of the stable equivalence $\TT_+\simeq S^0\vee S^1$, and (if $\eta$ acts trivially) the transfer is given in this basis by $(\alpha_+)^\dagger$.  We will in the future occasionally write $(\alpha_+)^\dagger$ for $tr_\alpha\phi^\alpha_+$ even if $\eta$ does not act trivially.

\begin{cor} \label{cor-diagtransfer}
  Let $\delta\in M_n\bZ$ be a diagonal matrix with $p$-power
  entries.  Then, under the stable equivalence $\TT_+ \simeq S^0\vee
  S^1$ the transfer $(\delta_+)^\dagger\colon (\Tnp)_+\to(\Tnp)_+$
is given by the matrix
$$\tau_\delta=\bigotimes_{1\leq j\leq n}
\left[\begin{matrix}
  \delta_{jj}&(\delta_{jj}-1)\eta\\0&1
\end{matrix}\right]
$$
If $\alpha\in\Mn$ then 
\raisebox{1.25pt}{$
\xymatrix{
  S^1\vee\dots\vee S^1\ar[r]^-{\text{inc}} & (\Tnp)_+ \ar[r]^{(\alpha_+)^\dagger} & (\Tnp)_+ \ar[r]^-{\text{proj}} & S^1\vee\dots\vee S^1
}
$}
is given by $\alpha^\dagger$.  If $\eta$ acts trivially then 
\vspace{-8pt}
$$
\begin{CD}
  S^1\vee\dots\vee S^1@>{\text{inc}}>>(\Tnp)_+\\
@V{\text{inc}}VV@V{(\alpha_+)^\dagger}VV\\
(\Tnp)_+@>{(\alpha^\dagger)_+}>>(\Tnp)_+
\end{CD}
$$
commutes in the stable homotopy category.
\end{cor}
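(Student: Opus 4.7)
Plan: The strategy is to prove the three claims sequentially, in each case reducing to the one-dimensional transfer computation established in the preceding lemma.

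For the first claim, observe that a diagonal $\delta \in M_n\bZ$ with $p$-power entries decomposes the self-covering $\delta\colon \Tnp \to \Tnp$ as a product of self-coverings of the individual circle factors. Consequently the transfer $tr_\delta$ is, after $p$-completion, the $n$-fold smash product $tr_{\delta_{11}} \wedge \cdots \wedge tr_{\delta_{nn}}$, so $(\delta_+)^\dagger$ is the smash of the one-dimensional composites $tr_{\delta_{jj}} \phi^{\delta_{jj}}_+$. Applying the cited one-dimensional lemma to each factor and using that smash products of stable maps between wedges of spheres correspond to tensor products in the subset-indexed basis arising from the splitting $\TT_+ \simeq S^0 \vee S^1$ yields the claimed formula for $\tau_\delta$.

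For the second claim, I would factor $\alpha = \gamma'\delta\gamma$ via Lemma~\ref{lem:diagon} with $\gamma, \gamma' \in GL_n(\zp)$ and $\delta$ diagonal with $p$-power entries. The pullback diagram preceding the corollary, together with the absorption of $\gamma'$ into the implicit identification $L_{\gamma'\delta} = L_\delta$, lets us express $(\alpha_+)^\dagger$ as a sandwich of $(\delta_+)^\dagger = \tau_\delta$ by the stable isomorphisms induced by $\gamma$ and $\gamma'$. On the $S^1 \to S^1$ block, Remark~\ref{rem-etanonzero} shows $\gamma_+$ restricts to the matrix $\gamma$ (since $\eta$-contributions occur only between summands of strictly decreasing size), while the tensor product of Part~1 restricts to the diagonal matrix with $j$-th entry $\prod_{i \neq j} \delta_{ii}$: only the bottom-right ``$1$'' at factor $j$ combined with the top-left ``$\delta_{ii}$''s at factors $i \neq j$ contributes, as every other tensor-product entry would change the subset-size grading. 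This diagonal matrix is precisely $\delta^\dagger$. Combining these and invoking multiplicativity of $\dagger$ with $u^\dagger = u^{-1}$ for units yields that the $S^1 \to S^1$ restriction of $(\alpha_+)^\dagger$ equals $(\gamma')^{-1} \delta^\dagger \gamma^{-1} = (\gamma'\delta\gamma)^\dagger = \alpha^\dagger$.

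For the third claim, when $\eta$ acts trivially the explicit formulas of Remark~\ref{rem-etanonzero} imply every $\beta_+$ and every $\tau_\delta$ is block-diagonal with respect to subset-size grading, since all $\eta$-entries between summands of different size vanish. Hence $(\alpha_+)^\dagger$, built from block-diagonal factors via the sandwich of Part~2, is itself block-diagonal, so its restriction to $S^1$-summands maps into $S^1$-summands with $S^1 \to S^1$ block equal to $\alpha^\dagger$ by Part~2. The other composite $(\alpha^\dagger)_+ \circ \mathrm{inc}$ has $S^1 \to S^1$ block $\alpha^\dagger$ and vanishes on higher-degree blocks, directly by Remark~\ref{rem-etanonzero}; the two composites therefore agree. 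The main obstacle throughout is establishing the factorization $(\alpha_+)^\dagger = (\gamma'_+)^{-1} \cdot \tau_\delta \cdot (\gamma_+)^{-1}$ rigorously: one must carefully unwind the pullback diagrams expressing how transfers interact with projections, taking into account the paper's convention of using $\phi^a$ for maps in either direction and the implicit identifications used to absorb the unit $\gamma'$. Once this factorization is in hand, the remaining analysis is a routine computation with tensor products.
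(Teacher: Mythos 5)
Your overall strategy --- factor $\alpha = \gamma'\delta\gamma$ via Lemma~\ref{lem:diagon}, reduce the diagonal transfer to smashes of $tr_p$ and tensor products in the subset-indexed basis, and then read off the degree-one block --- is exactly the approach laid out in the paragraphs preceding the corollary (the paper states the corollary without a separate proof environment, as an immediate consequence of that discussion). Parts~1 and~3 of your argument are fine, and your observation that when $\eta$ acts trivially all the matrices in play become block-diagonal in the subset-size grading is precisely the right point.

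There is, however, a genuine error in Part~2.  You invoke ``multiplicativity of $\dagger$'' to conclude $(\gamma'\delta\gamma)^\dagger = (\gamma')^{-1}\delta^\dagger\gamma^{-1}$, but the adjoint of Definition~\ref{def:adjoint} is \emph{anti}-multiplicative: for injections $g\colon A\to B$ and $f\colon B\to C$ one checks $(fg)(g^\dagger f^\dagger) = f\,|g|\,f^\dagger = |g|\,|f| = |fg|$, so $(fg)^\dagger = g^\dagger f^\dagger$ with the factors reversed, giving $(\gamma'\delta\gamma)^\dagger = \gamma^{-1}\delta^\dagger(\gamma')^{-1}$.  Your sandwich formula for $(\alpha_+)^\dagger$ has the conjugating isomorphisms on the wrong sides as well: unwinding the pullback diagram and the definition $(\alpha_+)^\dagger = tr_\alpha\,\phi^\alpha_+$ carefully (the $(\gamma')^{-1}$ arises on the right because it is absorbed into the source map $\phi^\alpha$, while the $\gamma^{-1}$ arises on the left from the top row of the transfer square), one finds $(\alpha_+)^\dagger = (\gamma^{-1})_+\,\tau_\delta\,((\gamma')^{-1})_+$, whose degree-one block is $\gamma^{-1}\delta^\dagger(\gamma')^{-1}$.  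Your two reversals cancel, so the conclusion you reach is correct, but for $n\geq 2$ with non-commuting $\gamma,\gamma'$ the intermediate equality $(\gamma')^{-1}\delta^\dagger\gamma^{-1} = (\gamma'\delta\gamma)^\dagger$ is simply false. You should fix the direction of the sandwich and invoke anti-multiplicativity of $\dagger$; as written the argument appears to succeed only by a coincidence of offsetting sign (order) errors.
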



\subsection{Relations among the Frobenius, differential, and Verschiebung}

In the one-dimensional case, the Verschiebung is related to the Frobenius and differential by the relations $FV=p$, $VF=V(1)\cdot$, and $FdV=d$.  In Proposition~\ref{prop-vers}, we established multi-dimensional analogs of the first two relations, namely $F^{\alpha} V_{\beta} = |\gcd_{\alpha,\beta}| V_{\tilde \beta} F^{\tilde \alpha}$ and $V_{\alpha} F^{\alpha} = V_{\alpha}(1)\cdot$; (here we have made use of the observation in example~\ref{ex-gcd}).  It remains only to analyze the composite $F^{\alpha} d_{\ell} V_{\beta}$.

\begin{lemma}\label{FdV0}
  Let $\alpha,\beta\in\Mn$ and let $\ell\in M_{n\times k}\zp$. 
Then 
$$F^\alpha d_\ell V_\alpha\colon S^k\smsh \Tn(A)^{L_\beta}\to \Tn(A)^{L_\beta}$$ 
is homotopic to the composite
$$
\xymatrix{S^k\smsh T^{\beta}\ar[r]^-{\sigma\smsh1} &
(\TT_p^k)_+\smsh T^{\beta}\ar[r]^{\ell_+\smsh1}&
(\TT_p^n)_+\smsh T^{\beta}\ar[r]^{(\alpha_+)^\dagger\smsh 1}&
(\TT_p^n)_+\smsh T^{\beta}\ar[r]^-{\phi^\beta\smsh1}&
(\TT_p^n/L_{\beta})_+\smsh T^{\beta}\ar[r]^-{\mu} & T^{\beta}
}
$$
where $(\alpha_+)^\dagger:=tr_\alpha\phi^\alpha_+$, the map $tr_\alpha\colon (\Tnp/L_\alpha)_+\to (\Tnp)_+$ is the transfer,
and as before $T^\beta$ is shorthand for $\Tn(A)^{L_\beta}$.
\end{lemma}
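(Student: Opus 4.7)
The plan is to expand $d_\ell V_\alpha$ into its primitive pieces and then reduce the lemma to a comparison of self-maps of $(\Tnp/L_\beta)_+$ that is governed by the equivariance of the transfer. First, by the definition of the differential in Section~\ref{sec-higherdiff},
\[d_\ell \simeq \mu^{\alpha\beta} \circ (\ell_+ \wedge 1) \circ (\sigma \wedge 1),\]
where $\mu^{\alpha\beta}\colon (\Tnp)_+ \wedge T^{\alpha\beta} \to T^{\alpha\beta}$ denotes the natural $\Tnp$-action on $T^{\alpha\beta}$, factoring as $\mu \circ (\phi^{\alpha\beta} \wedge 1)$ through the underlying $(\Tnp/L_{\alpha\beta})_+$-module action. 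Sliding $V_\alpha$ past the remaining factors by functoriality of the smash product and applying $F^\alpha$ reduces the lemma to the auxiliary identity
\[F^\alpha \circ \mu^{\alpha\beta} \circ (1 \wedge V_\alpha) \simeq \mu \circ (\phi^\beta \wedge 1) \circ ((\alpha_+)^\dagger \wedge 1) \colon (\Tnp)_+ \wedge T^\beta \to T^\beta.\]

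To prove this, I would pass to the stabilized mapping-space description $T^\gamma \simeq \mathrm{Map}_*(S^W \wedge (\Tnp/L_\gamma)_+, \Lambda_{\Tnp}(A)(S^W))^{\Tnp}$ used in Section~\ref{sec-vers}. In this language, $V_\alpha$ is pullback along the transfer $tr_{\alpha\beta}^\beta \colon (\Tnp/L_{\alpha\beta})_+ \to (\Tnp/L_\beta)_+$, $F^\alpha$ is pullback along the projection $pr \colon (\Tnp/L_\beta)_+ \to (\Tnp/L_{\alpha\beta})_+$, and the $\Tnp$-action on $T^{\alpha\beta}$ is pulled back along $\phi^{\alpha\beta}$ from the natural $(\Tnp/L_{\alpha\beta})_+$-module structure. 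Consequently, $F^\alpha \circ \mu^{\alpha\beta} \circ (1 \wedge V_\alpha)$ corresponds to precomposition by the stable self-map $tr_{\alpha\beta}^\beta \circ m_{\phi^{\alpha\beta}(-)} \circ pr$ of $(\Tnp/L_\beta)_+$ (parameterized by the input class from $(\Tnp)_+$, with $m$ denoting translation), while the target composite corresponds to precomposition by $m_{\phi^\beta((\alpha_+)^\dagger(-))}$.

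The $\Tnp$-equivariance of the transfer, which is the second item of Proposition~\ref{prop-transfer}, then slides the translation $m_{\phi^{\alpha\beta}(-)}$ past $tr_{\alpha\beta}^\beta$, replacing it by $m_{\phi^\beta(-)}$. The pullback square displayed immediately after Corollary~\ref{cor-diagtransfer} identifies the remaining composite $tr_{\alpha\beta}^\beta \circ pr$ with the $\phi^\beta$-conjugate of $(\alpha_+)^\dagger = tr_\alpha \phi^\alpha_+ \colon (\Tnp)_+ \to (\Tnp)_+$, which matches the two self-maps and proves the auxiliary identity. The main hurdle will be the careful bookkeeping of the several instances of the natural isomorphisms $\phi^?$ and of the distinction between the $\Tnp$-action on $T^\gamma$ and its underlying $(\Tnp/L_\gamma)_+$-module structure; the geometric content is otherwise short, resting entirely on transfer equivariance together with the prior comparison of $tr_{\alpha\beta}^\beta \circ pr$ with $(\alpha_+)^\dagger$.
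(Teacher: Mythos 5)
Your reduction is sound: the displayed composite and $F^\alpha d_\ell V_\alpha$ both factor through $(\ell_+\smsh 1)(\sigma\smsh 1)$, and the lemma does reduce to the auxiliary identity
$F^\alpha\circ\mu^{\alpha\beta}\circ(1\smsh V_\alpha)\simeq \mu\circ(\phi^\beta_+\smsh 1)\circ((\alpha_+)^\dagger\smsh 1)$ as maps $(\Tnp)_+\smsh T^\beta\to T^\beta$.
That is exactly what the paper's large diagram encodes, and the passage to self-maps of $(\Tnp/L_\beta)_+$ by dualizing $F^\alpha$, $V_\alpha$, and the module action is a legitimate reformulation of the paper's argument. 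However, the two concrete justifications you give for closing the argument do not hold up.

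The ``slide'' step is incorrect as stated, and Proposition~\ref{prop-transfer}.2 does not supply it. That item asserts equivariance of $tr_\alpha$ for the \emph{single} $\Tnp$-module structure in which $g\in\Tnp$ acts on $(\Tnp/L_\beta)_+$ by translation by $\phi^\beta(g)$ and on $(\Tnp/L_{\alpha\beta})_+$ by translation by the \emph{image} of $\phi^\beta(g)$ under the covering projection $\Tnp/L_\beta\to\Tnp/L_{\alpha\beta}$. That image is $\phi^{\alpha\beta}(\alpha g)$, not $\phi^{\alpha\beta}(g)$: the commuting square $\mathrm{pr}\circ\phi^\beta=\phi^{\alpha\beta}\circ\alpha$ (the content of the pullback diagram you allude to) shows that the $\phi^{\alpha\beta}$-twisted action on $(\Tnp/L_{\alpha\beta})_+$ does \emph{not} lift along $\mathrm{pr}$ to the $\phi^\beta$-twisted action on $(\Tnp/L_\beta)_+$. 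Consequently $tr^\beta_{\alpha\beta}\circ m_{\phi^{\alpha\beta}(g)}$ is not homotopic to $m_{\phi^\beta(g)}\circ tr^\beta_{\alpha\beta}$ in general, and the replacement of $m_{\phi^{\alpha\beta}(-)}$ by $m_{\phi^\beta(-)}$ is not available. The second step is likewise off: using $\mathrm{pr}_+=\phi^{\alpha\beta}_+\alpha_+\phi_{\beta,+}$ and $tr^\beta_{\alpha\beta}=\phi^\beta_+(\alpha_+)^\dagger\phi_{\alpha\beta,+}$, the composite $tr^\beta_{\alpha\beta}\circ\mathrm{pr}_+$ is the $\phi^\beta$-conjugate of $(\alpha_+)^\dagger\alpha_+$ (of degree $|\alpha|^2$, informally the sum of deck translations), not of $(\alpha_+)^\dagger$, so it does not ``match'' the right-hand side either.

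What actually closes the argument, and what the paper uses, is Frobenius reciprocity---Proposition~\ref{prop-transfer}.3, not item 2. In your notation this reads $tr^\beta_{\alpha\beta}\circ m_x\circ\mathrm{pr}\simeq m_{tr^\beta_{\alpha\beta}(x)}$ for $x\in(\Tnp/L_{\alpha\beta})_+$: the translation is \emph{absorbed into} the transfer rather than commuted past it. This is the content of the paper's ``right pentagon,'' which the paper proves by the Hesselholt diagram~1.5.2 device. After that absorption one has $m_{tr^\beta_{\alpha\beta}(\phi^{\alpha\beta}(g))}$, and it is only then that the pullback square (the paper's ``upper left pentagon'') enters, to identify $tr^\beta_{\alpha\beta}\circ\phi^{\alpha\beta}_+$ with $\phi^\beta_+\circ(\alpha_+)^\dagger$. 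To repair your proof, replace the slide/pullback pair by this Frobenius-reciprocity/pullback pair, and note that the pullback square in question is the one displayed in the proof of Lemma~\ref{FdV0} itself rather than the one in Section~\ref{sec-stablesplitting}.
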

\begin{proof}
  If we show the stable commutativity of 
$$\xymatrix{
{(\Tnp)_+\smsh T^{\beta}}\ar[rr]^{\phi^{\beta}_+\smsh1}&&
{(\Tnp/L_\beta)_+\smsh T^{\beta}}\ar[r]^-{\mu}&T^{\beta}\\
{(\Tnp/L_\alpha)_+\smsh T^{\beta}}\ar[u]^{tr_\alpha\smsh1}&
{(\Tnp)_+\smsh T^{\beta}}\ar[l]_-{\phi^\alpha_+\smsh1}\ar[r]^-{\phi^{\alpha\beta}_+\smsh1}\ar[d]_{1\smsh V_\alpha}&
{(\Tnp/L_{\alpha\beta})_+\smsh T^{\beta}}\ar[u]^{tr_\alpha \smsh1}\ar[d]_{1\smsh V_\alpha}&\\
&{(\Tnp)_+\smsh T^{{\alpha\beta}}}\ar[r]^-{\phi^{\alpha\beta}_+\smsh1}&
{(\Tnp/L_{\alpha\beta})_+\smsh T^{{\alpha\beta}}}\ar[r]^-{\mu}&
T^{{\alpha\beta}}\ar[uu]^{F^\alpha}
}
$$
then we are done: a moment's reflection shows that the two maps in question can be obtained by precomposing
  with 
$\xymatrix{
    S^k\smsh T^{\beta}\ar[r]^-{\sigma\smsh1}&
    {(\Tp[k])_+\smsh T^{\beta}\ar[r]^{\ell_+\smsh1}}
    &(\Tnp)_+\smsh T^{\beta}}.$

The square obviously commutes.  The upper left pentagon commutes because 
$$\xymatrix{{\Tnp}\ar[d]^{\text{proj}}\ar[rr]^{\phi^{\beta}}&&
{\Tnp}/L_\beta\ar[d]^{\text{proj}}\\
{\Tnp}/L_\alpha&{\Tnp}\ar[l]_(.4){\phi^\alpha}\ar[r]^-{\phi^{\alpha\beta}}&
{\Tnp}/L_{\alpha\beta}
}
$$
is a pullback---here of course the horizontal maps are isomorphisms.  The right pentagon commutes by arguments analogous to those in the proof of the one-dimensional $FdV$ relation, specifically by dualizing the Verschiebung and Frobenius and expressing the pentagon as a composite of transfer pullback squares for products of tori, a la diagram 1.5.2 of Hesselholt~\cite{hess-ptypical}.
\end{proof}
\begin{cor}
  In the situation of the lemma, if $\alpha\in GL_n\zp$ then
$$F^\alpha d_\ell V_\alpha=d_{\alpha^{-1}\ell}$$
\end{cor}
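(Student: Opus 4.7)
The plan is to apply Lemma~\ref{FdV0} with $\alpha \in GL_n(\zp)$ and recognize the resulting composite as the differential $d_{\alpha^{-1}\ell}$. Since all the substantial content has been packaged into Lemma~\ref{FdV0}, what remains is essentially an exercise in unwinding the definitions of $\phi^\alpha$ and $tr_\alpha$ in the invertible case.

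First I would observe that when $\alpha \in GL_n(\zp)$, the kernel $L_\alpha = \alpha^{-1}\zpn/\zpn$ is trivial, so the covering $\alpha: \Tnp \to \Tnp$ is itself an isomorphism.  Under the identification $\Tnp/L_\alpha = \Tnp$, the natural isomorphism $\phi_\alpha: \Tnp/L_\alpha \to \Tnp$ is multiplication by $\alpha$, and hence $\phi^\alpha = \phi_\alpha^{-1}$ is multiplication by $\alpha^{-1}$.  Moreover, since the projection $pr_\alpha$ is an isomorphism, Proposition~\ref{prop-transfer}.1 gives $tr_\alpha \simeq \id$.  Combining these yields
$$(\alpha_+)^\dagger \;=\; tr_\alpha \circ \phi^\alpha_+ \;\simeq\; (\alpha^{-1})_+$$
as a stable endomap of $(\Tnp)_+$; this is also consistent with Corollary~\ref{cor-diagtransfer}, where $\alpha^\dagger = \alpha^{-1}$ on the $S^1$ summands.

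Substituting this equivalence into the formula supplied by Lemma~\ref{FdV0}, the middle factors $\ell_+$ followed by $(\alpha_+)^\dagger$ combine, by functoriality of $(-)_+$ applied to $\alpha^{-1} \circ \ell: \zpk \to \zpn$, into the single map $(\alpha^{-1}\ell)_+: (\TT^k_p)_+ \to (\Tnp)_+$.  Recalling that the $\Tnp$-action on $T^\beta$ is by definition $\mu \circ (\phi^\beta_+ \smsh 1)$, the resulting composite
$$S^k \smsh T^\beta \xrightarrow{\sigma \smsh 1} (\TT^k_p)_+ \smsh T^\beta \xrightarrow{(\alpha^{-1}\ell)_+ \smsh 1} (\Tnp)_+ \smsh T^\beta \xrightarrow{\phi^\beta_+ \smsh 1} (\Tnp/L_\beta)_+ \smsh T^\beta \xrightarrow{\mu} T^\beta$$
is precisely the definition of $d_{\alpha^{-1}\ell}$ as a self-map of $T^\beta$.

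There is essentially no serious obstacle here, since the heavy lifting was absorbed into Lemma~\ref{FdV0}.  The only subtlety is to keep straight the direction of $\phi^\alpha$ versus $\phi_\alpha$---which the paper itself warns is a notational hazard---so as to obtain $\alpha^{-1}\ell$ rather than $\alpha\ell$ in the final answer.
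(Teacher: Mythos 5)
Your proof is correct and follows the paper's own argument exactly: the paper also proves the corollary by observing that when $\alpha \in GL_n(\zp)$, the isomorphism $\phi^\alpha$ is multiplication by $\alpha^{-1}$ and $tr_\alpha$ is the identity, so $(\alpha_+)^\dagger \simeq (\alpha^{-1})_+$, whence the composite in Lemma~\ref{FdV0} becomes $d_{\alpha^{-1}\ell}$. You have simply unwound the definitions a bit more explicitly than the two-line proof in the paper, but the route is identical.
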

\begin{proof} We need only understand $(\alpha_+)^\dagger := tr_\alpha \phi^\alpha_+$.  Here $\phi^\alpha$ is simply $\alpha^{-1}\colon
  \Tnp\to\Tnp/L_\alpha=\Tnp$ and $tr_\alpha$ is the identity.
\end{proof}

\nid This formula may seem surprising, given that $F^\alpha\colon
T^{L_{\alpha\beta}}\to T^{L_\beta}$ and $V_\alpha$ are the identity of
spectra when $\alpha$ is an isomorphism, but follows from the fact
that they are not equivariant, and $d_\ell$ is sensitive to the $\Tnp$-action.

Lemma \ref{FdV0} can more elegantly be rephrased in terms of a less useful variant of the differential: for a stable map $v\colon(\Tp[k])_+\to(\Tnp)_+$ let $D_v$ be the composite
$
\xymatrix{
  (\Tp[k])_+\smsh X \ar[r]^-{v\smsh 1} & (\Tp[n])_+\smsh X \ar[r]^-{\mu} & X,
}
$
so that $d_\ell=D_{(\ell_+)}(\sigma\smsh 1)$.
Then we get that
$$F^\alpha D_v V_\alpha=D_{(\alpha_+)^\dagger v}$$

\begin{cor}\label{cor-FadVa}
  Let $\alpha\in M_n\bZ$. 
Assume multiplication by $\eta$ is nullhomotopic on $T^{L_\beta}$ (for example
if it is an Eilenberg-MacLane spectrum or if $p$ is odd).  If $\ell\in M_{n\times 1}\zp$,  then
$$F^\alpha d_\ell V_\alpha = d_{\alpha^\dagger\ell}$$ and 
$$F^\alpha d_1 V_\alpha 
= \frac{|\alpha|}{\det(\alpha)}\cdot d_1$$  
\end{cor}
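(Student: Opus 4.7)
The plan is to apply Lemma~\ref{FdV0} in both parts and reduce each assertion to an explicit calculation of the stable composite $(\alpha_+)^\dagger\circ v$ for an appropriate $v\colon S^k\to(\Tnp)_+$. Throughout we use that $\eta$ acts trivially on $T^{L_\beta}$.

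For the first formula, Lemma~\ref{FdV0} presents $F^\alpha d_\ell V_\alpha$ as the operator $D_w$ associated to $w:=(\alpha_+)^\dagger\circ\ell_+\circ\sigma\colon S^1\to(\Tnp)_+$, where $\sigma\colon S^1\to(\TT^1_p)_+$ is the top-cell inclusion from the stable splitting. Since $\ell\in M_{n\times 1}\zp$, the composite $\ell_+\circ\sigma$ factors through the 1-skeleton inclusion $\bigvee_{j=1}^n S^1\hookrightarrow(\Tnp)_+$. Corollary~\ref{cor-diagtransfer}, in its $\eta=0$ version, asserts that $(\alpha_+)^\dagger$ restricted along this 1-skeleton inclusion agrees with $(\alpha^\dagger)_+$ restricted along the same inclusion. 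Therefore $w=(\alpha^\dagger)_+\circ\ell_+\circ\sigma=(\alpha^\dagger\ell)_+\circ\sigma$, which is the stable map defining $d_{\alpha^\dagger\ell}$.

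For the second formula, read $d_1$ as $d_{I_n}\colon S^n\smsh X\to X$, so that Lemma~\ref{FdV0} (with $k=n$ and $\ell=I_n$) yields $F^\alpha d_1 V_\alpha=D_{w'}$ for $w':=(\alpha_+)^\dagger\circ\sigma_n\colon S^n\to(\Tnp)_+$, where $\sigma_n$ is the top-cell inclusion obtained by $n$-fold smashing of $\sigma$. The claim is that $w'\simeq(|\alpha|/\det\alpha)\cdot\sigma_n$. At $\eta=0$, Remark~\ref{rem-etanonzero} shows that $\alpha_+$ is cell-dimension preserving on the splitting $(\Tnp)_+\simeq\bigvee_j(S^j)^{\vee\binom{n}{j}}$. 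The same holds for $(\alpha_+)^\dagger$: by Smith normal form write $\alpha=\gamma'\delta\gamma$ with $\gamma,\gamma'\in GL_n\zp$ and $\delta$ diagonal with $p$-power entries; transfer functoriality (Proposition~\ref{prop-transfer}.1) gives $(\alpha_+)^\dagger\simeq(\gamma^{-1})_+\,(\delta_+)^\dagger\,((\gamma')^{-1})_+$, and each factor is cell-dimension preserving at $\eta=0$ by Remark~\ref{rem-etanonzero} and Corollary~\ref{cor-diagtransfer}. Hence $w'$ lands entirely in the top $S^n$-summand. To pin down the scalar, combine $\alpha_+|_{\mathrm{top}}=\det(\alpha)$ from Remark~\ref{rem-etanonzero} with the stable equivalence
$$\alpha_+\circ(\alpha_+)^\dagger=(\phi_\alpha)_+\circ(\mathrm{proj}_\alpha)_+\circ tr_\alpha\circ(\phi^\alpha)_+\simeq|\alpha|\cdot\id_{(\Tnp)_+},$$
which uses that $(\mathrm{proj}_\alpha)_+\circ tr_\alpha\simeq|L_\alpha|\cdot\id$ for the finite covering $\mathrm{proj}_\alpha\colon\Tnp\to\Tnp/L_\alpha$. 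On the top cell this gives $\det(\alpha)\cdot(\alpha_+)^\dagger|_{\mathrm{top}}=|\alpha|$, hence $(\alpha_+)^\dagger|_{\mathrm{top}}=|\alpha|/\det(\alpha)$, finishing the claim.

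The main obstacle I expect is justifying cell-dimension preservation of $(\alpha_+)^\dagger$ for general $\alpha$ at $\eta=0$. The Smith normal form reduction above is clean but implicit about the identifications $L_\alpha\leftrightarrow L_\delta$ hidden in the factorization $\alpha=\gamma'\delta\gamma$ and about the matching of transfers $tr_\alpha$ with $tr_{\gamma^{-1}}\circ tr_\delta\circ tr_{(\gamma')^{-1}}$ under the relevant $\phi$-isomorphisms; a fully rigorous alternative would compute $(\alpha_+)^\dagger$ cellularly using the $|S|$-minor formulas of Remark~\ref{rem-etanonzero}, at the cost of a more laborious bookkeeping.
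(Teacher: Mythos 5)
Your proof is correct and follows essentially the same route as the paper: for the first formula you invoke Lemma~\ref{FdV0} together with the $\eta=0$ square in Corollary~\ref{cor-diagtransfer}, exactly as the paper does, and for the second formula you reduce to identifying the $S^n$-column of $(\alpha_+)^\dagger$, which is precisely the paper's asserted claim that ``the $n$-dimensional part of $(\alpha_+)^\dagger$ is a column of zeros except for the entry $|\alpha|/\det(\alpha)$.'' Where you go beyond the paper's terse proof is in actually \emph{justifying} that assertion: the paper states it without argument, while you supply a Smith-normal-form reduction $\alpha=\gamma'\delta\gamma$ to show $(\alpha_+)^\dagger$ is cell-dimension preserving at $\eta=0$, and then pin down the top-cell scalar via the identity $\alpha_+\circ(\alpha_+)^\dagger\simeq|\alpha|\cdot\id$, which does follow from $(pr_\alpha)_+\circ tr_\alpha\simeq|L_\alpha|\cdot\id$ for the finite covering together with $\alpha_+=(\phi_\alpha)_+\circ(pr_\alpha)_+$ and $(\alpha_+)^\dagger=tr_\alpha\circ\phi^\alpha_+$. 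Your worry at the end about the implicit identification of $tr_\alpha$ with a conjugate of $tr_\delta$ is well-placed but resolvable: one verifies that $\phi^\alpha=\gamma^{-1}\phi^\delta(\gamma')^{-1}$ and $tr_\alpha=(\gamma^{-1})_+\,tr_\delta\,\gamma_+$ (the latter from Proposition~\ref{prop-transfer}.1 applied to $L_\alpha=\gamma^{-1}L_\delta$), and the two $\gamma$-terms cancel, giving exactly the factorization you wrote down. So the argument stands; you have simply written out a step the authors left to the reader.
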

\begin{proof}
The first part follows by Corollary~\ref{cor-diagtransfer} and Lemma~\ref{FdV0}.  For the second part, note that, if $\eta=0$, the ``$n$-dimensional part'' of $(\alpha_+)^\dagger$ is a column of zeros except for the first entry which is the unit $|\alpha|/\det(\alpha)$. 
\end{proof}

For differentials $d_\ell$, with $\ell$ an $n \times k$ matrix for $1 < k < n$, we see that $F^\alpha d_\ell V_\alpha$ is given by intermediate matrices of minors.  Note also that the number $\frac{|\alpha|}{\det(\alpha)}\in\zp$ appearing in the above formula is invertible.

\begin{lemma} \label{lem-dFFd}
  Let $\ell\colon\zpk\to\zpn$ and $\alpha\in\Mn$.  Then
  \begin{enumerate}
  \item[1.] $d_\ell F^\alpha=F^\alpha d_{\alpha\ell}$
  \item[2.] $V_\alpha d_\ell=d_{\alpha\ell}V_\alpha$
  \end{enumerate}
\end{lemma}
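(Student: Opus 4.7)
The plan is to derive both relations directly from the definition of $d_\ell$ and the two intertwining identities of Proposition~\ref{prop-vers}.3, namely
\[ \mu_\beta(1\smsh F^\alpha) = F^\alpha \mu_{\alpha\beta}(\alpha_+\smsh 1) \]
for the Frobenius and
\[ V_\alpha \mu_\beta = \mu_{\alpha\beta}(\alpha_+\smsh V_\alpha) \]
for the Verschiebung, where $\mu_\beta$ and $\mu_{\alpha\beta}$ denote the twisted $\Tnp$-actions on $T^\beta$ and $T^{\alpha\beta}$. Since $d_\ell$ is built from the torus action precomposed with $\ell_+$, these identities let us slide $F^\alpha$ or $V_\alpha$ past the action at the cost of introducing an $\alpha_+$, which subsequently absorbs into $\ell_+$ via the stable homotopy identity $\alpha_+\circ \ell_+ = (\alpha\ell)_+$.

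First I would unfold $d_\ell$ on a $\Tnp$-spectrum $X$ as the composite
\[ S^{k\bC}\smsh X\xrightarrow{\sigma\smsh 1}S^k\smsh(\Tp[k])_+\smsh X\xrightarrow{1\smsh \ell_+\smsh 1}S^k\smsh(\Tp[n])_+\smsh X\xrightarrow{1\smsh \mu_X}S^k\smsh X, \]
and record the functoriality identity $\alpha_+ \ell_+ = (\alpha\ell)_+$ in the stable homotopy category, which reflects the commutativity $\alpha\circ\ell$ of maps of tori.

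For part~1, observe that in the composite expressing $d_\ell\circ(1\smsh F^\alpha)$ on $S^{k\bC}\smsh T^{\alpha\beta}$, the map $1\smsh F^\alpha$ touches only the $T$-factor and so commutes freely past $\sigma\smsh 1$ and $1\smsh \ell_+\smsh 1$; the composite therefore reduces to applying $\mu_\beta(1\smsh F^\alpha)$ at the end. Substituting the Frobenius intertwining identity for this composite and then absorbing the new $\alpha_+$ into $\ell_+$ produces exactly the composite defining $F^\alpha d_{\alpha\ell}$. Part~2 proceeds dually: postcomposing the composite for $d_\ell$ with $V_\alpha$, the Verschiebung identity permits $V_\alpha$ to migrate inward through the action and land adjacent to the $T$-factor input, while the same $\alpha_+ \ell_+ = (\alpha\ell)_+$ reassembly realizes the composite as $d_{\alpha\ell}\circ(1\smsh V_\alpha)$.

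The only real obstacle is bookkeeping: one must keep straight which twisted action ($\mu_\beta$ on $T^\beta$ or $\mu_{\alpha\beta}$ on $T^{\alpha\beta}$) is invoked at each stage, and verify that since all manipulations occur on the torus- and $X$-factors, leaving the $S^k$-factors untouched, the chain of equalities at the explicit $S^{k\bC}$-level composite descends to the stated stable map $d_\ell\colon S^k\smsh X\to X$. With these cautions each part is a two-line diagram chase.
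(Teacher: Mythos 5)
Your proof is correct and rests on the same underlying mechanism as the paper's: both arguments hinge on the equivariance of the Frobenius and Verschiebung with respect to the twisted $\Tnp$-actions. The paper establishes this by drawing out the explicit diagram involving $\phi^\beta_+$, the projections, and the inclusion of fixed points, then precomposes with $\ell_+ (\sigma\smsh 1)$; you instead invoke Proposition~\ref{prop-vers}.3 as a black box to slide $F^\alpha$ (resp.\ $V_\alpha$) past $\mu$, and absorb the resulting $\alpha_+$ into $\ell_+$ via functoriality of $(-)_+$. This is a slightly more modular packaging of the identical diagram chase, and both parts of your argument go through exactly as described.
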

\begin{proof}The first equation holds because the diagram 
  $$
  \xymatrix{
(\Tnp)_+\smsh T^{{\alpha\beta}}
\ar[r]^{\text{proj}\smsh1}\ar[d]^{\phi^\beta_+\smsh1}&
(\Tnp/L_\alpha)_+\smsh T^{{\alpha\beta}}\ar[d]^{\phi^\beta_+\smsh1}&\\
(\Tnp/L_\beta)_+\smsh T^{{\alpha\beta}}
\ar[r]^{\text{proj}\smsh1}\ar[d]^{1\smsh\text{inc}}&
(\Tnp/L_{\alpha\beta})_+\smsh T^{{\alpha\beta}}
\ar[r]^-{\mu}&T^{{\alpha\beta}}\ar[d]^{\text{inc}}\\
(\Tnp/L_\beta)_+\smsh T^{{\beta}}\ar[rr]^\mu
&&T^{{\beta}}
}
  $$
commutes, and the two sides of the equation can be obtained by precomposing with
$$
\begin{CD}
  S^k\smsh X^{L_{\alpha\beta}}
  @>{\sigma\smsh 1}>>
  \Tp[k]\smsh X^{L_{\alpha\beta}}
  @>{\ell_+\smsh 1}>>
  \Tp[n]\smsh X^{L_{\alpha\beta}}
\end{CD}
$$
The second equation is similar.
\end{proof}

Piecing together our accumulated understanding of the interactions among these operations, we can establish the final relation:

\begin{theorem}
  Let p be an odd prime, and let $\alpha,\beta\in\Mn$, $\ell\colon\zp \to\zpn$.  
Choose a representative $D$ of $\gcd_{\alpha,\beta}$ and coprime 
$\bar\alpha,\bar\beta\in\Mn$ with $\alpha=D \bar\alpha$ 
and $\beta=D \bar\beta$.  
Furthermore, choose a splitting $(s, t)$ to $\bar{\alpha}+\bar\beta$, so that $\bar \alpha s + \bar \beta t = 1$.  
Then
$$F^\alpha d_\ell V_\beta=d_{s D^\dagger\ell}F^{\bar\alpha}V_{\bar\beta}+F^{\bar\alpha}V_{\bar\beta}d_{t D^\dagger\ell}$$
\end{theorem}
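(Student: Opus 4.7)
The plan is to reduce to the same-matrix formula of Corollary~\ref{cor-FadVa} and then split the resulting differential using the Bezout identity. Decomposing $\alpha=D\bar\alpha$ and $\beta=D\bar\beta$, the natural composition rules for Frobenii and Verschiebungs give $F^\alpha = F^{\bar\alpha}F^D$ and $V_\beta = V_D V_{\bar\beta}$, so
\[
F^\alpha d_\ell V_\beta \;=\; F^{\bar\alpha}\bigl(F^D d_\ell V_D\bigr)V_{\bar\beta}.
\]
Since $p$ is odd, $\eta$ is nullhomotopic, and Corollary~\ref{cor-FadVa} applies to the inner same-matrix composite to yield $F^D d_\ell V_D = d_{D^\dagger\ell}$. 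Hence
\[
F^\alpha d_\ell V_\beta \;=\; F^{\bar\alpha}\,d_{D^\dagger\ell}\,V_{\bar\beta}.
\]

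The Bezout identity $\bar\alpha s + \bar\beta t = 1$ yields $D^\dagger\ell = \bar\alpha(sD^\dagger\ell) + \bar\beta(tD^\dagger\ell)$, and the additivity of differentials established in Section~\ref{sec-higherdiff} splits
\[
d_{D^\dagger\ell} \;=\; d_{\bar\alpha s D^\dagger\ell} + d_{\bar\beta t D^\dagger\ell}.
\]
Applying Lemma~\ref{lem-dFFd}.1, rearranged as $F^{\bar\alpha}d_{\bar\alpha m} = d_m F^{\bar\alpha}$, to the first summand (with $m = sD^\dagger\ell$) moves $F^{\bar\alpha}$ across to produce $d_{sD^\dagger\ell}\,F^{\bar\alpha}V_{\bar\beta}$. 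Applying Lemma~\ref{lem-dFFd}.2, rearranged as $d_{\bar\beta m}V_{\bar\beta} = V_{\bar\beta}d_m$, to the second summand (with $m = tD^\dagger\ell$) moves $V_{\bar\beta}$ across to produce $F^{\bar\alpha}V_{\bar\beta}\,d_{tD^\dagger\ell}$. Summing the two pieces gives the claimed relation.

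The essential difficulty is concentrated in the first step, where the identity $F^D d_\ell V_D = d_{D^\dagger\ell}$ packages all the genuine geometry: the transfer structure, the stable splitting of $\Tnp$, and the identification of $(D_+)^\dagger$ with the matrix adjoint under that splitting (as in Lemma~\ref{FdV0} and Corollary~\ref{cor-diagtransfer}), with the nullhomotopy of $\eta$ at odd primes making the adjoint formula clean. A minor verification is that Corollary~\ref{cor-FadVa} extends from integer matrices to $D\in\Mn$; this is routine via the diagonal normal form of Lemma~\ref{lem:diagon}. Once the same-matrix identity is in hand, the remainder is a formal shuffle of differentials across Frobenii and Verschiebungs using additivity of $d$ and Lemma~\ref{lem-dFFd}.
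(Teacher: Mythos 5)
Your proof matches the paper's argument step for step: decompose via $D$ to get $F^\alpha d_\ell V_\beta = F^{\bar\alpha}(F^D d_\ell V_D)V_{\bar\beta}$, apply Corollary~\ref{cor-FadVa} to collapse the middle to $d_{D^\dagger\ell}$, split via additivity and the Bezout identity, and shuttle the two summands across $F^{\bar\alpha}$ and $V_{\bar\beta}$ using Lemma~\ref{lem-dFFd}. Your remark that Corollary~\ref{cor-FadVa} is stated for $\alpha\in M_n\bZ$ and needs a (routine, via Lemma~\ref{lem:diagon} or simply via the general form of Corollary~\ref{cor-diagtransfer}) extension to $D\in\Mn$ is a fair point that the paper applies without comment.
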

\begin{proof}  From Corollary~\ref{cor-FadVa} and Lemma~\ref{lem-dFFd}, we have
  \begin{align*}
    F^\alpha d_\ell V_\beta = \; &  F^{\bar\alpha} F^Dd_\ell V_D V_{\bar\beta}\\
    = \; &  F^{\bar\alpha}d_{D^\dagger\ell}V_{\bar\beta}\\
    = \; &  F^{\bar\alpha}d_{\bar\alpha s D^\dagger\ell+\bar\beta t D^\dagger\ell}V_{\bar\beta}\\
    = \; &  F^{\bar\alpha}(d_{\bar\alpha s D^\dagger\ell}+d_{\bar\beta t D^\dagger\ell})V_{\bar\beta}\\
    = \; &  F^{\bar\alpha}d_{\bar\alpha s D^\dagger\ell}V_{\bar\beta}+ F^{\bar\alpha}d_{\bar\beta t D^\dagger\ell}V_{\bar\beta}\\
    = \; &  d_{s D^\dagger\ell}F^{\bar\alpha}V_{\bar\beta}+F^{\bar\alpha}V_{\bar\beta}d_{t D^\dagger\ell}
  \qedhere
  \end{align*} 
  \end{proof}

\nid Note that by Proposition~\ref{prop-vers}.2, $F^{\bar\alpha} V_{\bar\beta} = V_{\tilde \beta} F^{\tilde \alpha}$, where $\tilde \alpha$ and $\tilde \beta$ are coprime matrices with $\alpha \tilde \beta = \beta \tilde \alpha$.  The right hand side of this $FdV$ relation could thereby be rewritten in terms of $dVF$ and $VFd$.

We summarize the structure of the fixed point spectra $T^{(n)}(A)^{L_{\alpha}}$ in the following omnibus theorem.  We state the results in the homotopy category, though those relations only involving the restriction and Frobenius may be lifted to spectra.

\begin{theorem} \label{thm-relations}
Fix an odd prime p.  Let $A$ be a connective commutative ring spectrum.  For $\alpha \in M_n(\zp) \cap GL_n(\bQ_p)$ an injective endomorphism of $\zpn$, let $L_{\alpha} := \alpha^{-1} \zpn / \zpn \subset \Tnp$ be the corresponding subgroup of the p-adic n-torus.  Denote by $T^{\alpha} := T_{\Tnp}(A)^{L_{\alpha}}$ the fixed points of the n-dimensional topological Hochschild homology of $A$; this is a ring spectrum with multiplication denoted $\mu: T^{\alpha} \sm T^{\alpha} \ra T^{\alpha}$.

There are operators in the stable homotopy category $R_{\alpha}: T^{\beta \alpha} \ra T^{\beta}$ (restriction), $F^{\alpha}: T^{\alpha \beta} \ra T^{\beta}$ (Frobenius), and $V_{\alpha} : T^{\beta} \ra T^{\alpha \beta}$ (Verschiebung).  Moreover, for each p-adic vector $v \in \zpn$, there is an operator $d_v: S^1 \sm T^{\alpha} \ra T^{\alpha}$ (differential).  These $R$, $F$, $V$, and $d$ maps satisfy the following relations.

\begin{enumerate}

\item[1.] 
$R_{\alpha} \mu = \mu R_{\alpha}$; \\
$F^\alpha \mu = \mu F^\alpha$; \\
$d_v (1 \sm \mu) = \mu (d_v \sm 1) + \mu (1 \sm d_v) (\tau \sm 1)$; \\
if $\alpha \in GL_n \zp$ then $F^\alpha = \id$ and $V_\alpha = \id$.  

\item[2.] 
$R_\alpha R_\beta = R_{\alpha \beta}$; \\
$F^{\alpha} F^{\beta} = F^{\beta \alpha}$; \\
$V_\alpha V_\beta = V_{\alpha \beta}$; \\
$d_{v+w} = d_v + d_w$; \\
$d_v (1 \sm d_w) = d_w \tau (d_v \sm 1) \tau$.

\item[3.] 
$R_\alpha F^\beta = F^\beta R_\alpha$; \\
$R_\alpha V_\beta = V_\beta R_\alpha$; \\
$R_\alpha d_v = d_v R_\alpha$.

\item[4.] 
$\mu (V_\alpha \sm 1) = V_\alpha \mu (1 \sm F^\alpha)$; \\ 
$F^\alpha V_\beta = |\gcd_{\alpha,\beta}| V_{[\lcm_{\alpha,\beta} / \alpha]} F^{[\lcm_{\alpha,\beta} / \beta]}$.

\item[5.] 
$d_v F^\alpha = F^\alpha d_{\alpha v}$; \\
$V_\alpha d_v = d_{\alpha v} V_\alpha$; \\ 
$F^\alpha d_v V_\beta = d_{\bez_\alpha \gcd_{\alpha,\beta}^\dagger v} V_{[\lcm_{\alpha,\beta} / \alpha]} F^{[\lcm_{\alpha,\beta} / \beta]} + V_{[\lcm_{\alpha,\beta} / \alpha]} F^{[\lcm_{\alpha,\beta} / \beta]} d_{\bez_\beta \gcd_{\alpha,\beta}^\dagger v}$.

\end{enumerate}

Notation: In the above, we have abbreviated for example $1 \sm V$ as $V$ and $R \sm R$ as $R$ and the like, when no confusion is possible.  In item (1), $\tau$ is a twist, and the equations record the facts that $R$ and $F$ are ring maps, $d_v$ is a derivation, and $F^\alpha$ and $V_\alpha$ only depend on the group $L_\alpha$.  The terms in items (4) and (5) are defined as follows.  Choose matrices $\gcd_{\alpha,\beta}$ and coprime $[\alpha / \gcd_{\alpha,\beta}]$ and $[\beta / \gcd_{\alpha,\beta}]$ such that $\alpha = \gcd_{\alpha, \beta} [\alpha / \gcd_{\alpha,\beta}]$ and $\beta = \gcd_{\alpha, \beta} [\beta / \gcd_{\alpha,\beta}]$.  Next choose ``Bezout" matrices $\bez_\alpha$ and $\bez_\beta$ such that $[\alpha / \gcd_{\alpha,\beta}] \bez_\alpha + [\beta / \gcd_{\alpha,\beta}] \bez_\beta = 1$.  Finally choose coprime matrices $[\lcm_{\alpha,\beta} / \alpha]$ and $[\lcm_{\alpha,\beta} / \beta]$ such that $\alpha [\lcm_{\alpha,\beta} / \alpha] = \beta [\lcm_{\alpha,\beta} / \beta]$; that common product is by definition $\lcm_{\alpha,\beta}$.

\end{theorem}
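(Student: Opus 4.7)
The plan is to read Theorem \ref{thm-relations} as an omnibus assembly of relations already established in Sections 2 and 3, so the proof reduces to citing (and in one case lightly reformulating) the preceding results. I would proceed group by group through items (1)--(5), verifying that each equation is either immediate from a construction or follows from a named lemma or proposition in the earlier text.

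First, items (1)--(3) are largely immediate from the constructions. That $F^\alpha$ is a ring map follows because it is the inclusion of fixed points of a commutative $\ess$-algebra; that $R_\alpha$ is a ring map is visible in the mapping-space description used to build it (restricting the source is manifestly multiplicative on smash powers). That $d_v$ is a derivation, is linear in $v$, and that different $d$'s graded-commute, all follow from the defining formula $d_v = \mu \circ (1 \sm v_+) \circ (\sigma \sm 1)$ together with the corresponding properties of the ring multiplication $\mu$ and the linearity/additivity lemmas for $d_\ell$ established in Section~\ref{sec-higherdiff}. The triviality of $F^\alpha$ and $V_\alpha$ for $\alpha \in GL_n(\zp)$ is because $L_\alpha = 0$ in that case. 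The composition laws in item (2) combine elementary composition of fixed-point inclusions and restriction maps with Proposition~\ref{prop-transfer}.1 for the Verschiebung. The commutation of $R_\alpha$ with $F^\beta$, $V_\beta$, and $d_v$ in item (3) follows because all three latter operators are built from the $\Tnp$-action on the mapping-space model $\Map_*(S^W \sm (\Tnp/L)_+, \Lambda_{\Tnp}(A)(S^W \sm K))^{\Tnp}$, while $R$ merely restricts the indexing subgroup $L$---the two constructions manifestly commute.

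Next, items (4) and (5) are precisely the main technical outputs of Sections~\ref{sec-vers} and~\ref{sec-stablesplitting}. The first equation in (4) is Proposition~\ref{prop-vers}.1 (Frobenius reciprocity for the Verschiebung), and the second is Proposition~\ref{prop-vers}.2 (the double coset formula), combined with Example~\ref{ex-gcd} to identify the double coset order $|L_{\tilde b}\backslash L_{a\tilde b}/L_{\tilde a}|$ with $|\gcd_{\alpha,\beta}|$, and with the translation of $\tilde\alpha$, $\tilde\beta$ into $[\lcm_{\alpha,\beta}/\beta]$ and $[\lcm_{\alpha,\beta}/\alpha]$ under the conventions of Section~\ref{sec-gcdlcm}. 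The first two relations in (5) are exactly Lemma~\ref{lem-dFFd}. The third and most subtle relation---the $FdV$ formula---is precisely the theorem proven immediately preceding this omnibus statement.

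The one genuinely nontrivial ingredient is that $FdV$ formula, and it is the natural candidate for the main obstacle. Its derivation uses Lemma~\ref{FdV0} to reroute $F^\alpha d_\ell V_\alpha$ through the transfer $(\alpha_+)^\dagger$, Corollary~\ref{cor-diagtransfer} to compute $(\alpha_+)^\dagger$ in terms of the matrix adjoint (here oddness of $p$ enters to kill $\eta$), the additivity of $d$ in its vector argument to split $d_{D^\dagger\ell}$ as $d_{\bar\alpha \bez_\alpha D^\dagger \ell} + d_{\bar\beta \bez_\beta D^\dagger \ell}$, and Proposition~\ref{prop-vers}.2 to reorganize $F^{\bar\alpha}V_{\bar\beta}$ as $V_{[\lcm_{\alpha,\beta}/\alpha]}F^{[\lcm_{\alpha,\beta}/\beta]}$. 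The real bookkeeping difficulty across the whole theorem is keeping straight which notion of ``quotient matrix'' (namely $\bar\alpha = [\alpha/\gcd_{\alpha,\beta}]$ versus $\tilde\alpha = [\lcm_{\alpha,\beta}/\beta]$) appears on each side of each equation; the Bezout and gcd/lcm language developed in Section~\ref{sec-gcdlcm} is designed precisely to make that bookkeeping tractable.
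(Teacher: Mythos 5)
Your reading is correct and matches the paper's own treatment: the paper explicitly introduces this result with ``We summarize the structure of the fixed point spectra $T^{(n)}(A)^{L_{\alpha}}$ in the following omnibus theorem'' and provides no separate proof, so the theorem is intended exactly as the citation-assembly you describe, with item~(4) coming from Proposition~\ref{prop-vers}.1--2 plus Example~\ref{ex-gcd}, the first two parts of item~(5) from Lemma~\ref{lem-dFFd}, and the $FdV$ relation from the theorem immediately preceding together with the remark translating $F^{\bar\alpha}V^{\bar\beta}$ into $V_{\tilde\beta}F^{\tilde\alpha}$. Your attribution of the remaining items to the constructions (fixed-point inclusion for $F$, the stabilization/mapping-space model for $V$, the Loday/cyclotomic structure for $R$, and the $d_\ell$-lemmas in Section~\ref{sec-higherdiff}) is faithful to the text.
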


At the risk of repetition, we follow the lead of Hesselholt and Madsen in taking homotopy groups and repackaging some of this data into a particular kind of pro differential ring.  In the following $p$ is again an odd prime.

\begin{cor} \label{cor-pmdgr}
Associated to a connective commutative ring spectrum $A$, there is a pro multi-differential graded ring 
denoted $TR^{\alpha}_q(A;p)$ and defined as follows.  For each matrix $\alpha \in M_n(\zp) \cap GL_n(\bQ_p)$, we have a group $TR^{\alpha}_q(A;p) := \pi_q(T_{\Tnp}(A)^{L_{\alpha}})$; here $L_\alpha = \alpha^{-1} \zpn / \zpn$.  As $q$ varies these groups form a graded ring.  For each p-adic vector $v \in \zpn$, there is a graded differential $d_v: TR^\alpha_q(A;p) \ra TR^\alpha_{q+1}(A;p)$; these differentials are derivations, are linear  
in the vector $v$, and they graded commute with one another.  The collection $TR^{\alpha}_*(A;p)$ is therefore a multi-differential graded ring.  As $\alpha$ varies these form a pro multi-differential graded ring under the restriction maps $R_\alpha$.

There is a collection of pro-graded-ring operators $F^\alpha: TR^{\alpha \beta}_* \ra TR^\beta_*$, and a collection of pro-graded-module operators $V_\alpha: (F^\alpha)_* TR^{\beta}_* \ra TR^{\alpha \beta}_*$.  Both $F^\alpha$ and $V_\alpha$ depend only on the group $L_\alpha$.  Here $(F^\alpha)_* TR^{\beta}_*$ denotes $TR^{\beta}_*$ with the $TR^{\alpha \beta}_*$-module structure determined by precomposition with $F^\alpha$.  Note that the fact that $V_\alpha$ is a module map is equivalent to Frobenius reciprocity: $V_\alpha(x) \cdot y = V_\alpha (x \cdot F^\alpha(y))$.

These operators are subject to the relations
\begin{enumerate}
\item[1.] $F^\alpha V_\beta = |\gcd_{\alpha,\beta}| V_{[\lcm_{\alpha,\beta} / \alpha]} F^{[\lcm_{\alpha,\beta} / \beta]}$

\item[2.] $d_v F^\alpha = F^\alpha d_{\alpha v}$; $V_\alpha d_v = d_{\alpha v} V_\alpha$

\item[3.] $F^\alpha d_v V_\beta = d_{\bez_\alpha \gcd_{\alpha,\beta}^\dagger v} V_{[\lcm_{\alpha,\beta} / \alpha]} F^{[\lcm_{\alpha,\beta} / \beta]} + V_{[\lcm_{\alpha,\beta} / \alpha]} F^{[\lcm_{\alpha,\beta} / \beta]} d_{\bez_\beta \gcd_{\alpha,\beta}^\dagger v}$
\end{enumerate} 
\end{cor}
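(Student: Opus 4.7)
The plan is to extract the corollary directly from Theorem~\ref{thm-relations} by applying $\pi_*$ and then verifying that the resulting algebraic structure is that of a pro multi-differential graded ring equipped with Frobenius and Verschiebung operators. The bulk of the work has already been carried out at the spectrum level, so the task is to translate each spectrum-level relation into its homotopy-group incarnation and to check that the package indeed forms a pro-object.

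First, for each fixed $\alpha$ I would produce the multi-differential graded ring structure on $TR^{\alpha}_*(A;p)$. The ring spectrum structure on $T^{\alpha}$ immediately gives $\pi_*(T^{\alpha})$ the structure of a graded ring. Each spectrum-level map $d_v\colon S^1\smsh T^{\alpha}\to T^{\alpha}$ induces a homomorphism $d_v\colon TR^{\alpha}_q\to TR^{\alpha}_{q+1}$. The linearity $d_{v+w}=d_v+d_w$ comes from item~(2) of Theorem~\ref{thm-relations}; the Leibniz rule follows from the spectrum-level derivation identity in item~(1); and graded commutativity $d_v d_w = -d_w d_v$ on homotopy groups follows from the identity $d_v(1\smsh d_w)=d_w\tau(d_v\smsh 1)\tau$ in item~(2), once one accounts for the sign arising from the twist $\tau\colon S^1\smsh S^1\to S^1\smsh S^1$.

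Next I would assemble these multi-differential graded rings into a pro-object. The restrictions $R_{\alpha}$ are ring maps by item~(1) and they commute with the differentials by item~(3), so the transition maps in the inverse system $\{TR^{\alpha}_*\}$ preserve the full multi-differential graded ring structure. Similarly, $F^{\alpha}$ is a ring map by item~(1) and commutes with restrictions by item~(3), giving a morphism of pro-graded-rings $F^{\alpha}\colon TR^{\alpha\beta}_*\to TR^{\beta}_*$. The Verschiebung $V_{\alpha}$ is in general only additive; the statement that it is a map of $TR^{\alpha\beta}_*$-modules (via $F^{\alpha}$) is Frobenius reciprocity, which is precisely the homotopy-level content of the first equation of item~(4): applying $\pi_*$ to $\mu(V_{\alpha}\smsh 1)=V_{\alpha}\mu(1\smsh F^{\alpha})$ gives $V_{\alpha}(x)\cdot y=V_{\alpha}(x\cdot F^{\alpha}(y))$.

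Finally, relations (1)--(3) of the corollary are literally the homotopy-group translations of the second equation of item~(4) and both equations of item~(5) of Theorem~\ref{thm-relations}. The only real subtlety, and the step I would handle most carefully, is the sign conventions in graded commutativity of the $d_v$'s and in the Leibniz rule: one must verify that the twist maps appearing in the spectrum-level identities introduce exactly the Koszul signs expected in a multi-differential graded ring, and that no additional $\eta$-contributions survive (which is guaranteed by the standing assumption that $p$ is odd, used already throughout Section~3). Everything else is a direct repackaging of Theorem~\ref{thm-relations}.
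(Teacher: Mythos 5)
Your proposal is correct and takes essentially the same approach as the paper: Corollary~\ref{cor-pmdgr} is presented there as a direct repackaging of Theorem~\ref{thm-relations} on homotopy groups, with no separate proof given, and your step-by-step translation of each spectrum-level relation (ring structure from $\mu$, linearity and graded commutativity of $d_v$ from item (2) with the Koszul sign from $\tau$, Leibniz from item (1), compatibility with $R_\alpha$ from item (3), Frobenius reciprocity and the $FV$/$dF$/$Vd$/$FdV$ relations from items (4)--(5)) is exactly the intended content. The only tiny slip is a miscount: item (5) of the theorem contains three equations, not two, so corollary relations (2) and (3) come from the first two and the third of those, respectively.
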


It may be that this list of relations, together with a few discussed earlier concerning higher differentials, is in a useful sense complete.  In this case, it is clear how to utilize and abstract the structure seen in the corollary: add a map (compatible with the operators) from the Burnside-Witt vectors~\cite{dresssiebeneicher, graham-witt} into the degree zero part of the pro multi-differential graded ring---we would call the resulting structure a Burnside-Witt complex.  (Compare Hesselholt-Madsen~\cite{hm-mixedchar}.)  It would be worth studying the initial such complex, which serves the role of a higher analog of the de Rham-Witt complex.  In future work we will make precise and investigate these notions of Burnside-Witt complex and ``de Rham-Burnside-Witt" complex.


\section{Adams operations on covering homology} \label{sec-adamsops}

Recall that in the end we are interested in covering homologies $TC^J(A)$, that is in versions of higher topological cyclic homology---these are homotopy limits of the fixed points $T^{(n)}(A)^{L_\alpha}$ under collections of restriction and Frobenius operators associated to submonoids $J$ of the self-isogenies of the p-adic n-torus.  There are groups of Adams operations acting on the fixed points $T^{(n)}(A)^{L_\alpha}$, and for appropriate choices of submonoids $J$, these operations survive the homotopy limit to produce interesting actions on the covering homology $TC^J(A)$.  Indeed, these actions will play a crucial role in our future investigation of chromatic phenomena arising from higher topological cyclic homology.

\subsection{Identification of the Frobenius and restriction categories} \label{sec-identfrobres}

We begin by explicitly identifying the structure of the indexing categories for Frobenius and restriction operators, in the case of higher topological Hochschild homology based on the p-adic n-torus.  Recall from section~\ref{sec-covhom} that $\Mn := M_n(\zp) \cap GL_n(\bQ_p)$ and $Frob_{\Mn}$ and $Res_{\Mn}$ are the subcategories of the twisted arrow category $\mathcal Ar_{\Mn}$ consisting respectively of the maps $\alpha^*$ and $\alpha_*$.

\begin{lemma}\label{lem:RFcats}
Two objects in $Frob_{\mathcal{M}_n}$ are connected by at most
one morphism; similarly for $Res_{\mathcal{M}_n}$.  The group $GL_n(\zp)$ acts both on the left and
the right of both $Frob_{\mathcal{M}_n}$ and $Res_{\mathcal{M}_n}$.
\end{lemma}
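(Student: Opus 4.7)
The approach rests on one key observation: $\mathcal{M}_n$ sits inside the group $GL_n(\bQ_p)$, and its group of units is exactly $GL_n(\zp)$. In particular $\mathcal{M}_n$ is both left- and right-cancellative. For uniqueness, note that a morphism $\alpha\to\beta$ in $Frob_{\mathcal{M}_n}$ is an $a\in\mathcal{M}_n$ with $\alpha=a\beta$, and since $\beta$ is invertible in $GL_n(\bQ_p)$ we can cancel to obtain $a=\alpha\beta^{-1}$; this determines $a$ uniquely, and such a morphism exists precisely when $\alpha\beta^{-1}\in M_n(\zp)$. The same argument applied on the opposite side handles $Res_{\mathcal{M}_n}$, where a morphism must equal $c=\beta^{-1}\alpha$.

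For the two-sided $GL_n(\zp)$-action, the plan is to define each action on objects by the obvious left or right multiplication, $\alpha\mapsto g\alpha$ or $\alpha\mapsto\alpha g$, and then check that each extends consistently to morphisms. Two of the four cases are essentially trivial: the right action on $Frob_{\mathcal{M}_n}$ carries a morphism $a\colon\alpha\to\beta$ to the same $a\colon\alpha g\to\beta g$ (since $\alpha g = a\beta g$), and symmetrically the left action on $Res_{\mathcal{M}_n}$ leaves the morphism element unchanged. The remaining two cases require conjugation: the left action on $Frob_{\mathcal{M}_n}$ sends $a$ to $gag^{-1}\colon g\alpha\to g\beta$ (directly from $g\alpha=(gag^{-1})(g\beta)$), while the right action on $Res_{\mathcal{M}_n}$ sends $c$ to $g^{-1}cg\colon\alpha g\to\beta g$. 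These conjugates lie in $\mathcal{M}_n$ because $g,g^{-1}\in M_n(\zp)$; functoriality and unitality follow immediately from the composition rule $(a_0^*)(a_1^*)=(a_1a_0)^*$ for the twisted arrow category.

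There is no real obstacle here beyond bookkeeping; the only item requiring genuine care is keeping straight which of the four actions acts trivially on morphisms and which acts by conjugation. The conjugation formulas are forced by the requirement that the transported morphism remain in $\mathcal{M}_n$, and they are well defined precisely because $GL_n(\zp)$ is the unit group of $\mathcal{M}_n$.
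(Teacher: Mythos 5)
Your proof is correct and follows essentially the same route as the paper: uniqueness of morphisms follows from cancellation in $GL_n(\bQ_p)$, and the two-sided $GL_n(\zp)$-actions are defined by left/right multiplication on objects with the morphism element either fixed or conjugated as forced by the twisted-arrow relation. You spell out all four action cases and the explicit formula $a=\alpha\beta^{-1}$ (resp.\ $c=\beta^{-1}\alpha$) where the paper states one case and leaves the rest as ``similar,'' but the content is identical.
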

\begin{proof}
  Note that since the matrices in $\mathcal{M}_n$ are invertible over $\bQ_p$ we
  have cancellation:  $fgh=fg'h\Rightarrow g=g'$.  Hence, in both
  $Res_{\mathcal{M}_n}$ and $Frob_{\mathcal{M}_n}$ there is at most one arrow between two
  objects. If $a^*\colon ab\to b\in Frob_{\mathcal{M}_n}$ and $g\in
  GL_n(\zp)$ we set $g\cdot a^*=(gag^{-1})^*\colon gab\to gb$ and
  $a^*\cdot g=a^*\colon abg\to bg$.  The other action is similar.
\end{proof}

\nid Notice that whereas there is at most one map between two objects in
$Res_{\mathcal{M}_n}$ or $Frob_{\mathcal{M}_n}$, in the ``amalgamation'' $\mathcal Ar_{\mathcal{M}_n}$ there can be many.  

\begin{lemma}\label{lem:RFcats2}
For $\alpha \in \Mn$, the assignment $\alpha \mapsto L_\alpha=\alpha^{-1}\zpn/\zpn$ defines
an equivalence of categories 
$$
  Frob_{\Mn}\we GL_n(\zp)\backslash Frob_{\Mn}\cong Sub_{\Tnp}^{\op}
$$
from $Frob_{\Mn}$ to the opposite of the category $Sub_{\Tnp}$ of finite subgroups of
$\Tnp = \bR^n_p/\zpn$ and inclusions, and induces an isomorphism between $GL_n(\zp)\backslash Frob_{\Mn}$ and $Sub_{\Tnp}^{\op}$.

Dually, the assignment $\beta\mapsto \beta\zpn$ defines an
equivalence
$$Res_{\Mn}\we Res_{\Mn}/GL_n(\zp)\cong Op_{\zpn}
$$ between $Res_{\Mn}$ and the category $Op_{\zpn}$ of open subgroups of $\zpn$ and inclusions, and induces an isomorphism between the orbit category $Res_{\Mn}/GL_n(\zp)$ and $Op_{\zpn}$.
\end{lemma}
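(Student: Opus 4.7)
The plan is to handle the Frobenius case first, then observe that the restriction case follows by a formal dualization. The key technical input is Lemma \ref{lem:matvsgp}, which translates multiplicative relations in $\Mn$ into subgroup inclusions inside $\Tnp$.

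First I would check that the assignment $\alpha \mapsto L_\alpha$ is invariant under the left $GL_n(\zp)$-action: for $g \in GL_n(\zp)$, we have $L_{g\alpha} = (g\alpha)^{-1}\zpn/\zpn = \alpha^{-1}\zpn/\zpn = L_\alpha$. A Frobenius morphism $a^*\colon a\beta \to \beta$ in $Frob_{\Mn}$ then maps to the inclusion $L_\beta \hra L_{a\beta}$, and one checks directly that composition is compatible. This defines a functor $F\colon Frob_{\Mn}\to Sub_{\Tnp}^{\op}$ that factors through the quotient by the $GL_n(\zp)$-action.

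The crux of the argument is two applications of Lemma \ref{lem:matvsgp}. By Lemma \ref{lem:RFcats}, between any two objects of $Frob_{\Mn}$ there is at most one morphism, and the corresponding statement in $Sub_{\Tnp}^{\op}$ is clear since it is a poset. Lemma \ref{lem:matvsgp} supplies what remains: there exists $h \in \Mn$ with $\alpha = h\beta$ if and only if $L_\beta \subseteq L_\alpha$, and $h$ may be chosen in $GL_n(\zp)$ iff $L_\beta = L_\alpha$. The first statement gives bijectivity on $\mathrm{Hom}$-sets, since both sides have at most one element and the existence criteria coincide. The second statement shows that two objects of $Frob_{\Mn}$ are isomorphic iff they lie in the same left $GL_n(\zp)$-orbit iff $L_\alpha = L_\beta$. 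Coupled with essential surjectivity---every finite $L\subset \Tnp$ is contained in $\Cp$ and lifts to a $\zp$-lattice $M \subseteq \bQ_p^n$ containing $\zpn$, which is free of rank $n$, so writing the standard basis of $\zpn$ in a basis of $M$ gives $\alpha \in \Mn$ with $M = \alpha^{-1}\zpn$ and hence $L = L_\alpha$---this yields the desired equivalence $Frob_{\Mn} \we Sub_{\Tnp}^{\op}$ and the isomorphism $GL_n(\zp)\backslash Frob_{\Mn} \cong Sub_{\Tnp}^{\op}$.

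The restriction case is formally dual. The assignment $\beta \mapsto \beta\zpn$ is invariant under right multiplication by $GL_n(\zp)$, and a morphism $c_*\colon \beta c \to \beta$ becomes the inclusion $\beta c\zpn \subseteq \beta\zpn$. Essential surjectivity is immediate here: every open subgroup of $\zpn$ has finite index, hence is a free $\zp$-module of rank $n$, and any basis expressed in the standard basis realizes it as $\beta\zpn$ for some $\beta \in \Mn$. The relation analogous to Lemma \ref{lem:matvsgp}---that $\beta = \beta' c$ for some $c \in \Mn$ iff $\beta\zpn \subseteq \beta'\zpn$---follows at once by applying $\beta'^{-1}$ over $\bQ_p$, and the conclusion is then assembled exactly as in the Frobenius case. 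The main subtlety to track, rather than a true obstacle, is the bookkeeping of variance: in the Frobenius diagram the arrow $a^*\colon a\beta \to \beta$ produces the \emph{inclusion} $L_\beta \hra L_{a\beta}$ (hence the opposite category $Sub_{\Tnp}^{\op}$), whereas in the restriction diagram $c_*\colon \beta c \to \beta$ gives the inclusion $\beta c \zpn \subseteq \beta\zpn$ in the natural direction; keeping these variances straight is what makes Lemma \ref{lem:matvsgp} applicable in both settings without modification.
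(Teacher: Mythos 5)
Your proof is correct and follows essentially the same route as the paper: both reduce to Lemma~\ref{lem:matvsgp} (together with the observation that every finite subgroup of $\Tnp$ is some $L_\alpha$) and then dualize for the restriction statement. The one cosmetic difference is that the paper makes the equivalence $Frob_{\Mn}\we GL_n(\zp)\backslash Frob_{\Mn}$ explicit by choosing a set-theoretic section $\sigma$ of $\Mn\to GL_n(\zp)\backslash\Mn$ and building a section functor $\Sigma$ with a natural isomorphism $\gamma$, whereas you deduce it formally from the factorization of your equivalence $Frob_{\Mn}\to Sub_{\Tnp}^{\op}$ through the orbit category; both are fine since the key lemma and the essential-surjectivity lattice argument are identical.
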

\begin{proof}
Since all finite subgroups of $\Tnp$ are of the form
$L_\alpha$ for some $\alpha$, Lemma~\ref{lem:matvsgp} is just another way of stating that $\alpha\mapsto L_\alpha$ induces an isomorphism between $GL_n(\zp)\backslash Frob_{\Mn}$ and $Sub_{\Tnp}^{\op}$.

Choose a section $\sigma$ of the projection $p\colon \Mn\to GL_n(\zp)\backslash\Mn$.  For every $\beta\in\Mn$ there is a unique $\gamma_\beta\in GL_n(\zp)$ such that $\sigma(p(\beta))=\gamma_\beta\beta$.  Let $\Sigma\colon GL_n(\bZ_p)\backslash Frob_{\Mn}\to Frob_{\Mn}$ be defined by $\Sigma (p(\beta))=\sigma(p(\beta))=\gamma_\beta\beta$ and $\Sigma(p(\beta)\to p(\alpha\beta))=(\gamma_\beta\beta\to \gamma_{\alpha\beta}\alpha\beta=(\gamma_{\alpha\beta}\alpha\gamma_{\beta}^{-1})\gamma_\beta\beta)$, giving a section to the projection to the orbit category, with $\gamma$ as the natural isomorphism establishing the equivalence.

The last statement is dual.
The analog of Lemma~\ref{lem:matvsgp} for $Res_{\Mn}$ is the statement that
two open subgroups $\alpha\zpn$ and $\beta\zpn$ of $\zpn$  are equal if and
only if $\beta^{-1}\alpha\in GL_n(\zp)$, and more generally
$\alpha\zpn\subseteq\beta\zpn$ if and only if
$\beta^{-1}\alpha\in\Mn$.
\end{proof}

Henceforth we abbreviate $Sub_{\Tnp}^{\op}$ by $Sub$.  In our later computation of topological Frobenius homology, we will need the following result concerning the orbit structure of this category of subgroups.

\begin{cor}\label{tranact}
  Let $K\in Sub$.  The orbit of $K$ under the $GL_n(\zp)$-action on
  $Sub$ consists of the subgroups that are abstractly group-isomorphic
  to $K$.  

The stabilizer of $K$  has finite index in $GL_n(\zp)$, and if $K\subseteq L_{p^l \id_n}$ for some $l$, then the entire orbit will
  consist of subgroups of $L_{p^l \id_n}$.
\end{cor}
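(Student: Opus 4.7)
The plan is to prove the three assertions separately, after first identifying the $GL_n(\zp)$-action on $Sub$ concretely. Under the isomorphism $Sub \cong GL_n(\zp) \backslash Frob_{\Mn}$ of Lemma \ref{lem:RFcats2}, the residual (right) action of $GL_n(\zp)$ on $Sub$ is simply the restriction to finite subgroups of the usual action of $GL_n(\zp)$ on $\Tnp=\bR_p^n/\zpn$ by continuous group automorphisms (with an $\alpha \leftrightarrow \alpha^{-1}$ inversion coming from the formula $L_\alpha=\alpha^{-1}\zpn/\zpn$).

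For the stability claim, observe that $L_{p^l\id_n}=p^{-l}\zpn/\zpn$ is precisely the subgroup of $p^l$-torsion of $\Tnp$, so every group automorphism of $\Tnp$ preserves it. Consequently $gK\subseteq L_{p^l\id_n}$ whenever $K\subseteq L_{p^l\id_n}$.

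Next, for the orbit description, one direction is immediate: $g\in GL_n(\zp)$ restricts to a group isomorphism $K \xrightarrow{\cong} gK$. For the converse, suppose $K\cong K'$ as abstract groups. Write $K=L_\alpha$ and $K'=L_{\alpha'}$ with $\alpha,\alpha'\in\Mn$ (Lemma \ref{lem:RFcats2}), so that $K\cong\zpn/\alpha\zpn$ and $K'\cong\zpn/\alpha'\zpn$. Apply Lemma \ref{lem:diagon} (plus the remark following it for the local PID $\zp$) to put both matrices in Smith normal form: $\alpha=\gamma'\delta\gamma$ and $\alpha'=\gamma_1'\delta'\gamma_1$ with $\gamma,\gamma',\gamma_1,\gamma_1'\in GL_n(\zp)$ and $\delta,\delta'$ diagonal matrices with $p$-power entries. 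The elementary divisor theorem then forces $\delta'$ to agree with $\delta$ up to permutation of the diagonal entries, and after absorbing a permutation matrix into $\gamma_1$ and $\gamma_1'$ we may assume $\delta=\delta'$. The identity $\alpha(\gamma^{-1}\gamma_1)=(\gamma'(\gamma_1')^{-1})\alpha'$ then yields $L_{\alpha\cdot g}=L_{\alpha'}$ for $g=\gamma^{-1}\gamma_1\in GL_n(\zp)$, placing $K$ and $K'$ in the same $GL_n(\zp)$-orbit.

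Finally, for the finite-index claim: every finite $K\in Sub$ has $p$-power exponent, so $K\subseteq L_{p^l\id_n}$ for some $l$. By the stability assertion, the entire orbit of $K$ remains inside $L_{p^l\id_n}\cong(\bZ/p^l\bZ)^n$, which has only finitely many subgroups; hence the orbit is finite and by orbit-stabilizer the stabilizer has finite index in $GL_n(\zp)$. The only real technical input is Smith normal form over $\zp$, already packaged as Lemma \ref{lem:diagon}; the main hazard is not conceptual but notational, namely keeping straight the $\alpha\mapsto\alpha^{-1}$ inversion that converts right multiplication on $\Mn$ into the geometric action on $Sub$.
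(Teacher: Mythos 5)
Your proof is correct, and it reaches the same result as the paper while taking a genuinely different route on the two nontrivial steps. For the converse direction of the orbit claim, the paper lifts the given abstract isomorphism $x\colon K\cong K'$ through the free covers $\alpha^{-1}\zpn\to (\alpha')^{-1}\zpn$, uses Nakayama's lemma to promote this lift to an isomorphism, and then extracts $\gamma\in GL_n(\zp)$ as its restriction to the kernel $\zpn$. You instead invoke Smith normal form (Lemma~\ref{lem:diagon}) together with the uniqueness of elementary divisors to match the diagonal parts of $\alpha$ and $\alpha'$ and read off $g=\gamma^{-1}\gamma_1$ directly; this is slightly more explicit (it produces $g$ by a matrix computation rather than a lifting argument) but relies on the uniqueness half of the structure theorem rather than Nakayama. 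For stability of the orbit inside $L_{p^l\id_n}$, the paper argues matricially via Lemma~\ref{lem:matvsgp} that $p^l\alpha^{-1}\in\Mn$ implies $p^l\gamma\alpha^{-1}\in\Mn$; you observe instead that $L_{p^l\id_n}$ is the $p^l$-torsion subgroup of $\Tnp$, which is a characteristic subgroup preserved by every automorphism. Your torsion-subgroup observation is arguably cleaner and makes the finite-index conclusion immediate, just as in the paper, via orbit-stabilizer on the finitely many subgroups of $L_{p^l\id_n}$.
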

\begin{proof}
  Clearly, all elements in the orbit of $K$ are abstractly isomorphic
  to $K$.  Conversely, assume $K'\in Sub$ and given a group
  isomorphism $x\colon K\cong K'$.  Choose matrices
  $\alpha,\alpha'\in\Mn$ such that $K=L_\alpha$ and $K'=L_{\alpha'}$
  and consider the diagram
$$
\xymatrix{
0\ar[r]&\zpn\ar@{.>}[d]^{\gamma}\ar@{^{(}->}[r]&
\alpha^{-1}\zpn\ar@{.>}[d]\ar@{->>}[r]& 
K\ar[r]\ar[d]^x_{\cong}&0\\
0\ar[r]&\zpn\ar@{^{(}->}[r]&
(\alpha')^{-1}\zpn\ar@{->>}[r]& 
K'\ar[r]&0
}
$$
By Nakayama's lemma the dotted middle vertical map exists and is an
isomorphism, and so the dotted left vertical map exists, and is
represented by a matrix, say $\gamma$.  This means that
$K'=L_{\alpha\gamma^{-1}}=\gamma\cdot K$.

Lemma \ref{lem:matvsgp} implies that $L_{\alpha}\subseteq L_{p^l \id_n}$
if and only if $p^l\alpha^{-1}\in\Mn$, and so certainly such an $l$
exists.  Furthermore, since $p^l\gamma\alpha^{-1}=\gamma\cdot
p^l\alpha^{-1}$ we also have that $L_{\alpha\gamma^{-1}}\subseteq
L_{p^l \id_n}$.
The stabilizer has finite
  index because there are only finitely many subgroups of $L_{p^l \id_n}$.
\end{proof}

\begin{remark}
Occasionally we will be exclusively interested in homotopy limits over subcategories of $Frob_{\Mn}$, and for those purposes we can avoid the $p$-adic integers, as follows.  Let $Frob_{\Mn \bZ}$ be the full subcategory of $Frob_{\Mn}$ with objects \emph{integral} matrices with determinants a power of $p$.  The inclusion $Frob_{\Mn \bZ}\subseteq Frob_{\Mn}$ is an equivalence of categories, and the assignment $\alpha\mapsto L_\alpha$ induces an isomorphism between $GL_n(\bZ)\backslash Frob_{\Mn \bZ}$ and the category $Sub_{\Cp}^{\op}$ of finite subgroups of $\Cp$.
\end{remark}

\subsection{Actions on inverse systems}

In this section, we describe actions by groups $I \subseteq GL_n(\zp)$ on the covering homology $TC^J(A)$, for submonoids $J \subseteq \Mn$ of isogenies of the torus.

\begin{prop}
Let $I\subseteq GL_n(\zp)$ be a subgroup and $J\subseteq\Mn$ a submonoid such
that if $\gamma\in I$ and $\beta\in J$ then $\gamma\beta\gamma^{-1}\in
J$.  In this case the group $I$ acts on $TC^J(A) := \holim_{\beta\in\mathcal Ar_J}\Tn(A)^{L_\beta}$ by sending $\gamma \in I$ to the operator $R_\gamma F^{\gamma^{-1}}$.
\end{prop}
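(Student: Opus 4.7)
The plan is to realize the action as arising from a self-equivalence of the diagram $T\colon \mathcal Ar_J\to \SpecCat$ sending $\beta$ to $\Tn(A)^{L_\beta}$. The hypothesis that $I$-conjugation preserves $J$ guarantees that for each $\gamma\in I$, the assignment $c_\gamma(\beta):=\gamma\beta\gamma^{-1}$ defines an auto-equivalence of $\mathcal Ar_J$: a morphism $d=abc\to b$ is sent to $(\gamma a\gamma^{-1})(\gamma b\gamma^{-1})(\gamma c\gamma^{-1})\to \gamma b\gamma^{-1}$. Unpacking variances, the composite $R_\gamma F^{\gamma^{-1}}$ at $\beta$ is a spectrum map
$$T^\beta \xrightarrow{F^{\gamma^{-1}}} T^{\gamma\beta} \xrightarrow{R_\gamma} T^{\gamma\beta\gamma^{-1}},$$
which is the right shape to serve as the $\beta$-component of a natural transformation $\tau_\gamma\colon T\Rightarrow T\circ c_\gamma$. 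Such data yield a self-map of $TC^J(A)=\holim_{\mathcal Ar_J}T$ by composing the map on homotopy limits induced by $\tau_\gamma$ with the reindexing isomorphism $\holim(T\circ c_\gamma)\cong \holim T$.

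The first step is to verify naturality of $\tau_\gamma$. Since $T$ sends a morphism $d=abc\to b$ to $F^aR_c$, naturality becomes the identity
$$R_\gamma F^{\gamma^{-1}}\,F^aR_c \;=\; F^{\gamma a\gamma^{-1}}R_{\gamma c\gamma^{-1}}\,R_\gamma F^{\gamma^{-1}}$$
as maps $T^{abc}\to T^{\gamma b\gamma^{-1}}$. Both sides unwind to $R_{\gamma c}F^{a\gamma^{-1}}$ using only the relations $R_\alpha R_\beta=R_{\alpha\beta}$, $F^\alpha F^\beta=F^{\beta\alpha}$, and $R_\alpha F^\beta=F^\beta R_\alpha$ from Theorem~\ref{thm-relations}. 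The second step, multiplicativity of the assignment $\gamma\mapsto R_\gamma F^{\gamma^{-1}}$, follows from the same three relations:
$$R_{\gamma_1}F^{\gamma_1^{-1}}R_{\gamma_2}F^{\gamma_2^{-1}}=R_{\gamma_1}R_{\gamma_2}F^{\gamma_1^{-1}}F^{\gamma_2^{-1}}=R_{\gamma_1\gamma_2}F^{(\gamma_1\gamma_2)^{-1}},$$
and the identity element of $I$ acts trivially because $R_1=\id=F^1$. Together these give a genuine group action of $I$ on the homotopy limit.

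The principal obstacle I anticipate is not in these short manipulations but in the level at which the action lives: the relations in Theorem~\ref{thm-relations} are stated in the stable homotopy category, so a priori one obtains only a family of homotopy classes of self-maps satisfying the group law up to homotopy. To upgrade this to a strict $I$-action on a point-set model of $TC^J(A)$, one must promote $\tau_\gamma$ to a strict natural transformation between cofibrant-fibrant replacements of the diagrams $T$ and $T\circ c_\gamma$, which requires revisiting the point-set constructions of $R_\gamma$ and $F^{\gamma^{-1}}$ along the lines of the frameworks developed for $R$, $F$, and $V$ in the earlier sections. For the subsequent application to systems of Adams operations on covering homology, however, the homotopy-level action produced above is what is essential.
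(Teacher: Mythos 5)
Your proof is correct and follows the same approach as the paper: both exhibit $R_\gamma F^{\gamma^{-1}}$ as a natural transformation between the diagram $\beta\mapsto T^{\beta}$ and its precomposition with the conjugation $c_\gamma$, verified by the two commuting squares against the Frobenius and restriction morphisms of $\mathcal Ar_J$. Your closing worry about the homotopy-versus-strict level of the action is reasonable but in fact already addressed by the remark just before Theorem~\ref{thm-relations}, which observes that relations involving only $R$ and $F$ lift to the point-set level of spectra.
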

\begin{proof}
Observe that if $\gamma\in I$ and $\alpha,\beta\in J$ then
$$
\begin{CD}
  T^{{\gamma\alpha\beta\gamma^{-1}}}@>{F^{\gamma^{-1}}}>{=}>
T^{{\gamma\alpha\beta}}@>{R_\gamma}>>T^{{\alpha\beta}}\\
   @V{F^{\gamma\alpha\gamma^{-1}}}VV@V{F^\alpha}VV@V{F^\alpha}VV\\
    T^{{\gamma\beta\gamma{-1}}}@>{F^{\gamma^{-1}}}>{=}>
T^{{\gamma\beta}}@>{R_\gamma}>>T^{{\beta}}
\end{CD}
$$
and
$$
\begin{CD}
  T^{{\gamma\alpha\beta\gamma^{-1}}}@>{F^{\gamma^{-1}}}>{=}>
T^{{\gamma\alpha\beta}}@>{R_\gamma}>>T^{{\alpha\beta}}\\
    @V{R_{\gamma\beta\gamma^{-1}}}VV@V{R_{\gamma\beta\gamma^{-1}}}VV@V{R_\beta}VV\\
    T^{{\gamma\alpha\gamma^{-1}}}@>{F^{\gamma^{-1}}}>{=}>
T^{{\gamma\alpha}}@>{R_\gamma}>>T^{{\alpha}}
\end{CD}
$$
commute.
\end{proof}

The question is how to choose appropriate $I$ and $J$.  The intersection between $I$ and $J$ should be kept as small as possible, since any element in the intersection will already be in the indexing category of the homotopy limit, and so the action will be trivial.  The minimal $J$ of interest is the free submonoid generated by $p$, which consists of the matrices $p^k \id_n$ for $k\in\bN$.  Any $\alpha\in\Mn$ commutes with these matrices, so in this case we may let $I=GL_n(\zp)$:

\begin{cor} \label{cor-diagcyc}
  Let $\Delta\subseteq \Mn$ be the free submonoid generated by $p \cdot \id_n$.  The group $GL_n(\zp)$ acts on the ``topological diagonal cyclic homology" 
$$TC^{\Delta}(A) = \holim_{p^k \id_n \in\mathcal Ar_\Delta}\Tn(A)^{L_{p^k \id_n}}.$$
\end{cor}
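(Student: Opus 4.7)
The plan is to apply the preceding proposition directly, taking $I = GL_n(\zp)$ and $J = \Delta$. The only thing to verify is the normalization hypothesis: for every $\gamma \in GL_n(\zp)$ and every $\beta \in \Delta$, we need $\gamma \beta \gamma^{-1} \in \Delta$. Once this is checked, the proposition produces the claimed action of $GL_n(\zp)$ on $TC^{\Delta}(A) = \holim_{p^k \id_n \in \mathcal Ar_\Delta} \Tn(A)^{L_{p^k \id_n}}$ by sending $\gamma$ to the operator $R_\gamma F^{\gamma^{-1}}$.

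First I would unpack the structure of $\Delta$. By definition, $\Delta$ is the free submonoid of $\Mn$ generated by $p \cdot \id_n$, so its elements are precisely the scalar matrices $p^k \id_n$ for $k \in \bN$. These scalars lie in the center of $M_n(\zp)$, and so $\gamma (p^k \id_n) \gamma^{-1} = p^k \id_n$ for every $\gamma \in GL_n(\zp)$. The normalization hypothesis is therefore satisfied trivially, and the preceding proposition applies verbatim. There is no real obstacle: the work lies entirely in the previous proposition, which in turn rests on the commutation squares for $R$ and $F$ recorded there, together with the identification $F^\gamma = \id$ for $\gamma \in GL_n(\zp)$ from Theorem~\ref{thm-relations}.1.

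What the corollary really records is that $\Delta$ is the maximally small submonoid of $\Mn$ that is normalized by all of $GL_n(\zp)$, which is what motivates the choice of $\Delta$ as the base case of interest. As a sanity check that the resulting action need not be trivial, one observes that $GL_n(\zp) \cap \Delta = \{\id_n\}$, since every other element of $\Delta$ has determinant $p^{nk}$ with $k > 0$, hence is not a unit of $\zp$. Thus none of the non-identity elements of $GL_n(\zp)$ are already incorporated into the indexing category $\mathcal Ar_\Delta$, so the $GL_n(\zp)$-action on $TC^\Delta(A)$ has a chance of being genuinely interesting; this is the observation that will be exploited in Section~\ref{sec-adamsops} to produce Adams-type operations.
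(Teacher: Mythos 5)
Your proof is correct and matches the paper's: the key observation is that the scalar matrices $p^k \id_n$ are central in $M_n(\zp)$, so the normalization hypothesis of the preceding proposition holds trivially with $I = GL_n(\zp)$ and $J = \Delta$, and the action is given by $\gamma \mapsto R_\gamma F^{\gamma^{-1}}$. One small caveat on your concluding remark: $\Delta$ is not literally the minimal submonoid of $\Mn$ normalized by $GL_n(\zp)$ (for instance the submonoid generated by $p^2 \id_n$ is strictly smaller and still normalized); the paper only claims $\Delta$ is the minimal such submonoid ``of interest,'' a heuristic rather than a precise minimality statement.
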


\begin{remark}
When $\beta \in GL_n \zp$, the restriction map $R_\beta$ is an isomorphism of $\Tnp$-spectra, which can be rather confusing.  In keeping track of the action, the reader may find consolation in the fact that the diagram
$$\xymatrix{
{\TT_p^{n}\times\TT_p^{n}/L_{\alpha\beta}}\ar[d]^{1\times\phi_\beta}&
{\TT_p^{n}/L_{\alpha\beta}\times\TT_p^{n}/L_{\alpha\beta}}
\ar[l]_-{\phi_{\alpha\beta}\times 1}\ar[r]^-{\mu}\ar@{.>}[d]^{\phi_\beta\times\phi_\beta}&
{\TT_p^{n}/L_{\alpha\beta}}\ar[d]^{\phi_\beta}\\
{\TT_p^{n}\times\TT_p^{n}/L_\alpha}&
{\TT_p^{n}/L_\alpha\times\TT_p^{n}/L_\alpha}
\ar[l]_-{\phi_{\alpha}\times 1}\ar[r]^-{\mu}&
{\TT_p^{n}/L_\alpha}}
$$
commutes, where the solid vertical maps keep track of the reindexing implemented by the restriction map, while the horizontal maps keep track of the actions.  At root, the restriction map for an invertible $\beta$ is equal to the functorial action by $\beta: \Tnp \ra \Tnp$, namely the map $T_{\beta}: T_{\Tnp}(A) \ra T_{\Tnp}(A)$, and it is this action that survives to $TC^{\Delta}(A)$.  Note that the target $T_{\Tnp}(A)$ of the map $T_{\beta}$ has its $\Tnp$-action twisted by $\phi_\beta$, so that $T_{\beta}$ remains equivariant.
\end{remark}

\subsection{Examples of group actions on covering homology} \label{sec-exampleactions}

An interesting class of examples arises as follows.  Let $B$ be a
$\zp$-algebra that is free and finitely generated of rank $n$ as a $\zp$-module.  Choose a basis for $B$ and let $M_B=B\cap (B\otimes\bQ)^*$, that is the elements of $B$ that are invertible over $\bQ$.  Using the basis, the monoid $M_B$ is naturally identified as a submonoid of $\Mn$.  We may then consider
$$TC^B(A) := TC^{M_B}(A) = \holim_{\beta\in\mathcal Ar_{M_B}} T^{(n)}(A)^{L_\beta}$$

\begin{example}
Consider the $2$-adic Gaussian integers $B = \bZ_2[i]$.  Up to multiplication by units of $B$,
any element in $M_B$ is a power of $1+i \in M_B$.  Note that $\pi_0 TR^B(A) = W_B(\pi_0 A)$, where $W_B(\pi_0 A)$ is the Burnisde-Witt vectors for $\pi_0 A$ over the module $B \cong \bZ_2^{\times 2}$~\cite{bcd}.  In this example of the $2$-adic Gaussian integers, $TC^B(A)$ is the homotopy limit of $TR^B(A) \overset{F^{1+i}}{\underset{\id}{\rightrightarrows}} TR^B(A)$.  As a result, we have $\pi_{-1} TC^B(A) = W_B(\pi_0 A)_{F^{1+i}}$; this last expression denotes the Frobenius coinvariants of the Burnside-Witt ring.  The reader should contrast this example with the diagonal cyclic homology $TC^{\Delta}(A)$ of Corollary~\ref{cor-diagcyc}.  Both examples come from fixed points of the 2-adic 2-toral topological Hochschild homology, and in both examples the indexing category is ``linear" in the sense that it is generated by a single pair of Frobenius and Restriction operators.  However, in the diagonal cyclic case, $\pi_{-1} TC^{\Delta}(A) = W_{\bZ_2^{\times 2}}(\pi_0 A)_{F^{2 \cdot \id_2}}$.  Note that in the Gaussian integers example, $F^2 = F^{2i} = (F^{1+i})^2$---the two coequalizers, with respect to $F^{1+i}$ and $F^{2 \cdot \id_2}$, are not in general the same.  The reader is invited to make these calculations explicit in the case of $A=\bZ$, using the equivalence $W_{\bZ_2^{\times 2}}(\bZ) \cong K_0(\text{almost finite } \bZ_2^{\times 2}\text{-sets})$.  Other imaginary quadratic orders provide similarly interesting examples.
\end{example}

We get an action of the group $\Aut_{\zp\text{-algebra}}(B)$
of $\zp$-algebra automorphisms of $B$ as follows.  Using the chosen basis of $B$, view $\Aut_{\zp\text{-algebra}}(B)$ as a submonoid of $GL_n(\zp)$; an automorphism $g$ corresponds to a matrix denoted $x_g$.  As above, identify $M_B$ with the corresponding submonoid of $\Mn$.  The automorphism $g \in \Aut_{\zp\text{-algebra}}(B)$ acts on the submonoid $M_B$ by $g(\beta) = x_g \beta x_g^{-1}$, where $\beta \in M_B \subset \Mn$.  The diagram
$$
  \begin{CD}
    \Tn(A)^{L_{g(\alpha\beta\gamma)}}@>{F^{x_g}R_{x_g^{-1}}}>>\Tn(A)^{L_{\alpha\beta\gamma}}\\
	@V{F^{g\gamma}R_{g\alpha}}VV@V{F^{\gamma}R_{\alpha}}VV\\
    \Tn(A)^{L_{g(\beta)}}@>{F^{x_g} R_{x_g^{-1}}}>>\Tn(A)^{L_{\beta}}
  \end{CD}
$$
commutes (because the diagram
$$
\begin{CD}
  B@>{b\mapsto gb}>>B\\@V{\beta\cdot}VV@V{g(\beta)\cdot}VV\\B@>{b\mapsto gb}>>B
\end{CD}
$$commutes).  Hence we have a natural transformation 
$$
\xymatrix@R=0pt@C-5pt{
 {\mathcal Ar_{M_B}}\ar[rrdd]^-{\; \; \; \; \beta\mapsto\Tn(A)^{L_\beta}}\ar[dddd]_{{g}}
 && \\
 &\ar@{=>}[dddl]_(.35){RF^g}& \\
 && \SpecCat\\
 &&\\
 {\mathcal Ar_{M_B}}\ar[uurr]_{\; \; \; \; \beta\mapsto\Tn(A)^{L_\beta}} &&
}
$$
where $RF^g:=R_{x_g}F^{x_g^{-1}}$.
The automorphism group $\Aut_{\zp\text{-algebra}}(B)$ therefore acts on $TC^B(A)$.

\begin{example} . 
  \begin{enumerate}
  \item[1.] Let $G$ be a group of order $n$ and $B=\zp[G]$.  In this case $G$ acts
    on $TC^B(A)$.
  \item[2.] Let $\mathcal O_K$ be the ring of integers in the degree $n$ extension $K$ of $\bQ_p$. The Galois group $G=Gal(K/\bQ_p)$ acts on $TC^{\mathcal
  O_K}(A)$.  For instance, if $n=2$ and $p\neq 1\! \mod 4$, then we have ``complex
  conjugation'' on $TC^{\mathcal O_K}(A)$.
\item[3.] Let $\mathcal O$ be a maximal order in a division algebra of rank $n^2$ over
  $\bQ_p$.  The algebra automorphisms of $\mathcal
  O$---and in particular the units $\mathcal O^*$---act on $TC^{\mathcal
  O}(A)$.  The reader who may wonder at the curious appearance of the $n^2$-fold iterated topological Hochschild homology in this example is invited to compare with the origin of $n^2$-fold abelian varieties in the chromatic level $n$ version of Behrens and Lawson's topological automorphic forms~\cite{behrenslawson-taf}.
  We will give a detailed study of this example in particular in future work.
  \end{enumerate}
\end{example}


\section{Calculation of $TR^{(n)}$ for the sphere}

Recall that $TR^{(n)}(A)$ is the homotopy limit over the restriction operators acting on fixed points of higher topological Hochschild homology $T^{(n)}(A)$.  This ``topological restriction homology" is an important way-station en route to computations of topological cyclic homology.  In this section we calculate topological restriction homology for the sphere spectrum:

\begin{prop}
  There is an equivalence
$$TR^{(n)}(\ess)\simeq\prod_{\mathcal O\subseteq \zpn} B(\zpn/\mathcal O)_+$$
where the product varies over the open subgroups $\mathcal
O\subseteq\zpn$.
\end{prop}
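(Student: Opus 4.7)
The plan is to decompose each fixed-point spectrum using the tom Dieck splitting and then track how the restriction operators act on the decomposition, identifying the tom Dieck summands that assemble coherently into the inverse limit.

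First, since $\ess$ is the unit commutative ring spectrum, $\Lambda_{\Tnp}(\ess)$ is equivariantly the trivial $\Tnp$-sphere, and for each $\alpha \in \Mn$ the tom Dieck splitting gives a finite wedge
\[
  T^\alpha(\ess) = \ess^{L_\alpha} \simeq \bigvee_{K \leq L_\alpha} \Sigma^\infty B(L_\alpha/K)_+,
\]
which is equivalently a finite product in spectra. Next, I would analyze the restriction $R_c \colon T^{\beta c}(\ess) \to T^\beta(\ess)$: it is built from the cyclotomic equivalence $\Phi^{L_c} \Lambda_{\Tnp}(\ess) \simeq \Lambda_{\Tnp/L_c}(\ess)$ together with the identification $\phi_c$, and on tom Dieck summands it annihilates the summand indexed by $K \leq L_{\beta c}$ whenever $K \not\supseteq L_c$, while on each surviving $K \supseteq L_c$ it induces the canonical equivalence $\Sigma^\infty B(L_{\beta c}/K)_+ \simeq \Sigma^\infty B(L_\beta/(K/L_c))_+$ onto the corresponding summand of $T^\beta(\ess)$.

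With this behaviour in hand, I would introduce the notion of a \emph{thread}: a choice $\alpha \mapsto K_\alpha \leq L_\alpha$ satisfying $K_{\beta c} = \pi_c^{-1}(K_\beta)$ for every restriction morphism, where $\pi_c \colon L_{\beta c} \twoheadrightarrow L_\beta$ is the quotient by $L_c$. A consequence of this compatibility is that the cotype $L_\alpha/K_\alpha$ is an invariant of the thread. Passing to Pontryagin duals between $\zpn$ and $\Cp$, a thread becomes a compatible family of surjections $\zpn/\mathcal O_\alpha \twoheadrightarrow F$ (defined once $\alpha$ is large enough so that $\mathcal O_\alpha$ is contained in the eventual kernel), and such a family is exactly the datum of a single surjection $\zpn \twoheadrightarrow F$, i.e.\ of an open subgroup $\mathcal O = \ker \leq \zpn$ with cotype $F = \zpn/\mathcal O$. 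This sets up a bijection between threads and open subgroups of $\zpn$.

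Finally, because each $T^\alpha(\ess)$ is a finite product of its tom Dieck summands and the restrictions act coordinate-wise along threads (killing non-threaded pieces), the functor $\alpha \mapsto T^\alpha(\ess)$ splits as a product, indexed by open subgroups $\mathcal O \leq \zpn$, of subfunctors $D_\mathcal O$; since homotopy limits distribute over products, $TR^{(n)}(\ess) \simeq \prod_{\mathcal O} \holim D_\mathcal O$. For each $\mathcal O$ the subfunctor $D_\mathcal O$ equals $\Sigma^\infty B(\zpn/\mathcal O)_+$ on the cofinal collection of $\alpha$ with $\mathcal O_\alpha \subseteq \mathcal O$, with all structure maps there equivalences, so its homotopy limit is $\Sigma^\infty B(\zpn/\mathcal O)_+$, yielding the claimed product. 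The principal obstacle will be the precise identification in the second step: pinning down the action of restriction on the tom Dieck decomposition and its interaction with the identifications $\phi_c$ and the cyclotomic structure; once that is settled, the remaining arguments are essentially combinatorial, and finiteness of each $L_\alpha$ means that no delicate interchange of $\holim$ with an infinite coproduct is needed.
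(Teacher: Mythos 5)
Your plan tracks the paper's own proof quite closely. Both arguments start from the same decomposition of the fixed-point sphere spectrum (you invoke the tom Dieck splitting directly; the paper reaches the same wedge $\bigvee_{K\subseteq G} B(G/K)_+$ via the $K$-theory of finite $G$-sets, which for finite abelian $G$ is the same statement), both observe that the restriction operator acts on this decomposition as the ``take-$H$-fixed-points'' projection $f_!$ (killing summands whose indexing subgroup does not contain the kernel and inducing equivalences on the rest), and both then reindex to see the limit as a product over open subgroups of $\zpn$. Your ``threads'' and the Pontryagin-duality identification are a repackaging of the paper's Lemma~\ref{lem:RFcats2}, which identifies $Res_{\Mn}$ up to equivalence with the poset $Op_{\zpn}$ of open subgroups under $\beta\mapsto\beta\zpn$; both deliver the same bookkeeping.

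The one place where you have flagged a gap rather than filled it is exactly where the paper invests its real effort. You write that the ``principal obstacle'' is pinning down how restriction interacts with the tom Dieck decomposition and the identification $\phi_c$, and that is correct: this is not formal. The identification $T^\alpha(\ess)\simeq\ess^{L_\alpha}$ must be made functorial simultaneously for restriction, Frobenius, and transfer, so that the decomposition of the prosystem is compatible with the structure maps. The paper resolves this in section~\ref{sec-kthyequivsphere} by constructing an explicit zig-zag of levelwise or stable equivalences of $\Gamma$-categories
\[
  \ess^G \xra{\ \simeq\ } NT_G \xla{\ \simeq\ } NB_G \xra{\ \simeq\ } NC_G \xla{\ \simeq\ } ND_G \xra{\ \simeq\ } K(Sets^G),
\]
through Thom, ball, configuration, and embedded-set categories, each strictly natural in $G$ with respect to $i^*$, $i_*$, and $f_!$; this is then fed into Proposition~\ref{prop-tgkg}. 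So your proposal is correct in strategy and arrives at the same answer, but without the analogue of this chain of functorial equivalences the claim that restriction ``acts coordinate-wise along threads'' is asserted rather than proved. Finally, a small simplification the paper uses and you might add: it suffices to take the homotopy limit over the cofinal tower of diagonal subgroups $L_{p^k\id_n}$, which makes the stabilization of each thread and the identification of each product factor immediate.
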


Our computation of $TR^{(n)}(\ess)$, as well as our computation of $TF^{(n)}(\ess)$ in section~\ref{sec-tfnsphere}, will be based on a comparison of the fixed points of topological Hochschild homology with the $K$-theory of equivariant finite sets.  Below we give a detailed account of the homotopy type and the functorial properties of these $K$-theory spectra.  Then we relate the $K$-theory of equivariant sets to the fixed points of the equivariant sphere spectrum.  Finally we note that the topological Hochschild homology of the sphere is the equivariant sphere spectrum, and thereby complete the description of $TR^{(n)}(\ess)$.

\subsection{The K-theory of finite $G$-sets} \label{sec-ktfinite}

For $G$ a finite group, let ${Sets}^G$ denote the category whose objects are finite $G$-sets and whose morphisms are $G$-equivariant isomorphisms.  We begin by reviewing the structure of the $K$-theory spectra $K({Sets}^G)$ as described in Segal~\cite{segal-niceproceedings} or Carlsson~\cite{carlsson-topology}.  

Note that any transitive $G$-set $X$ is isomorphic to one of the form $G/K$, where $K$ is a subgroup of $G$.  The conjugacy class of $K$ is determined by $X$, and conversely determines $X$ up to isomorphism.  Any finite $G$-set $X$ has a decomposition 
$X \cong \coprod _{i = 1} ^sG/K_i
$,
and $s$ and the collection of conjugacy classes of subgroups (with multiplicities) are determined uniquely by $X$ and in turn determine $X$ up to isomorphism of $G$-sets.  It follows that there is an equivalence of categories 
$$  {Sets}^G \cong \prod _{[K]} S[K]
$$
where the product is over the conjugacy classes of subgroups of $G$, and where $S[K]$ denotes the full subcategory of ${Sets}^G$ on $G$-sets $X$ for which the stabilizer of every $x \in X$ is in the conjugacy class $[K]$.  Each $S[K]$ is a symmetric monoidal subcategory of ${Sets}^G$.  The automorphism group of the object 
$(G/K)^n$ is the wreath product $\Sigma _n \wr W_G(K)$, where $W_G(K) $ denotes the quotient group $N_G(K)/K$, with $N_G (K)$ the normalizer of $K$ in $G$.  It is now an easy consequence of the Barratt-Priddy-Quillen Theorem~\cite{priddy-symmetric} that the spectrum associated to the symmetric monoidal category $S[K]$ is the suspension spectrum $\Sigma ^{\infty} BW_G(K)_+$.  There is a corresponding decomposition of spectra 
$$
K({Sets}^G) \cong \bigvee _{[K]} BW_G(K)_+ 
$$
where again the wedge sum is over the conjugacy classes of subgroups of $G$.  We now restrict our attention to finite abelian $G$, in which case the $K$-theory is simply
$$K({Sets}^G)\we \bigvee_{K\subseteq G} B(G/K)_+
$$

We will need to understand the functorial behavior of this decomposition under group homomorphisms.  Let $i: G^{\prime} \hookrightarrow G$ be the inclusion of a subgroup.  A quotient $G/K$, regarded as a $G^{\prime}$-set by restriction of the action along $i$, decomposes as 
$G/K \cong \coprod _{G/K \cdot G^{\prime}} G^{\prime}/ K \cap G^{\prime} 
$.
This decomposition is reflected in $K$-theory as follows.
The induced map $i^* : K({Sets}^G) \rightarrow K({Sets}^{G'})$ sends the $K$-theory factor $K(S[K])$ corresponding to the subgroup $K$ to the factor $K(S[K \cap G^{\prime}])$ corresponding to the subgroup $K \cap G^{\prime}$.  Moreover, the map on that factor is the transfer $B(G/K)_+ \rightarrow B(G^{\prime}/K \cap G^{\prime})_+$
associated to the inclusion $G^{\prime}/K \cap G^{\prime} \hookrightarrow G/K$.

The $K$-theory of finite sets is functorial not only for inclusions of groups, but also for surjective homomorphisms of groups.  If $f\colon G\fib G'$ is a surjection with kernel $H$ there is a ``restriction map'' $f_!\colon {Sets}^G\to {Sets}^{G'}$ sending the finite $G$-set $X$ to its fixed points $X^H$.  In particular $f_!$ sends the $G$-set $G/K$ to the empty set if $H\not\subseteq K$ and to $G'/f(K)$ if $H\subseteq K$.  Under the equivalence above, this map $f_!$ corresponds to the projection
$\bigvee_{K\subseteq G} B(G/K)_+\to\bigvee_{J\subseteq G'} B(G'/J)_+
$
induced by the isomorphism $G/K\cong G'/f(K)$ if $H\subseteq K$, and
the trivial map if not.

For those as perplexed as the authors by the classical conflict of
terminology, we offer the following dictionary between the language of
the Burnside ring and that of the Witt structure on topological Hochschild homology. Here
$i\colon H\subseteq G$ is the inclusion of a subgroup, and $f: G \ra G/H$ is the projection.

\noindent\begin{center}
  \begin{tabular}
{|c|c|}
\hline
  {\bf $G$-sets}&{\bf Topological Hochschild  Homology} \\
\hline
Restriction to action by subgroup &Frobenius = inclusion of fixed
points\\
$i^*\colon {Sets}^G\to {Sets}^H$&$F\colon\Tn(A)^G \ra \Tn(A)^H$\\
\hline
Inducing up action $X\mapsto G\times_HX$&{\Ver} = transfer\\
$i_*\colon {Sets}^H\to {Sets}^G$& $V\colon\Tn(A)^H \ra \Tn(A)^G$\\
\hline
Taking fixed points $X\mapsto X^H$& Restriction\\
$f_!\colon {Sets}^G\to {Sets}^{G/H}$ &$R\colon T_{\Tnp}(A)^G \to T_{\Tnp / H}(A)^{G/H}$\\
\hline
\end{tabular}
\end{center}

\subsection{K-theory and the equivariant sphere spectrum} \label{sec-kthyequivsphere}

When $G$ is a finite group, there is an equivalence between the $K$-theory of the category of finite $G$-sets and the $G$-fixed points of the equivariant sphere spectrum---this is the equivariant Barratt-Priddy-Quillen theorem.  The topological restriction homology $TR^{(n)}(\ess)$ is a homotopy limit of $G$-fixed points of the sphere for varying $G$.  To express $TR^{(n)}(\ess)$ in terms of $K$-theory, we need an equivalence between the fixed points of the sphere and the $K$-theory of finite sets that intertwines the restriction operation on the sphere with a fixed point operation on sets.  In this section we construct an explicit chain of equivalences that has the necessary functoriality properties:
$$\ess^G \xra{\simeq} NT_G \xla{\simeq} NB_G \xra{\simeq} NC_G \xla{\simeq} ND_G \xra{\simeq} K({Sets}^G)$$
The intermediate spectra $NT_G$, $NB_G$, $NC_G$, and $ND_G$ are described below.

We use $\Gamma$-spaces as our model of spectra, and generally follow the configuration space viewpoint Segal developed in his proof of the non-equivariant Barratt-Priddy-Quillen theorem~\cite{segal-config}.  Let $\ess^G$ denote the $\Gamma$-space taking a finite based set $F$ to the space $\hocolim_V \Map_G(S^V, F \sm S^V)$---this is the $G$-fixed point spectrum of the sphere.  As before ${Sets}^G$ is the symmetric monoidal category of finite $G$-sets, and let $\bar{K}({Sets}^G)$ denote the associated $\Gamma$-category; that is, the category $\bar{K}({Sets}^G)(F)$ has objects the equivariantly $(F\backslash *)$-labeled finite $G$-sets, with morphisms the equivariant label-preserving maps.  Recall that the $K$-theory spectrum $K({Sets}^G)$ is the $\Gamma$-space defined by the levelwise nerve of $\bar{K}({Sets}^G)$.

The $\Gamma$-category $\bar{K}({Sets}^G)$ encodes abstract finite $G$-sets.  The topological $\Gamma$-category $C_G$ will encode configurations of finite $G$-sets in $G$-representations, and the topological $\Gamma$-category $T_G$ will encode the self-maps of $G$-spheres determined by the Thom construction on the normal bundles of those configurations of $G$-sets.  The topological $\Gamma$-categories $B_G$ and $D_G$ will provide technical bridges between the Thom category, the configuration category, and the abstract $G$-set category.  

We proceed directly to the definitions.  Fix a complete universe $\cU$ of orthogonal $G$-representations.  The objects of the category $D_G(F)$ consist of triples $(X, V, \phi)$, where $X$ is an equivariantly $(F \backslash *)$-labeled finite $G$-set, $V$ is a subrepresentation of $\cU$, and $\phi: X \ra V$ is an equivariant embedding.  A morphism of $D_G(F)$ from $(X, V, \phi)$ to $(Y, W, \psi)$ is an inclusion $V \subset W$ of subrepresentations of $\cU$ and a label-preserving equivariant isomorphism $\gamma: X \ra Y$ such that $\psi \gamma = \phi$.  The map $D_G \ra \bar{K}({Sets}^G)$ of topological $\Gamma$-categories takes $D_G(F)$ to $\bar{K}({Sets}^G)(F)$ by sending the triple $(X, V, \phi)$ to the labeled $G$-set $X$.  The resulting map $ND_G \ra K({Sets}^G)$ of $\Gamma$-spaces is a levelwise equivalence: the homotopy fiber of $D_G(F) \ra \bar{K}({Sets}^G)(F)$ is a contractible equivariant embedding category, and the equivalence follows by topological Quillen Theorem A.

Next we define the configuration category $C_G$.  The objects of $C_G(F)$ are pairs $(C,V)$, where $V$ is a subrepresentation of $\cU$ and $C$ is an equivariantly $(F \backslash *)$-labeled finite $G$-subset of $V$.  A morphism of $C_G(F)$ from $(C,V)$ to $(D,W)$ is an inclusion $V \subset W$ of subrepresentations taking $C$ label-preservingly isomorphically onto $D$.  The equivalence $C_G(F) \la D_G(F)$ takes the triple $(X, V, \phi)$ to the pair $(\phi(X), V)$.  The category $B_G$ is an enlargement of $C_G$ that includes balls around the configurations.  The objects of $B_G(F)$ are pairs $(A,V)$, where again $V$ is a subrepresentation of $\cU$, and $A$ is an equivariant collection of disjoint open metric balls in $V$ whose components are equivariantly $(F \backslash *)$-labeled.  A morphism from $(A,V)$ to $(B,W)$ is an inclusion $V \subset W$ of subrepresentations taking $A$ into $B$ by a proper map and inducing a label-preserving isomorphism of the set of centers of the $A$-balls to the set of centers of the $B$-balls.  The equivalence $B_G(F) \ra C_G(F)$ takes the pair $(A,V)$ to the pair $(A^{\text{cent}},V)$, where $A^{\text{cent}}$ denotes the set of centers of the balls of $A$.

The topological $\Gamma$-category $T_G$ is defined as follows.  The objects of $T_G(F)$ consist of pairs $(V,f)$, where $V$ is a subrepresentation of $\cU$, and $f: S^V \ra F \sm S^V$ is an equivariant map.  A morphism from $(V,f)$ to $(W,g)$ is an inclusion $V \subset W$ of subrepresentations together with a distinguished equivariant homotopy class of equivariant homotopies relative to $f$ from $g$ to $\Sigma^{W-V} f$.  The map $T_G(F) \la B_G(F)$ takes a pair $(A,V)$, where $A$ is a labeled collection of balls in $V$, to the pair $(V,f)$, where $f$ is the Pontryagin-Thom construction on the open inclusion $A \subset V$, modified so that each ball of $A$ maps by a standard linear isomorphism to $V$ and so that a ball of $A$ labeled by $p \in F$ maps to the $p$-component of the target $F \sm S^V$; a morphism of $B_G(F)$ from $(A,V)$ to $(B,W)$ maps to the morphism of $T_G(F)$ given by the inclusion $V \subset W$ together with the homotopy class of a standard linear homotopy from the Pontryagin-Thom construction on $B \subset W$ to the $(W-V)$-suspension of the Pontryagin-Thom construction on $A \subset V$.  Unlike the equivalences between $NB_G$, $NC_G$, $ND_G$ and $K({Sets}^G)$, the map $NT_G \la NB_G$ is not a levelwise equivalence, but it is a stable equivalence of $\Gamma$-spaces.

The last map $\ess^G \ra NT_G$ does not come from a map of $\Gamma$-categories, but is a levelwise equivalence.  Chose the standard model for the homotopy colimit of a functor $X$ from a diagram $I$ to spaces, namely
$$\hocolim X = \colim \Big( \coprod_{i \ra j} X(i) \otimes N(j/I)^{\op} \rra \coprod_i X(i) \otimes N(i/I)^{\op} \Big)$$
The functor in question is $X_{\ess^G(F)}(V)=\Map_G(S^V, F \sm S^V)$, defined on the diagram of subrepresentations of $\cU$.  The map $\ess^G(F) \ra NT_G(F)$ sends the simplex $\{(V,f) \otimes (V \subset V_1 \subset \cdots \subset V_k)\}$ of $\Map_G(S^V, F \sm S^V) \otimes N(V/I)^{\op}$ to the simplex $\{(V, f) \ra (V_1, \Sigma^{V_1 - V} f) \ra \cdots \ra (V_k, \Sigma^{V_k - V} f)\}$ of $NT_G(F)$---in the latter simplex the morphisms are given by the homotopy classes of the identities.

All the above constructions are functorial with respect to changing the group $G$.  That is, for $i: H \ra G$ an inclusion of finite groups, and for $f: G \ra G'$ a surjection of finite groups, each of the equivalences commutes with the maps $i^*$, $i_*$, and $f_!$.

\subsection{The homotopy limit over the restrictions}

We now have all the ingredients to express the restriction system of fixed points of topological Hochschild homology in terms of a system of $K$-theory functors of categories of $G$-sets, and thereby to compute topological restriction homology.

\begin{prop} \label{prop-tgkg}
  Let $G$ be a finite group and $S$ a free $G$-space.  There is a weak equivalence $\dT[S](\ess)^G\simeq \ess^G$.  When $f\colon G \twoheadrightarrow G'$ is a surjection the following diagram commutes:
$$
\xymatrix{
\dT[S](\ess)^G \ar[r]^-{\simeq} \ar[d]_{R} & \ess^G \ar@{-->}[r]^-{\simeq} \ar[d]_{f_!} & K({Sets}^G) \ar[d]_{f_!} \\
\dT[S](\ess)^{G'} \ar[r]^-{\simeq} & \ess^{G'} \ar@{-->}[r]^-{\simeq} & K({Sets}^{G'})
}
$$
When $i\colon G \hra G'$ is an injection, the diagram commutes with $(R, f_!)$ replaced by $(F, i^*)$ or by $(V, i_*)$.
\end{prop}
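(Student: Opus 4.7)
The plan is to first construct the equivalence $\dT[S](\ess)^G \simeq \ess^G$, and then verify the three commutativity claims—the Frobenius and Verschiebung cases by naturality and the restriction case through the cyclotomic structure.

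For the base equivalence, I would start from the stabilization weak equivalence recorded before the Verschiebung definition,
$$\Lambda_S(A)(K)^H \xrightarrow{\simeq} \Map_*(S^W \smsh (S/H)_+, \Lambda_S(A)(S^W \smsh K))^G.$$
Since $\ess$ is the smash-product unit, each level $\ess^{\smsh S_k}$ of the simplicial spectrum defining $\Lambda_S(\ess)$ is equivalent to $\ess$, so $\Lambda_S(\ess)(L) \simeq L$ naturally. Setting $A = \ess$ and $H = G$, the right-hand side becomes $\Map_*(S^W \smsh (S/G)_+, S^W \smsh K)^G$, which, because $S$ is a free $G$-space, is a standard equivariant Pontryagin--Thom model for the value of $\ess^G$ at $K$. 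Composing with the chain $\ess^G \simeq NT_G \simeq NB_G \simeq NC_G \simeq ND_G \simeq K({Sets}^G)$ from section~\ref{sec-kthyequivsphere} completes the identification.

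For the Frobenius/injection case, $F$ is by definition the inclusion of fixed point spectra, so under the equivalence $\dT[S](\ess)^H \simeq \ess^H$ it matches the inclusion $\ess^{G'} \hookrightarrow \ess^G$ for $G \subseteq G'$. On the $K$-theory side, $i^*$ restricts a $G'$-set to a $G$-set, decomposing as the transfer-like maps recorded in section~\ref{sec-ktfinite}; that these correspond under Barratt--Priddy--Quillen is immediate from the naturality of the spectra $NT_G$, $NB_G$, $NC_G$, $ND_G$ under group inclusions, which was built into their definitions. The Verschiebung case is analogous: $V_a$ is defined via the transfer for the projection $S \to S/L_a$, which by Proposition~\ref{prop-transfer} is the ordinary transfer $\ess^G \to \ess^{G'}$, and this transfer corresponds under the equivariant Barratt--Priddy--Quillen equivalence to the induction functor $i_* : X \mapsto G' \times_G X$, as recorded in the dictionary at the end of section~\ref{sec-ktfinite}.

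The main obstacle is the restriction/surjection case, since $R$ is defined through the cyclotomic structure. I would proceed by tracing $R$ through the stabilization equivalence above: the definition of $R$ involves restricting the source of the equivariant mapping space from $\bigcup_{K \subseteq H} S^K$ to $\bigcup_{I \subseteq K \subseteq H} S^K$, which at the level of representing spaces replaces $(S/H)_+$ by $(S^I/(H/I))_+$ and yields a model for $\ess^{G'}$. On the $K$-theory side, $f_!$ sends a finite $G$-set $X$ to $X^{\ker f}$, which on the Segal--tom Dieck decomposition annihilates the summands $BW_G(K)_+$ with $\ker f \not\subseteq K$ and preserves the rest via the isomorphism $G/K \cong G'/f(K)$. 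That these two operations agree follows from the fact that, for $A = \ess$, the cyclotomic structure on $\Lambda_S(\ess)$ is essentially tautological—every smash power of $\ess$ is $\ess$, so the restriction reduces to an inclusion of sub-mapping-spaces—combined with the naturality of the chain in section~\ref{sec-kthyequivsphere} under surjections of groups.
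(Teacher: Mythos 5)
Your overall strategy — establish the equivalence via the stabilization of the Loday construction and the unitality of $\ess$, then read off compatibility from the explicit models — is close to the paper's. The paper is terse: it asserts that a cofinality argument identifies $\dT[S](\ess)$ with $\hocolim_V \Map_*(S^V, S^V)$ (the standard model of the genuine equivariant sphere $G$-spectrum) already as a $G$-spectrum, after which $\dT[S](\ess)^G \simeq \ess^G$ and the compatibility with $R$, $F$, $V$ are declared to be obvious. Your proof proceeds instead through the stabilization weak equivalence recorded before the Verschiebung; that is a legitimate alternative, since the cofinality identification and the stabilization map are essentially two presentations of the same observation.

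There are, however, two technical flaws in your write-up. First, the change-of-groups factor in the stabilization is misplaced: in the paper the factor $(G/L_a)_+$ appears because the stabilization passes from $L_a$-fixed points to $G$-fixed points. In your situation you already want $G$-fixed points (i.e., the analogue of $L_a$ is $G$ itself), so the change-of-groups factor is $(G/G)_+ = S^0$ and no factor $(S/G)_+$ should appear; the orbit space of the indexing space $S$ plays no role. The correct form of the stabilization here is $\Lambda_S(\ess)(K)^G \simeq \Map_*(S^W, \Lambda_S(\ess)(S^W \smsh K))^G$. Second, the assertion "$\Lambda_S(\ess)(L)\simeq L$ naturally, since $\ess$ is the unit'' is exactly the crux, and your justification only addresses the nonequivariant homotopy type of each simplicial level. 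What you need is that this equivalence respects the $G$-action coming from permutation of smash factors via the $G$-action on $S$ — that is, that the induced genuine $G$-spectrum is the equivariant sphere, so that $G$-fixed points recover the tom Dieck/Segal $\ess^G$ rather than the naive fixed points of a trivial $G$-spectrum. This follows from coherence of the symmetric monoidal structure on $\Gamma$-spaces (the permutation action on a smash power of the unit is transported to the trivial action on the unit), but you should say so explicitly, since this is precisely the nontrivial content that the paper's cofinality argument encapsulates. The remainder of your argument — the naturality of the chain $\ess^G \simeq \cdots \simeq K({Sets}^G)$ from section~\ref{sec-kthyequivsphere} for $F$ and $V$, and the direct tracing of $R$ through the cyclotomic structure — is reasonable and actually more explicit than the paper's "obviously respects."
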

\begin{proof}
It remains only to establish the first equivalence, and the compatibility of that equivalence with $f_!$, $i^*$, and $i_*$.

Without loss of generality we may assume $S$ is finite.  In this case a cofinality argument shows that $\dT[S](\ess)$ is equivalent to $\hocolim_V \Map_*(S^V,S^V)$, where the homotopy colimit occurs over $G$-representations.  The equivalence $\dT[S](\ess)^G \xra{\simeq} \ess^G$ follows and this map obviously respects the restriction, the inclusion of fixed points, and the transfer.
\end{proof} 

\begin{cor}
  There are equivalences 
$$TR^{(n)}(\ess) \simeq \holim_{\mathcal O\subseteq \zpn}K({Sets}^{\zpn/\mathcal O})\simeq \prod_{\mathcal O\subseteq \zpn
} B(\zpn/\mathcal O)_+$$
where the $\mathcal O$'s vary over the open subgroups of $\zpn$.
\end{cor}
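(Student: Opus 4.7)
The plan is to rewrite $TR^{(n)}(\ess)$ entirely in terms of the $K$-theory of finite $G$-sets, perform the Barratt-Priddy-Quillen wedge decomposition stated in Section~\ref{sec-ktfinite}, and then compute the cofiltered homotopy limit using a final-subcategory argument that collapses each wedge factor to a single contribution.

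First, I would unravel the definition. By Lemma~\ref{lem:RFcats2} the category $Res_{\Mn}$ is equivalent to the category $Op_{\zpn}$ of open subgroups of $\zpn$, via $\alpha \mapsto \alpha \zpn$. Under this equivalence the spectrum $T^{\alpha}(\ess)$ indexed by a matrix $\alpha$ corresponds to $T_{\Tnp}(\ess)^{L_\alpha}$ with $L_\alpha \cong \zpn/\alpha \zpn = \zpn/\mathcal O$. Applying the two vertical equivalences of Proposition~\ref{prop-tgkg} (together with the chain of equivalences $\ess^G \simeq K({Sets}^G)$ from Section~\ref{sec-kthyequivsphere}), and invoking the compatibility of that chain with the restriction operator $R$ on the left and the fixed-point operator $f_!$ on the right, the system $\{T^\alpha(\ess)\}_{\alpha\in Res_{\Mn}}$ is identified with the system $\{K({Sets}^{\zpn/\mathcal O})\}_{\mathcal O \in Op_{\zpn}}$, whose transition map for an inclusion $\mathcal O \subseteq \tilde{\mathcal O}$ is $f_!$ for the quotient surjection $f\colon \zpn/\mathcal O \twoheadrightarrow \zpn/\tilde{\mathcal O}$. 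This gives the first equivalence of the corollary.

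Next I would plug in the Barratt-Priddy-Quillen decomposition. Since $\zpn/\mathcal O$ is a finite abelian group, Section~\ref{sec-ktfinite} gives
$$K({Sets}^{\zpn/\mathcal O}) \simeq \bigvee_{K \subseteq \zpn/\mathcal O} B((\zpn/\mathcal O)/K)_+ \simeq \bigvee_{\mathcal O' \supseteq \mathcal O} B(\zpn/\mathcal O')_+,$$
where the reindexing uses the bijection $K \leftrightarrow \mathcal O'/\mathcal O$ between subgroups of $\zpn/\mathcal O$ and open subgroups of $\zpn$ containing $\mathcal O$. Under this decomposition, and using the description in Section~\ref{sec-ktfinite} of $f_!$ as the projection onto those summands whose indexing subgroup contains $\ker f = \tilde{\mathcal O}/\mathcal O$, the transition map for $\mathcal O \subseteq \tilde{\mathcal O}$ is the identity on the summand $B(\zpn/\mathcal O')_+$ whenever $\mathcal O' \supseteq \tilde{\mathcal O}$ and is null otherwise.

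Finally, since each wedge is finite (hence equivalent to the corresponding finite product), I may rewrite
$$K({Sets}^{\zpn/\mathcal O}) \simeq \prod_{\mathcal O'} X_{\mathcal O}(\mathcal O'), \qquad X_{\mathcal O}(\mathcal O') = \begin{cases} B(\zpn/\mathcal O')_+ & \mathcal O' \supseteq \mathcal O \\ * & \text{otherwise} \end{cases}$$
in a way compatible with the transition maps. Homotopy limits commute with products, so
$$TR^{(n)}(\ess) \simeq \prod_{\mathcal O'} \holim_{\mathcal O \in Op_{\zpn}} X_{\mathcal O}(\mathcal O').$$
For each fixed $\mathcal O'$, the full subcategory $\{\mathcal O \subseteq \mathcal O'\} \subseteq Op_{\zpn}$ is coinitial (given any $\tilde{\mathcal O}$, the intersection $\tilde{\mathcal O} \cap \mathcal O'$ is an open subgroup lying below both), and on this subcategory $X_{-}(\mathcal O')$ is constant equal to $B(\zpn/\mathcal O')_+$ with all structure maps the identity. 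Hence each factor collapses to $B(\zpn/\mathcal O')_+$, yielding the second equivalence.

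I expect the cleanest step to be the first (pure categorical bookkeeping via Lemma~\ref{lem:RFcats2} and Proposition~\ref{prop-tgkg}), and the only genuine subtlety is the last: justifying that the passage from wedge to product, and the cofinality argument, faithfully identify the homotopy limit with $\prod_{\mathcal O'} B(\zpn/\mathcal O')_+$. The coinitiality point is where one must be slightly careful, since $Op_{\zpn}$ is cofiltered but not a poset with a terminal object, so the argument rests on intersecting open subgroups to produce the required lower bounds.
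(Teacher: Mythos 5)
Your proof is correct and follows the same overall approach as the paper: the first equivalence by Lemma~\ref{lem:RFcats2} and Proposition~\ref{prop-tgkg}, then the Barratt--Priddy--Quillen decomposition of $K({Sets}^{\zpn/\mathcal O})$ together with the description of $f_!$ as the projection, followed by a cofinality argument. The paper leaves the final step as ``the second equivalence follows'' (with a remark that one may restrict to the chain $\cdots\subseteq p^{k}\zpn\subseteq\cdots\subseteq\zpn$), whereas you make the reindexing of wedges as products explicit and carry out a clean coinitiality argument on each factor by passing to the subcategory $\{\mathcal O\subseteq\mathcal O'\}$; this is a legitimate, equivalent way to organize the collapse and is more detailed than the paper's treatment. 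One small slip in your closing commentary: $Op_{\zpn}$ \emph{does} have a terminal object, namely $\zpn$ itself; what it lacks is an \emph{initial} object (there is no smallest open subgroup), which is precisely why a coinitiality argument is needed. This does not affect the body of your argument, where the relevant observation—that for fixed $\mathcal O'$ the poset $\{\mathcal O\subseteq\mathcal O'\}$ is coinitial and the restricted diagram is constant with identity transition maps—is stated and used correctly.
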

\begin{proof}
The first equivalence is immediate from Proposition~\ref{prop-tgkg}, using Lemma~\ref{lem:RFcats2} to express the restriction homotopy limit in terms of open subgroups of $\zpn$.  In section~\ref{sec-ktfinite}, we noted that $K({Sets}^G)\we \bigvee_{K\subseteq G} B(G/K)_+$ for finite abelian $G$.  Moreover we observed that for a surjection $f: G \ra G'$, the restriction map $f_! : K({Sets}^G) \ra K({Sets}^{G'})$ corresponds to the projection 
$\bigvee_{K\subseteq G} B(G/K)_+\to\bigvee_{J\subseteq G'} B(G'/J)_+$
that induces an isomorphism between the $B(G/K)$ and $B(G'/f(K))$ factors whenever $\ker f \subset K$.  The second equivalence in the corollary follows.
\end{proof}

Notice that the homotopy limit here is deceivingly complicated: it is enough to take the homotopy limit over the final subcategory $\dots\subseteq p^{k+1}\zpn\subseteq p^{k}\zpn\subseteq\dots\subseteq \zpn$ corresponding to the restrictions in $TR$ along the inclusions $L_{p^k \id_n}\subseteq L_{p^{k+1} \id_n}$.


\section{$TF^{(n)}$ for the sphere and the Segal conjecture for tori} \label{sec-tfnsphere}

In this section, we calculate the ``topological Frobenius homology" $TF^{(n)}(\ess)$ of the sphere spectrum; recall that this is the homotopy limit over the Frobenius operators acting on the fixed points of higher topological Hochschild homology.  On the one hand, this calculation is closely related to the problem of understanding the higher topological cyclic homology of the sphere---indeed in the Appendix we prove that topological cyclic homology is a homotopy limit of restriction operators on topological Frobenius homology, and we use our computation of the Frobenius homology of the sphere to investigate the diagonal cyclic homology of the sphere.  On the other hand, it is not difficult to see that the function spectrum $F(B\TZ_+, \ess_p)$ is homotopy equivalent to $TF^{(n)}(\ess)_p$.  Our evaluation of topological Frobenius homology therefore gives a precise description of the homotopy type of $F(B\TZ_+, \ess_p)$, that is of the $p$-adic cohomotopy of the classifying space of the torus.  The description of $F(BG_+, \ess_p)$ for finite $G$ is known as the {\em Segal conjecture for $G$}, and was carried out in the 1980s.  Partial results for $G$ compact Lie were obtained, but they focused on the analysis of $\pi_0$ of the function spectrum.  In particular, a complete analysis of the case of a torus was not obtained.  We give such an analysis below.  We hope the result can moreover be used to complete our understanding of the compact Lie group version of the Segal conjecture.

We begin by relating the cohomotopy spectrum of the classifying space of the torus to topological Frobenius homology, and by summarizing our computation of the latter.

\begin{prop} \label{prop-tntf}
There is an equivalence
$$F(B\TZ_+, \ess_p)  \simeq TF^{(n)}(\ess)_p$$
\end{prop}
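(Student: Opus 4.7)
The plan is to identify both sides with the $p$-completion of a single homotopy limit of fixed-point spectra of the equivariant sphere indexed by finite subgroups of $\Tnp$, and then to compare the colimit of classifying spaces of those subgroups with $B\TZ$.

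First I rewrite the Frobenius homotopy limit in geometric terms. By Lemma~\ref{lem:RFcats2} the indexing category $Frob_{\Mn}$ is equivalent to $Sub_{\Tnp}^{\op}$, and by Proposition~\ref{prop-tgkg} the higher THH of the sphere reduces to the equivariant sphere: $T^{(n)}(\ess)^{L} \simeq \ess^{L}$ naturally in the Frobenii. This gives
\[ TF^{(n)}(\ess) \;\simeq\; \holim_{L \in Sub_{\Tnp}^{\op}} \ess^{L}, \]
with structure maps the inclusions of genuine fixed points $\ess^{L'} \hookrightarrow \ess^{L}$ for $L \subseteq L'$.

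Next I apply the Segal conjecture for finite $p$-groups of Lin--Gunawardena and Carlsson to each term: for every finite abelian $p$-group $L$ the canonical comparison induces an equivalence $(\ess^{L})_p \simeq (\ess^{hL})_p = F(BL_+, \ess)_p$, and this equivalence is natural under inclusions $L \subseteq L'$, matching the inclusion of genuine fixed points with the map $F(BL'_+, \ess)_p \to F(BL_+, \ess)_p$ induced by $BL \to BL'$. Passing to the homotopy limit, and commuting $p$-completion past it via the cofinal countable subtower indexed by the diagonal subgroups $\{L_{p^k \id_n}\}$ whose terms have bounded-type homotopy, I obtain
\[ TF^{(n)}(\ess)_p \;\simeq\; \holim_L F(BL_+, \ess)_p \;\simeq\; F\bigl((\hocolim_L BL)_+,\, \ess\bigr)_p. \]

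Finally I identify the colimit and compare it with $B\TZ$. The ascending union of all finite subgroups of $\Tnp$ is the $p$-power torsion $\Cp = (\QQ_p/\ZZ_p)^n$, so $\hocolim_L BL \simeq B\Cp$. The inclusion $\Cp \hookrightarrow \TZ$ as $p$-power roots of unity in each circle factor induces $B\Cp \to B\TZ$, which is a mod-$p$ cohomology equivalence: for $n=1$, the Milnor $\lim^1$-sequence together with the fact that under $\mu_{p^k} \subseteq \mu_{p^{k+1}}$ the Euler classes satisfy $x_{k+1} \mapsto x_k$ shows $H^*(BC_{p^\infty}; \FF_p) \cong \FF_p[x]$ with $|x|=2$, matching $H^*(\CC P^\infty; \FF_p)$; the higher-dimensional case follows by K\"unneth. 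This mod-$p$ equivalence yields an equivalence of $p$-completed function spectra, and combining the three steps gives $TF^{(n)}(\ess)_p \simeq F((B\Cp)_+, \ess)_p \simeq F(B\TZ_+, \ess_p)$, the last step using that $B\TZ$ has finite-type mod-$p$ cohomology.

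The main obstacle will be the coherence of the Segal-conjecture comparison across the whole system of finite subgroups: the equivalence $(\ess^L)_p \simeq F(BL_+, \ess)_p$ must be natural in $L$ in a way that lets it be promoted to an equivalence of homotopy limits, and the $p$-completion must be interchanged with the limit. Both are standard once one restricts to the cofinal countable subtower of diagonal subgroups, but they warrant careful execution.
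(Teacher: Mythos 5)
Your proposal is correct and follows essentially the same route as the paper's proof: identify $TF^{(n)}(\ess)$ with $\holim_L \ess^L$ via Proposition~\ref{prop-tgkg} and Lemma~\ref{lem:RFcats2}, invoke the Segal conjecture for finite groups termwise, and then compare the resulting homotopy limit of cohomotopy spectra with $F(B\TZ_+,\ess_p)$. The only difference is that the paper delegates this last comparison to a cited ``direct calculation,'' whereas you spell it out by writing the limit as $F(B\Cp{}_+,\ess)_p$ and using that $B\Cp\to B\TZ$ is a mod-$p$ cohomology equivalence.
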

\begin{proof}
The cohomotopy of the classifying space of the torus is equivalent to a homotopy limit of cohomotopy spectra of classifying spaces of finite groups, which spectra are in turn related to fixed points of equivariant sphere spectra and thereby to Frobenius homology:
$$F(B\TZ_+, \ess_p)  \simeq \holim F(BG_+,\ess_p) 
\simeq \holim \ess_p^G
\simeq \holim T_{\Tnp}(\ess)_p^G 
\simeq (\holim T_{\Tnp}(\ess)^G)_p = TF^{(n)}(\ess)_p$$
Here all the homotopy limits may occur over either the finite subgroups of $\Tnp$ or over the final subcategory of diagonal subgroups $C_{p^l}^{\times n}$.  The first equivalence is established by direct calculation~\cite{carlsson-fpp}, and the second is the Segal conjecture for finite groups~\cite{carlsson-segalconj}.  The third equivalence is Proposition~\ref{prop-tgkg} and the fourth is immediate.
\end{proof}

\begin{theorem} \label{thm-segalconj}
The homotopy groups of the higher topological Frobenius homology of the sphere spectrum are as follows:
$$\pi_*(TF^{(n)}(\ess)_p) = \prod_{k,\alpha} \lim_l \left( \bZ[GL_n(\zp) / \Gamma_{l,k,\alpha}] \otimes \pi_*(\Sigma^{\infty} S^k \sm B \TT^k_+)/p^l \right)$$
Here the product is over $1 \leq k \leq n$ and $\alpha$ is a collection of unordered positive integers $\{n_1, \ldots, n_k\}$.  The limit is over $l \in \NN$, and the group $\Gamma_{l,k,\alpha} \subset GL_n(\zp)$ is determined as follows.  Consider subgroups $K$ of $C_{p^l}^{\times n}$ such that the minimal number of generators of the quotient group $C_{p^l}^{\times n} / K$ is exactly $k$ (we say that $K$ has rank $k$), and the collection of exponents of $p$ in the standard cyclic p-group decomposition of $C_{p^l}^{\times n} / K$ is $\{l - n_1, \ldots, l - n_k\}$ (we say that $K$ has cotype $\alpha$).  The group $GL_n(\zp)$ acts on the set of subgroups $K$ of $C_{p^l}^{\times n}$ with rank $k$ and cotype $\alpha$, and $\Gamma_{l,k,\alpha}$ is the stabilizer of any chosen $K$ under this $GL_n(\zp)$-action.
\end{theorem}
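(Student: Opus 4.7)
The plan is to reduce the homotopy limit defining $TF^{(n)}(\ess)$ to an inverse system of $K$-theory spectra of finite equivariant sets, split each level by the $GL_n(\zp)$-orbits of subgroups, and then extract homotopy groups via a Milnor sequence.

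First, by cofinality of the diagonal subgroups $\{C_{p^l}^{\times n}\}_{l \geq 0}$ inside $Sub_{\Tnp}$, together with the equivalence $Frob_{\Mn} \simeq Sub_{\Tnp}^{\op}$ of Lemma~\ref{lem:RFcats2}, I would rewrite
$$TF^{(n)}(\ess) \simeq \holim_{l} T_{\Tnp}(\ess)^{C_{p^l}^{\times n}}$$
with transition maps the Frobenii attached to the inclusions $C_{p^l}^{\times n} \subseteq C_{p^{l+1}}^{\times n}$. Proposition~\ref{prop-tgkg} then replaces each fixed point spectrum by $K({Sets}^{C_{p^l}^{\times n}})$, carrying Frobenius to the restriction-of-action functor $i^*$. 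The Barratt-Priddy-Quillen splitting $K({Sets}^G) \simeq \bigvee_{K \subseteq G} B(G/K)_+$ from Section~\ref{sec-ktfinite}, applied to $G = C_{p^l}^{\times n}$, produces a wedge summand $B(C_{p^l}^{\times n}/K)_+$ for each subgroup $K$, and the identification of $i^*$ as the transfer along the inclusion $C_{p^l}^{\times n}/(K \cap C_{p^l}^{\times n}) \hookrightarrow C_{p^{l+1}}^{\times n}/K$ pins down all the transition maps of the tower.

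Next I would reindex the wedge summands by the isomorphism type of the quotient $C_{p^l}^{\times n}/K$. By Corollary~\ref{tranact}, the subgroups $K$ of fixed rank $k$ and cotype $\alpha = \{n_1, \ldots, n_k\}$ form a single $GL_n(\zp)$-orbit, canonically identified with $GL_n(\zp)/\Gamma_{l,k,\alpha}$, on every member of which the corresponding summand takes the form $B(C_{p^{n_1}} \times \cdots \times C_{p^{n_k}})_+$. The tower therefore splits as a product over $(k,\alpha)$ of $(GL_n(\zp)/\Gamma_{l,k,\alpha})$-indexed families of copies of these classifying spaces. After $p$-completion, the Milnor sequence controls the passage to the inverse limit, giving
$$\pi_*(TF^{(n)}(\ess)_p) \cong \prod_{k,\alpha} \lim_l \pi_*\Bigl(\bigvee_{GL_n(\zp)/\Gamma_{l,k,\alpha}} B(C_{p^{n_1}} \times \cdots \times C_{p^{n_k}})_+\Bigr)_p.$$

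The main technical obstacle is to identify this expression with $\prod_{k,\alpha} \lim_l (\bZ[GL_n(\zp)/\Gamma_{l,k,\alpha}] \otimes \pi_*(\Sigma^\infty S^k \sm B\TT^k_+)/p^l)$. This amounts to tracking how the transfer transition maps between the $p$-primary cyclic quotient pieces — combined with the change of stabilizer $\Gamma_{l,k,\alpha}$ as $l$ grows — conspire, via the Segal conjecture for finite cyclic $p$-groups, to produce the suspension $S^k$ (from the direction of the transfer up the tower of cyclic $p$-power extensions) and the $p^l$-reduction of the $p$-adic cohomotopy of $B\TT^k$. Verifying the compatibility of these pieces with the $GL_n(\zp)$-equivariance and checking $\lim^1$-vanishing are the delicate closing steps, amounting essentially to threading the Segal conjecture for cyclic $p$-groups through the orbit decomposition established in the earlier steps.
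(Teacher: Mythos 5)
Your overall framework---cofinality of the diagonal subgroups, replacement of fixed points by $K({Sets}^{C_{p^l}^{\times n}})$, Barratt-Priddy-Quillen splitting into summands indexed by subgroups, and the $GL_n(\zp)$-orbit identification via Corollary~\ref{tranact}---matches the paper's strategy. However, there is a genuine gap at the step where you write ``The tower therefore splits as a product over $(k,\alpha)$ of $(GL_n(\zp)/\Gamma_{l,k,\alpha})$-indexed families of copies of these classifying spaces.'' This is false at each finite level $l$: the transition map $i^*$ sends the summand attached to $K \subseteq C_{p^{l+1}}^{\times n}$ to the summand attached to $K \cap C_{p^l}^{\times n}$, and the rank of $C_{p^l}^{\times n}/(K \cap C_{p^l}^{\times n})$ can be strictly smaller than the rank of $C_{p^{l+1}}^{\times n}/K$ (it drops by the number of cyclic factors of $K$ of full exponent $p^l$). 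So the transition maps do \emph{not} respect the $(k,\alpha)$-grading, and the tower does not decompose as a product of towers indexed by $(k,\alpha)$.

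What rescues the argument in the paper is the rank filtration of Section~\ref{sec-rankfilt}: because rank can only decrease under intersection, the subcategories $\cC^G[k]$ of $G$-sets with orbits of rank $\leq k$ are preserved by $i^*$, yielding a filtration by subfunctors $\Phi[k]$. On the associated graded $\Phi[k]/\Phi[k-1]$ the cotype is preserved (Proposition~\ref{prop-rankcotype}), giving the cotype decomposition of Corollary~\ref{cor-decomp}. One must then prove that this filtration splits after passing to the homotopy limit---this is Theorem~\ref{thm-ranksplit} and Corollary~\ref{cor-decompofinal}, where a composite involving the inclusion $\prod_\alpha \Phi[k,\alpha] \to \Phi[k]$ is shown to become an equivalence in the limit. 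Without this filtration-and-splitting argument your proposed direct product decomposition has no justification, and the proof does not go through. (Your remarks about the $S^1$-transfers assembling into $\Sigma^\infty S^k \sm B\TT^k_+$ and the $\lim^1$-vanishing via Mittag-Leffler are correct in outline and match Proposition~\ref{prop-profacts} and the discussion following it, though they are stated rather vaguely.)
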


The conceptual origin of this decomposition of the homotopy of $TF^{(n)}(\ess)$ and a more detailed description of the terms involved in the decomposition are given in the following sections.
 
\subsection{The rank filtration of the equivariant sphere spectrum functor} \label{sec-rankfilt}

We express topological Frobenius homology as a homotopy limit of equivariant sphere spectra, and then describe a rank filtration of these spectra.  

By definition $TF^{(n)}(\ess)$ is the homotopy limit $\holim_{Frob_{\cM_n}} T^{(n)}(\ess)^{L_\alpha}$.  In section~\ref{sec-identfrobres}, we saw that the Frobenius indexing category is equivalent to the category $Sub := Sub_{\Tnp}^{\op}$ of finite subgroups of $\Tnp$.  By Proposition~\ref{prop-tgkg}, the fixed points of topological Hochschild homology are equivalent to the fixed points of the equivariant sphere spectrum.  In particular,  topological Frobenius homology can be expressed as $TF^{(n)}(\ess)_p \simeq \holim_{G \in Sub} ({\ess}^G)_p$.  Throughout this section we will abbreviate the functor in this homotopy limit by $\Phi: Sub \ra \SpecCat$; that is $\Phi(G) = ({\ess}^G)_p$ and $\Phi(i:H \hra G)=i^*$.  Moreover, in light of the results of section~\ref{sec-kthyequivsphere} and by abuse of notation, we will not distinguish between the equivariant sphere spectrum ${\ess}^G$ and the $K$-theory of $G$-sets $K({Sets}^G)$---indeed most of our analysis will occur in the world of $G$-sets---and we will generally let p-completion be implicit.

The rank filtration of the equivariant sphere spectrum functor $\Phi$ is obtained by filtering the symmetric monoidal category ${Sets}^G$.    For any transitive $G$-set $X \cong G/K$, we define the rank of $X$ to be the minimal number of generators required to generate $G/K$.  We then define ${\mathcal C}^G[k]$ to be the full subcategory of ${Sets}^G$ of those $G$-sets all of whose orbits have rank less than or equal to $k$.  These subcategories have the following properties. 

\begin{itemize}
\item{${\mathcal C}^G[0]$ is equivalent to the category of finite sets with trivial $G$-action, hence is equivalent to the sphere spectrum. ${\mathcal C}^G[n]$ is all of ${{Sets}}^G$. }
\item{${\mathcal C}^G  [k]$ is a symmetric monoidal subcategory of ${{Sets}}^G$, whose associated spectrum we will denote by ${\ess}^G[k]$.  We have an increasing sequence of spectra 
$$   {\ess}^G[0] \subseteq {\ess}^G[1] \subseteq \ldots \subseteq {\ess}^G[n] = {\ess}^G
$$}
\item{The subcategories ${\mathcal C}^G[k]$ are preserved under the map $\Phi(i)$, where $i: H \hra G$, and so they will create their own spectrum-valued diagrams $\Phi [k]$, with $\Phi[k](G) := K({\mathcal C}^G[k])$.  }
\end{itemize}

 We now have a filtration of $\holim_{Sub}{\Phi}
$ by 
$$ \holim_{Sub}{\Phi [0]} \subseteq  \holim_{Sub}{\Phi [1]} \subseteq \ldots \subseteq  \holim_{Sub}{\Phi [n]} =  \holim_{Sub}{\Phi }
$$
The  relative terms  in this filtration are the spectra 
$\holim_{Sub}{\Phi [k] /\Phi [k-1]} $, in view of the following general result about spectrum homotopy limits.  

\begin{prop}
Let $\cC$ denote a small category, and suppose that we have two spectrum-valued functors $F$ and $G$ on $\cC$, together with a natural transformation $\varphi: F \rightarrow G$.  Let $C(\varphi)$ denote the functor with $C(\varphi) (x) $ equal to the mapping cone of $F(x) \rightarrow G(x)$.  There is a cofibration sequence of spectra 
$
\holim_{\cC}{F} \rightarrow \holim_{\cC}{G} \rightarrow 
\holim_{\cC}{C(\varphi)}
$.
\end{prop}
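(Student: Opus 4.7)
The plan is to exploit the fundamental identification in stable homotopy theory between cofiber sequences and fiber sequences, together with the general principle that homotopy limits preserve fiber sequences of spectrum-valued diagrams.

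First, I would observe that for each object $x \in \cC$, the mapping cone $C(\varphi)(x)$ fits into a cofiber sequence $F(x) \to G(x) \to C(\varphi)(x)$ of spectra. In the stable category, this is equivalent (up to a canonical connecting map and suspension shift) to a fiber sequence $F(x) \to G(x) \to C(\varphi)(x)$; equivalently, $\Omega C(\varphi)(x)$ is naturally weakly equivalent to the homotopy fiber of $\varphi(x)$. Because $\varphi$ is a natural transformation, this identification is functorial in $x$, so we obtain a levelwise fiber sequence of spectrum-valued functors on $\cC$.

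Next, I would invoke the standard fact that $\holim_{\cC}$, viewed as the right Quillen adjoint to the constant-diagram functor in the projective model structure on spectrum-valued diagrams, preserves homotopy fiber sequences. Concretely, one replaces $F \to G$ by an objectwise fibration with fiber levelwise weakly equivalent to $C(\varphi)$ (shifted by one), and then applies the fact that a homotopy limit of fibrations of fibrant diagrams is a fibration with the expected fiber. Applying this levelwise to the fiber sequence from the first step yields a fiber sequence
\[
\holim_{\cC} F \longrightarrow \holim_{\cC} G \longrightarrow \holim_{\cC} C(\varphi)
\]
in the category of spectra.

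Finally, since cofiber and fiber sequences of spectra coincide, this fiber sequence is at once a cofibration sequence, which is exactly the statement to be proved. There is no real obstacle here; the only subtle point, which would fail in an unstable setting, is the stability-based identification of cofiber and fiber sequences that allows one to pass the mapping cone construction through $\holim$. This is the reason the proposition is formulated for spectrum-valued rather than space-valued functors, and the reason it suffices for the rank-filtration argument in Section~\ref{sec-rankfilt} to analyze the relative terms $\holim_{Sub} \Phi[k]/\Phi[k-1]$.
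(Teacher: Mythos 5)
Your proof is correct and follows the standard argument (the paper states this proposition without giving a proof, so there is nothing to diverge from). One small streamlining: rather than passing through $\Omega C(\varphi)(x)$, you can simply note that in spectra the cofiber sequence $F(x)\to G(x)\to C(\varphi)(x)$ is itself a fiber sequence, i.e.\ $F(x)$ is the homotopy fiber of $G(x)\to C(\varphi)(x)$ naturally in $x$; since $\holim_{\cC}$ commutes with homotopy fibers (a homotopy limit commutes with a homotopy limit), one gets $\holim F \simeq \operatorname{fib}(\holim G \to \holim C(\varphi))$, and stability converts this fiber sequence back into the asserted cofibration sequence.
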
 

Our next task is to evaluate the subquotients $\holim_{Sub} {\Phi [k]/\Phi [k-1]}$ of the rank filtration.  Toward that end, we express the quotients $\ess^G[k]/\ess^G[k-1]$ themselves as $K$-theory spectra of symmetric monoidal categories.  Define the symmetric monoidal category ${\mathcal C}^G {\langle}k{\rangle}$  to be the category of finite $G$-sets all of whose orbits have rank equal to $k$.  Given an inclusion of abelian groups $i: H \hookrightarrow G$, define a functor $i^*: {\mathcal C}^G {\langle}k{\rangle} \rightarrow {\mathcal C}^H {\langle}k{\rangle}$ as follows.  For a finite $G$-set $X$, all of whose orbits have rank $k$, decompose $X$ as $X = X^{\prime} \coprod X^{\prime \prime}$, where $X^{\prime}$ is the union of all $H$-orbits of $X$ that have rank $k$.  The functor $i^*$ takes $X$ to $X^{\prime}$.  Altogether the construction ${\mathcal C}^G \langle k \rangle$ gives a contravariant functor ${\mathcal I}$ from the category of finite subgroups of $\TZ$ to the category of small categories.  
\begin{prop}
The quotient ${\ess}^G[k]/{\ess}^G[k-1]$ is the $K$-theory spectrum associated to the symmetric monoidal category ${\mathcal C}^G{\langle}k{\rangle}$  of finite $G$-sets whose orbits all have rank $k$.  For $i: H \hra G$ an inclusion of abelian groups, the map $K({\mathcal I}(i^*)): {\ess}^G[k]/{\ess}^G[k-1] \rightarrow {\ess}^H[k]/{\ess}^H[k-1]$ is induced by the natural restriction map $i^*: \ess^G[k] \ra \ess^H[k]$.
\end{prop}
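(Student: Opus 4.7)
The plan is to establish a splitting of symmetric monoidal categories
$${\mathcal C}^G[k] \simeq {\mathcal C}^G[k-1] \times {\mathcal C}^G\langle k\rangle$$
and then read off the cofiber statement on $K$-theory spectra. The key observation is that any finite $G$-set $X$ all of whose orbits have rank at most $k$ admits a canonical decomposition $X = X_{<k} \coprod X_k$, where $X_{<k}$ is the union of the orbits of rank strictly less than $k$ and $X_k$ is the union of the orbits of rank exactly $k$. This decomposition is natural with respect to $G$-equivariant isomorphisms and compatible with disjoint union, and so promotes to an equivalence of symmetric monoidal categories (with monoidal structure $\coprod$).

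Applying $K$-theory to this product decomposition yields a product of connective spectra on the right, which (since the $K$-theory of a symmetric monoidal category is group-complete) is equivalent to the wedge sum. Under this identification the inclusion $\ess^G[k-1] \hra \ess^G[k]$ is simply the inclusion of the first wedge summand, and therefore the cofiber $\ess^G[k]/\ess^G[k-1]$ is identified with $K({\mathcal C}^G\langle k\rangle)$, as desired. Combined with the Barratt-Priddy-Quillen splitting of Section~\ref{sec-ktfinite}, this also recovers the concrete description $\ess^G[k]/\ess^G[k-1] \simeq \bigvee_{K: \mathrm{rank}(G/K)=k} B(G/K)_+$.

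For the functoriality claim, I need to verify that restriction along $i: H \hra G$ intertwines the induced map on the quotient spectra with the functor ${\mathcal I}(i^*)$ on the rank-$k$ stratum. Given $X \in {\mathcal C}^G[k]$ with its canonical decomposition $X_{<k} \coprod X_k$, restriction gives $i^*X = i^*X_{<k} \coprod i^*X_k$ in ${\mathcal C}^H[k]$. The piece $i^*X_{<k}$ lies entirely in ${\mathcal C}^H[k-1]$, because rank does not increase under restriction to subgroups: if $G/K$ has rank at most $k-1$, then its subgroup $HK/K \cong H/(H \cap K)$ does as well, by the structure theorem for finitely generated abelian groups. Meanwhile $i^*X_k$ decomposes further as $Y_{<k} \coprod Y_k$ into its $H$-rank strata. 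After passage to the quotient by ${\mathcal C}^H[k-1]$, only the $Y_k$ piece survives, which is exactly ${\mathcal I}(i^*)(X_k)$.

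The step I expect to require the most care is the first: confirming that the orbit-rank decomposition genuinely defines a strict product of symmetric monoidal categories (rather than merely an equivalence of underlying categories), so that the $K$-theory product/wedge identification is unambiguous and the cofibration sequence is obtained on the nose. Once that symmetric-monoidal splitting is in hand, the identification of the cofiber and the verification of naturality under $i^*$ are essentially bookkeeping.
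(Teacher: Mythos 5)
Your proof is correct and matches the argument the paper leaves implicit (the proposition is stated without proof, relying on the wedge decomposition $K(\mathrm{Sets}^G)\simeq\bigvee_{[K]}BW_G(K)_+$ from Section~\ref{sec-ktfinite}). Your organizing step---the symmetric monoidal product decomposition $\mathcal{C}^G[k]\simeq\mathcal{C}^G[k-1]\times\mathcal{C}^G\langle k\rangle$ via the orbit-rank stratification---is a clean way to package that same orbit-by-orbit splitting, and your verification that $H$-rank never exceeds $G$-rank (clear for finite abelian $p$-groups, since rank is $\dim_{\mathbb{F}_p}A[p]$ and $B[p]\subseteq A[p]$) correctly justifies the naturality of the quotient under restriction.
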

\nid From now on, we will write ${\ess}^G{\langle}k{\rangle}$ for the subquotient ${\ess}^G[k]/{\ess}^G[k-1]$.

\subsection{The cotype decomposition of the sphere and a splitting of the rank filtration} \label{sec-cotype}

We can simplify the homotopy limit in topological Frobenius homology as follows.  The full subcategory $\cD_n$ of $Sub$ on the diagonal subgroups $C^{\times n}_{p^l}$ of $\Tnp$ is final, and so $TF^{(n)}(\ess) \simeq \holim_{l \in \cD_n} \Phi |_{\cD_n}$.  Restricting to the subcategory $\cD_n$ will allow us to produce a decomposition of the subquotients of the rank filtration, which in turn will force the rank filtration to split: $ \holim_{Sub}{\Phi} \simeq \bigvee _{k=0} ^n \holim_{Sub} {\Phi [k]/\Phi [k-1]}
$.

For any subgroup $K \subseteq C_{p^l}^{\times n} \subset \TT^{n}$, we will let the {\em type} of $K$ denote the collection of exponents of $p$  occurring in the decomposition of $C_{p^l}^{\times n}/K$ into a direct sum of finite cyclic $p$-groups.  That is, if the type of $K$ is the set $\{ e_1, e_2, \ldots , e_t \}$, then we have 
$$ C_{p^l}^{\times n}/K \cong \bigoplus _{i=1}^t C_{p^{e_i}}
$$
We observe that (i) $1 \leq e_i \leq l $ and (ii) $t \leq n$. By the {\em cotype} of $K$, we mean the collection $\{ l-e_1, l -e_2, \ldots, l - e_t \}$, and we denote it by $ct_l(K)$,  to emphasize that it depends on $l$.  We will need the following result regarding this invariant. 

\begin{prop}\label{prop-rankcotype}
Let $K \subseteq C_{p^l}^{\times n}$, and suppose that 
$$rk(C_{p^l}^{\times n} /K) = rk(C_{p^{l-1}}^{\times n}  /K\cap C_{p^{l-1}}^{\times n})$$ Then $$ct_l(K) = ct_{l-1}(K \cap C_{p^{l-1}}^{\times n})$$
\end{prop}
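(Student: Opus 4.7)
The plan is to translate the statement into a purely algebraic fact about a single finite abelian $p$-group $Q$ and its subgroup $pQ$, then verify this fact using the structure theorem.

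Write $G=C_{p^l}^{\times n}$ and $H=C_{p^{l-1}}^{\times n}$, viewed as the $p^{l-1}$-torsion subgroup of $G$, so that $H=pG$. Set $K'=K\cap H$, $Q=G/K$, and $Q'=H/K'$. The first step is to identify $Q'$ with the subgroup $pQ\subseteq Q$. Indeed, since $K'=K\cap H$, the inclusion $H\hookrightarrow G$ induces an injection $H/K'\hookrightarrow G/K$, whose image is $(H+K)/K=(pG+K)/K=pQ$. Thus $Q'\cong pQ$ canonically.

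Next, I would use the structure theorem to write $Q=\bigoplus_{i=1}^t C_{p^{e_i}}$ with each $e_i\geq 1$, so that the type of $K$ is $\{e_1,\dots,e_t\}$ and $\mathrm{rk}(Q)=t$. Then $pQ=\bigoplus_{e_i\geq 2}C_{p^{e_i-1}}$, so $\mathrm{rk}(pQ)=|\{i:e_i\geq 2\}|$. The hypothesis $\mathrm{rk}(Q)=\mathrm{rk}(Q')$ becomes $\mathrm{rk}(pQ)=t$, which forces every $e_i$ to satisfy $e_i\geq 2$. In that case the type of $K'$, which is the type of $pQ=Q'$, is exactly $\{e_1-1,\dots,e_t-1\}$.

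Finally, I would compute both cotypes: by definition $ct_l(K)=\{l-e_1,\dots,l-e_t\}$ while $ct_{l-1}(K')=\{(l-1)-(e_i-1)\}_{i=1}^{t}=\{l-e_i\}_{i=1}^{t}$, and these multisets coincide.

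There is essentially no obstacle: the only real content is the identification $Q'\cong pQ$, which is elementary, together with the observation that the rank-preservation hypothesis is precisely what rules out cyclic summands of order $p$ in $Q$ (the summands that would be killed when passing to $pQ$). All other summands shrink by exactly one power of $p$, and this single shift is compensated by the shift from $l$ to $l-1$ in the definition of cotype.
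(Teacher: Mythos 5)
Your proof is correct. The key move is the identification $H/(K\cap H)\cong pQ$ for $Q=G/K$, which lets you work entirely with a cyclic decomposition of the quotient $Q$: the rank hypothesis then says $pQ$ has the same number of nontrivial cyclic summands as $Q$, which rules out any $C_p$ summand, and the rest of the argument is bookkeeping. The paper's proof is the mirror image: it decomposes the subgroup $K\cong (C_{p^l})^s\oplus\bigoplus_i C_{p^{f_i}}$ rather than the quotient, identifies $s$ with $n-\mathrm{rk}(G/K)$, and observes that intersecting with $C_{p^{l-1}}^{\times n}$ kills exactly the $C_{p^{l-1}}$ factors of $K$ (the ones with $f_i=l-1$), so the rank hypothesis forces there to be none of these. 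The two conditions are dual — a $C_{p^{l-1}}$ summand of $K$ corresponds to a $C_p$ summand of $Q$ — so the content is the same. Your version avoids decomposing a subgroup of $G$ and makes the comparison of cotypes transparent through the isomorphism $Q'\cong pQ$, which is arguably the cleaner presentation; the paper's version reads ranks off more directly from the subgroup decomposition. Either is fine.
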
 
\begin{proof}
We first reinterpret $ct_l(K)$. Since $K$ is a subgroup of $C_{p^l}^{\times n}$, it can be decomposed as
$$ K \cong (C_{p^l})^{s} \oplus \bigoplus _{i= 1}^{n-s}  C_{p^{f_i}}
$$
where $0 \leq f_i \leq l-1$.  The numbers $s$ and the collection of numbers $f_i$ are unique, up to a possible reordering of the $f_i$'s.  The corresponding decomposition for $K \cap C_{p^{l-1}}^{\times n}$ is of the form 
$$ K \cap  C_{p^{l-1}}^{\times n} \cong C_{p^l}^{s + t} \oplus \bigoplus _{i= 1}^{n-s - t} C_{p^{f_i}}
$$
where $t$ is the number of values of $i$ for which $f_i = l-1$.  It is clear from the definitions that the rank of $C_{p^l}^{\times n}/K$ is $n- s$, and that the rank of $C_{p^{l-1}}^{\times n} /K \cap C_{p^{l-1}}^{\times n}  $ is $n- s - t$.  Since we are assuming that  $rk(C_{p^l}^{\times n} /K) = rk(C_{p^{l-1}}^{\times n}  /K\cap C_{p^{l-1}}^{\times n} )$, it follows that in our case $t = 0$, so in fact we have $f_i \leq l-2$ for all $i$.  Finally, we now have $$ct_l(C_{p^l}^{\times n} /K) = \{ f_1, f_2, \ldots , f_{n- s} \} = 
ct_{l-1}(C_{p^{l-1}}^{\times n}  /K\cap C_{p^{l-1}}^{\times n} ) $$ as required. \end{proof}

  We next consider the symmetric monoidal category ${\mathcal C}^{C_{p^l}^{\times n}}{\langle} k{\rangle}$ whose objects are the finite  $C_{p^l}^{\times n}$-sets all of whose orbits have rank $k$.  Note that $k \leq n$.  Any orbit in a finite $C_{p^l}^{\times n}$-set has the form $C_{p^l}^{\times n} / K$, where $K$ is a subgroup.  By the {\em cotype} of the orbit, we will mean $ct_l(K)$.  Any finite $C_{p^l}^{\times n}$-set $X$ whose orbits all have rank $k$  has a canonical decomposition
$$ X \cong  \coprod _{\alpha} X_{\alpha}
$$
Here $X_{\alpha}$ denotes the union of all orbits whose cotype is equal to $\alpha$.  Further, $\alpha$ ranges over unordered families $\{ n_1, n_2, \ldots , n_k \}$, where the $n_i$'s are positive integers.  It follows that the symmetric monoidal category 
${\mathcal C}^{C_{p^l}^{\times n}}{\langle} k{\rangle}$ has a decomposition
$$ {\mathcal C}^{C_{p^l}^{\times n}}{\langle} k{\rangle}\cong \prod _{\alpha} {\mathcal C}^{C_{p^l}^{\times n}}{\langle}k, \alpha {\rangle}
$$
where ${\mathcal C}^{C_{p^l}^{\times n}}{\langle} k, \alpha {\rangle}$ is the symmetric monoidal category of finite $C_{p^l}^{\times n}$-sets all of whose orbits have rank $k$ and all of whose cotypes are $\alpha$.  
We will write ${\ess}^{C_{p^l}^{\times n}}{\langle}k, \alpha {\rangle}$ for the corresponding spectrum; we now have a decomposition of spectra  
$$  {\ess}^{C_{p^l}^{\times n}}{\langle}k {\rangle}  \simeq 
\bigvee_{\alpha} {\ess}^{C_{p^l}^{\times n}}{\langle}k,\alpha {\rangle}
$$

\begin{prop}The restriction maps ${\ess}^{C_{p^l}^{\times n}}{\langle}k{\rangle} \rightarrow {\ess}^{C_{p^{l-1}}^{\times n} }{\langle}k{\rangle} $ respect the cotype decomposition.  
\end{prop}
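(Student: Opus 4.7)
The plan is to reduce the statement to a direct orbit-by-orbit application of Proposition~\ref{prop-rankcotype}. First I would recall from Section~\ref{sec-rankfilt} that the restriction map on the subquotient $\ess^{C_{p^l}^{\times n}}\langle k\rangle$ is induced by the functor $i^{*}:\mathcal{C}^{C_{p^l}^{\times n}}\langle k\rangle\to\mathcal{C}^{C_{p^{l-1}}^{\times n}}\langle k\rangle$ which takes a finite $C_{p^l}^{\times n}$-set $X$ (all of whose orbits have rank $k$) to the union of those $C_{p^{l-1}}^{\times n}$-orbits of $X$ which themselves have rank $k$. Since both the rank filtration and the cotype splitting are defined orbit-wise and respect coproducts, it suffices to check the claim when $X=C_{p^l}^{\times n}/K$ is a single orbit of rank $k$ and cotype $\alpha$.

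Next I would unpack the restriction of a single transitive set. The standard double-coset decomposition gives
$$i^{*}\bigl(C_{p^l}^{\times n}/K\bigr)\;\cong\;\coprod_{C_{p^l}^{\times n}/(K\cdot C_{p^{l-1}}^{\times n})}\,C_{p^{l-1}}^{\times n}\big/\bigl(K\cap C_{p^{l-1}}^{\times n}\bigr),$$
so every $C_{p^{l-1}}^{\times n}$-orbit appearing in $i^{*}(C_{p^l}^{\times n}/K)$ has isotropy exactly $K\cap C_{p^{l-1}}^{\times n}$. By definition of $i^{*}$ on the rank-$k$ subcategory, the only orbits retained are those for which
$$\mathrm{rk}\bigl(C_{p^{l-1}}^{\times n}/(K\cap C_{p^{l-1}}^{\times n})\bigr)\;=\;k\;=\;\mathrm{rk}\bigl(C_{p^l}^{\times n}/K\bigr).$$

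Finally, for any such retained orbit the rank equality is precisely the hypothesis of Proposition~\ref{prop-rankcotype}, which then yields $ct_{l-1}(K\cap C_{p^{l-1}}^{\times n})=ct_l(K)=\alpha$. Hence $i^{*}$ carries $\mathcal{C}^{C_{p^l}^{\times n}}\langle k,\alpha\rangle$ into $\mathcal{C}^{C_{p^{l-1}}^{\times n}}\langle k,\alpha\rangle$, and passing to $K$-theory shows that the restriction maps $\ess^{C_{p^l}^{\times n}}\langle k\rangle\to\ess^{C_{p^{l-1}}^{\times n}}\langle k\rangle$ respect the wedge decomposition indexed by the cotypes $\alpha$.

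There is no real obstacle here; the only thing that must be watched carefully is the order of operations, namely that one applies the rank-$k$ selection built into $i^{*}$ before invoking Proposition~\ref{prop-rankcotype}, so that the rank hypothesis of that proposition is automatically satisfied on every orbit that actually survives into $i^{*}(X)$.
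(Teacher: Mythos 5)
Your proof is correct and takes essentially the same route as the paper, which disposes of this proposition with a one-line remark citing Proposition~\ref{prop-rankcotype}; you have simply spelled out the orbit-by-orbit reduction and the double-coset decomposition that make that citation work. One small notational point: the displayed isomorphism computes the full restriction of $C_{p^l}^{\times n}/K$ to $C_{p^{l-1}}^{\times n}$ rather than $i^*$ itself, which further discards the orbits whose rank drops; since all the $C_{p^{l-1}}^{\times n}$-orbits share the isotropy $K\cap C_{p^{l-1}}^{\times n}$ the distinction is all-or-nothing, so your subsequent sentence already handles it, but the display as written should not be labeled $i^*$.
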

\nid This result is a consequence of Proposition \ref{prop-rankcotype}: as long as the ranks stay constant, so do the types.  

 We can define functors $\Phi [k,\alpha]$ on ${\mathcal D}_n$ by 
$\Phi [k,\alpha](C_{p^l}^{\times n}) = {\ess}^{C_{p^l}^{\times n}}\hspace{-.2cm}{\langle}k,\alpha {\rangle}$.  Observe that for any fixed $l$, the set of possible cotypes for $C_{p^l}^{\times n}$-orbits is finite, since the integers $n_i$ involved must be less than or equal to $l$.  Consequently, we have a decomposition of $\Phi[k]/\Phi [k-1]$ into a {\em product} of functors $\Phi [k,\alpha]$.  

\begin{cor}\label{cor-decomp} There are equivalences
$$\holim_{Sub}{\Phi [k] }/ \holim_{Sub}{\Phi [k-1] } \simeq \holim_{Sub} \Phi[k]/\Phi[k-1] \simeq
\holim_{{\mathcal D}_n} \Phi[k]/\Phi[k-1] \simeq \prod _{\alpha} \holim_{{\mathcal D}_n}{\Phi [k, \alpha]}$$
\end{cor}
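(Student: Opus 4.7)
The plan is to establish the three equivalences sequentially, each by a different general principle that has already been set up in the paper.

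For the first equivalence, I would apply the cofibration-sequence proposition stated just before Corollary~\ref{cor-decomp} to the objectwise cofibration of $Sub$-diagrams $\Phi[k-1] \hookrightarrow \Phi[k]$. The restriction maps $i^\ast$ visibly preserve the orbit-rank filtration (if $i \colon H \hra G$ and every $G$-orbit of $X$ has rank $\leq k$, then every $H$-orbit of $i^\ast X$ does too, since ranks can only decrease on passing to a subgroup), so this is indeed an objectwise cofibration of spectrum-valued diagrams, with objectwise cofiber $\Phi[k]/\Phi[k-1]$. The proposition then identifies the cofiber of $\holim_{Sub}\Phi[k-1] \to \holim_{Sub}\Phi[k]$ with $\holim_{Sub}\Phi[k]/\Phi[k-1]$.

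For the second equivalence, I would invoke the finality of $\cD_n \subseteq Sub$ noted at the opening of section~\ref{sec-cotype}: every finite subgroup of $\Tnp$ is contained in some diagonal $C_{p^l}^{\times n}$ (take $l$ large enough to kill the group), and this inclusion is natural enough that the under-categories are contractible. Hence restriction from $Sub$ to $\cD_n$ preserves homotopy limits of any spectrum-valued diagram, and in particular $\holim_{Sub}\Phi[k]/\Phi[k-1] \simeq \holim_{\cD_n}\Phi[k]/\Phi[k-1]$.

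For the third equivalence I would use the cotype decomposition of section~\ref{sec-cotype}. At each object $C_{p^l}^{\times n}$ of $\cD_n$ we have
$$\Phi[k]/\Phi[k-1](C_{p^l}^{\times n}) \;\simeq\; \bigvee_{\alpha}\ess^{C_{p^l}^{\times n}}\langle k,\alpha\rangle,$$
and only the unordered tuples $\alpha = \{n_1,\ldots,n_k\}$ with each $n_i<l$ contribute nontrivially. The wedge is therefore finite at each level and may be replaced by a finite product. By the proposition preceding this corollary the restriction maps respect the cotype decomposition, so the product identification is functorial on $\cD_n$: we obtain an equivalence of $\cD_n$-diagrams $\Phi[k]/\Phi[k-1] \simeq \prod_\alpha \Phi[k,\alpha]$, where $\Phi[k,\alpha]$ is taken to be trivial at those $l$ too small to support $\alpha$. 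Since homotopy limits commute with products, this yields the final equivalence.

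The main obstacle is the third step: one must carefully justify the passage from the \emph{levelwise finite} wedge (indexed by those $\alpha$ admissible at each $l$) to a single \emph{infinite} product over all cotypes $\alpha$, and confirm that the ``extend by a point'' factors contribute trivially in the limit. Once that bookkeeping is in place, the interchange of $\holim_{\cD_n}$ with $\prod_\alpha$ is formal, and the corollary follows.
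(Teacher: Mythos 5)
Your proposal takes the same approach as the paper — the paper does not give an explicit argument for this corollary, treating it as an immediate consequence of the preceding discussion, and your three steps fill in exactly the intended reasoning: the cofibration-sequence proposition for the first equivalence, finality of $\cD_n\subseteq Sub$ for the second, and the cotype decomposition together with commutation of homotopy limits with products for the third. Your flagged concern about the third step is also the concern the paper itself addresses in the sentence ``Observe that for any fixed $l$, the set of possible cotypes\ldots is finite\ldots Consequently, we have a decomposition of $\Phi[k]/\Phi[k-1]$ into a \emph{product} of functors $\Phi[k,\alpha]$'' — the point being precisely that the levelwise finite wedge coincides with a levelwise finite product of spectra, and one then extends each $\Phi[k,\alpha]$ by a point to obtain a $\cD_n$-diagram valued in products over \emph{all} cotypes, after which $\holim$ and $\prod$ commute formally.
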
 

We can bootstrap this decomposition into a splitting of the rank filtration:

\begin{theorem} \label{thm-ranksplit}
  The projection
  $$\holim_{l \in {\mathcal D}_n}\Phi[k](l)\to\holim_{l \in {\mathcal D}_n}\Phi[k](l)/\Phi[k-1](l)$$
  splits in the homotopy category.  (Here $\Phi[k](l)$ is shorthand for $\Phi[k](C_{p^l}^{\times n})$.)  More precisely, the diagram
$$
  \xymatrix{{\holim_l \prod_\alpha\Phi[k,\alpha](l)}
    \ar[r]\ar[dr]^{\simeq}&
  {\holim_l \Phi[k](l)}\ar[d]\\
{\prod_\alpha\holim_l \Phi[k,\alpha](l)}\ar[u]^{\cong}&
{\holim_l \Phi[k](l)/\Phi[k-1](l)}
}
$$
commutes, where the horizontal map is induced by inclusion of
categories, the diagonal map is induced from the equivalence of Corollary~\ref{cor-decomp},
the left vertical map is the canonical isomorphism, and the right
vertical map is the projection.
\end{theorem}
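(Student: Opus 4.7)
The plan is to exhibit an explicit section to the projection, coming from the inclusion of $G$-sets whose orbits all have rank \emph{exactly} $k$ into $G$-sets whose orbits have rank \emph{at most} $k$. Once this section is constructed levelwise and shown to intertwine with the cotype decomposition, the diagram commutes and the splitting follows immediately.

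First, I would construct the top horizontal map in the diagram from the inclusion of symmetric monoidal categories. For each $l$, write $G = C_{p^l}^{\times n}$ and note that the subcategory $\mathcal{C}^G\langle k\rangle \subseteq \mathcal{C}^G[k]$ (of $G$-sets all of whose orbits have rank exactly $k$) is a symmetric monoidal subcategory under disjoint union. Passing to $K$-theory and using the decomposition $\mathcal{C}^G\langle k\rangle \simeq \prod_\alpha \mathcal{C}^G\langle k, \alpha\rangle$ from Section~8.2 yields a map $\prod_\alpha \Phi[k,\alpha](l) \to \Phi[k](l)$. At fixed $l$ only finitely many $\alpha$ contribute nontrivially (those with all entries $\leq l$), so the product is effectively a finite wedge and no homotopical issues arise. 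Naturality in $l$ with respect to the restriction functors $i^*$ is a direct consequence of Proposition~\ref{prop-rankcotype}: restriction sends $G$-sets with all orbits of rank $k$ to $H$-sets with all rank-$k$ orbits of unchanged cotype, so both the subcategory inclusion and the cotype splitting are preserved under restriction.

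Second, I would verify that the composite
\[
\prod_\alpha \Phi[k,\alpha](l) \longrightarrow \Phi[k](l) \longrightarrow \Phi[k](l)/\Phi[k-1](l)
\]
recovers the equivalence of Corollary~\ref{cor-decomp}. This is essentially tautological: by construction the subquotient $\ess^G[k]/\ess^G[k-1]$ was identified in Section~8.1 as the $K$-theory of $\mathcal{C}^G\langle k\rangle$, and the identification is implemented precisely by the inclusion $\mathcal{C}^G\langle k\rangle \hookrightarrow \mathcal{C}^G[k]$ followed by the projection. Under the further cotype decomposition, this equivalence becomes the product identification of Corollary~\ref{cor-decomp}.

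Third, I would apply $\holim_{l \in \mathcal{D}_n}$ to everything. The levelwise inclusion furnishes the top horizontal map of the diagram, the canonical commutation of $\holim$ with products furnishes the left vertical isomorphism, and the levelwise equivalence (together with the compatibility with restriction) furnishes the diagonal equivalence. Commutativity of the diagram reduces to the levelwise identity established in the previous step. Since the diagonal is an equivalence, the right vertical projection then splits, with section given by the horizontal map composed with the inverse of the diagonal (after transposing through the left isomorphism).

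The main obstacle will be keeping careful track of the naturality of the constructions on $\mathcal{D}_n$: one must check that both the subcategory inclusion $\mathcal{C}^G\langle k\rangle \hookrightarrow \mathcal{C}^G[k]$ and the product decomposition by cotype are strictly compatible with the restriction functors, so that the resulting map of spectra is a genuine natural transformation and not merely levelwise defined. This compatibility rests squarely on Proposition~\ref{prop-rankcotype}; once it is in hand, the remainder of the argument is formal.
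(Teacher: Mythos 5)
Your overall strategy---construct the section from the inclusion $\mathcal{C}^G\langle k\rangle \hookrightarrow \mathcal{C}^G[k]$, check the triangle commutes levelwise, pass to the homotopy limit---is the same as the paper's.  But there is a genuine error in your justification of the crucial naturality step, and it is precisely the step that makes this theorem nontrivial.

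You assert that ``restriction sends $G$-sets with all orbits of rank $k$ to $H$-sets with all rank-$k$ orbits of unchanged cotype, so both the subcategory inclusion and the cotype splitting are preserved under restriction,'' and attribute this to Proposition~\ref{prop-rankcotype}.  That proposition is conditional: it says the cotype is preserved \emph{provided} the rank is preserved.  It does \emph{not} say the rank is preserved, and in general it is not.  If $K \subseteq C_{p^l}^{\times n}$ has cotype $\alpha = \{n_1,\dots,n_k\}$ with some $n_i = l-1$, then restricting the orbit $C_{p^l}^{\times n}/K$ along $C_{p^{l-1}}^{\times n} \hookrightarrow C_{p^l}^{\times n}$ produces orbits of rank $< k$ (this is exactly the $t>0$ case in the proof of Proposition~\ref{prop-rankcotype}).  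Consequently, the square
$$
  \begin{CD}
    \mathcal{C}^{G_l}\langle k\rangle @>{\mathrm{incl}}>> \mathcal{C}^{G_l}[k]\\
    @V{\mathcal{I}(i^*)}VV @V{i^*}VV\\
    \mathcal{C}^{G_{l-1}}\langle k\rangle @>{\mathrm{incl}}>> \mathcal{C}^{G_{l-1}}[k]
  \end{CD}
$$
does \emph{not} commute: the two composites differ by the rank-dropped orbits, which live in $\mathcal{C}^{G_{l-1}}[k-1]$.  The transition maps in $\prod_\alpha\Phi[k,\alpha]$ are the discard-lower-rank functors $\mathcal{I}(i^*)$, while those in $\Phi[k]$ are the undiscarded $i^*$; the levelwise inclusion is therefore not a strict natural transformation on $\mathcal{D}_n$, and you cannot simply ``apply $\holim$'' to a pointwise-defined family of maps.

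To repair this you need an extra argument, which the paper leaves implicit.  The rank-drop discrepancy for the cotype-$\alpha$ factor occurs only at the single boundary level $l = \max_i n_i + 1$; for $l$ strictly above that, restriction does preserve rank and the inclusion really is natural.  So for each fixed $\alpha$ you should restrict to the cofinal subdiagram $\{l > \max_i n_i + 1\}$ of $\mathcal{D}_n$, on which the inclusion \emph{is} a natural transformation, obtain a map $\holim_l\Phi[k,\alpha](l)\to\holim_l\Phi[k](l)$ via cofinality, and then assemble these over $\alpha$ (using, e.g., that the right vertical projection composed with each of these maps is the inclusion of the $\alpha$-factor into the product).  The commutativity you observe in your second paragraph is correct and tautological; it is the existence of the top horizontal map as a map of homotopy limits that actually needs the cofinality argument, not Proposition~\ref{prop-rankcotype} alone.
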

\begin{proof}
When $l$ is fixed, the upper triangle is exactly the identification of the filtration quotient \mbox{$\Phi[k](l) / \Phi[k-1](l)$} with the finite product over the cotypes.  Considering the homotopy limit over $l$, Corollary~\ref{cor-decomp} shows that the diagonal map becomes an equivalence.
\end{proof}
\begin{cor}\label{cor-decompofinal}
  There is an equivalence
$$TF^{(n)}(\ess)_p\simeq \holim_{Sub}\Phi\simeq\prod_{k,\alpha} \holim_{{\mathcal D}_n} \Phi[k,\alpha]$$
\end{cor}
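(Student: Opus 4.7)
The plan is to chain together the results already established in the preceding sections: the identification of $TF^{(n)}(\ess)_p$ with the homotopy limit of equivariant sphere fixed-point spectra, the reduction to the final diagonal subcategory $\mathcal D_n$, and the splitting of the rank filtration combined with the cotype decomposition of its subquotients. Essentially all the hard work has been done already, and the corollary amounts to a formal assembly of previously established pieces.

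For the left-hand equivalence $TF^{(n)}(\ess)_p \simeq \holim_{Sub} \Phi$, I would unpack $TF^{(n)}(\ess) = \holim_{\alpha \in Frob_{\mathcal M_n}} T^{(n)}(\ess)^{L_\alpha}$ and reindex along the equivalence $Frob_{\mathcal M_n} \simeq Sub$ of Lemma~\ref{lem:RFcats2} furnished by $\alpha \mapsto L_\alpha$. Proposition~\ref{prop-tgkg} then provides a compatible family of equivalences $T^{(n)}(\ess)^G \simeq \ess^G$ intertwining the Frobenii with the equivariant restrictions $i^*$, and after p-completion we obtain $TF^{(n)}(\ess)_p \simeq \holim_{G \in Sub}(\ess^G)_p = \holim_{Sub}\Phi$, exactly as recorded in the proof of Proposition~\ref{prop-tntf}.

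For the right-hand equivalence, I would proceed in three steps. First, since $\mathcal D_n \subseteq Sub$ is final (every finite subgroup of $\Tnp$ lies inside some $C_{p^l}^{\times n}$), one has $\holim_{Sub}\Phi \simeq \holim_{\mathcal D_n}\Phi|_{\mathcal D_n} = \holim_{\mathcal D_n}\Phi[n]$. Second, apply Theorem~\ref{thm-ranksplit} inductively on $k$: for each $k$ the projection $\holim_{\mathcal D_n}\Phi[k]\to \holim_{\mathcal D_n}\Phi[k]/\Phi[k-1]$ admits a homotopy-level section, so the cofibration $\holim_{\mathcal D_n}\Phi[k-1]\to \holim_{\mathcal D_n}\Phi[k]$ splits, and iterating from $k=0$ through $k=n$ yields $\holim_{\mathcal D_n}\Phi[n] \simeq \prod_{k=0}^{n}\holim_{\mathcal D_n}\Phi[k]/\Phi[k-1]$ (a finite product of spectra, equivalently a wedge). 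Third, apply Corollary~\ref{cor-decomp}, which identifies each quotient $\holim_{\mathcal D_n}\Phi[k]/\Phi[k-1]$ with $\prod_\alpha \holim_{\mathcal D_n}\Phi[k,\alpha]$, and combine the two indexed products into a single product over pairs $(k,\alpha)$.

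The only genuinely substantive input is the homotopy-level splitting of the rank filtration provided by Theorem~\ref{thm-ranksplit}, which is what allows one to pass from an iterated tower of cofiber sequences to a clean product; this in turn relies on the finiteness of the set of cotypes at each level $l$, so that a finite product commutes past the homotopy limit. Given that ingredient, the corollary is pure bookkeeping: the finality reduction is formal, the iterated splitting is straightforward induction, and the cotype decomposition is componentwise. No additional technical obstacle remains.
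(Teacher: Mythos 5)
Your proposal is correct and follows exactly the route the paper takes: the first equivalence is assembled from the identification $Frob_{\cM_n}\simeq Sub$ (Lemma~\ref{lem:RFcats2}) and Proposition~\ref{prop-tgkg}, as recorded at the start of Section~\ref{sec-rankfilt}, and the second equivalence is the combination of the splitting in Theorem~\ref{thm-ranksplit} with the cotype decomposition of Corollary~\ref{cor-decomp}. Your added detail about iterating the splitting over $k$ and passing the finite product of cotypes past the homotopy limit is a sound expansion of the same argument.
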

\begin{proof}
The first equivalence was mentioned at the beginning of section~\ref{sec-rankfilt}.  The second is a combination of Theorem~\ref{thm-ranksplit}, which splits the rank filtration into its subquotients, and Corollary~\ref{cor-decomp}, which decomposes the subquotients by cotype.
\end{proof}

\subsection{The homotopy type of the fixed rank-cotype components of $TF^{(n)}$} \label{sec-rankcotype}

The next stage is to evaluate the homotopy of the individual rank-cotype components $\holim_{{\mathcal D}_n}{\Phi [k, \alpha]}$.  

Before directly confronting the rank-cotype computation, we record a few necessary generalities about group actions on spectra.  Consider a group $G$, a subgroup $K \subseteq G$, and a basepoint-preserving action of $K$ on a based space $X$.  The based space $G_+ \smsh_{K} X$ is the quotient of $G_+ \wedge X$ by the equivalence relation $gk \wedge x \simeq g \wedge kx$.  We extend this notion to spectra, for a naive action of $K$ on a spectrum $X$, by letting $G_+ \smsh_{K}X$ be the spectrum associated to the prespectrum $G_+ \smsh_{K}X(-)$.  This construction has the following universal property. 

\begin{prop} \label{universality} Given a spectrum $Y$ with $G$-action, a subgroup $K \subseteq G$, a spectrum $X$ with \mbox{$K$-action}, and a $K$-equivariant map $\psi : X \rightarrow Y$, there is a unique $G$-equivariant extension $\overline{\psi}: G _+ \smsh_{K} X \rightarrow Y$ that agrees with $\psi $ on $X = K_+ \smsh_{K} X \subseteq G _+ \smsh_{K} X $.  
\end{prop}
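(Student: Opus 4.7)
The plan is to exhibit $\overline{\psi}$ by the formula $\overline{\psi}(g \wedge x) = g \cdot \psi(x)$, verify this descends to the quotient $G_+ \smsh_K X$, check $G$-equivariance, confirm it restricts to $\psi$ on $K_+ \smsh_K X$, and finally establish uniqueness. Because the construction $G_+ \smsh_K (-)$ is levelwise on the associated prespectrum, it suffices to work one space at a time and then pass to the spectrum associated to the resulting prespectrum map; all maps in sight are basepoint-preserving, and the levelwise constructions are automatically compatible with structure maps since the $G$-action on $Y$ is strict.

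First I would work at the level of based spaces. Define $\tilde{\psi}: G_+ \wedge X(V) \to Y(V)$ by $\tilde{\psi}(g \wedge x) = g \cdot \psi(x)$, which is continuous and basepoint-preserving. To check that $\tilde{\psi}$ descends through the quotient relation $gk \wedge x \sim g \wedge kx$, observe
\[
\tilde{\psi}(gk \wedge x) = (gk)\cdot \psi(x) = g\cdot(k\cdot \psi(x)) = g\cdot \psi(k \cdot x) = \tilde{\psi}(g \wedge kx),
\]
where the middle equality uses associativity of the $G$-action on $Y$ and the penultimate equality uses $K$-equivariance of $\psi$ (together with $K \subseteq G$ acting on $Y$ via restriction). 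Thus $\tilde\psi$ factors through a well-defined map $\overline{\psi}(V): G_+ \smsh_K X(V) \to Y(V)$. As $V$ varies, the maps $\overline{\psi}(V)$ are natural with respect to the structure maps of the prespectrum $G_+ \smsh_K X(-)$, since the $G$-action on $Y$ commutes with suspension coordinates by assumption, and hence they assemble into a map of spectra after spectrification.

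Next I would verify that $\overline{\psi}$ is $G$-equivariant and restricts to $\psi$. For equivariance, for $h \in G$ we have $\overline{\psi}(h \cdot (g \wedge x)) = \overline{\psi}(hg \wedge x) = (hg)\cdot \psi(x) = h \cdot \overline{\psi}(g \wedge x)$. For the restriction statement, the inclusion $X = K_+ \smsh_K X \hookrightarrow G_+ \smsh_K X$ sends $x \mapsto e \wedge x$, so $\overline{\psi}(e \wedge x) = e \cdot \psi(x) = \psi(x)$ as required.

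For uniqueness, suppose $\overline{\psi}'$ is any $G$-equivariant extension of $\psi$. Every element of $G_+ \smsh_K X$ is represented by an expression of the form $g \wedge x = g \cdot (e \wedge x)$, and by $G$-equivariance together with agreement on $X$ we are forced to have
\[
\overline{\psi}'(g \wedge x) = g \cdot \overline{\psi}'(e \wedge x) = g \cdot \psi(x) = \overline{\psi}(g \wedge x),
\]
so $\overline{\psi}' = \overline{\psi}$. The main subtlety, such as there is one, lies in being careful that the construction $G_+ \smsh_K (-)$ on spectra is defined via the prespectrum it produces, so one needs to confirm that the level-wise maps $\overline{\psi}(V)$ are compatible with the prespectrum structure maps before spectrifying; this is automatic from the definitions and the fact that the $G$-action on $Y$ is a spectrum action.
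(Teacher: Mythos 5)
The paper states Proposition~\ref{universality} without proof, treating it as a standard universal property of the induction construction $G_+ \smsh_K (-)$; your argument is exactly the routine verification the paper is implicitly relying on. Your proof is correct: defining the extension levelwise by $g \wedge x \mapsto g\cdot\psi(x)$, checking descent to the quotient using $K$-equivariance of $\psi$ and associativity of the $G$-action, verifying $G$-equivariance and restriction, and deducing uniqueness by forced evaluation on elements $g \cdot (e \wedge x)$ is precisely the canonical argument, and your remark that all of this is compatible with spectrification because the construction is levelwise on the associated prespectrum is the right way to dispose of the only technical point.
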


 The non-equivariant homotopy type of $G _+ \smsh_{K} X $ can be described
 as follows.
\begin{lemma}\label{untwist}Let $K$ be a subgroup of $G$, and
 let $X$ be a spectrum with a $K$-action.  Let $\{ g_{\alpha} \}_{\alpha \in A}$ denote a set of coset representatives for $G/K$, so that the cosets are given by $\{ [ g_{\alpha} ] \} _{\alpha \in A}$.  The  map 
$$ \theta: G/K_+ \wedge X \rightarrow G_+ \wedge _K X,
$$
defined levelwise by $[g_\alpha ] \wedge x  \mapsto g_{\alpha } \wedge_K x$, is an equivalence of spectra. 
\end{lemma}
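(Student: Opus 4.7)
The plan is to exhibit an explicit inverse to $\theta$ and verify that $\theta$ and this inverse are mutually inverse isomorphisms at the level of prespectra, from which an equivalence of the associated spectra follows immediately. Since the construction $G_+\smsh_K X$ is by hypothesis formed levelwise and then spectrified, the work is entirely at a single level, and the result is essentially a consequence of the uniqueness of coset decompositions.

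First I would record the combinatorial fact that every element of $G_+\smsh_K X(V)$ admits a unique representative of the form $g_\alpha\smsh_K x$ with $\alpha\in A$ and $x\in X(V)$. Indeed, any $g\in G$ can be written uniquely as $g=g_\alpha k$ for some $\alpha\in A$ and $k\in K$, and then $g\smsh_K x = g_\alpha\smsh_K kx$ in the quotient, while two such normal form expressions $g_\alpha\smsh_K x$ and $g_\beta\smsh_K y$ are equal only if $[g_\alpha]=[g_\beta]$ in $G/K$ (forcing $\alpha=\beta$) and $x=y$.

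Using this normal form, I would define a levelwise map
$$\psi\colon G_+\smsh_K X(V)\to G/K_+\smsh X(V),\qquad g_\alpha\smsh_K x\longmapsto [g_\alpha]\smsh x,$$
which is well defined by the uniqueness just established. It is then straightforward to check that $\theta\circ\psi$ and $\psi\circ\theta$ agree with the identity on normal form representatives, and that both $\theta$ and $\psi$ commute with the prespectrum structure maps, since in each case these structure maps are obtained by smashing the structure maps of $X$ with the identity on $G/K_+$ (respectively $G_+$), and $\theta$ and $\psi$ only permute the ``$G$-coordinate'' while leaving the ``$X$-coordinate'' untouched up to the $K$-action that has already been quotiented out.

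Assembling these pieces, $\theta$ is a levelwise isomorphism of prespectra, and therefore induces an equivalence on associated spectra, proving the lemma. There is no genuine obstacle here: the only subtlety is bookkeeping the distinction between $G$-equivariant data (which is what $G_+\smsh_K X$ knows about as a $G$-spectrum) and the underlying non-equivariant homotopy type (which is what the statement addresses), so one must be careful not to expect $\theta$ to be $G$-equivariant—it is only the underlying map of spectra that is an equivalence, as the statement and subsequent applications require.
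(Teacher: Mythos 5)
Your proof is correct, and the paper states this lemma without giving a proof, so there is no alternative argument to compare against. The unique coset normal form $g = g_\alpha k$ furnishes the explicit levelwise inverse $\psi$, making $\theta$ a levelwise isomorphism of prespectra and hence an isomorphism of the spectrified spectra; your closing caution that $\theta$ is not $G$-equivariant---only a non-equivariant equivalence---is likewise accurate and matches how the lemma is subsequently applied.
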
 

The naturality properties of this construction are as follows.  Suppose we have a group $G$, subgroups $K^{\prime} \subseteq K \subseteq G$, group actions of $K$ and $K^{\prime} $ on spectra $X$ and $X^{\prime}$ respectively, and an equivariant map $f : X^{\prime} \rightarrow X$.  There is a naturally defined map 
$ \eta : G \wedge _{K^{\prime} }X^{\prime} \rightarrow G \wedge _K X$
induced by the levelwise map $\eta (g \wedge _{K^{\prime}} x^{\prime}) = g \wedge _K f(x^{\prime})
$.  Choose families of coset representatives $ \{ g_{\alpha } \}_{\alpha \in A}$ and $\{ g^{\prime} _{\beta} \} _{\beta \in B}$ for $G/K$ and $G / K^{\prime}$ respectively.  The coset $[g^{\prime}_{\beta}]$ determines a $K$-coset, which can  be written in the form $[g _{\alpha}]$ for a unique value of $\alpha$.  This means that there is a distinguished element $k_{\beta} \in K$ so that $g^{\prime}_{\beta} = g_{\alpha} k_{\beta}$.  

\begin{lemma}\label{mapbehavior} The diagram 
$$
  \begin{CD}
    G/K'_+\smsh X'@>{\theta}>> G\smsh_{K'}X'\\
    @V{\lambda}VV@V{\eta}VV\\
    G/K_+\smsh X@>{\theta}>> G\smsh_{K}X
  \end{CD}
$$
commutes, where $\lambda$ is the map induced by the levelwise equation 
$\lambda ( [g_{\beta}^{\prime}] \wedge x^{\prime} ) = [g_{\alpha}] \wedge k_{\beta} f(x^{\prime} )$---here $\alpha$ is chosen so that the $K$-coset $[g^{\prime}_{\beta}]$ is equal to $[g_{\alpha}]$.  
\end{lemma}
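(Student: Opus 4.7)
The proof is a direct diagram chase on generators, made possible because all four maps in the square are defined at the level of prespectra and extended by naturality in the smash factors. My plan is to chase a typical generator of $G/K'_+ \smsh X'$ around both sides of the square at a fixed prespectrum level and check that the two images agree; commutativity at the spectrum level follows immediately by spectrification.

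Concretely, I would take a generator $[g'_\beta] \smsh x'$. Along the top-right composite (first $\theta$, then $\eta$), $\theta$ sends this to $g'_\beta \smsh_{K'} x'$, and $\eta$ sends that to $g'_\beta \smsh_K f(x')$. Substituting $g'_\beta = g_\alpha k_\beta$ and sliding $k_\beta \in K$ across the smash-over-$K$ symbol yields $g_\alpha \smsh_K k_\beta f(x')$. Along the left-bottom composite (first $\lambda$, then $\theta$), $\lambda$ sends $[g'_\beta] \smsh x'$ to $[g_\alpha] \smsh k_\beta f(x')$ by definition, and $\theta$ sends this to $g_\alpha \smsh_K k_\beta f(x')$. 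The two images coincide, so the diagram commutes on generators, and hence after passage to spectra.

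There is no substantial obstacle beyond correctly unraveling the definitions. The only point deserving comment is the internal coherence of $\lambda$: although its formula is written in terms of the chosen coset representatives, the assignment $[g'_\beta] \mapsto [g_\alpha]$ is simply the canonical projection $G/K'_+ \to G/K_+$ induced by the inclusion $K' \subseteq K$, and the twist by $k_\beta$ on the $X$-factor is exactly the compensation needed to interchange $\smsh_{K'}$ and $\smsh_K$ under the identifications provided by Lemma~\ref{untwist}. In this sense, the content of the lemma is really that the compensating $K$-action admits the explicit coset-by-coset description given, and that writing $\lambda$ in this form makes the square commute on the nose rather than merely up to homotopy.
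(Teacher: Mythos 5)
Your proof is correct and is exactly the diagram chase one expects: substitute $g'_\beta = g_\alpha k_\beta$, slide the $K$-element across the $\smsh_K$, and observe that the twist by $k_\beta$ built into $\lambda$ is precisely what makes the two composites agree on the nose. The paper states the lemma without proof, treating it as immediate from the definitions of $\theta$ and $\eta$, so your explicit verification supplies the argument the authors left implicit and does so correctly.
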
 

We now introduce some terminology concerning the collection of groups of a fixed cotype.  For each $k$ and $\alpha$, let ${\mathcal M}[k,\alpha](l)$ denote the based set obtained by adjoining a disjoint basepoint to the set of subgroups $K \subseteq C_{p^l}^{\times n}$ such that $C_{p^l}^{\times n} /K$  has rank $k$ and $K$ has cotype $\alpha$.  We note that there are maps 
$\theta(l, l^{\prime}): {\mathcal M}[k,\alpha](l) \rightarrow {\mathcal M}[k,\alpha](l^{\prime})$ whenever $l \geq l^{\prime}$, defined by $\theta (l, l^{\prime})(K) = K \cap C_{p^{l^{\prime}}}^{\times n}$
when $C_{p^{l^{\prime}}}^{\times n}/K \cap C_{p^{l^{\prime}}}^{\times n}$ has rank $k$ and $K \cap C_{p^{l^{\prime}}}^{\times n}$ has cotype $\alpha$, and $\theta (l, l^{\prime})(K) = *$ otherwise.    We now   describe this set ${\mathcal M}[k,\alpha](l)$ as a quotient of $GL_n (\zp)$.  
\begin{lemma}\label{Mformoduli}  The group $GL_n ({\bZ}_p)$ acts on the based set ${\mathcal M}[k,\alpha](l)$.  The action is transitive on non-basepoint elements, and the stabilizer of each non-basepoint element is a finite index subgroup. Therefore, ${\mathcal M}[k,\alpha](l)$ can be described  as $(GL_n({\bZ}_p)/\Gamma _l)_+$, where $\Gamma _l$ is the stabilizer of an element in ${\mathcal M}[k,\alpha](l)$.   
\end{lemma}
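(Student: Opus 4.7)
The plan is to exhibit $\mathcal{M}[k,\alpha](l)$ as an orbit for a suitable action of $GL_n(\zp)$ and then invoke Corollary~\ref{tranact} to handle transitivity and the finite-index stabilizer property. First I would set up the action: the standard $GL_n(\zp)$-action on $\bR^n_p/\zpn=\Tnp$ preserves each of the subgroups $L_{p^l \id_n}=C_{p^l}^{\times n}$, and therefore induces an action on the set of subgroups of $C_{p^l}^{\times n}$. Because this action is by group automorphisms of $C_{p^l}^{\times n}$, for any $K\subseteq C_{p^l}^{\times n}$ and $g\in GL_n(\zp)$ it induces isomorphisms $K\cong gK$ and $C_{p^l}^{\times n}/K\cong C_{p^l}^{\times n}/gK$. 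In particular the rank of the quotient and the cotype of $K$ are preserved, so the action restricts to $\mathcal M[k,\alpha](l)\setminus\{*\}$, extended by fixing the basepoint.

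Next I would verify transitivity on non-basepoint elements. Given $K,K'\in\mathcal M[k,\alpha](l)$, by the structural analysis of subgroups of $C_{p^l}^{\times n}$ carried out in the proof of Proposition~\ref{prop-rankcotype}, any such $K$ decomposes as $(C_{p^l})^{n-k}\oplus\bigoplus_{i=1}^{k}C_{p^{f_i}}$ where $\alpha=\{f_1,\dots,f_k\}$ (and each $f_i\le l-1$), and similarly for $K'$. Hence $K$ and $K'$ have the same abstract group isomorphism type, determined entirely by $n$, $l$, $k$ and $\alpha$. By Corollary~\ref{tranact}, any two abstractly isomorphic finite subgroups of $\Tnp$ lie in a common $GL_n(\zp)$-orbit, and if they happen to be contained in $L_{p^l \id_n}$ the orbit stays inside $L_{p^l \id_n}=C_{p^l}^{\times n}$. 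Thus $K$ and $K'$ are $GL_n(\zp)$-conjugate inside $C_{p^l}^{\times n}$, proving transitivity.

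For the stabilizer I would again appeal to Corollary~\ref{tranact}, which gives directly that the stabilizer $\Gamma_l$ of any chosen non-basepoint element $K$ is of finite index in $GL_n(\zp)$, since the orbit of $K$ consists of only finitely many subgroups of $C_{p^l}^{\times n}$. Combining transitivity with the orbit--stabilizer correspondence then yields a $GL_n(\zp)$-equivariant bijection between $\mathcal M[k,\alpha](l)\setminus\{*\}$ and $GL_n(\zp)/\Gamma_l$, giving the based identification $\mathcal M[k,\alpha](l)\cong (GL_n(\zp)/\Gamma_l)_+$ as claimed.

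The main conceptual point, and the only step that could plausibly obstruct the argument, is the translation between the cotype invariant and the abstract isomorphism type of $K$ as a subgroup of $C_{p^l}^{\times n}$; this is essentially a bookkeeping exercise that is already implicit in the proof of Proposition~\ref{prop-rankcotype}, so once it is made explicit the rest of the proof is a direct invocation of Corollary~\ref{tranact} together with the orbit--stabilizer theorem applied to a based set.
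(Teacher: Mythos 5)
Your proof is correct and follows essentially the same route as the paper's: the paper's proof is the single sentence ``Since fixing rank and cotype fixes the abstract isomorphism type of the subgroup, this is Corollary~\ref{tranact}.'' You merely unpack that sentence, using the decomposition $K\cong(C_{p^l})^{n-k}\oplus\bigoplus_iC_{p^{f_i}}$ established in the proof of Proposition~\ref{prop-rankcotype} to justify the asserted link between rank/cotype and abstract isomorphism type before invoking Corollary~\ref{tranact} exactly as the authors do.
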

\begin{proof} Since fixing rank and cotype fixes the abstract
  isomorphism type of the subgroup, this is Corollary \ref{tranact}. 
\end{proof}

 We proceed to analyze $ K({\mathcal C}^{C_{p^l}^{\times n}}{\langle} k, \alpha {\rangle}) $, where as before ${\mathcal C}^{C_{p^l}^{\times n}}{\langle} k, \alpha {\rangle}$ is the symmetric monoidal category of finite $C_{p^l}^{\times n}$-sets all of whose orbits have rank $k$ and all of whose cotypes are $\alpha$.  Choose any subgroup $K \subseteq C_{p^l}^{\times n}$ for which the orbit $ C_{p^l}^{\times n}/K$ has rank $k$ and such that $K$ has cotype $\alpha$.  Let $\Gamma$ denote the stabilizer of $K$ under the action of $GL_n({\bZ}_p)$ on ${\mathcal M}[k,\alpha](l)$.   Let 
$${\mathcal C}^{C_{p^l}^{\times n}}{\langle} K {\rangle} \subseteq
 {\mathcal C}^{C_{p^l}^{\times n}}{\langle} k, \alpha {\rangle} $$
denote the symmetric monoidal subcategory on those $C_{p^l}^{\times n}$-sets all of whose points have $K$ as their stabilizer.  On the one hand, the group $\Gamma$ clearly acts on this category ${\mathcal C}^{C_{p^l}^{\times n}} \langle K \rangle$ via its action on the quotient group $C_{p^l}^{\times n} /K$, and consequently acts on the corresponding spectrum.  On the other hand, the full group $GL_n({\bZ}_p)$ acts on the category 
$ {\mathcal C}^{C_{p^l}^{\times n}}{\langle} k, \alpha {\rangle}$ because it acts on the group $C_{p^l}^{\times n}$.  The inclusion $ {\mathcal C}^{C_{p^l}^{\times n}}{\langle} K {\rangle}  \subseteq {\mathcal C}^{C_{p^l}^{\times n}}{\langle} k, \alpha {\rangle}$ is clearly $\Gamma$-equivariant, so Proposition \ref{universality} yields a map of spectra
$ \rho : GL_n({\bZ}_p)_+ \smsh_{\Gamma} K({\mathcal C}^{C_{p^l}^{\times n}}{\langle} K {\rangle}) \rightarrow K({\mathcal C}^{C_{p^l}^{\times n}}{\langle} k, \alpha {\rangle}).
$

\begin{prop} \label{prop-glngamma}
The map $\rho: GL_n({\bZ}_p)_+ \smsh_{\Gamma} K({\mathcal C}^{C_{p^l}^{\times n}}{\langle} K {\rangle}) \rightarrow K({\mathcal C}^{C_{p^l}^{\times n}}{\langle} k, \alpha {\rangle})$ is an equivalence.  
\end{prop}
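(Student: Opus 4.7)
The plan is to prove the equivalence via an orbit-type decomposition of the target category, combined with the transitivity of the $GL_n(\zp)$-action established in Lemma~\ref{Mformoduli} and the untwisting formula of Lemma~\ref{untwist}.

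First I would observe that any finite $C_{p^l}^{\times n}$-set whose orbits all have rank $k$ and cotype $\alpha$ decomposes canonically as a disjoint union $X = \coprod_{K'} X_{K'}$, where $X_{K'}$ is the union of those orbits with stabilizer exactly $K'$ and the index $K'$ ranges over the non-basepoint elements of ${\mathcal M}[k,\alpha](l)$. This yields an equivalence of symmetric monoidal categories
$$
{\mathcal C}^{C_{p^l}^{\times n}}\langle k, \alpha \rangle \;\cong\; \prod_{K'} {\mathcal C}^{C_{p^l}^{\times n}}\langle K' \rangle.
$$
Since $K$-theory carries a product of symmetric monoidal categories to a wedge of spectra, this decomposes the target of $\rho$ as a wedge indexed by the non-basepoint elements of ${\mathcal M}[k,\alpha](l)$.

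Next I would invoke Lemma~\ref{Mformoduli}, which identifies the indexing set with $GL_n(\zp)/\Gamma$ under the natural transitive $GL_n(\zp)$-action. For each representative $g \in GL_n(\zp)$ of a coset $[g]$, the action of $g$ on the group $C_{p^l}^{\times n}$ induces an isomorphism of symmetric monoidal categories ${\mathcal C}^{C_{p^l}^{\times n}}\langle K \rangle \xrightarrow{\cong} {\mathcal C}^{C_{p^l}^{\times n}}\langle gK \rangle$, and these isomorphisms assemble, after applying $K$-theory, into an equivalence
$$
(GL_n(\zp)/\Gamma)_+ \smsh K({\mathcal C}^{C_{p^l}^{\times n}}\langle K \rangle) \;\xrightarrow{\simeq}\; \bigvee_{K'} K({\mathcal C}^{C_{p^l}^{\times n}}\langle K' \rangle) \;\simeq\; K({\mathcal C}^{C_{p^l}^{\times n}}\langle k,\alpha \rangle).
$$
By Lemma~\ref{untwist}, the source of this map is non-equivariantly identified with $GL_n(\zp)_+ \smsh_\Gamma K({\mathcal C}^{C_{p^l}^{\times n}}\langle K \rangle)$, which is the source of $\rho$.

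The main obstacle, and the final step, is to verify that the equivalence constructed above \emph{is} the map $\rho$ of Proposition~\ref{universality}. By the uniqueness clause of that proposition, it suffices to check that the restriction of the constructed map to the summand indexed by the identity coset (i.e.\ the factor $\{1\} \smsh_\Gamma K({\mathcal C}^{C_{p^l}^{\times n}}\langle K \rangle)$) agrees with $K$ applied to the inclusion ${\mathcal C}^{C_{p^l}^{\times n}}\langle K \rangle \hookrightarrow {\mathcal C}^{C_{p^l}^{\times n}}\langle k,\alpha \rangle$, and that it is $\Gamma$-equivariant. The first compatibility is tautological from the construction, while the $\Gamma$-equivariance follows by specializing the naturality diagram of Lemma~\ref{mapbehavior} to the inclusion $\Gamma \hookrightarrow GL_n(\zp)$: a choice of coset representatives $\{g_\alpha\}$ for $GL_n(\zp)/\Gamma$ makes it straightforward to track how $\gamma \in \Gamma$ permutes summands via the rule $g_\alpha \gamma = g_{\alpha'} \gamma_\beta$, matching exactly the permutation induced by the $GL_n(\zp)$-action on the target category.
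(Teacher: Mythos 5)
Your proposal is correct and follows essentially the same route as the paper: untwist the source via Lemma~\ref{untwist}, decompose the target by stabilizer using the identification of orbits with $GL_n(\zp)/\Gamma$ from Lemma~\ref{Mformoduli}, and match the two wedge/product decompositions factor by factor. One small imprecision worth tightening: $K$-theory sends a product of symmetric monoidal categories to a \emph{product} of spectra, not a wedge; the passage to a wedge is legitimate only because $\Gamma$ has finite index (so the indexing set ${\mathcal M}[k,\alpha](l)$ is finite), a point the paper flags explicitly and which you should, too, even though the finiteness is contained in Lemma~\ref{Mformoduli} which you do cite.
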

\begin{proof} 
Using Lemma \ref{untwist} note that the map $\rho$ has source
  $$GL_n({\bZ}_p)_+ \smsh_{\Gamma} K({\mathcal C}^{C_{p^l}^{\times n}}{\langle} K {\rangle}) \simeq
  (GL_n({\bZ}_p)/\Gamma)_+ \smsh K({\mathcal C}^{C_{p^l}^{\times
      n}}{\langle} K {\rangle}) \simeq
      \bigvee_{\bar{g}\in GL_n({\bZ}_p)/\Gamma} K({\mathcal C}^{C_{p^l}^{\times
      n}}{\langle} K {\rangle})$$  
The target of the map $\rho$ is
$$K\left(\cC^{C_{p^l}^{\times n}}\langle k,\alpha\rangle\right) \simeq
K\left(\prod_{L\in\mathcal M[k,\alpha](l)}
\cC^{C_{p^l}^{\times n}}\langle L\rangle\right)$$
By Lemma \ref{Mformoduli}, we have a preferred bijection 
$$
  \begin{CD}
    (GL_n({\bZ}_p)/\Gamma)_+@>{\bar{g}\mapsto \bar{g}\cdot K}>> \mathcal
  M[k,\alpha](l)
  \end{CD}
 $$ 
Under these equivalences
the map $\rho$ takes the $\bar{g}$-summand in
the wedge to the $(\bar{g}\cdot K)$-th factor in the $K$-theory product---the map is induced by the
isomorphism
${\mathcal C}^{C_{p^l}^{\times n}}{\langle} K {\rangle}\cong 
{\mathcal C}^{C_{p^l}^{\times n}}{\langle} \bar{g}\cdot K {\rangle}
$
given by $\bar{g}$.
Because $\Gamma$ is of finite index the map from the wedge to
the product is an equivalence.
\end{proof} 

Proposition~\ref{prop-glngamma} establishes the homotopy type of the values $\Phi[k,\alpha](l) = K({\mathcal C}^{C_{p^l}^{\times n}}\langle k,\alpha \rangle)$ of the functor $\Phi[k,\alpha]$.  We now describe the maps between the various spectra $\Phi[k,\alpha](l)$. We begin by choosing for each $l$ a subgroup $K_l \subseteq C_{p^l}^{\times n}$, with $K_l \in {\mathcal M}[k,\alpha ](l)$, and $K_l \cap C_{p^l}^{\times n}  = K_{l^{\prime}}$ whenever $l \geq l^{\prime}$.  For example, decompose $C_{p^l}^{\times n} $ as $C_{p^l}^{\times n-k } \times  C_{p^l}^{\times k} $, and select a subgroup $E \subseteq C_{p^l}^{\times k}$ whose family of cyclic factors is $\alpha$.  Then the groups $K_{l^{\prime}} = C_{p^{l^{\prime}}}^{\times n-k} \times E$ form an appropriate family of subgroups.  We define $\Gamma _l $ to be the stabilizer of $K_l$ under the $GL_n ({\bZ}_p)$-action.  It is clear that $\Gamma_{l+1} \subseteq \Gamma _l$.  We now define $$\tau _l: {\mathcal C}^{C_{p^l}^{\times n}}{\langle} K_l{\rangle} \rightarrow 
{\mathcal C}^{C_{p^{l-1}}^{\times n} }{\langle} K_{l-1} {\rangle}
$$
to be the functor obtained by restricting $C_{p^l}^{\times n}/K_l$-sets along the inclusion 
$C_{p^{l-1}}^{\times n}  /K_{l-1} \hookrightarrow  C_{p^{l}}^{\times n}/K_{l}$.  This functor is clearly $K_{l}$-equivariant, and an application of the universal property in Proposition \ref{universality} yields a map of spectra 
$$t_l: GL_n({\bZ}_p) \smsh_{\Gamma_l} K({\mathcal C}^{C_{p^l}^{\times n}}{\langle} K_l{\rangle} ) \rightarrow 
GL_n({\bZ}_p) \smsh_{\Gamma_{l-1}} K({\mathcal C}^{C_{p^{l-1}}^{\times n} }{\langle} K_{l-1}{\rangle} )
$$

\begin{prop} \label{main} The functor $\Phi [k,\alpha ]$ is naturally
  equivalent to the functor $\Psi$  given by $\Psi(C_{p^l}^{\times n})
  = GL_n({\bZ}_p) \smsh_{\Gamma_l} K({\mathcal C}^{C_{p^l}^{\times
      n}}{\langle} K_l{\rangle} )$, and taking $C_{p^{l-1}}^{\times n} \hookrightarrow C_{p^l}^{\times n}$ to the map
  $t_l: \Psi (C_{p^l}^{\times n}) \ra \Psi (C_{p^{l-1}}^{\times
    n} )$.  In other words, the diagram
$$
  \begin{CD}
    GL_n({\bZ}_p) \smsh_{\Gamma_{l}} 
    K({\mathcal C}^{C_{p^l}^{\times n}}{\langle} K_l{\rangle} )
    @>{\rho}>{\simeq}> 
    K({\mathcal C}^{C_{p^l}^{\times n}}{\langle} k, \alpha {\rangle})
    @=\Phi[k,\alpha](l)\\
    @V{t_l}VV@.@V{\Phi[k,\alpha](``l-1 \hookrightarrow l")}VV\\
    GL_n({\bZ}_p) \smsh_{\Gamma_{l-1}} K({\mathcal C}^{C_{p^{l-1}}^{\times
      n}}{\langle} K_{l-1}{\rangle}) @>{\rho}>{\simeq}>
    K({\mathcal C}^{C_{p^{l-1}}^{\times n}}{\langle} k, \alpha {\rangle})
    @=\Phi[k,\alpha](l-1)
  \end{CD}
$$
commutes.
\end{prop}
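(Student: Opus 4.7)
The plan is to verify commutativity of the displayed square, since the horizontal equivalences $\rho$ are already supplied by Proposition~\ref{prop-glngamma}, which then upgrades the levelwise equivalence $\rho$ into a natural equivalence $\Phi[k,\alpha] \simeq \Psi$. The verification will proceed by untwisting both sides via Lemmas~\ref{untwist} and~\ref{mapbehavior} and then matching summand-by-summand in the resulting wedge and product decompositions.

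First I would rewrite the top-right composite. By Lemma~\ref{untwist} and the cotype decomposition of Section~\ref{sec-cotype}, it takes the form
$$\bigvee_{\bar g \in GL_n(\zp)/\Gamma_l} K({\mathcal C}^{C_{p^l}^{\times n}}{\langle} K_l{\rangle}) \to \prod_{L \in \mathcal{M}[k,\alpha](l)} K({\mathcal C}^{C_{p^l}^{\times n}}{\langle} L{\rangle}) \to \prod_{L' \in \mathcal{M}[k,\alpha](l-1)} K({\mathcal C}^{C_{p^{l-1}}^{\times n}}{\langle} L'{\rangle}),$$
where the first map sends the $\bar g$-summand into the $(\bar g \cdot K_l)$-factor via the category isomorphism induced by $\bar g$, and the second map, namely $i^*$ followed by projection onto the rank-$k$, cotype-$\alpha$ part, sends the $L$-factor to the $(L \cap C_{p^{l-1}}^{\times n})$-factor whenever the rank does not drop (by Proposition~\ref{prop-rankcotype}, the cotype is then forced to remain $\alpha$) and to the basepoint otherwise; on each surviving factor the map is induced by restriction-of-scalars along $C_{p^{l-1}}^{\times n}/(L \cap C_{p^{l-1}}^{\times n}) \hookrightarrow C_{p^l}^{\times n}/L$. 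Then I would rewrite the left-bottom composite. The map $t_l$ is produced from the $\Gamma_l$-equivariant restriction functor $\tau_l$ by the universal property of Proposition~\ref{universality}, so Lemmas~\ref{untwist} and~\ref{mapbehavior} turn $t_l$ into a map between wedges whose $\bar g$-summand is $K(\tau_l)$ (up to a coset-representative twist $k_{\beta}$) into the appropriate $\bar g'$-summand for $GL_n(\zp)/\Gamma_{l-1}$; postcomposition with $\rho$ places that $\bar g'$-summand into the $(\bar g' \cdot K_{l-1})$-factor of the product.

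The two composites then match by the following identification of indices and of summand maps. Because $GL_n(\zp)$ preserves the characteristic subgroup $C_{p^{l-1}}^{\times n} \subset C_{p^l}^{\times n}$ and $K_l \cap C_{p^{l-1}}^{\times n} = K_{l-1}$, one has $(\bar g \cdot K_l) \cap C_{p^{l-1}}^{\times n} = \bar g \cdot K_{l-1}$, which identifies the target-indexings on the two sides (and matches the basepoint/rank-drop behavior on the nose). On each surviving summand, the two restriction-of-scalars functors, on $C_{p^l}^{\times n}/K_l$-sets on one hand and on $C_{p^l}^{\times n}/(\bar g \cdot K_l)$-sets on the other, are intertwined by the isomorphism of quotient groups induced by $\bar g$. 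I expect the main obstacle to be the coset-representative bookkeeping in Lemma~\ref{mapbehavior}: choosing compatible coset representatives for $GL_n(\zp)/\Gamma_l$ and $GL_n(\zp)/\Gamma_{l-1}$ (possible because $\Gamma_l \subseteq \Gamma_{l-1}$) and then checking that the correction factor $k_\beta \in \Gamma_{l-1}$ it introduces is absorbed by the $\Gamma_{l-1}$-equivariance of the categories ${\mathcal C}^{C_{p^{l-1}}^{\times n}}{\langle} K_{l-1}{\rangle}$, so that the diagram commutes up to precisely the $\bar g$-isomorphisms already accounted for in the identification $\rho$.
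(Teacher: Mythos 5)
The paper states Proposition~\ref{main} without an explicit proof, treating the commutativity of the square as a direct consequence of the definitions of $\rho$, $\tau_l$, $t_l$ and the general Lemmas~\ref{untwist} and~\ref{mapbehavior}. Your proposal correctly fills in exactly this verification, and the approach is the right one: untwist both the smash products $GL_n(\zp)_+ \smsh_{\Gamma_l}(-)$ via Lemma~\ref{untwist}, identify $\rho$ summand-by-summand against the factors of $K(\cC^{C_{p^l}^{\times n}}\langle k,\alpha\rangle)$, and match the two composites. The two essential points you isolate are the correct ones and both hold: since each $C_{p^{l-1}}^{\times n}$ is a characteristic subgroup of $\Tnp$ fixed setwise by $GL_n(\zp)$, one has $(\bar g\cdot K_l)\cap C_{p^{l-1}}^{\times n}=\bar g\cdot K_{l-1}$, so the target indexings (including the rank-drop/basepoint behavior, via Proposition~\ref{prop-rankcotype}) match on the nose; and the $\bar g$-isomorphisms of orbit categories commute with restriction along $C_{p^{l-1}}^{\times n}\hookrightarrow C_{p^l}^{\times n}$, so the summand maps also match. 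Your treatment of the coset-representative twist $k_\beta\in\Gamma_{l-1}$ from Lemma~\ref{mapbehavior} is handled correctly: since $k_\beta$ stabilizes $K_{l-1}$, the composite $g_\alpha\cdot(k_\beta\cdot -)$ agrees with $(g_\alpha k_\beta)\cdot-=g'_\beta\cdot-$ and lands in the same factor, which is precisely what the top-right composite produces. In short, the proof is correct and is the natural unwinding the paper leaves to the reader.
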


Next we would like to describe the homotopy groups of $\holim_{{\mathcal D}_n} \Phi[k,\alpha]$ in terms of the homotopy limit of the factors $K(\cC^{C_{p^l}^{\times n}} \langle K_l \rangle )$.  The general situation we are in is as follows.  Consider a profinite group $G$ and a sequence of finite index subgroups $\Gamma_l \subseteq G$, $l \geq 0$, with $\Gamma_{l+1} \subseteq \Gamma_l$.  Let $\cdots \ra X_l \ra X_{l-1} \ra \cdots \ra X_0$ be an inverse system of spectra, and suppose for all $l$ the spectrum $X_l$ has a $\Gamma_l$-action such that the map $X_l \ra X_{l-1}$ is equivariant.  Proposition~\ref{universality} provides maps of spectra
$$  \cdots \rightarrow G_+ \smsh_{\Gamma_l} X_l \rightarrow G_+ \smsh_{\Gamma_{l-1}} X_{l-1} \rightarrow \cdots \rightarrow  G_+ \smsh_{\Gamma_0} X_0
$$
and by Lemma~\ref{untwist} and Lemma~\ref{mapbehavior} this inverse system is equivalent to the system
$$ \cdots \to{G/\Gamma_l}_+ \wedge X_l \rightarrow {G/\Gamma_{l-1}}_+ \wedge X_{l-1}
\rightarrow \cdots \to {G/\Gamma_0}_+ \wedge X_0
$$
We have equivalences
$$\holim_l {G/\Gamma_l}_+ \sm X_l \simeq \holim_{l,k} {G/\Gamma_l}_+ \sm X_k
\simeq \holim_l {G/\Gamma_l}_+ \sm \holim_k X_k$$
Let $X$ denote $\holim_k X_k$.  We now need only understand the homotopy groups of homotopy limits of inverse systems of the form ${G/\Gamma_l}_+ \sm X$.
\begin{prop} \label{prop-profacts}
If $\pi_t(X)$ is a finitely generated $\bZ_p$-module, then
$$\pi_t \holim_l ({G/\Gamma_l}_+ \sm X) \cong \lim_l (\bZ[G/\Gamma_l] \otimes \pi_t(X)/p^l)$$
\end{prop}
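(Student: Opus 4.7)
My approach is to combine the Milnor exact sequence for homotopy limits with a cofinality argument that exploits the $p$-adic completeness of finitely generated $\bZ_p$-modules.

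First I would identify the homotopy of each stage in the tower. Since $\Gamma_l$ has finite index in $G$, the quotient $G/\Gamma_l$ is finite, so $(G/\Gamma_l)_+ \wedge X$ is a finite wedge of copies of $X$, and hence
\[\pi_t((G/\Gamma_l)_+ \wedge X) \cong \bZ[G/\Gamma_l] \otimes_\bZ \pi_t(X).\]
By Lemma~\ref{mapbehavior} the structure map $\pi_t((G/\Gamma_{l+1})_+ \wedge X) \to \pi_t((G/\Gamma_l)_+ \wedge X)$ sends $[g'_\beta] \otimes v$ to $[g_\alpha] \otimes k_\beta v$, where $[g'_\beta] \mapsto [g_\alpha]$ under $G/\Gamma_{l+1} \twoheadrightarrow G/\Gamma_l$ and $k_\beta \in \Gamma_l$ is the induced twist. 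Since $k_\beta$ acts invertibly on $\pi_t(X)$ and $G/\Gamma_{l+1} \twoheadrightarrow G/\Gamma_l$ is surjective, this structure map is surjective, so the inverse system satisfies Mittag--Leffler.

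Next I would apply the Milnor short exact sequence
\[0 \to {\lim}^1_l \pi_{t+1}((G/\Gamma_l)_+ \wedge X) \to \pi_t \holim_l ((G/\Gamma_l)_+ \wedge X) \to \lim_l \pi_t((G/\Gamma_l)_+ \wedge X) \to 0.\]
By Mittag--Leffler the $\lim^1$ term vanishes, yielding
\[\pi_t \holim_l ((G/\Gamma_l)_+ \wedge X) \cong \lim_l \bigl(\bZ[G/\Gamma_l] \otimes \pi_t(X)\bigr).\]

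The remaining step is to rewrite this limit in the $p$-adic form of the statement. Set $M_l := \bZ[G/\Gamma_l] \otimes \pi_t(X)$; since $\pi_t(X)$ is finitely generated over $\bZ_p$ and $\bZ[G/\Gamma_l]$ is finitely generated free over $\bZ$, each $M_l$ is a finitely generated $\bZ_p$-module, hence $p$-adically complete, so $M_l = \lim_m M_l/p^m$. Letting $A_{l,m} := \bZ[G/\Gamma_l] \otimes \pi_t(X)/p^m$, Fubini for inverse limits gives
\[\lim_l M_l = \lim_l \lim_m A_{l,m} = \lim_{(l,m) \in \bN \times \bN} A_{l,m},\]
and since the diagonal $\{(l,l)\}$ is cofinal in $\bN \times \bN$, this double limit equals the diagonal limit $\lim_l A_{l,l} = \lim_l \bigl(\bZ[G/\Gamma_l] \otimes \pi_t(X)/p^l\bigr)$, as required. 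The core of the argument is the Milnor sequence; the only subtlety is this final $p$-adic rewriting via cofinality, which crucially uses the finite-generation hypothesis to secure completeness.
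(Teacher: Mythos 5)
Your proof takes essentially the same route as the paper's: the Mittag--Leffler property of the tower kills the $\lim^1$ term in the Milnor sequence, and then $p$-adic completeness of the finitely generated $\bZ_p$-modules $\bZ[G/\Gamma_l]\otimes\pi_t(X)$ together with cofinality of the diagonal in $\bN\times\bN$ gives the $p$-adic rewriting. You simply fill in more of the details (the explicit structure maps from Lemma~\ref{mapbehavior}, the Fubini/cofinality step), whereas the paper passes $\lim_k$ through the tensor using that $\bZ[G/\Gamma_l]$ is free and finitely generated; the two bookkeeping styles are interchangeable.
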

\begin{proof}
The systems $\pi_t({G/\Gamma_l}_+ \sm X)$ are Mittag-Leffler, as the maps are surjective, and thus the corresponding $\lim^1$ terms vanish.  Thus $\pi_t \holim_l {G/\Gamma_l}_+ \sm X \cong \lim_l \bZ[G/\Gamma_l] \otimes \pi_t(X)$.  Because $\bZ[G/\Gamma_l]$ is free and finitely generated, provided $\pi_t(X)$ is a finitely generated $\bZ_p$-module, it is moreover the case that $\lim_l \bZ[G/\Gamma_l] \otimes \pi_t(X) \cong \lim_l \bZ[G/\Gamma_l] \otimes \lim_k \pi_t(X)/p^k \cong \lim_l \bZ[G/\Gamma_l] \otimes \pi_t(X)/p^l$.
\end{proof}

 We now focus on our particular situation: take $G$ to be $GL_n({\bZ}_p)$, and $\Gamma_l$ to be, as before, the stabilizer of an element of ${\mathcal M}[k,\alpha ](l)$.  Let $X_l$ be $K({\mathcal C}^{C_{p^l}^{\times n}}{\langle} K_l  {\rangle})_p$, and let the maps in the inverse system be restrictions along the inclusions $C_{p^l}^{\times n}/ K_l \hookrightarrow C_{p^{l+1}}^{\times n}/K_{l+1}$. From the definitions we have an equivalence
$$K({\mathcal C}^{C_{p^l}^{\times n}}{\langle} K_l  {\rangle}) \simeq 
\Sigma^{\infty} \!\left(BC_{p^{l-n_1}} \times BC_{p^{l-n_2}} \times \cdots \times BC_{p^{l-n_k}}\right)_+$$
where $\alpha $ is the family $\{ n_1, n_2, \ldots n_k \}$, and the maps in the inverse system are the transfers obtained from the covering spaces 
$$BC_{p^{l-n_1}} \times BC_{p^{l-n_2}} \times \cdots \times BC_{p^{l-n_k}} \rightarrow BC_{p^{l+1-n_1}} \times BC_{p^{l +1 -n_2}} \times \cdots \times BC_{p^{l+1 -n_k}}
$$ 
For each factor  $BC_{p^{l-n_i}} $, we have the $S^1$-transfer $\Sigma ^{\infty}S^1 \wedge B\mathbb{T}_+ \rightarrow \Sigma^{\infty} (BC_{p^{l-n_i}})_+$.  We may smash $k$ such maps together to obtain a map 
$ \Sigma^{\infty} S^k \wedge B\mathbb{T}^k _+ \rightarrow \Sigma
^{\infty} 
(
BC_{p^{l-n_1}} \times BC_{p^{l-n_2}} \times \cdots \times
BC_{p^{l-n_k}}
)_+
$.
Standard compatibility formulae for the transfer show that these maps fit together to yield a map 
$$ \Sigma^{\infty} S^k \wedge B\mathbb{T}^k _+  \rightarrow
\holim_l {\Sigma^{\infty} 
\!\left(
BC_{p^{l-n_1}} \times BC_{p^{l-n_2}} \times \cdots \times
BC_{p^{l-n_k}}
\right)_+}
$$
After $p$-completion, this map becomes an equivalence, as can be checked by homological calculations of the transfer maps.  The homotopy groups of $(\Sigma^{\infty} S^k \wedge B\mathbb{T}^k_+)_p$ are evidently finitely generated $\bZ_p$-modules.

 The following is now a consequence of Proposition \ref{prop-profacts}
 and Corollary \ref{cor-decompofinal}.

\begin{theorem} \label{computation} The homotopy groups of the fixed rank-cotype piece of the topological Frobenius homology of the sphere are
$$\pi _* (\holim_{{\mathcal D}_n}{\Phi [k, \alpha ] })\cong 
\lim_l \left(\bZ[GL_n({\bZ}_p)/\Gamma_l] \otimes \pi _* (\Sigma ^{\infty} S^k \wedge B\mathbb{T}^k_+)/p^l\right)$$
In concrete terms, this means that for every free generator in $\pi _t (\Sigma ^{\infty} S^k \wedge B\mathbb{T}^k_+)$, there is a summand of the form 
$\lim_{i,j}  ({\bZ}/p^i) [GL_n({\bZ}_p)/\Gamma_j]
$
in $\pi _t (\holim_{{\mathcal D}_n}{\Phi [k, \alpha ] })$, and for every finite cyclic summand $C$ in $\pi _t(\Sigma ^{\infty} S^k \wedge B\mathbb{T}^k_+)$, there is  a summand of the form 
$C \otimes \lim_l {\bZ}[GL_n({\bZ}_p)/\Gamma_l]$. 

The homotopy groups of the topological Frobenius homology $TF^{(n)}(\ess)_p$, and therefore the cohomotopy groups of the classifying space of the torus, are given as the product
of these terms as $k$ and $\alpha$ vary over the rank and cotype.
\end{theorem}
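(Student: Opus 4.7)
The plan is to assemble Theorem~\ref{computation} from the structural identifications already in place, by reducing the homotopy limit $\holim_{\mathcal D_n}\Phi[k,\alpha]$ to a homotopy limit of a simple inverse system of spectra with a $GL_n(\zp)$-action and then extracting homotopy groups via Proposition~\ref{prop-profacts}.

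First I would invoke Proposition~\ref{main} to replace $\Phi[k,\alpha]$ by the functor $l\mapsto GL_n(\zp)_+\smsh_{\Gamma_l} K(\cC^{C_{p^l}^{\times n}}\langle K_l\rangle)_p$, with transition maps $t_l$ induced by restriction of $C_{p^l}^{\times n}/K_l$-sets along the inclusion $C_{p^{l-1}}^{\times n}/K_{l-1}\hra C_{p^l}^{\times n}/K_l$. Using Lemma~\ref{untwist} and Lemma~\ref{mapbehavior}, this inverse system of twisted smash products is equivalent to the system $l\mapsto (GL_n(\zp)/\Gamma_l)_+\smsh X_l$, where $X_l:=K(\cC^{C_{p^l}^{\times n}}\langle K_l\rangle)_p$ and the maps on the $X_l$ are the transfers coming from the covers $C_{p^{l-1}}^{\times n}/K_{l-1}\hra C_{p^l}^{\times n}/K_l$.

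Next I would compute the two homotopy limits one after the other. Writing the cotype as $\alpha=\{n_1,\dots,n_k\}$, the equivalence $K(\cC^{C_{p^l}^{\times n}}\langle K_l\rangle)\simeq \Sigma^\infty(BC_{p^{l-n_1}}\times\cdots\times BC_{p^{l-n_k}})_+$ recorded in Section~\ref{sec-kthyequivsphere} identifies the transition maps with $k$-fold smashes of $S^1$-transfers. The standard homological computation for these transfers, after $p$-completion, yields the equivalence
$$\holim_l X_l \;\simeq\; (\Sigma^\infty S^k\smsh B\TT^k_+)_p,$$
whose homotopy groups are finitely generated $\zp$-modules. Interchanging the two homotopy limits gives
$$\holim_{\mathcal D_n}\Phi[k,\alpha]\;\simeq\;\holim_l (GL_n(\zp)/\Gamma_l)_+\smsh (\Sigma^\infty S^k\smsh B\TT^k_+)_p.$$

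With this reduction, Proposition~\ref{prop-profacts} applies verbatim: the transfer maps on homotopy groups are surjective, so the system $\{\pi_t((GL_n(\zp)/\Gamma_l)_+\smsh X)\}$ is Mittag-Leffler, the $\lim^1$ vanishes, and the finite generation of $\pi_t X$ over $\zp$ lets one pull the $p^l$-reduction inside the tensor product, yielding
$$\pi_*(\holim_{\mathcal D_n}\Phi[k,\alpha])\cong \lim_l\bigl(\bZ[GL_n(\zp)/\Gamma_l]\otimes \pi_*(\Sigma^\infty S^k\smsh B\TT^k_+)/p^l\bigr).$$
The final statement for $TF^{(n)}(\ess)_p$ then follows by taking the product over $(k,\alpha)$ via the splitting of the rank filtration in Corollary~\ref{cor-decompofinal}.

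The two delicate points I expect to require the most care are, first, verifying that the transfer maps between the spectra $K(\cC^{C_{p^l}^{\times n}}\langle K_l\rangle)$ really fit together with the $S^1$-transfers on $B\TT^k$ (so that the standard Segal-conjecture-for-$\TT^k$ identification of the homotopy limit is justified), and second, checking that the $\Gamma_l$-equivariance and the nested-subgroup compatibility $\Gamma_{l+1}\subseteq \Gamma_l$ are preserved through Lemma~\ref{mapbehavior} so that the passage from the twisted smash products to $(GL_n(\zp)/\Gamma_l)_+\smsh X_l$ is natural in $l$; the rest of the argument is a formal combination of results already in place.
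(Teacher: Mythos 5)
Your proposal follows essentially the same route as the paper's own argument: Proposition~\ref{main} (via Proposition~\ref{prop-glngamma}) to identify $\Phi[k,\alpha]$ with the twisted smash product system, Lemmas~\ref{untwist} and~\ref{mapbehavior} to untwist to $(GL_n(\zp)/\Gamma_l)_+\smsh X_l$, the $S^1$-transfer identification of $\holim_l X_l$ with $(\Sigma^\infty S^k\smsh B\TT^k_+)_p$, the interchange of homotopy limits, and Proposition~\ref{prop-profacts}, then the product over $(k,\alpha)$ via Corollary~\ref{cor-decompofinal}. The two points you flag as delicate are exactly the points the paper leans on as well, so the proof is correct and matches the paper's approach.
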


\appendix

\renewcommand{\thetheorem}{A.\arabic{theorem}}
\setcounter{theorem}{0}
\section*{Appendix.  Cyclic homology as a homotopy limit of Frobenius homology}

Ordinary topological cyclic homology can be expressed in term of topological Frobenius homology:
$$TC(A) = \holim \left( TF(A) \overset{R}{\underset{\id}{\rightrightarrows}} TF(A) \right)$$
Note that this homotopy limit is equal to the homotopy limit of the diagram with a single object $TF(A)$ and with morphisms the self-maps $R^k: TF(A) \ra TF(A)$ for $0 \leq k < \infty$.

The purpose of this appendix is two-fold.  First, we show that higher topological cyclic homology is the homotopy equalizer of the action of all restriction maps on higher topological Frobenius homology:
\begin{prop} \label{prop-tctf}
There is an equivalence
$TC^{(n)}(A) \simeq \holim_{\mathcal{M}_n} TF^{(n)}(A)$
where $\mathcal{M}_n$ is, as before, the monoid of isogenies of the torus, and where the action of the elements $m \in \mathcal{M}_n$ on $TF^{(n)}(A)$ is induced by the restriction operators $R_{m}$.
\end{prop}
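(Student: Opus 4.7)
The plan is to identify the twisted arrow category $Ar_{\mathcal{M}_n}$ as a Grothendieck fibration over the one-object category $B\mathcal{M}_n$ with fiber $Frob_{\mathcal{M}_n}$, and then to invoke the standard Fubini theorem for iterated homotopy limits over such fibrations.

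First, I would construct the fibration. Define $\pi : Ar_{\mathcal{M}_n} \to B\mathcal{M}_n$ by $\beta \mapsto *$ on objects and $(a^*, c_*) \mapsto c$ on morphisms. The composition law in the twisted arrow category, namely $(a_0^*, c_{0*})(a_1^*, c_{1*}) = ((a_1 a_0)^*, (c_0 c_1)_*)$, projects to $c_0 c_1$, so $\pi$ is a functor. The fiber over $*$ consists of those morphisms with $c = \id$, which are exactly the Frobenius morphisms; hence the fiber is canonically $Frob_{\mathcal{M}_n}$. I then verify that $\pi$ is a Grothendieck fibration. For each $c \in \mathcal{M}_n$ and each $b \in Ar_{\mathcal{M}_n}$, the pure-Restriction morphism $(\id, c_*): bc \to b$ is a cartesian lift of $c$: any morphism $(a'^*, c_*): d' \to b$ lying over $c$ factors uniquely as $(\id, c_*) \circ (a'^*, \id)$, where $(a'^*, \id): d' \to bc$ is a Frobenius morphism lying in the fiber. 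Existence follows from the composition identity; uniqueness follows from the cancellation property of $\mathcal{M}_n \subset GL_n(\bQ_p)$.

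Second, I would apply the Fubini theorem: for a Grothendieck fibration $\pi : \mathcal{E} \to \mathcal{B}$ and a diagram $T : \mathcal{E} \to \SpecCat$, the right Kan extension $\pi_* T$ satisfies $\pi_* T(b) \simeq \holim_{\mathcal{E}_b} T|_{\mathcal{E}_b}$ and
\[
\holim_{\mathcal{E}} T \;\simeq\; \holim_{\mathcal{B}} \pi_* T.
\]
In our case this gives $\pi_* T(*) \simeq \holim_{Frob_{\mathcal{M}_n}} T = TF^{(n)}(A)$, hence
\[
TC^{(n)}(A) = \holim_{Ar_{\mathcal{M}_n}} T \;\simeq\; \holim_{B\mathcal{M}_n} TF^{(n)}(A) = \holim_{\mathcal{M}_n} TF^{(n)}(A).
\]

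Finally, I would identify the induced $\mathcal{M}_n$-action on $TF^{(n)}(A) = \pi_* T(*)$ with the restriction action. The cartesian transport functor associated to $c$ sends $\beta \mapsto \beta c$ in $Frob_{\mathcal{M}_n}$ and preserves Frobenius morphisms, while the corresponding natural transformation $T^{\beta c} \to T^\beta$ evaluated at each $\beta$ is exactly $R_c$. Passing to the homotopy limit, the induced endomorphism of $\holim_\beta T^\beta$ sends a compatible family $\{x_\beta\}$ to $\{R_c(x_{\beta c})\}_\beta$, which is precisely the restriction action named in the proposition.

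The main obstacle is bookkeeping the variances in the Kan extension calculation so that the transport action on the fiber matches the restriction action $R_c$ (rather than an opposite). The crucial ingredient making this work is the commutation relation $R_a F^b = F^b R_a$ recorded in Section~2, which guarantees that the two factorizations $(a^*, c_*) = (a^*, \id) \circ (\id, c_*) = (\id, c_*) \circ (a^*, \id)$ of a general morphism agree under $T$, and hence that the fibration structure on $Ar_{\mathcal{M}_n}$ induces a coherent $\mathcal{M}_n$-action on $TF^{(n)}(A)$ at the level of $\SpecCat$.
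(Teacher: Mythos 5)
Your proof is correct and takes essentially the same approach as the paper's. The paper expresses the relevant twisted arrow category as a categorical semi-direct product $\cK \ltimes Frob_{\cM}$ (for an arbitrary submonoid $\cK \subseteq \cM_n$, then specializes to $\cK = \cM_n$) and invokes Thomason's theorem for homotopy limits over Grothendieck constructions; your Grothendieck fibration $\pi\colon \Ar_{\cM_n} \to B\cM_n$ together with the Fubini theorem for right Kan extensions along fibrations is the same device in different notation, and your identification of the cartesian transport as $\beta \mapsto \beta c$ matching $R_c$ on the fibers agrees precisely with the paper's right-action functor $k\colon m \mapsto mk$.
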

\nid The proposition shows that the computation in section~\ref{sec-tfnsphere} of $TF^{(n)}(\ess)$ is an essential ingredient in any investigation of higher topological cyclic homology.  

The second purpose of this appendix is to illustrate the power of expressing cyclic homology in terms of Frobenius homology by applying the calculation of $TF^{(n)}(\ess)$ to describe the filtration quotients of the rank filtration of the diagonal cyclic homology of the sphere:
\begin{theorem}
The restriction maps corresponding to diagonal isogenies (those of the form $p^l \cdot \id_n$) preserve the rank filtration of the Frobenius homology of the sphere $TF^{(n)}(\ess)$.  The rank filtration therefore descends to diagonal cyclic homology $TC^{\Delta}(\ess)$, and the homotopy groups of the filtration quotients of the diagonal cyclic homology of the sphere are
$$\pi_*((TC^{\Delta}(\ess)[k]/TC^{\Delta}(\ess)[k-1])_p) \cong \prod_{\alpha \in \mathcal{O}_k} 
\lim_l \left( \bZ[GL_n(\zp) / \Gamma_{l,k,\alpha}] \otimes \pi_*(\Sigma^{\infty} S^k \sm B \TT^k_+)/p^l \right)$$
Here $\mathcal{O}_k$ is the set of unordered collections of positive integers $\{e_1, e_2, \ldots, e_k\}$ such that at least one $e_i$ is equal to 1; the group $\Gamma_{l,k,\alpha}$ is, as before, the stabilizer of a subgroup of $C^{\times n}_{p^l}$ of rank $k$ and cotype $\alpha$.
\end{theorem}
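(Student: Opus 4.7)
The plan is to exploit the cotype decomposition of Section~\ref{sec-rankcotype} and analyze the action of the diagonal restriction on it.  Under the identification $\ess^G\simeq K(\mathrm{Sets}^G)$ of Proposition~\ref{prop-tgkg}, $R_{p\cdot\id_n}$ corresponds to the fixed-point functor $f_!$ for the surjection $f\colon C_{p^{l+1}}^{\times n}\twoheadrightarrow C_{p^l}^{\times n}$ with kernel $C_p^{\times n}$.  This functor sends an orbit $G/K$ to $G'/f(K)$ when $\ker f\subseteq K$ and to the empty set otherwise; in the first case $G/K\cong G'/f(K)$ as abstract groups, so the rank is preserved.  Hence $R_{p\cdot\id_n}$ preserves the rank filtration of $TF^{(n)}(\ess)$, the filtration descends to $TC^{\Delta}(\ess)$, and the $k$-th filtration quotient is the homotopy equalizer of $R_{p\cdot\id_n}$ and the identity on $TF^{(n)}(\ess)[k]/TF^{(n)}(\ess)[k-1]\simeq\prod_\alpha F_\alpha$ with $F_\alpha:=\holim_{\mathcal{D}_n}\Phi[k,\alpha]$ (Corollary~\ref{cor-decomp}).

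I would next track how the restriction acts on cotypes.  A direct computation shows that if $K\subseteq C_{p^{l+1}}^{\times n}$ contains $C_p^{\times n}$ and has cotype $\{n_1,\dots,n_k\}$, then $f(K)\subseteq C_{p^l}^{\times n}$ has cotype $\{n_1-1,\dots,n_k-1\}$: the quotients $C_{p^{l+1}}^{\times n}/K$ and $C_{p^l}^{\times n}/f(K)$ are canonically isomorphic while the ambient level drops by one.  Hence $R_{p\cdot\id_n}$ carries $F_\alpha$ to $F_{\alpha-1}$ when $\alpha-1$ is still positive, and annihilates $F_\alpha$ within the positive-cotype decomposition precisely when $\alpha\in\mathcal{O}_k$.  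The positive cotypes therefore partition into orbits $\{\gamma,\gamma+1,\gamma+2,\dots\}$ indexed by a unique minimal element $\gamma\in\mathcal{O}_k$, and the equalizer on the orbit factor $\prod_{m\geq 0}F_{\gamma+m}$ is the homotopy limit $\holim_m F_{\gamma+m}$ along the shifts $R_{p\cdot\id_n}$.

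The crux is to prove that $R_{p\cdot\id_n}\colon F_{\gamma+1}\to F_\gamma$ is a homotopy equivalence, whereupon each orbit homotopy limit collapses to $F_\gamma$ and the result follows by combining with Theorem~\ref{computation}.  To this end I would choose compatible representative subgroups $K_l\subseteq C_{p^l}^{\times n}$ of cotype $\gamma$ together with $K_{l+1}\supseteq C_p^{\times n}$ satisfying $f(K_{l+1})=K_l$.  Under these choices, restriction induces an equivalence of symmetric monoidal categories $\mathcal{C}^{C_{p^{l+1}}^{\times n}}\langle K_{l+1}\rangle\simeq\mathcal{C}^{C_{p^l}^{\times n}}\langle K_l\rangle$ since both consist of direct sums of copies of the same abelian orbit type, and the stabilizers $\Gamma_{l+1,k,\gamma+1}$ and $\Gamma_{l,k,\gamma}$ in $GL_n(\zp)$ coincide because $g\cdot K_{l+1}=K_{l+1}$ if and only if $g\cdot K_l=K_l$.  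Proposition~\ref{main} then upgrades this to a levelwise spectrum equivalence, and the relation $RF=FR$ makes it compatible with the Frobenius maps in the homotopy limits defining $F_\gamma$ and $F_{\gamma+1}$.  The main technical obstacle is verifying this compatibility with both the $\Gamma$-equivariant structure and the Frobenius inverse system; once it is in hand, the collapse of each orbit limit to $F_\gamma$ is immediate, and the product over $\gamma\in\mathcal{O}_k$ yields the stated formula.
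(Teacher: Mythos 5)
Your proposal is correct and follows essentially the same line of argument as the paper: show that $R_{p\cdot\id_n}$ preserves the rank filtration by working at the level of finite $G$-sets via Proposition~\ref{prop-tgkg}, pass to the homotopy equalizer of $\phi$ and the identity on the filtration quotients $\prod_\alpha F_\alpha$, observe that $\phi$ shifts cotypes down by one and kills factors with a cotype entry equal to $1$, and then identify the equalizer via Theorem~\ref{computation}. The one place you add useful detail the paper glosses over is the justification that $R_{p\cdot\id_n}\colon F_{\gamma+1}\to F_\gamma$ is an equivalence: your observation that $K_{l+1}=f^{-1}(K_l)$ forces the stabilizer equality $\Gamma_{l+1,k,\gamma+1}=\Gamma_{l,k,\gamma}$, combined with the identification $C_{p^{l+1}}^{\times n}/K_{l+1}\cong C_{p^l}^{\times n}/K_l$ under $f_!$, is exactly what makes the paper's ``distinguished equivalence'' between $\holim_l\Phi[k,\theta(\alpha)]$ and $\holim_l\Phi[k,\alpha]$ canonical; the paper packages the conclusion in terms of function spectra $F(\{\theta^j(\alpha_0)\}_+, -)$ rather than your Milnor-sequence/orbit-limit formulation, but the two are equivalent.
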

\nid The attaching map of the filtration of diagonal cyclic homology is non-trivial even in the one-dimensional case, and it remains an open problem to determine the attaching maps of the filtration for higher diagonal cyclic homology.

\vspace{10pt}
We begin by developing a slight generalization of Proposition~\ref{prop-tctf}.
Recall that the twisted arrow category $\Ar_{\mathcal{M}}$ of a monoid $\mathcal{M}$, where $\mathcal{M}$ is viewed as a category with one object $\mu$, has objects the elements $\mu \xra{m} \mu$ of the monoid, and morphisms $({m_4}^*,{m_2}_*): m_1 \ra m_3$ given by diagrams
$$
  \begin{CD}
    \mu @>{m_2}>> \mu \\ @V{m_1}VV @V{m_3}VV \\ \mu @<{m_4}<< \mu
  \end{CD}
$$
Topological cyclic homology is the homotopy limit (of the fixed points of topological Hochschild homology) over $\Ar_{\mathcal{M}_n}$ for the monoid $\mathcal{M}_n$ of isogenies.  Topological Frobenius homology is the homotopy limit over the subcategory with $m_2=\id$, and similarly topological restriction homology for the subcategory with $m_4=\id$.  For any submonoid $\cK \subset \cM_n$, there is a construction intermediate between topological cyclic and topological Frobenius homology.  Let $\Ar_{\cM_n}[\cK]$ denote the subcategory of $\Ar_{\cM_n}$ whose morphisms have $m_2 \in \cK$, and define the $\cK$-relative topological cyclic homology by
$$TC^{(n)}_{\cK}(A) := \holim_{m \in \Ar_{\cM_n}[\cK]} T^{m}(A)$$
As before $T^m(A)$ denotes the fixed points of $T(A)$ by the kernel of the isogeny $m$.  Proposition~\ref{prop-tctf} is a special case of the following result.
\begin{theorem}
The monoid $\cK \subset \cM_n$ acts on $TF^{(n)}(A)$ and there is an equivalence
$$TC^{(n)}_{\cK}(A) \simeq \holim_{\cK} TF^{(n)}(A)$$
\end{theorem}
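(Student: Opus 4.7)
The plan is to exhibit $\Ar_{\cM_n}[\cK]$ as a Grothendieck fibration over the one-object category $\cK$ with fiber $Frob_{\cM_n}$, and then to apply the Fubini theorem for homotopy limits to rewrite the total homotopy limit as an iterated one.

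First I would construct the projection functor $\pi \colon \Ar_{\cM_n}[\cK] \to \cK$ sending every object to the unique object of $\cK$ and sending a morphism $({m_4}^*, {m_2}_*)$ to $m_2 \in \cK$. The composition rule $(a_0^*, {c_0}_*)(a_1^*, {c_1}_*) = ((a_1 a_0)^*, (c_0 c_1)_*)$ from the definition of the twisted arrow category immediately shows that $\pi$ is a functor, since its value on a composite is the monoid product of its values on the factors. By construction the fiber $\pi^{-1}(*)$ consists of the morphisms with $m_2 = 1$, which is exactly $Frob_{\cM_n}$. I would then verify that $\pi$ is a Grothendieck fibration by producing cartesian lifts: for an object $m_3 \in \Ar_{\cM_n}[\cK]$ and a morphism $m_2 \in \cK$, the morphism $({1}^*, {m_2}_*) \colon m_3 m_2 \to m_3$ lies over $m_2$ and is cartesian, with universal property inherited directly from the composition law in $\Ar_{\cM_n}[\cK]$.

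Next I would construct the claimed $\cK$-action on $TF^{(n)}(A)$ explicitly and identify it with the reindexing action determined by the fibration. For each $m_2 \in \cK$, the assignment $m \mapsto m m_2$ defines a reindexing functor $R_{m_2}^* \colon Frob_{\cM_n} \to Frob_{\cM_n}$, and the restriction maps $R_{m_2} \colon T^{m m_2}(A) \to T^m(A)$ assemble into a natural transformation $T \circ R_{m_2}^* \Rightarrow T$; naturality in $m$ is precisely the commutation $R_\alpha F^\beta = F^\beta R_\alpha$ of Theorem~\ref{thm-relations}, item 3. This natural transformation induces a self-map of $TF^{(n)}(A) = \holim_{Frob_{\cM_n}} T^m(A)$, and composition of such self-maps is controlled by the relation $R_{m_2} R_{m_2'} = R_{m_2 m_2'}$ (item 2 of Theorem~\ref{thm-relations}), giving a genuine monoid action.

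Finally I would invoke the Fubini theorem for homotopy limits over a Grothendieck fibration $\pi \colon \cE \to \cB$ with spectrum-valued diagram $F$, which states
\begin{equation*}
  \holim_{\cE} F \; \simeq \; \holim_{b \in \cB} \holim_{\pi^{-1}(b)} F|_{\pi^{-1}(b)}.
\end{equation*}
Applied to $\pi \colon \Ar_{\cM_n}[\cK] \to \cK$ with $F(m) = T^m(A)$, this yields $TC^{(n)}_{\cK}(A) \simeq \holim_{\cK} TF^{(n)}(A)$, with the $\cK$-action on the right being the one determined by the cartesian lifts, which by the preceding step is the restriction action. The main obstacle I expect is ensuring that the Fubini theorem can be applied cleanly in whichever model of spectra and homotopy limits is in force---this may require either explicit Reedy cofibrant replacements in a Bousfield--Kan framework or an appeal to an $\infty$-categorical formulation of the Grothendieck construction. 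A secondary bookkeeping matter is confirming the orientation of the monoid action (i.e.\ $\cK$ vs.\ $\cK^{\op}$) by tracking the order of composition inherited from $\Ar_{\cM_n}[\cK]$.
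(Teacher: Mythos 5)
Your proposal is correct and takes essentially the same route as the paper: the paper's "categorical semi-direct product $\cK \ltimes Frob_{\cM}$" is exactly the Grothendieck construction you describe, and the "Thomason theorem for homotopy limits" (cited to Chach\'olski--Scherer) is precisely the Fubini-over-a-fibration step you invoke. The only cosmetic difference is that the paper packages the fibration as an explicit isomorphism of categories and leaves the cartesian lifts implicit in the structure of the Grothendieck construction, whereas you spell them out and explicitly identify the resulting $\cK$-action with the restriction operators.
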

\begin{proof}
Let $\Psi$ denote the functor from $\Ar_{\cM_n}$ to spectra with $\Psi(m) = T^{m}(A)$, that is the functor whose homotopy limit gives topological cyclic homology.  We henceforth abbreviate $\cM_n$ as $\cM$.  We begin by reformulating the relative arrow category as a categorical semi-direct product:
\begin{lemma}
There is an isomorphism
$\Ar_{\cM}[\cK] \cong \cK \ltimes Frob_{\cM}.$
\end{lemma}

\nid Before describing the isomorphism we recall the definition of this semi-direct product.  Given a category $\cC$ and a functor $F: \cC^{\op} \ra \Cat$, the semi-direct product $\cC \ltimes F$ has objects pairs $(c,x)$, where $c \in \cC$ and $x \in F(c)$.  A morphism of $\cC \ltimes F$ from $(c,x)$ to $(c',x')$ is a pair $(f: c \ra c', \phi: x \ra F(f)(x'))$.  The composition of $(f,\phi):(c,x) \ra (c', x')$ and $(g,\psi):(c',x') \ra (c'',x'')$ is $(gf, F(f)(\psi) \circ \phi)$.  This categorical semi-direct product is a slight variant of Thomason's Grothendieck construction~\cite{thomason}.

In the lemma, $Frob_{\cM}$ is viewed as a functor from $\cK^{\op}$ to $\Cat$ taking the single object $\mu$ to $Frob_{\cM}$ and taking the morphism $k$ to the right action functor $k: Frob_{\cM} \ra Frob_{\cM}$ given on objects by $k \cdot (\mu \xra{m} \mu) = \mu \xra{m k} \mu$, and on morphisms by sending $(\id^*,{m_4}_*): m_1 \ra m_3$ to $(\id^*,{m_4}_*): m_1 k \ra m_3 k$.  The isomorphism in the lemma is given by the functor $\cK \ltimes Frob_{\cM} \ra \Ar_{\cM}[\cK]$ taking the pair $(\mu, \mu \xra{m} \mu)$ to the arrow $\mu \xra{m} \mu$ and taking the morphism $(\mu \xra{k} \mu, (\id^*,{m_4}_*): m_1 \ra m_3 k): (\mu, \mu \xra{m_1} \mu) \ra (\mu, \mu \xra{m_3} \mu)$ to the morphism $(k^*,{m_4}_*): m_1 \ra m_3$.

We now need only decompose the homotopy limit over the semi-direct product:
\begin{lemma}
There is a natural equivalence
$\holim_{\cK \ltimes Frob_{\cM}} \Psi \simeq \holim_{\cK} \left( \holim_{Frob_{\cM}} \Psi \right).$
\end{lemma}
\nid Here the functor $\Psi$ on the right hand side is implicitly restricted to the subcategory $Frob_{\cM}$ of $\cK \ltimes Frob_{\cM}$.  This equivalence has nothing to do with the particular categories in question and occurs for any categorical semi-direct product---the result is usually called the Thomason theorem for homotopy limits and is discussed for example in Chach{\'o}lski-Scherer~\cite{chachscher}.
\end{proof}

We use the theorem to express diagonal cyclic homology in terms of Frobenius homology.
\begin{cor}
There is an equivalence $TC^{\Delta}(A) \simeq \holim_{\Delta} TF^{(n)}(A)$.
\end{cor}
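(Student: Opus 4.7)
The plan is to specialize the theorem just established to the submonoid $\cK = \Delta \subseteq \cM_n$ generated by the central element $p \cdot \id_n$. Setting $\cK = \Delta$ in the equivalence $TC^{(n)}_\cK(A) \simeq \holim_\cK TF^{(n)}(A)$ immediately yields
\[
TC^{(n)}_\Delta(A) \;\simeq\; \holim_\Delta TF^{(n)}(A),
\]
which is the right-hand side of the corollary. The remaining task is to identify $TC^\Delta(A)$, originally defined in Corollary~\ref{cor-diagcyc} as $\holim_{\Ar_\Delta} T^\beta(A)$, with the theorem's construction $TC^{(n)}_\Delta(A) = \holim_{\Ar_{\cM_n}[\Delta]} T^m(A)$. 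The two differ in their indexing categories: $\Ar_\Delta$ admits only the diagonal objects $p^k \id_n$ with diagonal morphisms, while $\Ar_{\cM_n}[\Delta]$ takes \emph{all} of $\cM_n$ as objects but restricts the ``restriction'' component of its morphisms to lie in $\Delta$.

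To establish this identification, I would show that the inclusion of indexing categories $\iota \colon \Ar_\Delta \hookrightarrow \Ar_{\cM_n}[\Delta]$ is homotopy initial. Concretely, for each $m \in \cM_n$ regarded as an object of $\Ar_{\cM_n}[\Delta]$, the under-category $m \downarrow \iota$ must be contractible. The objects of $m \downarrow \iota$ are morphisms $f \colon m \to p^k \id_n$ in $\Ar_{\cM_n}[\Delta]$ landing in the image of $\iota$; each such $f$ corresponds to a factorization $m = m_4 \cdot p^{k+j} \cdot \id_n$ with $m_4 \in \cM_n$ and $k, j \geq 0$. The centrality of $p \cdot \id_n$ in $\cM_n$ endows these data with a directed poset structure in which larger $k+j$ refines the factorization, and a standard cofinal-cone argument should then yield contractibility of the nerve.

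The main obstacle is carrying out the contractibility verification rigorously, in particular checking that the composition law encoded in the twisted arrow category really does arrange $m \downarrow \iota$ into a filtered-like system and not merely a connected one. If that analysis proves finicky, a clean alternative is simply to adopt the theorem's $TC^{(n)}_\Delta(A)$ as the working definition of diagonal cyclic homology; the two conventions encode the same heuristic of ``restricting only the restriction operators to $\Delta$ while leaving the Frobenius operators unconstrained'', and under this convention the corollary becomes an immediate specialization of the theorem.
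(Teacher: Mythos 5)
Your approach is correct in substance, but it takes a genuinely different route from the paper, and the paper's route is slicker. The paper does not apply the theorem with $\cK=\Delta$, $\cM=\cM_n$; instead it runs the theorem's proof mechanism (semi-direct product plus Thomason's theorem) with $\Delta$ itself playing the role of the ambient monoid, obtaining
$TC^{\Delta}(A)=\holim_{\Ar_{\Delta}}T^{m}\simeq\holim_{\Delta}\bigl(\holim_{Frob_{\Delta}}T^{m}\bigr)$,
and then invokes finality of the inclusion $Frob_{\Delta}\hookrightarrow Frob_{\cM_n}$ to identify the inner limit with $TF^{(n)}(A)$. Since $Frob_{\Delta}$ is a linearly ordered chain of $F^{p}$'s, that finality is immediate. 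Your plan instead identifies $TC^{\Delta}(A)=\holim_{\Ar_{\Delta}}T^{m}$ with $TC^{(n)}_{\Delta}(A)=\holim_{\Ar_{\cM_n}[\Delta]}T^{m}$ via initiality of $\iota\colon\Ar_{\Delta}\hookrightarrow\Ar_{\cM_n}[\Delta]$, and then applies the theorem as stated. That works, but you stop short of actually verifying the contractibility of $m\downarrow\iota$, and the language about a ``directed poset'' is not quite right. Here is what makes it go through: writing objects of $m\downarrow\iota$ as triples $(k,j,a)$ with $m=a\,p^{k+j}$, $a\in\cM_n$, $k,j\ge 0$, cancellation in $\cM_n$ forces $a=m\,p^{-(k+j)}$, so the objects are indexed by $(k,j)$ with $k+j\le N$ where $N$ is maximal with $p^{-N}m\in M_n(\zp)$; tracing the twisted-arrow composition shows $(k,j)\to(k',j')$ exists (uniquely) iff $j'\ge j$ and $k+j\ge k'+j'$, so $(N,0)$ is an initial object, not a terminal cocone over a filtered system. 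With an initial object the nerve is contractible and your cofinality holds. Your ``alternative'' of adopting $TC^{(n)}_{\Delta}$ as the working definition is a cop-out: the corollary's content is precisely that the two definitions agree, so redefining evades the statement rather than proving it.
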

\begin{proof}
The chain of equivalences is the following
$$TC^{\Delta}(A) = \holim_{\Ar_{\Delta}} T^{m}(A) \simeq \holim_{\Delta} (\holim_{Frob_{\Delta}} T^{m}(A)) \simeq \holim_{\Delta} (\holim_{Frob_{\cM}} T^{m}(A)) = \holim_{\Delta} TF^{(n)}(A).$$
The first equivalence follows from the theorem, and the second because $Frob_{\Delta}$ is final in $Frob_{\cM}$.
\end{proof}
\nid Because the monoid $\Delta$ is generated by $p \cdot \id_n$, we can reexpress this homotopy limit as
$$\holim_{\Delta} TF^{(n)}(A) \simeq \holim \left( TF^{(n)}(A) \overset{\phi}{\underset{\id}{\rightrightarrows}} TF^{(n)}(A) \right)$$
where $\phi$ is the action of $p \cdot \id_n$ on $TF^{(n)}(A)$ induced by the restriction operator for that matrix.

Now we specialize to studying the diagonal cyclic homology of the sphere, and show that the operator $\phi$ preserves the rank filtration of $TF^{(n)}(\ess)$.  Recall that the rank filtration on the equivariant sphere spectrum, described in section~\ref{sec-rankfilt}, induces a filtration of topological Frobenius homology, and we denote this filtration by $TF^{(n)}(\ess)[k]$.
\begin{prop}
The restriction operator $\phi$ on $TF^{(n)}(\ess)$ preserves the rank filtration $TF^{(n)}(\ess)[k]$.
\end{prop}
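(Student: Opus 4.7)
The plan is to reduce the claim to a level-wise statement in the indexing category for Frobenius homology, and then to prove the level-wise statement via the identification of restriction with a fixed-point functor on equivariant set categories.

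First I would unpack how $\phi$ acts on $TF^{(n)}(\ess) \simeq \holim_{\beta \in Frob_{\cM_n}} \ess^{L_\beta}$. The monoid $\Delta$ acts on $Frob_{\cM_n}$ by right multiplication (as in the proof of Theorem A.2 via the categorical semi-direct product identification $\Ar_{\cM}[\cK] \cong \cK \ltimes Frob_{\cM}$). The endofunctor $(-)\cdot(p\cdot\id_n)\colon Frob_{\cM_n}\to Frob_{\cM_n}$ is accompanied by the natural transformation whose $\beta$-component is the restriction $R_{p\cdot\id_n}\colon\ess^{L_{\beta\cdot p\id_n}}\to \ess^{L_\beta}$; the induced endomorphism of the homotopy limit is $\phi$. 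Since the rank filtration $TF^{(n)}(\ess)[k]$ is by construction the homotopy limit of the level-wise rank filtrations $\ess^{L_\beta}[k]$ (cf.\ section~\ref{sec-rankfilt}), it suffices to verify that each $R_{p\cdot\id_n}$ carries $\ess^{L_{\beta\cdot p\id_n}}[k]$ into $\ess^{L_\beta}[k]$.

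Next, I would invoke Proposition~\ref{prop-tgkg} to translate the level-wise restriction into a statement about $K$-theory of equivariant sets. Setting $G:=L_{\beta\cdot p\id_n}$, $G':=L_\beta$, and $f\colon G\twoheadrightarrow G'$ the quotient by $\ker f = L_{p\cdot\id_n}\cap G$, the restriction $R_{p\cdot\id_n}$ is identified with the map $K(f_!)\colon K({Sets}^G)\to K({Sets}^{G'})$. From the description in section~\ref{sec-ktfinite}, the functor $f_!$ takes an orbit $G/K$ to $G'/f(K)$ when $\ker f\subseteq K$ and to the empty set otherwise.

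The core observation is then that $f_!$ preserves orbit rank. Indeed, when $\ker f\subseteq K$ we have the canonical isomorphism of abstract groups
\[
G'/f(K) \;=\; (G/\ker f)/(K/\ker f) \;\cong\; G/K,
\]
so the minimal number of generators—that is, the rank of the orbit—is the same on both sides; and when $\ker f\not\subseteq K$ the image is empty, which contributes to every filtration level. Consequently $f_!$ restricts to a functor $\cC^G[k]\to\cC^{G'}[k]$ for each $k$, and the induced map of $K$-theory spectra preserves the rank filtration. Reassembling the level-wise statement via the homotopy limit completes the proof.

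The only subtlety is bookkeeping: one must correctly match the endofunctor of $Frob_{\cM_n}$ to the natural transformation given by $R_{p\cdot\id_n}$, and match the latter to the fixed-point functor $f_!$ under the chain of equivalences of section~\ref{sec-kthyequivsphere}. Once the identifications are in place, the rank preservation is essentially formal, being the trivial isomorphism $(G/\ker f)/(K/\ker f)\cong G/K$.
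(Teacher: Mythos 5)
Your argument is correct and is essentially the same as the paper's: the paper's proof is the one-line remark that the filtered functors $\Phi[k]$ extend from $Frob_{\cM}$ to $\Ar_{\cM}[\Delta]$, and your proof simply unpacks what that "direct check" amounts to, namely that under the identification of Proposition~\ref{prop-tgkg} the restriction corresponds to the fixed-point functor $f_!$, which sends an orbit $G/K$ either to the empty set or to $G'/f(K)\cong (G/\ker f)/(K/\ker f)\cong G/K$, preserving orbit rank. The bookkeeping matching $\phi$ to the natural transformation given by $R_{p\cdot\id_n}$ on the level-wise system is correct.
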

\nid This proposition follows by directly checking that the filtered equivariant sphere spectrum functors $\Phi[k](G) := K(\cC^G[k])$ described in section~\ref{sec-rankfilt}, namely the functors whose homotopy limits are $TF^{(n)}(\ess)[k]$, extend from functors on $Frob_{\cM}$ to functors on $\Ar_{\cM}[\Delta]$.

Because the restriction respects the filtration, we have an induced filtration, denoted $TC^{\Delta}(\ess)[k]$, on diagonal cyclic homology.  The filtration quotients here can be described as a homotopy equalizer on the filtration quotients of Frobenius homology:
$$TC^{\Delta}(\ess)[k]/TC^{\Delta}(\ess)[k-1] \simeq \holim \left(
TF^{(n)}(\ess)[k]/TF^{(n)}(\ess)[k-1] \overset{\phi}{\underset{\id}{\rightrightarrows}} TF^{(n)}(\ess)[k]/TF^{(n)}(\ess)[k-1] \right)$$
It remains only to determine the action of $\phi$ on the filtration quotients of Frobenius homology, and for this we utilize the cotype decomposition of section~\ref{sec-cotype}.  Recall the decomposition
$$TF^{(n)}(\ess)[k]/TF^{(n)}(\ess)[k-1] \simeq \prod_{\alpha} \holim_l \Phi[k,\alpha]$$
where $\Phi[k,\alpha]$ is the rank $k$, cotype $\alpha$ part of the equivariant sphere spectrum functor, and where $\alpha$ varies over collections of unordered positive integers $\{e_1, e_2, \ldots, e_k\}$.  Let $P_k$ denote the set of such collections, and define a map $\theta: P_k \ra P_k$ by $\theta(\{e_1, e_2, \ldots, e_k\}) = \{e_1 + 1, e_2 + 1, \ldots, e_k +1\}$.  Roughly speaking, the operator $\phi$ shifts the factors in the cotype decomposition along the map $``\theta^{-1}"$:
\begin{prop}
The operator
$\phi: \prod_{\alpha} \holim_l \Phi[k,\alpha] \ra \prod_{\alpha} \holim_l \Phi[k,\alpha]$
maps the cotype factor $\holim_l \Phi[k,\theta(\alpha)]$ by a homotopy equivalence to the cotype factor $\holim_l \Phi[k,\alpha]$, and for any $\alpha_0$ containing some $e_i=1$, the operator $\phi$ maps the factor $\holim_l \Phi[k,\alpha_0]$ to the basepoint.  

More specifically, there is an identification
$$\prod_{\alpha \in P_k} \holim_l \Phi[k,\alpha] \simeq \prod_{\alpha_0 \in \cO_k} F((\{\theta^j(\alpha_0)\}_{j \geq 0})_+, \holim_l \Phi[k,\alpha_0])$$
where $\cO_k$ is the set of collections $\{e_1, e_2, \ldots, e_k\}$ with at least one $e_i$ equal to 1, and the right hand side of this equivalence is a product of function spectra with discrete sources.  With respect to this identification, the operator $\phi$ is equal to $\prod_{\alpha_0 \in \cO_k} F(\theta, \holim_l \Phi[k,\alpha_0])$.
\end{prop}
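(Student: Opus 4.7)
The plan is to analyze $\phi$ level by level via the equivalence of Proposition~\ref{prop-tgkg} with $K$-theory of $G$-sets, explicitly track the cotype shift, and assemble the result through the homotopy limit. First I would identify $\phi$ stagewise: the operator $\phi$ is induced by the restriction $R_{p\id_n}$, and at level $l+1 \to l$ it corresponds under Proposition~\ref{prop-tgkg} to the $K$-theory functor $f_!\colon K({Sets}^{C_{p^{l+1}}^{\times n}}) \to K({Sets}^{C_{p^l}^{\times n}})$ for the surjection $f\colon C_{p^{l+1}}^{\times n} \fib C_{p^l}^{\times n}$ whose kernel is the $p$-torsion subgroup $C_p^{\times n} = p^l \cdot C_{p^{l+1}}^{\times n}$. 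The description of $f_!$ in Section~\ref{sec-ktfinite} sends the orbit $C_{p^{l+1}}^{\times n}/K$ to $C_{p^l}^{\times n}/f(K)$ when $C_p^{\times n} \subseteq K$, and to the basepoint otherwise.

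Next I would trace the cotype. For $K$ of cotype $\{m_1, \ldots, m_k\}$ at level $l+1$, the condition $C_p^{\times n} \subseteq K$ is equivalent to the quotient $C_{p^{l+1}}^{\times n}/K$ having no cyclic factor of maximal order $p^{l+1}$, i.e., $m_i \geq 1$ for every $i$. Under this condition, $C_{p^l}^{\times n}/f(K) \cong C_{p^{l+1}}^{\times n}/K$ as abstract abelian groups, so $f(K)$ has cotype $\{m_1 - 1, \ldots, m_k - 1\}$ at level $l$. Moreover, the assignment $K \mapsto f(K) = K/C_p^{\times n}$ implements a bijection between subgroups of $C_{p^{l+1}}^{\times n}$ containing $C_p^{\times n}$ and arbitrary subgroups of $C_{p^l}^{\times n}$, preserving rank and shifting cotype by $-1$ with matching quotient groups. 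Via the Barratt-Priddy-Quillen decomposition of Section~\ref{sec-ktfinite}, this bijection upgrades to a levelwise equivalence $\Phi[k,\theta(\alpha_0)](l+1) \xrightarrow{\simeq} \Phi[k,\alpha_0](l)$ for every positive cotype $\alpha_0$.

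Assembling the levelwise equivalences into a map of homotopy limits uses the commutativity $R_\alpha F^\beta = F^\beta R_\alpha$ from Theorem~\ref{thm-relations}, which ensures the maps $\phi_l$ fit into a ladder with the Frobenius structure maps on both sides; the resulting map $\holim_l \Phi[k,\theta(\alpha_0)] \to \holim_l \Phi[k,\alpha_0]$ is then an equivalence of towers of spectra. For the basepoint claim, if $\alpha_0$ contains some $n_i = 1$, then the shifted cotype $\alpha_0 - 1$ has a 0 entry, so the image of $\phi_l$ lies in the cotype-0 stratum of $\Phi[k](l)/\Phi[k-1](l)$, and its projection onto any positive-cotype factor $\Phi[k,\alpha](l)$ vanishes. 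Thus the projection of $\phi$ restricted to $\holim_l \Phi[k,\alpha_0]$ into each positive-cotype factor is zero, giving the basepoint map in $\prod_{\alpha \in P_k} \holim_l \Phi[k,\alpha]$.

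For the more specific reformulation, I would partition $P_k = \bigsqcup_{\alpha_0 \in \cO_k} \{\theta^j(\alpha_0)\}_{j \geq 0}$, since every positive cotype is of the form $\theta^j(\alpha_0)$ for a unique $\alpha_0 \in \cO_k$ (obtained by subtracting $\min \alpha - 1$ from each entry). Iterating the first equivalence gives $\holim_l \Phi[k, \theta^j(\alpha_0)] \simeq \holim_l \Phi[k, \alpha_0]$ for every $j$, rewriting each orbit product as the function spectrum $F((\{\theta^j(\alpha_0)\}_{j\geq 0})_+, \holim_l \Phi[k,\alpha_0])$, on which $\phi$ acts precisely by the source-shift $F(\theta, -)$. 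The main obstacle I anticipate is promoting the orbit bijection to a $K$-theory equivalence at the spectrum level; this amounts to matching the Weyl groups $W_{C_{p^{l+1}}^{\times n}}(K) = C_{p^{l+1}}^{\times n}/K = C_{p^l}^{\times n}/f(K) = W_{C_{p^l}^{\times n}}(f(K))$, which in the abelian case is straightforward, so the remainder of the argument is mostly formal bookkeeping with the relations of Theorem~\ref{thm-relations}.
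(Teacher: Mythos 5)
Your proposal is correct and takes essentially the same route as the paper's proof, though you supply considerably more of the intermediate detail that the paper leaves implicit. The paper asserts a bijection $\cM[k,\theta(\alpha)](l+1) \to \cM[k,\alpha](l)$ and states that tracing through yields the claim; your proposal makes explicit that this bijection is the orbit-level picture of the restriction $\phi = R_{p\,\id_n}$ realized as $f_!$ for the surjection $f\colon C_{p^{l+1}}^{\times n} \twoheadrightarrow C_{p^{l}}^{\times n}$, computes the cotype shift directly from the condition $C_p^{\times n}\subseteq K$ being equivalent to the quotient having exponent at most $p^l$, and upgrades the bijection to a levelwise spectrum equivalence via the Barratt--Priddy--Quillen decomposition with matching Weyl groups. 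Your use of $R_\alpha F^\beta = F^\beta R_\alpha$ to assemble the levelwise equivalences into a (level-shifting) map of towers, and the partition $P_k = \bigsqcup_{\alpha_0 \in \cO_k}\{\theta^j(\alpha_0)\}_{j\geq 0}$ to produce the function-spectrum form, are exactly the bookkeeping the paper compresses into ``the identification in the proposition follows'' and ``directly tracing its action.''
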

\begin{proof}
The identification is seen as follows.  For $\alpha = \{e_1, e_2, \ldots, e_k\}$, choose an \mbox{$l > \max_i (e_i +1)$}.  The restriction associated to the inclusion $C^{\times n}_{p^l} \hra C^{\times n}_{p^{l+1}}$ induces a bijection from \mbox{$\cM[k,\theta(\alpha)](l+1)$} to $\cM[k,\alpha](l)$, where, as in section~\ref{sec-rankcotype}, the expression $\cM[k,\alpha](l)$ denotes the result of adjoining a disjoint basepoint to the set of subgroups $K \subseteq C^{\times n}_{p^l}$ with $C^{\times n}_{p^l}/K$ of rank $k$ and $K$ of cotype $\alpha$.  This bijection yields a distinguished equivalence of $\holim_l \Phi[k,\theta(\alpha)]$ and $\holim_l \Phi[k,\alpha]$ and the identification in the proposition follows.  That the operator $\phi$ corresponds to $F(\theta, -)$ in this identification is seen by directly tracing its action through the given equivalences.
\end{proof}
\nid The homotopy equalizer of $\phi$ and the identity evidently has a single factor for each primitive cotype $\alpha = \{e_1, e_2, \ldots, e_k\}$, that is for those cotypes with some $e_i=1$, and the computation of the rank-cotype components from section~\ref{sec-rankcotype} yields our desired result:
\begin{cor}
The homotopy groups of the filtration quotients of the rank filtration of the diagonal cyclic homology of the sphere are as follows:
$$\pi_*((TC^{\Delta}(\ess)[k]/TC^{\Delta}(\ess)[k-1])_p) \cong \prod_{\alpha \in \mathcal{O}_k} 
\lim_l \left( \bZ[GL_n(\zp) / \Gamma_{l,k,\alpha}] \otimes \pi_*(\Sigma^{\infty} S^k \sm B \TT^k_+)/p^l \right).$$
\end{cor}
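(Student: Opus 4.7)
The strategy combines three inputs: the homotopy-equalizer description
$$TC^{\Delta}(\ess)[k]/TC^{\Delta}(\ess)[k-1] \simeq \holim\Bigl(Q \overset{\phi}{\underset{\id}{\rightrightarrows}} Q\Bigr),$$
where $Q := TF^{(n)}(\ess)[k]/TF^{(n)}(\ess)[k-1]$; the preceding Proposition, which identifies $Q$ with $\prod_{\alpha_0 \in \cO_k} F((\{\theta^j(\alpha_0)\}_{j\geq 0})_+, X_{\alpha_0})$ (writing $X_{\alpha_0} := \holim_l \Phi[k,\alpha_0]$) and realizes $\phi$ as $F(\theta,-)$; and Theorem~\ref{computation}, which evaluates $\pi_*(X_{\alpha_0})$.

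First I would use that homotopy equalizers of spectra commute with arbitrary products---both being instances of homotopy pullbacks, and products of homotopy pullbacks being homotopy pullbacks of products---to reduce the problem to computing, for each primitive cotype $\alpha_0 \in \cO_k$, the homotopy equalizer $E_{\alpha_0}$ of $F(\theta, X_{\alpha_0})$ and $\id$ on $F(N_+, X_{\alpha_0})$, where $N = \{\theta^j(\alpha_0)\}_{j\geq 0}$.

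The heart of the argument is showing $E_{\alpha_0} \simeq X_{\alpha_0}$. There is a canonical constant-diagonal map $\Delta \colon X_{\alpha_0} \to F(N_+, X_{\alpha_0}) = \prod_N X_{\alpha_0}$, strictly equalized by $F(\theta,-)$ and $\id$, hence factoring through a natural map $X_{\alpha_0} \to E_{\alpha_0}$. To verify this is a weak equivalence, I would pass to the stable setting, where $E_{\alpha_0}$ is the fiber of $\id - F(\theta, -)$; indexing $N$ by $\bN$ via $\theta^j(\alpha_0) \leftrightarrow j$ identifies the induced operator on $\pi_* \prod_N X_{\alpha_0} = \prod_{j\geq 0} \pi_*(X_{\alpha_0})$ with the shift-difference map $(a_j)_j \mapsto (a_j - a_{j+1})_j$. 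This operator is surjective (given $(b_j)$, set $a_0 := 0$ and solve $a_{j+1} := a_j - b_j$ recursively) with kernel exactly the constant sequences, so the long exact sequence of the fiber collapses to give $\pi_* E_{\alpha_0} \cong \pi_* X_{\alpha_0}$, realized by $\Delta$.

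Combining this identification with Theorem~\ref{computation}'s computation
$$\pi_* X_{\alpha_0} \cong \lim_l \Bigl(\bZ[GL_n(\zp)/\Gamma_{l,k,\alpha_0}] \otimes \pi_*(\Sigma^\infty S^k \sm B\TT^k_+)/p^l\Bigr)$$
yields the stated product formula. The one genuinely substantive step is the shift-difference computation on $E_{\alpha_0}$; its surjectivity is precisely where the argument uses that the $\theta$-orbit of a primitive cotype is a \emph{free} $\bN$-orbit (with no periodic structure), a freeness that ultimately traces back to the fact that $\theta$ increments each $e_i$ by one and hence admits no fixed points on $P_k$.
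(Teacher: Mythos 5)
Your argument is correct and follows the paper's intended approach exactly: the paper dispatches the step with "the homotopy equalizer of $\phi$ and the identity evidently has a single factor for each primitive cotype," and your proof supplies precisely that missing detail — reduction to one free $\theta$-orbit via commuting the equalizer past the product, then the shift-difference computation showing $\pi_*(\mathrm{fib}(\id - F(\theta,-))) \cong \pi_* X_{\alpha_0}$ via the constant diagonal. Nothing is missing; the appeal to surjectivity of $(a_j) \mapsto (a_j - a_{j+1})$ (freeness of the $\bN$-orbit) is exactly the point the paper's "evidently" is glossing.
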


\settocdepth{part}

\section*{Acknowledgments}

We would like to express our gratitude to the Institut Mittag-Leffler and to Stanford University for their support and hospitality at various stages of this project.  We would also like to thank Teena Gerhardt and Andre Henriques for helpful background conversations, and Michael Weiss for clarifications concerning our examples of Adams operations on covering homology.

\settocdepth{section}

\bibliography{cdd}

\bibliographystyle{plain}

\end{document}